\newcommand{\DD}{\mathbb{D}}
\newcommand{\is}{\ov\imath}
\newcommand{\Rinn}{\ov \Ring}
\newcommand{\Rin}{\widetilde{\Ring}}
\newcommand{\lam}{\widetilde{\lambda}}
\newcommand{\Ar}{\mathrm{Area}_\eta}
\newcommand{\F}{\mathbb Z}
\newcommand{\Rr}{\mathfrak{R}}
\newcommand{\la}{\lambda}
\newcommand{\Ht}{\mathrm{H}}
\newcommand{\Hbb}{\mathbb{H}}
\newcommand{\ov}{\widehat}
\newcommand{\ovl}{\overline}
\newcommand{\ti}{\widetilde}
\newcommand{\ba}{\overline}
\newcommand{\Ringg}{\mathbb{B}}
\newcommand{\Hh}{\widehat{H}}
\newcommand{\ta}{\Ring}
\newcommand{\Sbb}{\mathbb{S}}
\newcommand{\tab}{(\Ring,\Hbb)}
\newcommand{\tabb}{(\Ringg,\Hbb)}
\newcommand{\tabl}{(\ovl\Ring,\ovl\Hbb)}
\newcommand{\gamm}{\tau}
\newcommand{\n}{\mathfrak{n}}
\newcommand{\Ncal}{\mathcal{N}}
\newcommand{\f}{\mathfrak{f}}
\newcommand{\g}{\mathfrak{g}}
\newcommand\RelSpinC{\SpinC}
\newcommand{\Tg}{\mathbb{T}_{\gamma}}
\newcommand{\Tai}{{\mathbb{T}}_{\alpha^i}}
\newcommand{\Taj}{{\mathbb{T}}_{\alpha^j}}
\newcommand{\Tbi}{{\mathbb{T}}_{\beta^i}}
\newcommand{\Tbjj}{{\mathbb{T}}_{\beta^j}}
\newcommand{\Ss}{\mathfrak{S}}
\newcommand{\Tt}{\mathfrak{T}}
\newcommand{\Bb}{\mathfrak{B}}
\newcommand{\Pp}{\mathfrak{P}}
\newcommand{\PP}{\mathbb{B}}
\newcommand{\Nod}{\mathcal{N}}
\newcommand{\Group}{\mathbb{G}}
\newcommand{\Or}{\mathfrak{o}}
\newcommand{\vsi}{\varsigma}
\newcommand{\Acal}{\mathcal{A}}
\newcommand{\Qcal}{\mathcal{Q}}
\newcommand{\Dcal}{\mathcal{D}}
\newcommand{\el}{\kappa}
\newcommand{\Pcal}{\mathcal{P}}
\newcommand\Dual{\mathcal D}
\newcommand\Duality\Dual
\newcommand{\relspinc}{{\underline{\spinc}}}
\newcommand\x{\mathbf x}
\newcommand\w{\mathbf w}
\newcommand\z{\mathbf z}
\newcommand\p{\mathbf p}
\newcommand\q{\mathbf q}
\newcommand\y{\mathbf y}
\newcommand\ModSphere{\ModFlow\left({\mathbb S}\longrightarrow
\Sym^{g-1}(\Sigma_{1})\times \Sym^2(\Sigma_{2})\right)}
\newcommand\ModSpheres\ModSphere
\newcommand\CF{CF}
\newcommand\gr{\mathrm{gr}}
\newcommand\UnparModSp{\widehat \ModSp}
\newcommand\UnparModFlow\UnparModSp
\newcommand\PD{\mathrm{PD}}
\newcommand{\spinc}{\mathfrak s}
\newcommand{\spinct}{\mathfrak t}
\newcommand\sM{\mathcal{M}}
\newcommand\ModMaps{\mathcal M}
\newcommand\ModSp\ModMaps
\newcommand\Ta{{\mathbb T}_{\alpha}}
\newcommand\Tb{{\mathbb T}_{\beta}}
\newcommand\Tc{{\mathbb T}_{\gamma}}
\newcommand\Td{{\mathbb T}_{\delta}}
\newcommand\alphas{\mbox{\boldmath$\alpha$}}
\newcommand\betas{\mbox{\boldmath$\beta$}}
\newcommand\gammas{\mbox{\boldmath$\gamma$}}
\newcommand\deltas{\mbox{\boldmath$\delta$}}
\newcommand\Ring{\mathbb A}
\newcommand\spincrel\relspinc
\newcommand\relspinct{\underline{\mathfrak t}}
\newtheorem{thm}{Theorem}[section]
\newtheorem{prop}[thm]{Proposition}
\newtheorem{cor}[thm]{Corollary}
\newtheorem{lem}[thm]{Lemma}
\newtheorem{example}[thm]{Example}
\newtheorem{defn}[thm]{Definition}
\newtheorem{remark}[thm]{Remark}
\def\endproof{\relax\ifmmode\expandafter\endproofmath\else
  \unskip\nobreak\hfil\penalty50\hskip.75em\hbox{}\nobreak\hfil\bull
  {\parfillskip=0pt \finalhyphendemerits=0 \bigbreak}\fi}
\def\endproofmath$${\eqno\bull$$\bigbreak}
\def\bull{\vbox{\hrule\hbox{\vrule\kern3pt\vbox{\kern6pt}\kern3pt\vrule}\hrule}}
\newcommand{\Q}{\mathbb{Q}}
\newcommand{\R}{\mathbb{R}}
\newcommand{\T}{\mathbb{T}}
\newcommand{\C}{\mathbb{C}}
\newcommand{\Z}{\mathbb{Z}}
\newcommand{\ModSWfour}{\mathcal{M}}
\newcommand{\ModFlow}{\ModSWfour}
\newcommand{\SpinC}{{\mathrm{Spin}}^c}
\newcommand\abuts\Rightarrow
\newcommand\Sym{\mathrm{Sym}}
\newcommand{\CFKT}{\text{CFK}}
\newcommand{\CFT}{\mathrm{CF}}
\newcommand{\HFT}{\mathrm{HF}}
\newcommand{\D}{\mathbb{D}}
\newcommand{\E}{\mathbb{E}}
\newcommand{\A}{\mathbb{A}}
\newcommand{\M}{\mathbb{M}}
\newcommand{\m}{\mathfrak{m}}
\newcommand{\Inv}{\mathcal{I}}
\newcommand{\lra}{\longrightarrow}
\newcommand{\ra}{\rightarrow}
\newcommand{\Sig}{\Sigma}
\newcommand{\Mod}{\mathcal{M}}
\newcommand{\dbar}{\overline{\partial}}
\newcommand{\Ker}{\text{Ker}}
\begin{document}

\title{A refinement of sutured Floer homology}%
\author{Akram S. Alishahi}
\address{Department of Mathematics, Sharif University of Technology, P. O. Box 11155-9415,
Tehran, Iran}
\email{akram.alishahi@gmail.com}
\author{Eaman Eftekhary}%
\address{School of Mathematics, Institute for Research in Fundamental Science (IPM),
P. O. Box 19395-5746, Tehran, Iran}%
\email{eaman@ipm.ir}
%\date{December 2005}%
%\dedicatory{}%
%\commby{}%

% ----------------------------------------------------------------
\begin{abstract}
We introduce a refinement of the Ozsv\'ath-Szab\'o complex
associated to a balanced sutured manifold $(X,\tau)$ by Juh\'asz \cite{Juh}.
An algebra $\Ring_\tau$ is associated to the boundary of a sutured manifold and
a filtration of its generators by $\Ht^2(X,\partial X;\Z)$ is defined.
For a fixed class $\spinc$ of a $\SpinC$ structure over the manifold
$\ovl X$, which is obtained from $X$ by filling out the sutures,
the Ozsv\'ath-Szab\'o chain complex $\CFT(X,\tau,\spinc)$
is then defined as a chain complex with coefficients in $\Ring_\tau$ and
filtered by $\SpinC(X,\tau)$. The filtered chain homotopy type
of this chain complex is  an invariant of $(X,\tau)$ and the
$\SpinC$ class $\spinc\in\SpinC(\ovl X)$. The construction
generalizes the construction of Juh\'asz. It plays the role of
$\CFT^{-}(X,\spinc)$ when $X$ is a closed three-manifold, and
the role of
$$\CFKT^-(Y,K;\spinc)=\bigoplus_{\relspinc\in\spinc}\CFKT^-(Y,K,\relspinc),$$
when the sutured manifold is obtained from a knot $K$ inside a three-manifold $Y$.
Our invariants generalize both the knot invariants of
Ozsv\'ath-Szab\'o and Rasmussen and the link invariants of Ozsv\'ath and Szab\'o.
We study some of the basic properties of the corresponding Ozsv\'ath-Szab\'o
complex, including the exact triangles, and some form of stabilization.
\end{abstract}
\maketitle
\tableofcontents
\newpage
\section{Introduction}\label{sec:intro}
\subsection{Introduction and the main results}
The introduction of Heegaard Floer homology by Ozsv\'ath and Szab\'o (\cite{OS-3m1},\cite{OS-3m2})
for closed three dimensional manifolds around the beginning of the millennium resulted in
very powerful tools for the study of various structures in low dimensional topology. In
particular, invariants for knots (c.f. \cite{OS-knot}, \cite{Ras} and \cite{Ef-LFH}), for links
\cite{OS-linkinvariant}, and for contact structures \cite{OS-contact} were constructed
using the fundamental idea of associating a chain complex to a pointed Heegaard diagram.
Moreover, four manifold invariants were constructed as some TQFT type homomorphisms
between the homology groups of the chain complexes associated
to the positive and negative boundary components \cite{OS-4m}.
The Ozsv\'ath-Szab\'o complexes associated with a closed three-manifold come in different flavors.
These are typically called \emph{hat, minus, plus }and \emph{infinity} modules. The other versions
may be re-constructed from the \emph{minus theory} if one also keeps track of the so called $U$-action.
Juh\'asz extended the hat version of Ozsv\'ath-Szab\'o complex to the context of
balanced sutured manifolds \cite{Juh}.
The sutured Floer homology of Juh\'asz detects taut sutured manifolds \cite{Juh-surface}, and may be used to define
a polytope associated with a sutured manifold which behaves well under taut surface
decompositions \cite{Juh-taut}.\\

In this paper, we extend the construction of Juh\'asz and construct a \emph{minus theory}
associated with a balanced sutured manifold. More precisely, let $(X,\tau)$ be a balanced
sutured manifold and let $\tau=\{\gamma_1,...,\gamma_\el\}$ be the set of sutures.
We will denote $\partial X-\tau$ by $\Rr(\tau)=\Rr^+(\tau)\cup\Rr^-(\tau)$, where
$\Rr^+(\tau)$ and $\Rr^-(\tau)$ are the positive and the negative part of the boundary, respectively.
We first associate an algebra $\Ring=\Ring_\tau$ to the boundary of $X$ as follows.
Let us assume that
$$\Rr^-(\tau)=\bigcup_{i=1}^k R_i^-,\ \ \&\ \
\Rr^+(\tau)=\bigcup_{j=1}^l R_j^+,$$
where $R_i^-$ and $R_j^+$ are the
connected components of $\Rr^-(\tau)$ and
$\Rr^+(\tau)$ respectively. Let $g_i^-$ denote the genus of $R_i^-$ and $g_j^+$ denote the genus
of $R_j^+$.
Consider the elements
\begin{displaymath}
\begin{split}
\la_i^-:=\prod_{\gamma_j\subset \partial R_i^-}\la_j,\ \ i=1,...,k,\ \ \ \ \&\ \ \ \
\la_i^+:=\prod_{\gamma_j\subset \partial R_i^+}\la_j,\ \ i=1,...,l,
\end{split}
\end{displaymath}
in the free $\F$-algebra $\F[\el]:=\langle \la_1,...,\la_\el\rangle$ generated by $\la_1,...,\la_\el$.
Let
\begin{displaymath}
\begin{split}
&\Ring_\tau:=\frac{\Big\langle \la_1,...,\la_\el\Big\rangle_\F}
{\Big\langle \la^+(\tau)-\la^-(\tau)\Big\rangle+
\Big\langle \la_i^+\ |\ g_i^+>0\Big\rangle+
\Big\langle \la_j^-\ |\ g_j^->0\Big\rangle},\\
&\text{where }\  \la^-(\tau)=\sum_{i=1}^k \la_i^-,\ \ \ \ \&\ \ \ \
\la^+(\tau)=\sum_{i=1}^l \la_i^+.
\end{split}
\end{displaymath}
 We will denote the set of monomials  $\prod_{i=1}^\el \la_i^{a_i}$
 by $G(\Ring)$, which forms a set of generators for $\Ring$.
One may define a natural morphism from $G(\Ring)$ to the $\Z$-module
$\Hbb=\Hbb_\tau:=\Ht^2(X,\partial X,\Z)$ by
\begin{displaymath}
\begin{split}
&\chi:G(\Ring)\lra \Hbb=\Ht^2(X,\partial X;\Z),\\
&\chi\Big(\prod_{i=0}^\el \la_i^{a_i}\Big):=
a_1\mathrm{PD}[\gamma_1]+...+a_\el\mathrm{PD}
[\gamma_\el],\ \ \forall \ a_1,..,a_\el\in\Z^{\geq 0}.
\end{split}
\end{displaymath}

Let $\ovl X=\ovl X^\tau$ be the three-manifold obtained by filling the sutures of
$(X,\tau)$ by attaching  $2$-handles to the sutures in $\tau$.
Fix a $\SpinC$ class $\spinc\in\SpinC(\ovl X)$.
Suppose that $(\Sig,\alphas,\betas,\z)$ is a Heegaard diagram for the sutured manifold
$(X,\tau)$, which is admissible in an appropriate sense.
Thus  $\Sig$ is a closed Riemann surface, $\alphas$ and $\betas$ are $\ell$-tuples of
disjoint simple closed curves, and $\z$ is a set of $\el$ marked points on $\Sig$.
If $\Sig^\circ=\Sig-\mathrm{nd}(\z)$ is the complement of a neighborhood of $\z$,
$X$ is obtained from $\Sig^\circ\times [-1,1]$ by attaching $2$-handles to
$\alphas\times\{-1\}$ and $\betas\times\{1\}$.
The Ozsv\'ath-Szab\'o chain complex $\CFT(X,\tau,\spinc)$ is then generated as a free
$\Ring_\tau$-module by those intersection points of the tori $\Ta,\Tb\subset \Sym^{\ell}(\Sig)$
associated with $\alphas$ and $\betas$ which correspond to the $\SpinC$ class $\spinc\in\SpinC(\ovl X)$.
The set $\pi_2^+(\x,\y)$ of positive Whitney disks  for generators
$\x,\y\in\Ta\cap\Tb$ is defined as usual, and we will have a map
\begin{displaymath}
\begin{split}
&\la_\z:\coprod_{\x,\y\in\Ta\cap\Tb}\pi_2^+(\x,\y)\lra G(\Ring)\\
&\la_\z(\phi):=\prod_{i=1}^\el \la_i^{n_{z_i}(\phi)},\ \ \
\forall\ \x,\y\in\Ta\cap\Tb,\ \&\  \forall\ \phi\in\pi_2^+(\x,\y).
\end{split}
\end{displaymath}
Here $n_{z_i}(\phi)$ denotes the coefficient of $z_i$ in the domain $\Dcal(\phi)$
associated with the Whitney disk $\phi$.
The differential $\partial$ of the complex $\CFT(X,\tau,\spinc)$ is defined by counting holomorphic disks
$\phi$ of Maslov index $1$ connecting the generators $\x$ and $\y$
of the complex, with an appropriate sign and weight $\la_\z(\phi)\in \Ring_\tau$.
The assignment of relative $\SpinC$ structures to the intersection points $\x\in\Ta\cap\Tb$
using $\z$ gives $\CFT(X,\tau,\spinc)$ the structure of a filtered $\tab$ chain complex
(see section~\ref{sec:algebra}
for a precise definition). The following is the main result of this paper.
\begin{thm}
The filtered $\tab$ chain homotopy type of the filtered
$\tab$ chain complex $\CFT(X,\tau,\spinc)$
is an invariant of the balanced sutured manifold $(X,\tau)$ and the $\SpinC$ class $\spinc\in\SpinC(\overline{X})$.
In particular, for any $\relspinc\in\spinc\subset\RelSpinC(X,\tau)$ the chain homotopy type of
the summand
$$\CFT(X,\tau,\relspinc)\subset\CFT(X,\tau,\spinc)=\bigoplus_{\relspinc\in\spinc}
\CFT(X,\tau,\relspinc)$$
is also an invariant of $(X,\tau,\relspinc)$.
\end{thm}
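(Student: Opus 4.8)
The plan is to follow the now-standard template for proving invariance of Heegaard-Floer-type constructions, upgraded to keep track of the $\Ring_\tau$-coefficients and the $\SpinC(X,\tau)$-filtration. One first shows that for a \emph{fixed} admissible Heegaard diagram $(\Sig,\alphas,\betas,\z)$ the object $\CFT(X,\tau,\spinc)$ is a well-defined filtered $\tab$ chain complex: this requires checking that $\partial$ lands in the correct filtration levels (which is immediate from the definition of $\la_\z(\phi)$ and the assignment of relative $\SpinC$ structures, since $\chi\circ\la_\z(\phi)$ records exactly the change in filtration), that $\partial^2=0$ (the usual Gromov-compactness argument for index-$2$ moduli spaces, now weighted by $\la_\z$, using that the weight is additive under splicing of disks, $\la_\z(\phi_1*\phi_2)=\la_\z(\phi_1)\la_\z(\phi_2)$), and that admissibility guarantees only finitely many positive disks contribute in each filtration level so that the sums defining $\partial$ are finite in $\Ring_\tau$. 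Then one must show independence of the auxiliary analytic data — the almost complex structure and the choice of perturbations/area form — which is the standard continuation-map argument: a generic path of almost complex structures induces a filtered $\tab$ chain homotopy equivalence, and a generic homotopy of paths induces a chain homotopy between these equivalences, all of it weighted by $\la_\z$ exactly as the differential is.

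The substantive part is invariance under change of Heegaard diagram. By the Reidemeister-Singer-type theorem for sutured Heegaard diagrams, any two admissible diagrams for $(X,\tau)$ are related by a sequence of (i) isotopies of the attaching curves, (ii) handleslides among the $\alphas$ and among the $\betas$, and (iii) (index-one/two) stabilizations, all taking place in the complement $\Sig^\circ$ of $\nbd(\z)$, together with changes of the admissibility-ensuring isotopy. Isotopy invariance is subsumed in the continuation-map argument above (an isotopy can be realized by a Hamiltonian perturbation, or handled by the exact-Hamiltonian isotopy argument). For handleslides one runs the usual triangle-counting argument: introduce a third set of curves $\gammas$ obtained from $\betas$ by a small handleslide, and use the top-dimensional generator $\Theta\in\Tb\cap\Tc$ to define a filtered chain map $\CFT(X,\tau,\spinc)\to CF(\Sig,\alphas,\gammas,\z)$ by counting holomorphic triangles weighted by $\la_\z$; associativity of the triangle maps (the count of holomorphic rectangles) together with the model computation that the composite $\betas\to\gammas\to\betas'$ map is, up to filtered chain homotopy, the identity on the relevant summand, shows this is a filtered $\tab$ chain homotopy equivalence. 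One has to check that the relevant Heegaard triple is admissible (so the triangle counts are finite in $\Ring_\tau$), that the weight of a triangle class is again given by multiplicities at $\z$ and behaves additively under the juxtaposition of triangles with bigons and with other triangles, and that the $\SpinC$-bookkeeping matches up under the identification $\SpinC(\Sig,\alphas,\betas,\z)\cong\SpinC(\Sig,\alphas,\gammas,\z)$. For stabilization one checks directly that stabilizing the diagram tensors the complex, over $\Ring_\tau$, with a two-step acyclic-up-to-the-obvious-generator complex, inducing a filtered isomorphism on chain homotopy type; since the stabilization takes place away from $\z$ the algebra $\Ring_\tau$ and the filtration are untouched.

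Finally, the ``in particular'' clause is essentially formal once the main invariance is in place: the differential $\partial$ strictly respects the $\SpinC(X,\tau)$-grading refinement of $\spinc$ in the sense that a disk $\phi\in\pi_2^+(\x,\y)$ with nonzero weight forces $\sRelSpinC(\y)=\sRelSpinC(\x)+\PD[\,\cdot\,]$ with the class determined by $\chi\circ\la_\z(\phi)$, so $\CFT(X,\tau,\spinc)$ splits as the stated direct sum of subcomplexes $\CFT(X,\tau,\relspinc)$, and every filtered $\tab$ chain homotopy equivalence constructed above respects this splitting (because the continuation and triangle maps are themselves weighted by $\la_\z$ and hence shift relative $\SpinC$ structures in the controlled way). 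Thus the homotopy type of each summand is an invariant of $(X,\tau,\relspinc)$.

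I expect the main obstacle to be the handleslide step: making the triangle-counting argument genuinely work with coefficients in the non-commutative-looking quotient algebra $\Ring_\tau$ and with the $\SpinC(X,\tau)$-filtration requires care in (a) verifying the correct admissibility hypotheses for the Heegaard triples so that all the weighted counts are finite and well-defined in $\Ring_\tau$, (b) checking that the relations defining $\Ring_\tau$ (the $\la^+(\tau)-\la^-(\tau)$ relation and the vanishing of positive-genus monomials) are compatible with — indeed, are forced by — the topology of $\Rr^\pm(\tau)$ in a way that survives the triangle maps, and (c) confirming the model calculation that the round-trip handleslide map is filtered-homotopic to the identity \emph{over $\Ring_\tau$}, not just over $\F$. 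The isotopy, stabilization, and independence-of-analytic-data steps, while requiring the weight-tracking, are routine adaptations of the closed and sutured cases.
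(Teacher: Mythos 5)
Your proposal follows essentially the same route as the paper: well-definedness of the weighted differential via admissibility and the boundary-degeneration cancellation, continuation maps for the analytic choices and isotopies, handleslide invariance via an admissible Heegaard multi-diagram with top generators, weighted triangle/rectangle counts and the small-triangle (energy filtration) argument showing the round trip is filtered homotopic to the identity over $\Ring_\tau$, stabilization as in the closed case, and the formal $\RelSpinC(X,\tau)$-splitting; even the obstacles you flag (triple admissibility, the $\Ring_\tau$-relations, closedness of $\Theta$ over $\Ring_\tau$) are exactly the points the paper treats via its special Heegaard diagram analysis and coherent systems of orientations and $\SpinC$ classes on polygons.
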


The above theorem implies that whenever we have a homomorphism $\rho:\Ring\ra \Ringg$ for a
ring $\Ringg$, the chain homotopy type of the complex
$$\CFT(X,\tau,\spinc;\Ringg)=\CFT(X,\tau,\spinc)\otimes_\Ring\Ringg$$
is also an invariant of the sutured manifold $(X,\tau)$. This complex is equipped
with filtration by $\SpinC(X,\tau)$ if the homomorphism $\rho$ respects
the filtration of the monomials of $\Ring$ by the elements of $\Hbb$.
In this case, it makes sense to talk about the following decomposition of $\CFT(X,\tau,\spinc;\Ringg)$:
$$\CFT(X,\tau,\spinc;\Ringg)=\bigoplus_{\relspinc\in\spinc\subset \SpinC(X,\tau)}
\CFT(X,\tau,\relspinc;\Ringg).$$
In particular, the homology groups
$$\mathrm{HF}(X,\tau,\relspinc;\Ringg)=H_*\left(\CFT(X,\tau,\relspinc;\Ringg),\partial\right),\ \
\forall\ \relspinc\in\SpinC(X,\tau)$$
may be defined, and are invariants of the sutured manifold and the relative $\SpinC$
class $\relspinc\in\SpinC(X,\tau)$.
As a special case, we may take $\Ringg=\F$ and let $\rho$  be the map sending all
the non-trivial monomials to zero. We will then recover the sutured Floer homology of Juh\'asz:
$$\mathrm{SFH}(X,\tau,\relspinc)=\mathrm{HF}(X,\tau,\relspinc;\Z),\ \
\forall\ \relspinc\in\SpinC(X,\tau).$$
Define a particular test ring $\Ringg_\tau$ for $\Ring_\tau$ by setting
\begin{displaymath}
\Ringg_\tau=\frac{\Big\langle \la_1,...,\la_\el\Big\rangle_\Z}
{\Big\langle\prod_{i=1}^\el \la_i^{n_i}\neq 1\ |\ n_i\in\Z^{\geq 0}\ \&\
\sum_{i=1}^\el n_i[\gamma_i]=0\ \text{in } \Ht_1(X;\Z)/\mathrm{Tors}\Big\rangle}.
\end{displaymath}
Clearly, there is a quotient map $\rho_\tau:\Ring_\tau\ra\Ringg_\tau$.
The following is a refinement of Juh\'asz' theorem 1.4 from \cite{Juh-surface}.
\begin{prop}
An irreducible balanced sutured manifold $(X,\tau)$ is taut if and only if the filtered
$(\Ringg_\tau,\Hbb_\tau)$ chain homotopy type of the complex
$$\CFT(X,\tau;\Ringg_\tau)=\bigoplus_{\spinc\in\SpinC(\ovl X)}\CFT(X,\tau;\spinc;\Ringg_\tau)$$
is non-trivial.
\end{prop}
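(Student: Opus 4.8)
The plan is to deduce the proposition from Juh\'asz' theorem 1.4 in \cite{Juh-surface} by showing that, for an irreducible balanced sutured manifold $(X,\tau)$, the non-triviality of the filtered $(\Ringg_\tau,\Hbb_\tau)$ chain homotopy type of $\CFT(X,\tau;\Ringg_\tau)$ is equivalent to the non-triviality of $\mathrm{SFH}(X,\tau)=\bigoplus_{\relspinc}\mathrm{SFH}(X,\tau,\relspinc)$. One direction is essentially formal: the quotient map sending all non-trivial monomials of $\Ring_\tau$ to zero factors through $\rho_\tau$ (since every relation imposed in $\Ringg_\tau$ is of the form ``a monomial equals $1$ whose class is already torsion, hence in particular not among the surviving monomials of the $\F$-quotient''$ $— more precisely, the $\F$-quotient kills all non-trivial monomials, so it kills everything that $\rho_\tau$ kills). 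Therefore $\CFT(X,\tau;\Ringg_\tau)\otimes_{\Ringg_\tau}\F$ recovers $\bigoplus_{\relspinc}\CFa(X,\tau,\relspinc)$, and so if $\CFT(X,\tau;\Ringg_\tau)$ is chain homotopically trivial then so is the Juh\'asz complex, whence $\mathrm{SFH}(X,\tau)=0$ and by \cite{Juh-surface} Theorem 1.4 the manifold is not taut.

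For the converse — taut implies non-trivial — I would argue that the filtered chain homotopy type over $(\Ringg_\tau,\Hbb_\tau)$ is \emph{at least as strong} an invariant as sutured Floer homology, in the sense that $\CFT(X,\tau;\Ringg_\tau)$ always surjects (after tensoring down) onto the Juh\'asz complex; hence if $\mathrm{SFH}(X,\tau)\neq 0$ then $\CFT(X,\tau;\Ringg_\tau)$ cannot be chain contractible as a filtered $(\Ringg_\tau,\Hbb_\tau)$ complex. Concretely, a filtered chain contraction of $\CFT(X,\tau;\Ringg_\tau)$ would, upon applying the ring map $\Ringg_\tau\to\F$, produce a filtered chain contraction of $\CFa(X,\tau)$, contradicting $\mathrm{SFH}(X,\tau)\ne 0$. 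The point that needs care is that tensoring a filtered chain homotopy equivalence (or contraction) with a ring map produces a filtered chain homotopy equivalence over the target — this is a general base-change lemma for the filtered $\tab$ category set up in section~\ref{sec:algebra}, and I would invoke it here.

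The genuinely substantive step — and the main obstacle — is verifying that the \emph{specific} test ring $\Ringg_\tau$ is rich enough that the implication ``$(X,\tau)$ taut $\Rightarrow$ $\CFT(X,\tau;\Ringg_\tau)\not\simeq 0$'' is not vacuous, i.e. that one does not lose all information by passing from $\Ring_\tau$ to $\Ringg_\tau$. In other words, I must check that $\Ringg_\tau\ne 0$ (the zero ring) and, more importantly, that the quotient $\rho_\tau$ does not already kill the generators of $\CFT$. The relations defining $\Ringg_\tau$ only identify with $1$ those monomials whose associated $1$-cycle $\sum n_i[\gamma_i]$ is torsion in $\Ht_1(X;\Z)$; since for a balanced sutured manifold the classes $[\gamma_i]$ need not be independent, this is a genuine quotient, but it retains enough of $\Hbb_\tau=\Ht^2(X,\partial X;\Z)$ (modulo torsion) to detect the $\SpinC$-decomposition. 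I would verify the compatibility of $\rho_\tau$ with the $\Hbb_\tau$-filtration — which is exactly the condition under which the decomposition $\CFT(X,\tau;\Ringg_\tau)=\bigoplus_{\relspinc}\CFT(X,\tau,\relspinc;\Ringg_\tau)$ makes sense — so that the invariant at hand specializes, summand by summand, to $\mathrm{SFH}(X,\tau,\relspinc)$.

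Finally I would assemble: irreducibility plus the two implications above give the equivalence ``$(X,\tau)$ taut $\iff$ $\mathrm{SFH}(X,\tau)\ne0$ $\iff$ filtered $(\Ringg_\tau,\Hbb_\tau)$ chain homotopy type of $\CFT(X,\tau;\Ringg_\tau)$ is non-trivial,'' the middle equivalence being Juh\'asz' Theorem 1.4 from \cite{Juh-surface}, which completes the proof of the proposition.
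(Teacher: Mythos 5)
Your argument establishes only one direction of the equivalence. Both your first and second paragraphs prove (in slightly different words) the same implication: a filtered contraction of $\CFT(X,\tau;\Ringg_\tau)$ would base-change along $\Ringg_\tau\ra\Z$ to a contraction of the Juh\'asz complex, so $\mathrm{SFH}(X,\tau)\neq 0$ (which holds when $(X,\tau)$ is taut, by Theorem 1.4 of \cite{Juh-surface}) forces the filtered $(\Ringg_\tau,\Hbb_\tau)$ chain homotopy type to be non-trivial. That matches the paper's argument for the ``taut $\Rightarrow$ non-trivial'' direction. What is missing is the other implication: if $(X,\tau)$ is irreducible and \emph{not} taut, why is the filtered chain homotopy type over $\Ringg_\tau$ trivial? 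This does not follow formally from your base-change considerations: knowing that $\CFT(X,\tau;\Ringg_\tau)\otimes_{\Ringg_\tau}\Z$ is contractible (equivalently $\mathrm{SFH}=0$) does not by itself lift to a contraction over $\Ringg_\tau$, since the differential over $\Ringg_\tau$ may contain terms with non-trivial monomial coefficients that die under the reduction; some Nakayama-type or perturbation argument would be needed, and you give none. Your third paragraph, which worries about $\Ringg_\tau$ being ``rich enough,'' addresses a non-issue (richness only helps the direction you already have) while the genuinely substantive half of the proposition is left unproved.

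The paper closes this gap geometrically rather than algebraically: for an irreducible non-taut $(X,\tau)$, Proposition 9.18 of \cite{Juh} produces a \emph{weakly admissible} Heegaard diagram $(\Sig,\alphas,\betas,\z)$ with $\Ta\cap\Tb=\emptyset$. The key point — and the reason the proposition is stated for the particular test ring $\Ringg_\tau$ rather than for $\Ring_\tau$ — is Remark~\ref{remark:weak-admissibility}: weak admissibility already suffices to define the complex with coefficients in $\Ringg_\tau$, because any positive periodic domain $\Pcal$ has $\rho_\tau(\la_\z(\Pcal))=0$ unless all its multiplicities at the marked points vanish. Hence that diagram may be used to compute $\CFT(X,\tau;\spinc;\Ringg_\tau)$ for every $\spinc\in\SpinC(\ovl X)$, and since it has no generators the complex is literally zero, so the filtered $(\Ringg_\tau,\Hbb_\tau)$ chain homotopy type is trivial. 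Without this step (or a genuine algebraic substitute for it), your proof only shows ``taut $\Rightarrow$ non-trivial'' and does not yield the stated equivalence.
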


For a knot $K$ inside a closed three-manifold $Y$, the boundary of the corresponding sutured
manifold $(X=Y-\mathrm{nd}(K),\tau)$ consists of a torus and $\tau$ consists of a pair of parallel
sutures on this torus. Thus, with the above notation,
$$k=l=1,\ \  g_1^+=g_1^-=0,\ \  \&\ \
 \la_1^+=\la_1^-=\la_1\la_2.$$
Thus, the algebra $\Ring$ is equal to
$\F[\la_1,\la_2]=\langle \la_1,\la_2\rangle_\F$, which gives
the $\Z\oplus\Z$ filtration associated with the knot $K$ inside the three-manifold $Y$.\\

The surgery exact triangle for the Ozsv\'ath-Szab\'o complexes associated with closed
three-manifolds may be extended to our setup. Namely, let $(X,\tau)$ be a sutured manifold
and $\gamma_1,\gamma_2\in\tau$ be two parallel sutures with opposite orientation which
form the common boundary of a cylindrical component $R_1^+\subset\Rr^+(\tau)$ and
a genus zero component $R_1^-\subset\Rr^-(\tau)$. Consider a simple closed curve
$$\la\subset  R_1^+\cup  R_1^-\cup\gamma_1\cup\gamma_2$$
which cuts $\gamma_1$ and $\gamma_2$ in a single transverse
point and remains disjoint from the rest of the sutures. Replacing $\gamma_1$ and $\gamma_2$
with two parallel copies of $\la$ (with opposite orientation) results in a new
sutured manifold $(X,\tau_\la)$.
Let $\la(n)$ be  (the homotopy class of) the simple closed curve obtained
from $\la$ by  twisting it $n$ times along $\gamma_1$ (or equivalently, $-n$ times along $\gamma_2$).
Correspondingly, we obtain the sutured manifold $(X,\tau_{\la(n)})$. When the choice of $\la$ is fixed,
we sometimes write $(X,\tau_n)$ for $(X,\tau_{\la(n)})$.
\\

The algebra associated with
all the sutured manifolds $(X,\tau_n)$ is the same. Let us denote this algebra by $\Ring$,
and assume that $\zeta_1,...,\zeta_\el$ are the generators of $\Ring$ which correspond to
the sutures. Furthermore, let $\zeta_1$ and $\zeta_2$ correspond to $\gamma_1$ and
$\gamma_2$ respectively. Note that in the relations ideal $I_\tau$ in
$\F[\el]=\langle \zeta_1,...,\zeta_\el\rangle_\F$ (which defines $\Ring$ as $\F[\el]/I_\tau$)
the generators either use $\zeta_1\zeta_2$, or they use none of $\zeta_1$ and $\zeta_2$. We may thus introduce
a new algebra $\Ringg$ as a quotient of  $\langle \la_0,\la_1,...,\la_\el\rangle_\F$
by an ideal $J_\tau$. The generators of $J_\tau$ are constructed from the generators of
$I_\tau$ by replacing $\zeta_j$ with $\la_j$ for $j=3,...,\el$ and replacing
$\zeta_1\zeta_2$ with $\la_0\la_1\la_2$. For $i=0,1,2$ we obtain embeddings  $\imath^i$
of $\Ring$ in $\Ringg$:
\begin{displaymath}
\imath^i:\Ring\ra \Ringg,\ \ \
\imath^i(\zeta_j)
=\begin{cases}
\la_i\ \ \ &\text{if }j=1\\
\frac{\la_0\la_1\la_2}{\la_i}\ \ \ &\text{if }j=2\\
\la_j\ \ \ &\text{if }3\leq j\leq \el\\
\end{cases}.
\end{displaymath}
We write $\Ring_i$ in order to refer to $\Ring$ as the sub-ring $\imath^i(\Ring)\subset \Ringg$.\\

 To keep the exposition simpler, we only consider the surgery triangle associated with the
 sutured manifolds $(X,\tau)$, $(X,\tau_0)$ and $(X,\tau_1)$. Let us denote by
 $\chi_j\in\Ht^2(X,\partial X;\Z)$ the Poincar\'e dual of the suture $\gamma_j$, for
 $j=3,...,\el$. Furthermore, let $\chi_0,\chi_1$ and $\chi_2$ denote the Poincar\'e duals
 of $\gamma_1$, $\la(0)$ and $-\la(1)$, respectively. Note that $\chi_0+\chi_1+\chi_2=0$ in
 $\Hbb=\Ht^2(X,\partial X;\Z)$. Define the filtration map by
 \begin{displaymath}
 \begin{split}
 &\chi:G(\Ringg)\lra \Ht^2(X,\partial X;\Z) \\
 &\chi\left(\prod_{j=0}^\el \la_i^{a_i}\right):=\sum_{j=0}^\el a_i \chi_i.
 \end{split}
\end{displaymath}
Associated with any $\SpinC$ class $\spinc\in\SpinC(\ovl X)$% and any coefficient ring $R$,
let $\E_i(\spinc;\Ring_i)$ be the complex $\CFT(X,\tau,\spinc;\Ring_0)$, $\CFT(X,\tau_0,\spinc;\Ring_1)$, or
$\CFT(X,\tau_1,\spinc;\Ring_2)$ depending on whether $i=0,1$ or $2$.
Let $\E_i(\spinc;\Ringg)=\E_i(\spinc;\Ring_i)\otimes_{\Ring_i}\Ringg$.
\begin{thm}
With the above notation fixed, we have a triangle
\begin{displaymath}
\begin{diagram}
\E_0(\spinc;\Ringg)&&\rTo{\f^{\spinc}_2}&&
\E_1(\spinc;\Ringg)\\
&\luTo{\f_1^{\spinc}}&&\ldTo{\f_0^{\spinc}}&\\
&&\E_{2}(\spinc;\Ringg)&&
\end{diagram}
\end{displaymath}
of filtered $(\Ringg,\Hbb)$ chain maps such that $\f_1^\spinc\circ \f^\spinc_0$, $\f_2^\spinc\circ \f^\spinc_1$,
and $\f_0^\spinc\circ \f^\spinc_2$ are null homotopic. Moreover, $\E_i(\spinc;\Ringg)$ is filtered
$(\Ringg,\Hbb)$ chain homotopic to the mapping cone of $\f^\spinc_i$. In particular,
if there is a homomorphism $\rho_R:\Ringg\ra R$ to a ring $R$, taking the tensor product of the above
triangle with $R$ and computing the homology groups we obtain a long exact sequence in homology:
\begin{displaymath}
\begin{diagram}
\dots&\rTo{f^\spinc_1}& \mathrm{HF}(X,\tau,\spinc;R)&\rTo{\f^\spinc_2}&
\mathrm{HF}(X,\tau_0,\spinc;R)&\rTo{\f^\spinc_0}&
\mathrm{HF}(X,\tau_1,\spinc;R)&\rTo{\f^\spinc_1}&
\dots.
\end{diagram}
 \end{displaymath}
\end{thm}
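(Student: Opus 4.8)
The plan is to reduce the surgery triangle for sutured manifolds to the surgery triangle for closed three-manifolds, exactly as in the Ozsv\'ath--Szab\'o setting, while keeping track of the algebra weights and the $\Hbb$-filtration. First I would choose a common Heegaard diagram for the three sutured manifolds $(X,\tau)$, $(X,\tau_0)$, $(X,\tau_1)$. Since the three manifolds differ only by the local replacement of the sutures $\gamma_1,\gamma_2$ by twisted copies of $\la$, one can arrange a diagram $(\Sigma,\alphas,\betas,\gammas,\deltas,\z)$ in which $\betas$, $\gammas$ and $\deltas$ agree outside a small region and, inside that region, the curves $\beta_1$, $\gamma_1$ and $\delta_1$ are pairwise transverse in a single point, forming a small triangle; this is the standard ``triangle diagram'' underlying a Dehn-twist sequence. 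One then has three sutured Heegaard diagrams $(\Sigma,\alphas,\betas,\z)$, $(\Sigma,\alphas,\gammas,\z)$, $(\Sigma,\alphas,\deltas,\z)$ representing the three members of the triangle, and the chain maps $\f_0^\spinc$, $\f_1^\spinc$, $\f_2^\spinc$ are defined by counting holomorphic triangles with a top generator $\Theta$ in each of the ``small'' diagrams $(\Sigma,\betas,\gammas,\z)$, etc., weighted by $\la_\z$ of the triangle domain and pushed into $\Ringg$ via the appropriate $\imath^i$. The point of the enlarged algebra $\Ringg$ with its extra generator $\la_0$ is precisely that a triangle domain records the multiplicities at $\z$ of each of the three boundary classes $\gamma_1$, $\la(0)$, $-\la(1)$ simultaneously, and the relation $\chi_0+\chi_1+\chi_2=0$ makes the three embeddings $\imath^i$ compatible; over $\Ringg$ the weights of triangles in the three ``small'' diagrams are consistent.

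The second step is to verify that these $\la_\z$-weighted triangle counts are filtered $(\Ringg,\Hbb)$ chain maps. This is a weighted version of the usual fact that $\partial$-of-a-triangle-count equals triangle-count-of-$\partial$: one runs the standard degeneration argument for the ends of the moduli space of index-$0$ triangles, and checks that the algebra weight behaves multiplicatively under splicing a holomorphic disk onto a holomorphic triangle, i.e.\ $\la_\z(\text{triangle})\cdot\la_\z(\text{disk})=\la_\z(\text{spliced triangle})$, which is immediate from additivity of $n_{z_i}$ under juxtaposition of domains. The filtered statement is then automatic because $\chi\circ\la_\z$ is additive in the same way and the filtration level of a generator is read off from its relative $\SpinC$ class; one must also check that $\Theta$ is chosen in the top filtration level so that $\f_i^\spinc$ is filtration non-increasing. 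Admissibility of all the multi-diagrams involved is handled exactly as in \cite{Juh} and \cite{OS-3m1}, by winding transverse to the $\z$ points.

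The third and main step is to show that the composite $\f_1^\spinc\circ\f_0^\spinc$ (and cyclically) is null-homotopic, and then that $\E_i(\spinc;\Ringg)$ is filtered $(\Ringg,\Hbb)$ chain homotopic to the mapping cone of $\f_i^\spinc$. For the first part, the chain homotopy is given by a $\la_\z$-weighted count of holomorphic quadrilaterals in $(\Sigma,\betas,\gammas,\deltas,\betas',\z)$; the key input is the model computation in the ``small'' quadruple diagram showing that the relevant count of quadrilaterals connecting the three top generators is exactly the identity (this is where one uses that $\la(1)$ is obtained from $\la(0)$ by a single Dehn twist, so the local picture is the genus-one surgery triangle of Ozsv\'ath--Szab\'o), together with an associativity/degeneration argument for the ends of the one-dimensional moduli of quadrilaterals. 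The second part — identifying $\E_i$ with the mapping cone — is the standard large-surgery / filtered mapping-cone argument: one exhibits a filtered quasi-isomorphism from $\E_i(\spinc;\Ringg)$ to $\mathrm{Cone}(\f_i^\spinc)$ by a count of quadrilaterals, and shows the error terms are filtered homotopies. The main obstacle I anticipate is precisely the bookkeeping in this model computation over $\Ringg$: one must check that the local quadrilateral and triangle counts, when decorated with the $\la$-weights coming from the $\z$-multiplicities and transported through the three different inclusions $\imath^0,\imath^1,\imath^2$, assemble into well-defined maps of $\Ringg$-modules that respect the $\Hbb$-filtration, and that the homotopies are themselves filtered — the topological content is identical to the closed case, but the algebra is genuinely new and the compatibility of the $\imath^i$ under the relation $\chi_0+\chi_1+\chi_2=0$ has to be used at every stage. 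Once the triangle of filtered chain maps and the mapping-cone identification are in place, tensoring with $\rho_R:\Ringg\to R$ and passing to homology yields the stated long exact sequence formally.
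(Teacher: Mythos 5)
Your overall architecture is the same as the paper's (triangle maps through top generators weighted by $\la_\z$, square-counting homotopies plus the associativity/degeneration argument, and the iterated mapping-cone lemma~\ref{lem:quasi-iso-filtered} for the cone identification), but there is a genuine gap in the two model computations that carry all the new content, and your proposal both misplaces and under-justifies them. For the null-homotopy of $\f^\spinc_{i-1}\circ\f^\spinc_{i+1}$ the required input is not that ``the count of quadrilaterals connecting the three top generators is the identity''; it is the vanishing of the weighted triangle count $\Phi_i(\Theta_{i+1}\otimes\Theta_{i-1})=0$ in the diagram $(\Sig,\betas^{i-1},\betas^{i},\betas^{i+1},\z)$. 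In the hat-version genus-one picture this is the familiar cancellation in pairs, but in the present weighted (minus-type) setting infinitely many triangle classes contribute and the two triangles of a would-be canceling pair carry a priori different monomials --- exactly the reason the naive surgery triangle fails for the minus theory over $\F[U]$. The paper's fix is simultaneously geometric and algebraic: the marked points $w_1,w_2$ in the surgery region are replaced by three points $z_0,z_1,z_2$ placed next to the three edges of the small triangle cut out by $\mu_0,\mu_1,\mu_2$, together with a point $p$ inside it, and the coefficient ring is the quotient by $\la_p-\la_0^{m_0-1}\la_1^{m_1-1}\la_2^{m_2-1}$ (with $\la_p$ inverted in general; $\la_p=1$ in the case of the statement). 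Only with this placement and this relation do the paired triangles acquire equal weights, so that the sum is finite and vanishes. Your diagram, as described, keeps the original marked points and never introduces this configuration, so the vanishing you need would simply fail; saying that ``the local picture is the genus-one surgery triangle of Ozsv\'ath--Szab\'o'' begs precisely the question the enlarged algebra is designed to answer.

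The quadrilateral computation you invoke is the ingredient for the cone identification, and even there it does not give the identity: the count of squares through $\Theta_j\otimes\Theta_{j+1}\otimes\Theta_{j+2}$ equals $\la_p\Theta_{j,j+3}$ (one distinguished small square contributes $\la_p$, and the remaining squares cancel in pairs, again because of the relation). This is why the paper rescales the triangle maps by $\la_i^{1-m_i}$ and multiplies the homotopies by $\la_p^{-1}$ before applying the algebraic lemma, and why hypothesis (2) of that lemma is verified by counting pentagons in $(\Sig,\alphas,\betas^{j},\betas^{j+1},\betas^{j+2},\betas^{j+3},\z)$ rather than quadrilaterals, with the closest-point map appearing as an isomorphism up to energy-filtered error terms. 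These are the steps your proposal would need to supply; the remaining points you list (the chain-map property from multiplicativity of $\la_\z$, the $\Hbb$-filtration bookkeeping via lemma~\ref{lem:f-respects-spinc}-type computations, admissibility, and the formal passage to the long exact sequence after tensoring with $R$) are handled essentially as you describe.
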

If the homomorphism $\rho_R$ also respects the filtration by $\Hbb$, the above
exact sequence refines to an exact sequence corresponding to any of the relative
$\SpinC$ structures $\relspinc\in\SpinC(X,\tau)$.\\

\subsection{Previous results and the history}
Attempts on extending the Ozsv\'ath-Szab\'o invariants
to three manifolds with boundary, at least when the boundary
is equipped with some extra structure have been made through two different approaches. If a
parametrization  of the boundary surface is  fixed, the three-manifold is called
a bordered three manifold. Lipshitz, Ozsv\'ath and Thurston generalize
the hat version of the Ozsv\'ath-Szab\'o complex for bordered three-manifold
by first constructing a graded differential algebra corresponding to
the parameterized boundary, and then associating the Bordered Floer modules of type $A$ and $D$
to the bordered manifold, which are respectively an $\Acal_\infty$ module and a module over
the differential graded algebra (see \cite{LOT-BFH1},\cite{LOT-BFH2}).
Gluing of bordered three-manifolds for constructing closed three-manifolds is translated
to an appropriate tensor product construction on the corresponding Bordered Floer modules.
\\

In a different direction, if the boundary of a three-manifold $X$ is decorated with a set $\tau$
of suture, Juh\'asz associates a complex, the so called \emph{sutured Floer complex} to the sutured
manifold $(X,\tau)$ \cite{Juh}, provided that $(X,\tau)$ is balanced.
The complex generalizes the hat versions of the Ozsv\'ath-Szab\'o
complexes associated with closed three-manifolds and links inside three-manifolds.
The theory of  sutured manifolds was introduced in \cite{Gabai-foliations1} and developed
in \cite{Gabai-foliations2} and \cite{Gabai-foliations3} by D. Gabai in order to study the existence
of taut foliations on three-manifolds. Sutured manifolds are  oriented three-manifolds with boundary,
together with a set of oriented simple closed curves (the sutures) that divide the boundary into
positive and negative parts. Gabai defines the so called \emph{sutured manifold decomposition}
which consists of cutting the manifold along a properly embedded oriented surface $R$ and adding one
side of $R$ to the plus boundary and the other side to the minus boundary.
He shows that a sutured manifold carries a taut foliation if and only if there is a sequence of
decompositions that result in a product sutured manifold. Honda, Kazez, and Mati\'c
  generalized the theory of sutured manifold decomposition  for the study of tight
  contact structures on three-manifolds, and developed the convex decomposition theory \cite{HKM2}.
In addition to the introduction of  sutured Floer complex, Juh\'asz described how sutured
Floer complex changes through sutured manifold decomposition \cite{Juh-surface}.
As a consequence, he shows that a sutured manifold $(X,\tau)$ is taut if and only if the sutured Floer
homology group $\mathrm{SFH}(X,\tau)$ is non-trivial.\\

These results suggested a deep connection between sutured Floer theory of Juh\'asz and the
sutured manifold decomposition theory of Gabai, as well as the contact geometry of three-manifolds.
Subsequent developments included the study of sutured Floer polytope by Juh\'asz \cite{Juh-taut}
and introduction of contact invariants for contact three-manifolds with convex boundary
by Honda, Kazez and Mati\'c \cite{HKM}. This last invariant generalizes the contact invariant
of Ozsv\'ath and Szab\'o for  a closed contact three-manifold
defined in \cite{OS-contact}.

\subsection{Outline of the paper}
The paper is organized as follows. In section~\ref{sec:surgery} we review some of the basic notions, including
the sutured manifolds, the corresponding Heegaard diagrams, and the $\SpinC$ structures on sutured manifolds.
We will also review some of the main constructions studied in this paper, including surgery
and filling the sutures.\\

In section~\ref{sec:admissibility} we investigate a notion of admissibility for Heegaard diagrams, which makes
it possible to construct an Ozsv\'ath and Szab\'o complex using Heegaard Floer theory.
The admissibility condition
is slightly weaker, in a sense, than the strong admissibility of Ozsv\'ath and Szab\'o in the context of
closed three-manifolds. However, it is strong enough for the construction of Ozsv\'ath-Szab\'o complex to work.
We show that all balanced sutured manifolds admit admissible Heegaard diagrams corresponding
to any $\SpinC$ class.\\

In section~\ref{sec:algebra} we develop the language of chain complexes filtered by a module, and
make some simple algebraic observations. Moreover, we construct an algebra associated with the
boundary of a balanced sutured manifold, as well as a filtration of its generators by classes in
$\Ht^2(X,\partial X;\Z)$. The algebra plays the role of the coefficient ring for the Ozsv\'ath-Szab\'o
chain complex associated with the balanced sutured manifold.\\

In section~\ref{sec:analytic-aspects} we study the orientability issues for the corresponding
moduli spaces. In particular, an appropriate orientation for the moduli spaces of boundary degenerations
is required so that the differential $\partial$ of the associated Ozsv\'ath-Szab\'o chain complex
satisfies $\partial^2=0$. Analyzing the analytic aspects of the theory thus requires  some new
techniques which are developed in section~\ref{sec:analytic-aspects}.\\

In section~\ref{sec:chain-complex} we construct the chain complex associated with
an admissible  Heegaard diagram for the balanced sutured manifold $(X,\tau)$.
We show that the filtered chain homotopy type of this complex is invariant under Heegaard
moves, and is independent of the choice of the path of almost structure on the symmetric product
of the Heegaard surface. The choice of the algebra associated with the boundary plays a very
crucial role both in defining the chain complex and proving the invariance of the
filtered chain homotopy type.\\

In section~\ref{sec:stabilization} we study how the filtered chain homotopy type of the
Ozsv\'ath-Szab\'o complex associated with a balanced sutured manifold $(X,\tau)$ changes
when we add two parallel copies of an existing suture to the boundary
with appropriate orientation. The operation is
called the {\emph{stabilization}} of the sutured manifold $(X,\tau)$.
When $(X,\tau)$ corresponds to a knot $K$ inside a closed three-manifold $Y$,
the stabilization corresponds to considering multi-pointed Heegaard diagrams for
defining the knot Floer complex, and the stabilization formula generalizes
the relation between usual Ozsv\'ath-Szab\'o complexes and the multi-pointed ones.\\

Finally, in section~\ref{sec:exact-triangle} we introduce a generalization of the
surgery triangle for balanced sutured manifolds. The freedom to choose many marked
points on the Heegaard diagram allows us to understand the chain maps in a better way, and
refine the existing triangles, and long exact sequences.

\newpage
\section{Background on sutured manifolds}\label{sec:surgery}
\subsection{Sutured manifolds and relative $\SpinC$ structures}
In this paper, we deal only with balanced sutured manifold, so we will modify the standard definition of sutured manifolds,
by throwing away the possibility of having a torus component in the suture.
\begin{defn}
A {\emph{sutured manifold}}  $(X,\gamm)$ is a compact oriented three-manifold $X$ with boundary $\partial X$, together
with a set of disjoint oriented simple closed curves $\gamm=\{\gamma_1,...,\gamma_\el\}$ on $\partial X$. We will denote by
$A(\gamma_i)$ a tubular neighborhood of $\gamma_i$ in $\partial X$, which will be an annulus.
We let $A(\gamm)=A(\gamma_1)\cup...\cup A(\gamma_\el)$.
Every component of $\Rr(\gamm)=\partial X- A(\gamm)^{\circ}$ is oriented (where $A(\gamm)^\circ$ denotes the interior
of $A(\gamm)$. Let  $\Rr(\tau)=\Rr^+(\tau)\cup\Rr^-(\tau)$ where
$\Rr^+(\gamm)$ denotes the union of components of $\Rr(\gamm)$
with the property that the orientation induced on $\tau$ as the boundary of $\Rr^+(\tau)$ agrees
with the orientation of $\tau$, while
$\Rr^-(\gamm)$ denotes the union of components of $\Rr(\gamm)$
with the property that the orientation induced on $\tau$ as the boundary of $\Rr^-(\tau)$ is the opposite of
the orientation of $\tau$. We assume that the orientation on the components of
$\Rr(\gamm)$ is compatible with the orientation of the boundary $\partial \Rr(\gamm)$ induced by the
sutures $\gamma_1,...,\gamma_\el$. A sutured manifold $(X,\gamm)$ is called {\emph{balanced}} if $X$ has no
closed components, $\chi(\Rr^+(\gamm))= \chi(\Rr^-(\gamm))$ and the induced map $\pi_0(A(\gamm))\ra \pi_0(\partial X)$
is surjective.
%If for the sutured manifold $(X,\tau)$ all the connected components in $\Rr(\tau)$ have genus equal to zero,
%we call $(X,\tau)$ an {\emph{spherical sutured manifold}} or a {\emph{tangle}}.
\end{defn}

\begin{defn}
A \emph{Heegaard diagram} is a tuple
$(\Sigma,\mbox{\boldmath${\alpha}$},\mbox{\boldmath${\beta}$},\bf{z})$
such that $(\Sigma,\mbox{\boldmath${\alpha}$},\mbox{\boldmath${\beta}$})$
is a balanced Heegaard diagram i.e. $\Sigma$ is a compact oriented
surface and $\mbox{\boldmath${\alpha}$}$ and $\mbox{\boldmath${\beta}$}$
are sets of disjoint oriented simple closed curves on $\Sigma$ where $|\mbox{\boldmath${\alpha}$}|=|\mbox{\boldmath$\beta$}|=\ell$, and
$${\bf z}=\Big\{z_1,...,z_\el\Big\}\subset \mathrm{int}\left(\Sigma-\bigcup\mbox{\boldmath
${\alpha}$}-\bigcup\mbox{\boldmath ${\beta}$}\right)$$
is a set of marked points such that each connected component of
$\Sigma-\mbox{\boldmath${\alpha}$}$ and $\Sigma-\mbox{\boldmath${\beta}$}$
contains at least one marked point.
\end{defn}

Every  Heegaard diagram $(\Sigma,\mbox{\boldmath${\alpha}$},
\mbox{\boldmath${\beta}$},\bf{z})$ uniquely defines a balanced
sutured manifold manifold as follows. Let
$\Sig^\circ=\Sig-D_1-...-D_\el$ denote the complement
of small disks $D_1,...,D_\el$ around $z_1,...,z_\el$,
where $\z=\{z_1,...,z_\el\}$. The three-manifold $X$ is obtained
from $\Sigma^\circ \times [-1,1]$ by attaching 3-dimensional 2-handles
along the curves $\alpha_i\times\{-1\}$ and $\beta_j\times\{1\}$ for
$i,j=1,...,\ell$). We may  define the set of sutures on the boundary of $X$ by
$$\tau=\Big\{\gamma_1,...,\gamma_\el\Big\},\ \
\gamma_i=\partial D_i\times \{0\}.$$
In this situation,
we  say that $(\Sigma, \mbox{\boldmath$\alpha$},\mbox{\boldmath$\beta$}
,{\bf z})$ is associated with the  the sutured three-manifold $(X,\tau)$.

\begin{prop}
For every balanced sutured manifold $(X,\tau)$,
there exists a Heegaard  diagram associated with it in the above sense.
\end{prop}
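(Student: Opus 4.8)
The plan is to prove the existence of an associated Heegaard diagram for every balanced sutured manifold $(X,\tau)$ by adapting Juh\'asz's argument (\cite{Juh}, Proposition 2.13) to the present setting, where $\z$ is allowed to carry several marked points. First I would fix a Morse function $f:X\to[-1,1]$ that is compatible with the sutured structure: that is, $f^{-1}(-1)=\Rr^-(\tau)$, $f^{-1}(1)=\Rr^+(\tau)$, $f$ has no minima or maxima in the interior, the restriction of $f$ to $A(\tau)$ has no critical points (so $A(\tau)$ is foliated by the flow lines joining $\Rr^-(\tau)$ to $\Rr^+(\tau)$), and all the index-$1$ critical points lie on the level set $f^{-1}(-1/2)$ while all the index-$2$ critical points lie on $f^{-1}(1/2)$. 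Such a self-indexing $f$ exists by a standard rearrangement argument on Morse functions relative to the boundary; the only new point over the closed case is the prescribed behavior on $A(\tau)$, which one arranges by choosing $f$ near $\partial X$ to agree with the obvious height function on a collar $\Rr^-(\tau)\times[-1,-1+\epsilon]$, $A(\tau)\times$(flow interval), etc.

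Next, take $\Sigma:=f^{-1}(0)$, a compact oriented surface. The index-$1$ critical points give rise, via the descending manifolds flowed up to $\Sigma$ together with a generic Riemannian metric, to a collection of attaching circles $\betas=\{\beta_1,\dots,\beta_k\}$ (I would follow the paper's convention so that the index-$1$ handles, attached along $\betas\times\{1\}$, match up; one may need to swap the roles of $\alpha$ and $\beta$ depending on orientation conventions), and the index-$2$ critical points give the circles $\alphas=\{\alpha_1,\dots,\alpha_{k'}\}$ as the ascending spheres intersected with $\Sigma$. The balanced hypothesis $\chi(\Rr^+(\tau))=\chi(\Rr^-(\tau))$ forces the number of index-$1$ critical points to equal the number of index-$2$ critical points, so $|\alphas|=|\betas|=:\ell$; this is exactly where ``balanced'' is used, as in Juh\'asz. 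Because the induced map $\pi_0(A(\tau))\to\pi_0(\partial X)$ is surjective, every component of $\partial X$ meets a suture, so every component of $\Rr^\pm(\tau)$ flows (under $f$) into $\Sigma$, which guarantees that each component of $\Sigma$ contains points of $A(\tau)$.

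Then I would produce the marked points $\z$. The set $A(\tau)$, being a union of annuli each foliated by flow lines of $\nabla f$ running from $\Rr^-(\tau)$ to $\Rr^+(\tau)$ and avoiding all critical points, intersects $\Sigma=f^{-1}(0)$ in a disjoint union of $\ell$ simple closed curves; call them $\gamma_1',\dots,\gamma_\ell'$. Each $\gamma_i'$ is disjoint from all the $\alpha$- and $\beta$-circles (the annulus $A(\gamma_i)$ contains no critical points, so its intersection with $\Sigma$ misses the ascending and descending manifolds). Choose one point $z_i$ on each $\gamma_i'$; set $\z=\{z_1,\dots,z_\ell\}$. One must check the combinatorial condition in the Definition of a Heegaard diagram: each component of $\Sigma-\alphas$ and of $\Sigma-\betas$ contains at least one $z_i$. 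This holds because a component of $\Sigma-\betas$ that contained no marked point would correspond to a region of $\Sigma$ all of whose downward flow stays away from $A(\tau)$ and hits no index-$0$ critical point (there are none), forcing it to flow into a component of $\Rr^-(\tau)$ that is capped off without meeting any suture — contradicting surjectivity of $\pi_0(A(\tau))\to\pi_0(\partial X)$ together with the balanced condition; symmetrically for $\Sigma-\alphas$ using $\Rr^+(\tau)$. Finally, unwinding the construction of $X$ from $(\Sigma,\alphas,\betas,\z)$ given in the excerpt (delete disks $D_i$ around $z_i$, take $\Sigma^\circ\times[-1,1]$, attach $2$-handles along $\alphas\times\{-1\}$ and $\betas\times\{1\}$, and set $\gamma_i=\partial D_i\times\{0\}$), one identifies the result with $(X,\tau)$: the product piece $\Sigma^\circ\times[-1,1]$ is the part of $X$ swept out by the gradient flow between the two critical levels after removing neighborhoods of $A(\tau)$, the $2$-handles recover the index-$1$ and index-$2$ handles, and the removed annular neighborhoods $\partial D_i\times[-1,1]$ glue back to give exactly $A(\tau)$, with $\partial D_i\times\{0\}$ isotopic to $\gamma_i$.

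The main obstacle is the bookkeeping around the boundary: making the Morse function genuinely standard on the annular collars $A(\tau)$ while still self-indexing in the interior, and then verifying carefully that no component of $\Sigma$ and no complementary region $\Sigma\setminus\alphas$ or $\Sigma\setminus\betas$ is ``starved'' of marked points. Both of these are handled in Juh\'asz's proof for the single-suture-type setup, and the argument goes through verbatim once one allows the marked-point set $\z$ to have the cardinality $\ell=|\tau|$ dictated by the number of sutures; I would cite \cite{Juh} for the portions that are unchanged and spell out only the compatibility of $f$ with $A(\tau)$ and the non-starvation of regions, which are the points genuinely sensitive to the balanced hypothesis.
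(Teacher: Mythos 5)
Your argument is essentially correct, but it takes a much longer route than the paper does. The paper's proof is two lines: it invokes Juh\'asz's existence result for sutured Heegaard diagrams $(\Sigma_\tau,\alphas,\betas)$ of a balanced sutured manifold (\cite{Juh}), then caps off each boundary component of $\Sigma_\tau$ corresponding to a suture $\gamma_i$ with a disk $D_i$ and places the marked point $z_i$ at the center of $D_i$; the conditions in the definition (every component of $\Sigma-\alphas$ and $\Sigma-\betas$ contains a marked point) are inherited from the defining property of a sutured diagram that no complementary component is disjoint from $\partial\Sigma_\tau$. You instead redo the Morse-theoretic construction from scratch: building a Morse function compatible with $A(\tau)$, taking $\Sigma=f^{-1}(0)$, extracting $\alphas,\betas$ from the ascending/descending manifolds, using the balanced condition to get $|\alphas|=|\betas|$, and using surjectivity of $\pi_0(A(\tau))\to\pi_0(\partial X)$ together with the absence of index $0$ and $3$ critical points to rule out ``starved'' regions. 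That is in substance a reproof of Juh\'asz's Proposition 2.13 rather than a use of it; it is more self-contained but buys nothing new here. One small imprecision to fix if you keep your route: $f^{-1}(0)$ is a surface \emph{with boundary}, whose boundary circles are exactly $A(\tau)\cap f^{-1}(0)$, so choosing $z_i$ on those circles puts the marked points on $\partial\Sigma$ rather than in $\mathrm{int}(\Sigma-\alphas-\betas)$ of a closed surface as the definition requires. The missing step is precisely the paper's proof: glue a disk to each boundary circle and take $z_i$ to be its center; your final ``unwinding'' paragraph implicitly assumes this capped-off picture but never performs the capping.
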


\begin{proof}
Let $(\Sigma_\tau,\mbox{\boldmath${\alpha}$},
\mbox{\boldmath${\beta}$})$ be a sutured Heegaard diagram for the
balanced sutured manifold
$(X,\tau)$ in the sense of \cite{Juh}. If
$\tau=\{\gamma_1,...,\gamma_\el\}$ consists of $\el$ sutures,
take $\Sigma$ to be the surface obtained from $\Sig_\tau$ by gluing $\el$ disks
$D_1,D_2,...,D_\el$ to it
along the boundary components corresponding to
$\gamma_1,...,\gamma_\el$. Let $z_i$ be the center of $D_i$, $i=1,...,\el$.
 Then $(\Sigma,\mbox{\boldmath${\alpha}$},\mbox{\boldmath$
 {\beta}$},\z=\{z_1,...,z_\el\})$ is a Heegaard diagram
 for $(X,\tau)$.
\end{proof}

\begin{prop}
If $(\Sigma_1,\mbox{\boldmath$\alpha$}_1,\mbox{\boldmath$\beta$}_1,{\bf z})$ and $(\Sigma_2,\mbox{\boldmath$\alpha$}_2,\mbox{\boldmath$\beta$}_2,{\bf w})$
are two  Heegaard diagrams for a balanced sutured manifold $(X,\tau)$, then they are
diffeomorphic after a finite set of Heegaard moves, which are
supported  away from the marked points.
\end{prop}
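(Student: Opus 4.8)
The plan is to reduce the statement to the corresponding fact for sutured Heegaard diagrams, which is due to Juh\'asz \cite{Juh} (and is deduced there from the Reidemeister--Singer type theorem of Ozsv\'ath and Szab\'o \cite{OS-3m1}), by passing through the dictionary between Heegaard diagrams in the sense of this section and Juh\'asz' sutured Heegaard diagrams that is implicit in the two propositions above. Concretely, given a Heegaard diagram $(\Sig_i,\alphas_i,\betas_i,\z_i)$ for $(X,\tau)$, deleting a small open disk around each of the $\el=|\tau|$ points of $\z_i$ yields the punctured surface $\Sig_i^\circ=\Sig_i-\mathrm{nd}(\z_i)$, and $(\Sig_i^\circ,\alphas_i,\betas_i)$ is a sutured Heegaard diagram for $(X,\tau)$; conversely, capping off each boundary circle of a sutured Heegaard diagram with a disk and placing a marked point at its centre recovers a Heegaard diagram in our sense. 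This correspondence is a bijection, it matches the set of marked points with the set of boundary circles, and the number of such circles is $\el$, determined by $\tau$.

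First I would invoke Juh\'asz' theorem: any two sutured Heegaard diagrams for the balanced sutured manifold $(X,\tau)$ are related by a finite sequence of sutured isotopies, handleslides, and (index one/two) stabilizations and destabilizations, followed by a diffeomorphism of sutured Heegaard diagrams. Every move in such a sequence takes place in the interior of the underlying punctured surface, hence away from the removed neighbourhoods of the marked points: a sutured stabilization is performed inside a disk disjoint from $\z_i$ and creates no new boundary circle, so under the dictionary it becomes exactly a stabilization of the closed diagram supported away from $\z_i$; isotopies and handleslides in the interior of $\Sig_i^\circ$ become isotopies and handleslides of $(\Sig_i,\alphas_i,\betas_i,\z_i)$ avoiding $\z_i$; and the concluding diffeomorphism extends over the capping disks, carrying $\z_1$ to $\z_2$. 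Running this in reverse converts the given sequence into a sequence of Heegaard moves, all supported away from $\z_1$, taking $(\Sig_1,\alphas_1,\betas_1,\z_1)$ to a diagram diffeomorphic to $(\Sig_2,\alphas_2,\betas_2,\z_2)$, which is the assertion. (In particular the a priori difference in genus between $\Sig_1$ and $\Sig_2$ is accounted for by the stabilizations.)

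The real content here is Juh\'asz' theorem, which I am assuming. The only point that needs genuine --- though entirely routine --- checking is that the move dictionary is faithful in both directions: an isotopy or handleslide of a closed diagram that avoids $\z$ restricts to the corresponding sutured move and conversely, while a stabilization avoiding $\z$ is precisely an interior stabilization of the sutured diagram. The key subtlety is that because $\tau$, and hence the number of boundary circles $\el$, is fixed throughout, there are no ``boundary'' stabilizations altering the number of sutures, so the number of marked points is preserved at every stage; this, together with the standard fact that in the closed setting one may always arrange Heegaard moves to avoid a prescribed finite set of points, is what one must keep track of. Alternatively one could bypass \cite{Juh} and argue directly via Cerf theory for self-indexing Morse functions on $X$ adapted to the fixed boundary pattern --- $\Rr^-(\tau)$ at the minimum level, the sutures at the middle level, $\Rr^+(\tau)$ at the maximum level, and no critical points of index $0$ or $3$ --- but the reduction above is shorter.
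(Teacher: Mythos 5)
Your proposal is correct and follows essentially the same route as the paper: the paper simply cites Juh\'asz' Proposition 2.15 for sutured Heegaard diagrams, with the translation between marked-point diagrams and sutured diagrams (capping boundary circles with pointed disks) being exactly the implicit dictionary you spell out. Your extra checking that the moves avoid the marked points and that the number of sutures is preserved is the routine verification the paper leaves to the reader.
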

\begin{proof}
This is proposition~2.15 from \cite{Juh}.
\end{proof}
For the most part of this paper, we will identify $\Rr(\tau)=\Rr^+(\tau)\cup\Rr^-(\tau)$
as the connected components of $\partial X-\tau$. Thus the boundary of each connected component
$R\subset\Rr(\tau)$ may be identified as a union of curves in $\tau$.
In the few situations where the annuli $A(\gamma_i)$ are relevant, we will
emphasize them in the notation.\\

Suppose that $(X,\tau)$ is a balanced sutured manifold.
 One may define a nowhere vanishing vector field on
 $\partial X$ as follows. Let $v_\tau$ be a vector
 field (with values in $TX|_{\partial X}$)
 which points outward on $\Rr^+(\tau)\subset \partial
 X-A(\tau)=\Rr(\tau)$, and points inward on $\Rr^-(\tau)
 \subset \Rr(\tau)$. Furthermore,
 under the identification $A(\gamma_i)=\gamma_i\times [-1,1]$,
 let $v_\tau|_{A(\gamma_i)}$ be the vector field
 $\frac{\partial}{\partial t}$
 determining the unit tangent vector of the second factor,
 i.e. the interval $[-1,1]$. In fact, we have to perturb
 $v_\tau$ on a small neighborhood $\partial A(\tau)$ to
 make it continuous, but we typically drop this
 perturbation from our notation.
 \begin{defn}
 Suppose that the non-vanishing vector fields $v$ and $w$ on $X$ agree
 with $v_\tau$ on $\partial X$. We say that
 $v$ and $w$ are \emph{homologous} if there is a ball
 $B\subset X^\circ$ such that the restrictions of
 $v$ and $w$ to $X-B$ are homotopic relative the boundary
 of $X$. We define the space $\RelSpinC(X,\tau)$ of
 \emph{relative $\SpinC$ structures}
  on the sutured manifold $(X,\tau)$
 to be the space of homology classes of such nowhere
 vanishing vector fields on $X$ which agree with $v_\tau$
 on $\partial X$.
 \end{defn}
Note that $\RelSpinC(X,\tau)$ is an affine space
over $\Ht^2(X,\partial X,\Z)$.
Let us assume that the $\SpinC$ structure
$\relspinc\in\RelSpinC(X,\tau)$ is represented by
a nowhere vanishing vector field $v$, so
that $v|_{\partial X}=v_\tau$. Let us define
the first Chern class of $\relspinc$ to be the
first Chern class of the
oriented $2$-plane field $v^{\perp}$ over $X$,
which lives in $\Ht^2(X,\Z)$.
Let us denote the inclusion of $\partial X$ in
$X$ by $i:\partial X\ra X$. We thus get a
map $$i^*:\Ht^2(X,\Z)\ra \Ht^2(\partial X,\Z).$$
The first Chern class of the $2$-plane field
$v_\tau^{\perp}$ lives in $\Ht^2(\partial X,\Z)$
and $c_1(\relspinc)$ is thus included in
$$(i^*)^{-1}\Big(c_1(v_{\tau}^{\perp})\Big)\subset \Ht^2(X,\Z).$$

We may glue a solid cylinder $D^2\times [-1,1]$ to
each component $A(\gamma_i)$ of $A(\tau)$ along
$S^1\times [-1,1]$. This way, we obtain a three-manifold
with boundary, which will be denoted
by $\ovl{X}=\ovl{X}^{\tau}$. The set of homology classes
of no-where vanishing vector fields
on $\ovl{X}$ which point outward on the positive boundary
components of $\ovl{X}$ and
point inward on the negative boundary components of
$\ovl{X}$ will be denoted by $\SpinC(\ovl{X})$.
Note that $\SpinC(\ovl{X})$ is an affine space over
$\Ht^2(\ovl{X},\partial \ovl{X})$. Again,
if $\spinc\in\SpinC(\ovl{X})$ is a given $\SpinC$
structure represented by a nowhere
vanishing vector field $w$ as above, we have the
notion of the first Chern class associated with $\spinc$,
which is defined to be $c_1(w^{\perp})$.
Clearly, $c_1(\spinc)$, as defined, is an element of
$\Ht^2(\ovl{X},\Z)$.\\

There is a notion of restricting $\SpinC$ structures
from $(X,\tau)$ to $\ovl{X}$ as follows. If
$\relspinc\in\RelSpinC(X,\tau)$ is represented by the
no-where vanishing vector field $v$
(so that $v|_{\partial X}=v_\tau$), we may extend $v$
over each one of the glued cylinders
$D^2\times [-1,1]$. In fact, $v$ may be extended over
 $D^2\times [-1,1]$ by setting it equal to
$\frac{\partial}{\partial t}$, where $t$ denotes the
variable associated with the interval $[-1,1]$.
The new vector field $\ovl{v}$ determines a $\SpinC$
structure on $\ovl{X}$, which will be denoted
by $[\relspinc]$. This gives a well-defined map
$$[.]:\RelSpinC(X,\tau)\lra \SpinC(\ovl{X}).$$

Let us denote  the inclusion of $X$ in $\ovl{X}$ by
$\imath:X\ra \ovl{X}$. This inclusion gives a
map $$\imath^*:\Ht^2(\ovl{X},\Z)\lra \Ht^2(X,\Z).$$
From the definition of the first Chern class, we know
that if $[\relspinc]$ is represented by
$\ovl{v}$, $\relspinc$ is represented by
$v=\imath^{*}(\ovl{v})$, and thus
$$\imath^{*}\Big(c_1([\relspinc])\Big)=c_1(\relspinc).$$

\subsection{Associated sutured manifolds; surgery and filling the sutures}
Let $(X,\tau)$ be a balanced sutured manifold, with $\tau$
the set of sutures on the boundary of $X$, and
with $\Rr^+(\tau)$ and $\Rr^-(\tau)$ the union of positive,
respectively negative, components in
$\partial X-\tau$ as above. Suppose that $R^+$ is
a component of $\Rr^+(\tau)$, $R^-$ is a component of $\Rr^-(\tau)$,
and $\gamma_1$ and $\gamma_2$ are sutures in $\tau$ such that
$$\partial R^+ \cap \partial R^-=\big\{\gamma_1,\gamma_2\big\}.$$
Let $\lambda$ be a simple closed curve, which consists of a
union $\lambda=\lambda^+\cup\lambda^-$, with
$\lambda^\bullet\subset \ovl{R^\bullet},\ \bullet\in\{+,-\}$,
which cuts either of $\gamma_1$ and $\gamma_2$ in a
single transverse point. Sometimes we may assume that
the image of the homology class $[\lambda]\in\Ht_1(\partial X,\Z)$
in $\Ht_1(X,\Z)$ is trivial, i.e. that $\lambda$ bounds
a closed surface in $X$. In this situation, we will
say that $\lambda$ is homologically trivial in $X$.
It makes sense to
talk about Morse surgery along $\lambda$ on the sutured
manifold $X$ as will follow.\\

Consider two parallel copies of $\lambda$ which we will denote by
$$\lambda_1=\lambda_1^+\cup\lambda_1^-,\ \ \&\ \
\lambda_2=\lambda_2^+\cup\lambda_2^-.$$
Let $N_i$ be a
tubular neighborhood of $\gamma_i$ in $\partial X$, which may be identified
with the standard cylinder $N_i=S^1\times I$, where $I$ is the
unit interval. The pair of arcs $(\lambda_1\cup\lambda_2)\cap N_i$ may then
be pictured as $\{1,-1\}\times I$, where the area
between the parallel curves $\lambda_1$ and $\lambda_2$ is identified as
$$\big\{z\in S^1\ |\ \mathrm{Im}(z)>0\big\}\times I\subset S^1\times I=N_i.$$
We may change the arcs $\{1,-1\}\times I$ with the arcs
\begin{displaymath}
\begin{split}
&\big\{(e^{t\pi\sqrt{-1}},t)\ |\ t\in I=[0,1]\big\}\subset S^1\times I,\ \ \&\\
&\big\{(-e^{t\pi\sqrt{-1}},t)\ |\ t\in I=[0,1]\big\}\subset S^1\times I
\end{split}
\end{displaymath}
in both $N_1$ and $N_2$. With this change, the curves
$\lambda_1$ and $\lambda_2$ are replaced by new simple closed
curves $\delta_1$ and $\delta_2$. Let
$(X, \tau_\lambda)$ be the balanced
sutured manifold defined with
$$\tau_\lambda
=\big(\tau-\big\{\gamma_1,\gamma_2\big\}\big)\cup
\big\{\delta_1,\delta_2\big\}.$$
In order to see that $(X,\tau_\lambda)$ is balanced,
 it suffices to note that
$$\partial X-\tau_\lambda=\big((\partial X-\tau)-
(R^+\cup R^-)\big)\cup(S^+\cup S^-),$$
where $$S^+=\left(R^+-\mathrm{nd}(\lambda^+)\right)
\cup \mathrm{nd}(\lambda^-),\ \ \&\ \
S^-=\left(R^--\mathrm{nd}(\lambda^-)\right)\cup
\mathrm{nd}(\lambda^+),$$ and thus
$\chi(S^\bullet)=\chi(R^\bullet),\ \bullet\in\{+,-\}$.
The surgery is usually denoted by $(X,\tau)
\rightsquigarrow (X,\tau_\lambda)$ in this paper.
\\

Suppose that the simple closed curve $\lambda$ is chosen as above.
For $n=(n_1,n_2)$ in $\Z\oplus \Z$, let $\lambda(n)$ be the
simple closed curve obtained from $\lambda$ by winding it
 $n_1$ times around $\gamma_1$ (close to the intersection of
$\lambda$ with $\gamma_1$) and $n_2$ times around $\gamma_2$
(close to the intersection of
$\lambda$ with $\gamma_2$). The homology class represented by
 $\lambda(n)$ in $\Ht_1(\partial X,\Z)$ is $[\lambda]
 +n_1[\gamma_1]-n_2[\gamma_2]$.
 When there is no confusion, we will denote
$(X,\tau_{\lambda(n)})$ by $(X,\tau_n)$.\\

Let us assume that
$$H=\left(\Sig,\alphas=\{\alpha_1,...,\alpha_\ell\},
\betas=\{\beta_1,...,\beta_\ell\},\z=\{z_1,...,z_\el\}\right)$$
is a Heegaard diagram for the sutured manifold $(X,\tau)$.
Here $\Sig$ is a closed Riemann surface of genus
$g$, $\alphas$ and $\betas$ are $\ell$-tuples of simple
closed curves which are homologically linearly independent
in $\Sig-\z$, and $\z=\{z_1,...,z_\el\}$ is a
$\el$-tuple of marked points
determining the set $\tau$ of sutures on the
boundary of the three-manifold $X$.
Furthermore, let us assume that
$z_1,z_2$ are the marked points corresponding to
the sutures $\gamma_1$ and $\gamma_2$ respectively.
We may choose the Heegaard diagram so that $z_1$
and $z_2$ are in the same
connected component of $\Sig-\alphas-\betas_0$,
where $\betas_0$ denotes
the subset $\betas-\{\beta_\ell\}$ of $\betas$.
 In this diagram,
the framing $\lambda$ is determined as an arc
which joins $z_1$ to $z_2$ in $\Sig-\betas$, and may
be completed to a simple closed
curve by adding to it a short arc from $z_2$ to $z_1$,
meeting $\beta_\ell$ transversely in a single point, and staying
disjoint from $\alphas\cup\betas_0$.\\

The Heegaard diagram describing the surgery
$(X,\tau)\rightsquigarrow (X,\tau_\lambda)$ is
then obtained as follows. Let
$$H_\lambda=(\Sig,\alphas,\betas_\lambda
=\{\beta_1',...,\beta_{\ell}'\},\z),$$
where $\beta_i'$ for $i=1,...,\ell-1$ is an exact
Hamiltonian isotope of the curve $\beta_i$
which cuts it in a pair of canceling transverse
intersection points. Moreover, $\beta_\ell'$ is obtained
as an isotope of the simple closed curve on $\Sig$
associated with $\lambda$ which separates
the marked points $z_1$ and $z_2$ from each other.
Abusing the notation, we will sometimes denote $\beta_\ell'$ by $\lambda$.
This Heegaard diagram corresponds to the balanced sutured manifold $(X,\tau_\la)$.
The Heegaard triple
$$(\Sig, \alphas,\betas,\betas_\lambda,\z)$$
will then described the surgery  $(X,\tau)
\rightsquigarrow (X,\tau_\lambda)$ associated with
$\lambda$.\\

We are particularly interested in the case where the curves $\gamma_1$ and $\gamma_2$
in $\tau=\{\gamma_1,...,\gamma_\el\}$ used for the surgery are the boundary of a
a subset $B\subset \partial X$ which is homeomorphic to a cylinder $[0,1]\times S^1$
(and $\gamma_1$ and $\gamma_2$ are identified with $\{0\}\times S^1$ and  $\{1\}\times S^1$
respectively).  Then
for $n=(n_1,n_2)$ and $m=(m_1,m_2)$ in $\Z\oplus \Z$,
the sutured manifolds $(X,\tau_n)$ and $(X,\tau_m)$
are the same if $m_1+m_2=n_1+n_2$.
Thus in this case, it makes sense to talk about $(X,\tau_n)$ for $n\in\Z$, if the simple closed curve
$\lambda$ is fixed.\\

Let $(X,\tau)$ be a balanced sutured manifold as above.
Let $I$ denote a subset of $\{1,...,\el\}$. Consider the
sutured manifold $(X(I),\tau(I))$ obtained by filling out the sutures of $(X,\tau)$
corresponding to the subset $I$
 with solid cylinders $D^2\times [-1,1]$. In particular, we have
 $\ovl{X}= X(1,...,\el)$.
In terms of the Heegaard diagrams, if $(\Sig,\alphas,\betas,\z=\{z_1,...,z_\el\})$ is
a Heegaard diagram associated with $(X,\tau)$ so that $z_i$ corresponds to the
suture $\gamma_i$,
a diagram for $(X(I),\tau(I))$ will be the pointed Heegaard diagram
$$\left(\Sig,\alphas,\betas,\z-\big\{z_i\ |\ i\in I\big\}\right).$$

Let $(\Sigma,\mbox{\boldmath$\alpha$},\mbox{\boldmath$\beta$},{\bf z})$
be a  Heegaard diagram for the balanced sutured manifold $(X,\tau)$.
Consider the symmetric product
$$\mathrm{Sym}^\ell(\Sigma)=\frac{\Sigma^{\times \ell}}{S_\ell}
=\frac{\Sig\times ...\times\Sig}{S_\ell}$$
equipped with a path of complex structures of the form
$\big\{J_t=\mathrm{Sym}^\ell(j_t)\big\}_{t\in[0,1]}$,
which is induced from a path of complex structure $\{j_t\}_{t\in[0,1]}$
on $\Sigma$, such that the map $\Sigma^{\times \ell}\to \mathrm{Sym}^\ell (\Sigma)$
is $(j_t,J_t)$-holomorphic for all $t\in[0,1]$.
Then  $\mathbb{T}_{\alpha}=\alpha_1\times...\times\alpha_\ell$
and $\mathbb{T}_\beta=\beta_1\times...\times\beta_\ell$ are
totally real sub-manifolds of $\mathrm{Sym}^\ell(\Sigma)$.
We may define a map
$$\relspinc=\relspinc_\z:\Ta\cap \Tb \lra \RelSpinC(X,\tau),$$
which is defined by choosing a Morse function compatible with
the Heegaard diagram for the sutured manifold $(X,\tau)$, viewing an intersection point $\x\in\Ta\cap\Tb$ as
a set of flow lines joining index-$1$ critical points to index-$2$ critical points
of the Morse function, and perturbing the gradient vector field of the corresponding
Morse function in a neighborhood of this set of flow lines associated with $\x$ in order
to obtain a nowhere vanishing vector field on $X$ with the desired properties.\\

Denote the natural maps obtained by extending the
relative $\SpinC$ structures on sutured manifolds over the attached
solid cylinders by
\begin{displaymath}
\begin{split}
&s_I=s_I^{\tau}:\RelSpinC(X,\tau)\lra \RelSpinC(X(I),\tau(I)),\ \ \forall\ I\subset \{1,...,\el\}.
\end{split}
\end{displaymath}
In particular, $s_{\{1,...,\el\}}^\tau$ is the restriction map $[.]:\RelSpinC(X,\tau)\ra \SpinC(\ovl{X})$
defined before.
Note that there is an exact sequence
\begin{displaymath}
\begin{diagram}
0&\rTo &\big\langle \PD[\gamma_i]%\in\Ht^2(X,\partial X;\Z)
\ |\ i\in I\big\rangle_\Z
&\rTo &\RelSpinC(X,\tau)&\rTo{s_I}&\RelSpinC(X(I),\tau(I))&\rTo&0.
\end{diagram}
\end{displaymath}
This sequence should be interpreted as follows. If two relative $\SpinC$ structures
$\relspinc,\relspinct\in\RelSpinC(X,\tau)$ satisfy $s_I(\relspinc)=s_I(\relspinct)$,
then the cohomology class $\relspinc-\relspinct$
is generated by the Poincar\'e duals of the sutures
corresponding to $I$.

\subsection{Relative $\SpinC$-structures and Heegaard diagrams}
Let the Heegaard diagram $(\Sigma,\mbox{\boldmath$\alpha$},\mbox{\boldmath$\beta$},{\bf z})$
 for the balanced sutured manifold $(X,\tau)$,
the symmetric product
$\mathrm{Sym}^\ell(\Sigma)$, the totally real tori $\Ta$ and $\Tb$,
and the path of complex structures
$\big\{J_t=\mathrm{Sym}^\ell(j_t)\big\}_{t\in[0,1]}$,
be as before.

\begin{defn}
Let $D\subset \C$ be the unit disk in the complex plane, and
$\x,\y\in \mathbb{T}_\alpha\cap\mathbb{T}_\beta$.
A {\emph{Whitney disk}} is a continuous map $\phi:D\to \mathrm{Sym}^\ell(\Sigma)$ such that
$\phi(-i)=\x,\phi(i)=\y$ and
\begin{displaymath}
\begin{split}
&\phi\big\{z\in\partial D\ |\ \mathrm{Re}(z)\ge 0\big\}\subset \mathbb{T}_\alpha\ \ \&\\
&\phi\big\{z\in\partial D\ |\ \mathrm{Re}(z)\le 0\big\}\subset \mathbb{T}_\beta.
\end{split}
\end{displaymath}
 The set of homotopy classes of Whitney disks connecting
$\x$ to $\y$ is denoted by $\pi_2(\x,\y)$.
For any homology class $\phi\in\pi_2(\x,\y)$,
we will denote the moduli space
of $\{J_t\}_t$-holomorphic representatives of
$\phi$ by $\Mod(\phi)$.
There exists a translation action of $\R$
on $\Mod(\phi)$. The quotient of
$\Mod(\phi)$ under this action will be
denoted by $\ov\Mod(\phi)$.
The Maslov index of $\phi$ is denoted by $\mu(\phi)$.
For $i\in\Z$, we will denote by $\pi_2^i(\x,\y)$
the subset of $\pi_2(\x,\y)$ which
consists of all $\phi$ with $\mu(\phi)=i$.
\end{defn}

It is known (\cite{OS-3m1}, and \cite{OS-linkinvariant})
that for any generic path
$\{J_t\}_t$ of complex structures, $\Mod(\phi)$
is a smooth manifold of dimension
$\mu(\phi)$, which is not necessarily compact.
In fact, this moduli space may be
compactified by adding the Gromov limits of
pseudo-holomorphic curves.
But the boundary strata which correspond to
 degenerations of the domain are not
necessarily of lower dimension. We will return to
this issue in section~\ref{sec:chain-complex}.

\begin{defn}
Let $\mathcal{D}_1,...,\mathcal{D}_m$ be the connected components of $\Sigma-\mbox{\boldmath$\alpha$}-\mbox{\boldmath$\beta$}$.
Each element of the free abelian group generated by
$\{\mathcal{D}_1,...,\mathcal{D}_m\}$ is called a
{\emph{domain}}. A domain $\mathcal{D}
=a_1\mathcal{D}_1+...+a_m\mathcal{D}_m$ is called \emph{positive},
denoted $\mathcal{D}\ge 0$,
if $a_i\ge 0$ for $1\le i \le m$. It is called
{\emph{periodic}} if its boundary is a sum of
$\alpha$ and $\beta$ curves.
\end{defn}

For every Whitney disk $\phi$ connecting intersection
points $\x$ and $\y$, the domain associated with
 $\phi$ is defined as follows:
$$\mathcal{D}(\phi)=\sum_{i=1}^{m}n_{p_i}(\phi)\mathcal{D}_i$$
where $p_i\in\mathcal{D}_i$ is a marked point.
Here $n_p(\phi)$ for a point $p\in\Sig-\alphas-\betas$
denotes the algebraic intersection number of
$\phi$ with the subvariety
$$\Delta_p=\Big\{(p_1,...,p_\ell)\in \mathrm{Sym}^\ell(\Sig)\ \Big|\ p_i=p,\
\text{for some }1\leq i\leq \ell\Big\}.$$
If the map $\phi$ is holomorphic then $\mathcal{D}(\phi)$
is obviously a positive domain.
We will denote by $\pi_2^+(\x,\y)$ the subset of $\pi_2(\x,\y)$ which
consists of all $\phi$ with $\Dcal(\phi)\geq 0$.
\\

If $\mathcal{P}$ is a periodic domain
we can associate to it a homology class in
$H_2(\ovl{X},\mathbb{Z})$. More precisely, let
$$\partial\mathcal{P}=\sum_{i=1}^{\ell}a_i\alpha_i
+\sum_{i=1}^{\ell}b_i\beta_i$$
and let $D_i$ be the union of $\alpha_i\times[-1,0]$ with
the core of the two-handles
attached to $\alpha_i\times\{-1\}$ in $X$. Similarly, let
 $D'_i$ be the union of $\beta_i\times[0,1]$ with the core of the
 two-handle attached to  $\beta_i\times\{1\}$. Define
$$H(\mathcal{P})=\mathcal{P}+\sum_{i=1}^{\ell}
a_iD_i+\sum_{i=1}^{\ell}b_iD'_i.$$

If $\phi\in\pi_2(\x,\x)$ is a Whitney disk connecting
$\x$ to itself, with $\x\in\Ta\cap\Tb$,
the domain $\Dcal(\phi)$ will be a periodic domain.
Conversely, any periodic domain
$\Pcal$ determines the class of a Whitney disk in
$\pi_2(\x,\x)$ for any
$\x\in\Ta\cap\Tb$. Thus the space of periodic
domains may be identified with
$\pi_2(\x,\x)$.\\

For each $\x\in \mathbb{T}_{\alpha}\cap\mathbb{T}_{\beta}$
let $\gamma_\x$ be the flow lines of a compatible Morse function connecting the index-$1$
critical points to the index-$2$ critical points passing through the union $\x$ of the intersection points
on $\Sig\times\{0\}\subset X$.

\begin{lem}
For $\x,\y\in \mathbb{T}_{\alpha}\cap\mathbb{T}_{\beta}$ we have $\relspinc(\x)-\relspinc(\y)=\mathrm{PD}(\epsilon(\x,\y))$ where
$\epsilon(\x,\y)=\gamma_\x-\gamma_\y\in \Ht_1(X,\mathbb{Z})$.
\end{lem}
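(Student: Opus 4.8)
The plan is to compute the difference $\relspinc(\x)-\relspinc(\y)$ directly from the vector-field description of relative $\SpinC$ structures, using the explicit recipe given before the lemma. Recall that $\relspinc(\x)$ is obtained from a fixed compatible Morse function (equivalently its gradient-like vector field) by modifying it in a neighborhood of the flow lines $\gamma_\x$ through the intersection point $\x$ so as to produce a nowhere vanishing vector field $v_\x$ agreeing with $v_\tau$ on $\partial X$, and similarly for $\y$. Outside a tubular neighborhood $U$ of $\gamma_\x\cup\gamma_\y$ the two vector fields $v_\x$ and $v_\y$ coincide (they both equal the unmodified gradient-like field, suitably desingularized), so the obstruction to a homotopy between them relative to $\partial X$ is localized in $U$, and it is measured by a relative Euler-class type computation in $\Ht^2(U,\partial U;\Z)\cong \Ht_1(U;\Z)$.

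The key step is to identify this local obstruction with $\PD(\epsilon(\x,\y))$ where $\epsilon(\x,\y)=\gamma_\x-\gamma_\y\in\Ht_1(X;\Z)$. First I would reduce to the case where $\x$ and $\y$ differ in a single coordinate — that is, $\x=\{x_1,\dots,x_\ell\}$ and $\y=\{x_1,\dots,x_{\ell-1},y_\ell\}$ with $x_\ell,y_\ell$ on the same curves $\alpha_i$ and $\beta_j$ — since the general case follows by composing such elementary moves and both sides of the asserted identity are additive under composition of paths. In the elementary case, $\gamma_\x$ and $\gamma_\y$ agree except along the flow lines through $x_\ell$ and $y_\ell$, which form a closed loop $\epsilon(\x,\y)$ in $X$: it is the flow line from the index-one critical point up through $x_\ell$, to the index-two critical point, then back down through $y_\ell$, closed off using the segments on $\alpha_i$ and $\beta_j$. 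Then I would invoke the standard computation (as in Ozsváth–Szabó's original treatment for closed three-manifolds, \cite{OS-3m1}, and its link version \cite{OS-linkinvariant}) that the change in the homology class of the associated nowhere vanishing vector field, when one reroutes the gradient flow across such an elementary difference, is precisely Poincaré dual to this loop. Concretely, in the complement of the critical points one compares the two sections of the unit sphere bundle and reads off the winding, which is governed by exactly the linking of the rerouting region with $\epsilon(\x,\y)$.

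The main obstacle — and the part I would take most care with — is making the localization argument clean in the sutured setting, where the vector field is constrained to equal $v_\tau$ on $\partial X$ and where $X$ is built from $\Sigma^\circ\times[-1,1]$ with two-handles rather than being closed. One must check that the modifications producing $v_\x$ and $v_\y$ can be chosen to agree outside a neighborhood of $\gamma_\x\cup\gamma_\y$ (in particular both equal $v_\tau$ near $\partial X$, which is automatic since the flow lines $\gamma_\x$ lie in the interior), so that the difference class genuinely lives in a relative cohomology group supported near $\gamma_\x\cup\gamma_\y$ and maps to $\Ht^2(X,\partial X;\Z)$ as claimed. Once this is set up, the identification of the obstruction with $\PD(\epsilon(\x,\y))$ is a local model computation identical to the closed case, and the affine-space structure of $\RelSpinC(X,\tau)$ over $\Ht^2(X,\partial X;\Z)$ noted above guarantees the difference is well defined independently of the auxiliary choices.
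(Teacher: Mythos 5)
Your overall strategy --- build $v_\x$ and $v_\y$ from a compatible Morse function by rerouting near the flow lines, observe that they agree (and agree with $v_\tau$ near $\partial X$) outside a neighborhood of $\gamma_\x\cup\gamma_\y$, and identify the localized difference class in $\Ht^2(X,\partial X;\Z)$ with $\PD[\gamma_\x-\gamma_\y]$ by a local winding computation --- is exactly the argument behind the result the paper invokes: the paper's proof is just a citation of Lemma 4.7 of \cite{Juh}, whose proof is the adaptation of the corresponding lemma of \cite{OS-3m1} that you describe. So the core of your proposal matches the intended proof.

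The one step that does not survive scrutiny is the opening reduction to the case where $\x$ and $\y$ differ in a single coordinate, with $x_\ell$ and $y_\ell$ on the same curves $\alpha_i$ and $\beta_j$, on the grounds that the general case follows by composing such elementary moves. An intermediate tuple obtained by replacing one coordinate of $\x$ need not lie in $\Ta\cap\Tb$ at all, since a generator must meet each $\alpha$ curve and each $\beta$ curve exactly once; moreover, elementary moves of the kind you allow never change the permutation matching $\alpha$ curves to $\beta$ curves, while two generators of $\Ta\cap\Tb$ can induce different permutations, so in general no chain of such moves connects $\x$ to $\y$. Fortunately this reduction is also unnecessary: for arbitrary $\x,\y$ the difference $\gamma_\x-\gamma_\y$ is already a $1$-cycle (both $\gamma_\x$ and $\gamma_\y$ have the same boundary, the sum of index-two critical points minus the sum of index-one critical points), the fields $v_\x$ and $v_\y$ agree outside a neighborhood of $\gamma_\x\cup\gamma_\y$, and the local model computation you quote can be carried out flow line by flow line in that neighborhood, yielding $\relspinc(\x)-\relspinc(\y)=\PD[\epsilon(\x,\y)]$ directly, exactly as in the cited proofs. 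With that step deleted (or replaced by the direct localization argument you already sketch), your proof is correct.
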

\begin{proof}
This is lemma~4.7 from \cite{Juh}.
\end{proof}
\begin{cor}\label{cor:s(x)-s(y)}
If $\phi\in\pi_2(\x,\y)$ then we have
$$\relspinc(\x)-\relspinc(\y)=
\sum_{i=1}^\el n_{z_i}(\phi).\mathrm{PD}[\gamma_i].$$
\end{cor}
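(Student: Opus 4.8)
The plan is to deduce Corollary~\ref{cor:s(x)-s(y)} directly from the preceding Lemma together with a concrete description of the homology class $\epsilon(\x,\y)=\gamma_\x-\gamma_\y\in\Ht_1(X,\Z)$ in terms of the domain of a Whitney disk $\phi\in\pi_2(\x,\y)$. First I would recall that by the Lemma we already know $\relspinc(\x)-\relspinc(\y)=\PD(\epsilon(\x,\y))$, so the entire content of the corollary is the identity $\epsilon(\x,\y)=\sum_{i=1}^\el n_{z_i}(\phi)[\gamma_i]$ in $\Ht_1(X,\Z)$ (equivalently, after applying $\PD$). The natural way to see this is to choose a one-cycle representing $\epsilon(\x,\y)$ coming from $\phi$ itself: the boundary $\partial\Dcal(\phi)$ consists of arcs lying on the $\alpha$- and $\beta$-curves running between the points of $\x$ and $\y$, so the union of the ``$\alpha$-part'' $a(\phi)$ of $\partial\Dcal(\phi)$ with the flow-line segments $\gamma_\x$ and $-\gamma_\y$ (pushed appropriately into the two handlebodies) forms a closed loop whose class in $\Ht_1(X,\Z)$ is $\epsilon(\x,\y)$, exactly as in the closed case treated by Ozsv\'ath--Szab\'o.

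Next I would make the bookkeeping precise. Write $\Dcal(\phi)=\sum_i n_{z_i}(\phi)\mathcal D_i' + (\text{terms supported away from }\z)$, and note that in $X$ each puncture disk $D_i$ around $z_i$ has boundary $\gamma_i=\partial D_i\times\{0\}$, which becomes one of the sutures. The key point is that when one forms the closed surface-with-corners representing a relative $2$-chain bounding $a(\phi)\cup\gamma_\x\cup(-\gamma_\y)$ by capping off with the cores of the $2$-handles attached along $\alphas\times\{-1\}$ and $\betas\times\{1\}$, the only remaining uncapped boundary is the collection of small circles around the marked points, each $\partial D_i$ appearing with multiplicity $n_{z_i}(\phi)$. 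Since $\partial D_i\times\{0\}$ is precisely $\gamma_i$ in $\partial X$, and since $\gamma_i$, viewed in $\Ht_1(X,\Z)$, is the image of the suture, this shows $\epsilon(\x,\y)=\sum_i n_{z_i}(\phi)[\gamma_i]$. Applying $\PD$ and combining with the Lemma gives the stated formula. One should also check independence of the choice of $\phi$: two choices differ by a periodic domain $\Pcal$, and $\sum_i n_{z_i}(\Pcal)[\gamma_i]=0$ in $\Ht_1(X,\Z)$ because $H(\Pcal)\in\Ht_2(\ovl X,\Z)$ has $\partial$ exactly this sum of suture classes (which die in $\ovl X$ but must already vanish in $\Ht_1(X,\Z)$ for the identity to be consistent)—alternatively, independence is automatic once the formula is proven for one $\phi$, since the left side does not depend on $\phi$.

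The main obstacle I anticipate is the careful identification of the relative chain: one must be precise about how the flow lines $\gamma_\x$ sit in $X$ relative to the Heegaard surface $\Sigma\times\{0\}$, how pushing $a(\phi)$ off $\Sigma$ into the $\alpha$-handlebody interacts with the $2$-handle cores $D_i$ appearing in the definition of $H(\Pcal)$, and that the algebraic count of ``leftover'' suture circles is genuinely $n_{z_i}(\phi)$ rather than some modified multiplicity. This is essentially the sutured-manifold analogue of the standard computation in \cite{OS-3m1} relating $\relspinc(\x)-\relspinc(\y)$ to $n_w(\phi)-n_{w'}(\phi)$, and once the roles of the marked points $\z$ and the sutures $\tau=\{\gamma_i\}$ are matched up as in the definitions recalled earlier in this section, the argument is routine. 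The remaining verification—that capping with handle cores leaves no $\alpha$- or $\beta$-boundary—is exactly the construction of $H(\Pcal)$ already given, applied here to the (not necessarily periodic) domain $\Dcal(\phi)$.
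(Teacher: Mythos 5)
Your proposal follows essentially the same route as the paper's proof: reduce, via the preceding lemma, to the identity $\epsilon(\x,\y)=\sum_{i}n_{z_i}(\phi)\,[\gamma_i]$ in $\Ht_1(X;\Z)$, and verify it by viewing $\Dcal(\phi)$ as a $2$-chain whose boundary consists of arcs on the $\alpha$- and $\beta$-curves together with small circles around the marked points appearing with multiplicities $n_{z_i}(\phi)$, each such circle being homologous to the corresponding suture $\gamma_i$. The only repairs needed are local: the $1$-cycle representing $\epsilon(\x,\y)$ must be the full $\alpha$- and $\beta$-part of $\partial\Dcal(\phi)$ (your ``$a(\phi)\cup\gamma_\x\cup(-\gamma_\y)$'' is not closed as written, and the capping-by-handle-cores step is more cleanly replaced, as in the paper, by working in $\Ht_1(X;\Z)\cong\Ht_1(\Sig-\z;\Z)/\langle\alpha_1,\dots,\alpha_\ell,\beta_1,\dots,\beta_\ell\rangle$), while the independence-of-$\phi$ discussion is superfluous since the formula is asserted for each $\phi$ separately.
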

\begin{proof}
The disk $\phi$ gives a domain $\Dcal(\phi)$, with the
property that $\epsilon(\x,\y)$ is represented by
$$\partial (\Dcal(\phi))\in \Ht_1(X,\Z)
=\frac{\Ht_1\Big(\Sig-\Big\{z_1,...,z_\el\Big\},\Z\Big)}{\Big\langle \alpha_1,...,\alpha_\ell,\beta_1,...,\beta_\ell\Big\rangle_\Z}.$$
If $\epsilon_i$ denotes a small loop around $z_i\in \Sig$, the
domain $\Dcal(\phi)$  gives a $2$-chain connecting
$\epsilon(\x,\y)$ and $n_{z_1}(\phi)\epsilon_1+...+n_{z_\el}
(\phi)\epsilon_\el$. However, $\epsilon_i$ is
homologous to $\gamma_i$, and we thus have
$$\relspinc(\x)-\relspinc(\y)=\mathrm{PD}[\epsilon(\x,\y)]
=\sum_{i=1}^\el n_{z_i}(\phi)\mathrm{PD}[\gamma_i].$$
This completes the proof of the corollary.
\end{proof}

Let us finish this subsection with a lemma for
computing the Maslov index of a periodic domain.
Let
$$\Sigma-\mbox{\boldmath$\alpha$}=\bigcup_{i=1}^{k}A_i,
\ \ \&\ \ \Sigma-\mbox{\boldmath$\beta$}=\bigcup_{i=1}^{l}B_i,$$
and assume we have $m=k+l-1$ points $w_1,...,w_m$ on $\Sigma$
such that $w_i\in A_i\cap B_{1}$ for $1\le i\le k$,
and $w_{i+k}\in A_k\cap B_{i+1}$ for $1\leq i<l$.
\begin{lem}\label{lem:Maslov-1}
For any periodic domain $\mathcal{P}\in\pi_2(\x,\x)$
such that $n_{w_i}(\mathcal{P})=0$ for $1\le i\le m$ we have:
$$\mu(\mathcal{P})=\big\langle c_1([\relspinc(\x)])
,H(\mathcal{P})\big\rangle.$$
\end{lem}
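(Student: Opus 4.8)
The plan is to reduce the statement to the corresponding formula in the closed case, proved by Ozsv\'ath and Szab\'o, by relating the periodic domain $\mathcal P$ in the sutured Heegaard diagram to a periodic domain in a genuine Heegaard diagram for the closed manifold $\ovl X$. Recall that $\ovl X$ is obtained from $X$ by filling the sutures, which on the Heegaard-diagram level amounts to forgetting the marked points $\z$; however, to get an honest Heegaard diagram for the closed manifold $\ovl X$ one must also record how the $2$-handles glued along the boundary of $\Sigma^\circ\times[-1,1]$ close things up. Concretely, I would pass to the Heegaard triple picture implicit in the construction of $H(\mathcal P)$: the capping regions $D_i$ and $D_i'$ — the union of $\alpha_i\times[-1,0]$ (resp. $\beta_i\times[0,1]$) with the cores of the attached two-handles — are exactly the pieces one adds to the planar domain $\mathcal P$ to obtain a closed surface representing a class in $H_2(\ovl X;\Z)$. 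So the first step is to make precise that $H(\mathcal P)$, as defined just before the lemma, is a well-defined element of $H_2(\ovl X;\Z)$ and that the assignment $\mathcal P\mapsto H(\mathcal P)$ is the standard isomorphism between periodic domains and $H_2$.

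Next I would invoke the Maslov index formula for periodic domains from the closed theory (Ozsv\'ath--Szab\'o, \cite{OS-3m1}, and in the multi-pointed/link setting \cite{OS-linkinvariant}), which says precisely $\mu(\mathcal P)=\langle c_1(\spinc(\x)),H(\mathcal P)\rangle$ when $\spinc(\x)$ is the $\SpinC$ structure of the relevant generator in the closed (or filled) manifold. Here $[\relspinc(\x)]\in\SpinC(\ovl X)$ is the image of $\relspinc(\x)$ under the filling map $[\cdot]:\RelSpinC(X,\tau)\to\SpinC(\ovl X)$ described earlier, and the condition $n_{w_i}(\mathcal P)=0$ for the $m=k+l-1$ specified basepoints $w_1,\dots,w_m$ is exactly the normalization one needs so that $\mathcal P$ can be viewed as coming from a periodic domain in a pointed diagram with a single basepoint in each $\alpha$- and each $\beta$-complementary region — i.e. it kills the "extra" periodic domains that appear only because there are several marked points. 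In other words, the hypothesis $n_{w_i}(\mathcal P)=0$ selects the subspace of periodic domains on which the Maslov index is still computed by the closed formula; without it, the presence of multiple marked points would let $\mu(\mathcal P)$ depend on the $n_{z_i}(\mathcal P)$ as well.

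The main technical step, then, is to justify the passage from the multi-pointed sutured diagram to a singly-pointed diagram for $\ovl X$ and to check that the Maslov index, the $\SpinC$ structure, and the homology class $H(\mathcal P)$ are all carried along correctly under this passage. I would argue as follows: the condition $n_{w_i}(\mathcal P)=0$ for $1\le i\le k$ forces $\mathcal P$ to have a fixed multiplicity in the region $A_i$ near $w_i$ — together these say $\mathcal P$ is supported, up to the $\beta$-region data, compatibly with a single $\alpha$-region; similarly for the $\beta$-side with $w_{k+1},\dots,w_m$. Hence $\mathcal P$ is the restriction of a periodic domain in a Heegaard diagram for $\ovl X$ obtained by adding further $\beta'$ (or $\alpha'$) curves that cap off the extra regions, and under this operation both sides of the claimed identity are preserved: $H(\mathcal P)$ because the capping regions $D_i,D_i'$ are unaffected, $c_1([\relspinc(\x)])$ because capping with null-homotopic curves does not change the $\SpinC$ structure, and $\mu(\mathcal P)$ because a periodic domain supported away from the new intersection points has the same Maslov index in the stabilized diagram.

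The hardest part, I expect, is making the index-invariance step rigorous: one has to be careful that adding curves to the Heegaard diagram — or, dually, forgetting basepoints — does not change the Maslov index of the specific periodic domain in question, and that the combinatorial count $m=k+l-1$ of normalizing basepoints is exactly right (neither too few, which would leave residual freedom, nor too many, which would overdetermine $\mathcal P$). I would handle this by an Euler-characteristic/point-counting bookkeeping: there are $k$ components of $\Sigma-\alphas$ and $l$ components of $\Sigma-\betas$, and the space of periodic domains modulo those detected by the $w_i$ has rank equal to $b_2(\ovl X)$ plus the rank of the "extra" classes, and the $k+l-1$ conditions cut out precisely the former. Once that linear-algebra count is in place, the reduction to \cite{OS-3m1} is immediate and completes the proof.
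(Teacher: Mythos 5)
There is a genuine gap, and it starts with the reduction target. You propose to reduce to the Ozsv\'ath--Szab\'o formula for \emph{closed} manifolds, calling $\ovl X$ "the closed manifold"; but $\ovl X$ is not closed --- filling the sutures with solid cylinders $D^2\times[-1,1]$ leaves the capped-off surfaces $\Rr^\pm$ as boundary --- so the closed-case formula from \cite{OS-3m1} does not directly apply, and there is no "honest Heegaard diagram for the closed manifold $\ovl X$" to pass to. The paper's proof goes the other way: it uses the basepoints $w_i$ to \emph{puncture} the surface, setting $\Sigma_{\w}=\Sigma-\mathrm{nd}(\w)$, which turns $(\Sigma_\w,\alphas,\betas)$ into a sutured Heegaard diagram in Juh\'asz' sense for the sutured manifold $X_\w$ obtained from $\ovl X$ by deleting neighborhoods of the flow lines through $\w$. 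The hypothesis $n_{w_i}(\Pcal)=0$ is used only so that $\Pcal$ descends to a periodic domain $i^{-1}(\Pcal)$ on the punctured diagram; then Theorem 5.2 of \cite{Juh} gives $\mu(i^{-1}\Pcal)=\langle c_1(\relspinc_\w(\x)),H(i^{-1}\Pcal)\rangle$, and the remaining (and essential) step is the Chern-class comparison $c_1(\relspinc_\w(\x))=i^*c_1([\relspinc(\x)])$, proved by observing that the vector field representing $[\relspinc(\x)]$ on $\ovl X$ restricts to one representing $\relspinc_\w(\x)$ on $X_\w$, together with $i^*H(\Pcal)=H(i^{-1}\Pcal)$.

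Your mechanism for the passage --- "adding further $\beta'$ (or $\alpha'$) curves that cap off the extra regions" to get a singly-pointed diagram for $\ovl X$ --- does not work as stated: adding attaching curves to a Heegaard diagram changes the underlying three-manifold (it is not a stabilization unless dual curves are added as well), so the claims that $H(\Pcal)$, $c_1$ of the relevant $\SpinC$ structure, and $\mu(\Pcal)$ are all "carried along" are unsupported, and the Chern-class identification that is the actual content of the paper's argument never appears in your proposal. Your qualitative reading of the role of the $m=k+l-1$ basepoints (they kill the span of the $A_i$'s and $B_j$'s among periodic domains) is correct, and the overall strategy of reducing to a known index formula is in the right spirit; but the known formula you need is Juh\'asz' relative one for sutured manifolds, reached by puncturing at $\w$, not the closed formula reached by capping off.
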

\begin{proof}
Let $\Sigma_{\textbf w}=\Sigma-\mathrm{nd}(\bf{w})$,
with $\w=\{w_1,...,w_m\}$.
Now $(\Sigma_{\bf w},\mbox{\boldmath$\alpha$},
\mbox{\boldmath$\beta$})$ is a sutured Heegaard diagram
for a sutured manifold $X_{\bf w}$ which is obtained
from $\ovl X$ by removing neighborhoods of the flow lines
passing through $\bf{w}$. If $i:X_{\bf w}\to \ovl{X}$
is the embedding of $X_{\bf w}$ in $\ovl{X}$, then
$i^{-1}(\mathcal{P})$ is a periodic domain in
$(\Sigma_{\bf w},\mbox{\boldmath$\alpha$},\mbox{\boldmath$\beta$})$,
and by theorem 5.2 from \cite{Juh} we have
$$\mu(i^{-1}\mathcal{P})=\Big\langle c_1
\big(\relspinc_{\bf w}(\x)\big)
,H\big(i^{-1}(\mathcal{P})\big)\Big\rangle.$$
We have $i^*H(\mathcal{P})=H\big(i^{-1}(\mathcal{P})\big)$.
Thus it is enough to show that
\begin{equation}\label{eq:maslov-spinc}
c_1\big(\relspinc_{\bf w}(\x)\big)=i^*c_1\big([\relspinc(\x)]\big).
\end{equation}
Let $\nu$ be the vector field defining $\relspinc(\x)$, and
let $\ovl{\nu}$ be the extension of $\nu$ to $\ovl{X}$.
Then $i^*\ovl{\nu}$ is the vector field defining
$\relspinc_{\bf w}(\x)$ and thus equation~\ref{eq:maslov-spinc}
is satisfied.
\end{proof}
\begin{lem}\label{lem:Maslov-index}
For any periodic domain $\mathcal{P}\in\pi_2(\x,\x)$
 we have:
$$\mu(\mathcal{P})=\big\langle c_1(\relspinc(\x))
,H(\mathcal{P})\big\rangle.$$
\end{lem}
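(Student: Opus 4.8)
The plan is to deduce the general case from Lemma~\ref{lem:Maslov-1}, which already establishes the formula $\mu(\mathcal P)=\langle c_1([\relspinc(\x)]),H(\mathcal P)\rangle$ under the extra hypothesis that $\mathcal P$ has vanishing local multiplicities at the $m=k+l-1$ prescribed points $w_1,\dots,w_m$. The key point is that both sides of the desired identity are functions on the space of periodic domains $\pi_2(\x,\x)$ which depend on $\mathcal P$ only through its image in $H_2(\ovl X;\Z)$. First I would observe that the right-hand side, $\langle c_1(\relspinc(\x)),H(\mathcal P)\rangle$, is by construction a linear functional in $H(\mathcal P)\in H_2(\ovl X;\Z)$, hence additive in $\mathcal P$. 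Then I would invoke the additivity of the Maslov index under juxtaposition of homotopy classes (the standard fact $\mu(\phi_1 * \phi_2)=\mu(\phi_1)+\mu(\phi_2)$ for $\phi_i\in\pi_2(\x,\x)$), so the left-hand side is additive in $\mathcal P$ as well. Thus the function $\mathcal P\mapsto \mu(\mathcal P)-\langle c_1(\relspinc(\x)),H(\mathcal P)\rangle$ is a homomorphism from the group of periodic domains to $\Z$.

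Next I would reduce to checking this homomorphism vanishes on a spanning set. The group of periodic domains is generated, rationally at least, by classes that one may take to have arbitrary prescribed local multiplicities at isolated points after adding a suitable linear combination of the "distinguished" generators; more concretely, using the $w_i$ as in Lemma~\ref{lem:Maslov-1}, the subgroup of periodic domains with $n_{w_i}(\mathcal P)=0$ for all $i$ together with the periodic domains supported near (or detected by) the $w_i$ span the whole group up to torsion. For periodic domains in the first subgroup, Lemma~\ref{lem:Maslov-1} gives the identity directly. For the remaining generators I would use the same argument as in the proof of Lemma~\ref{lem:Maslov-1}: pass to the sutured Heegaard diagram $(\Sigma_{\w},\alphas,\betas)$ obtained by removing neighborhoods of the relevant points, apply Juh\'asz' index formula (Theorem~5.2 of \cite{Juh}) there, and use the compatibility $i^*H(\mathcal P)=H(i^{-1}\mathcal P)$ along with $c_1(\relspinc_{\w}(\x))=i^*c_1([\relspinc(\x)])$ established in equation~\eqref{eq:maslov-spinc}. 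Running this with different choices of the auxiliary point-set $\w$ covers all generators.

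The main obstacle I anticipate is the bookkeeping needed to guarantee that varying the auxiliary points $w_1,\dots,w_m$ (equivalently, varying which regions are "pinned") produces enough periodic domains with vanishing multiplicities to span $H_2(\ovl X;\Z)$ — i.e., that one cannot get stuck with a periodic domain which has a nonzero multiplicity at \emph{every} admissible choice of distinguished point simultaneously. I expect this to be handled by noting that for any fixed periodic domain $\mathcal P$ one can always choose the connectivity pattern of the $w_i$ (which $A_i$ meets which $B_j$) so that $\mathcal P$ itself, or $\mathcal P$ minus a correcting periodic domain already known to satisfy the formula, has vanishing multiplicities at that particular choice; since the formula to be proved is additive, this suffices. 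Alternatively, and perhaps more cleanly, one can avoid spanning-set arguments entirely by arguing directly: given an arbitrary $\mathcal P$, add to it an integer multiple of a periodic domain $\mathcal P_0$ with $\mu(\mathcal P_0)$ and $H(\mathcal P_0)$ controlled so that $\mathcal P+\mathcal P_0$ has $n_{w_i}=0$, apply Lemma~\ref{lem:Maslov-1} to $\mathcal P+\mathcal P_0$, and subtract — the subtraction being legitimate precisely because of the additivity established in the first paragraph.
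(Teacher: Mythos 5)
Your fallback argument (the ``cleaner'' alternative at the end) is essentially the paper's proof: given $\Pcal$, subtract a correcting periodic domain to kill the multiplicities at $w_1,\dots,w_m$, apply Lemma~\ref{lem:Maslov-1} to the result, and use additivity of $\mu$ and of $\langle c_1,H(\cdot)\rangle$. In the paper the correcting domain is the explicit combination $\sum_{i=1}^k n_i A_i+\sum_{i=1}^{l-1}(n_{i+k}-n_k)B_{i+1}$, where $A_i$ and $B_j$ are the components of $\Sigma-\alphas$ and $\Sigma-\betas$ viewed as periodic domains in $\pi_2(\x,\x)$.

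The gap is that you never say how you know the formula for these correcting domains, and it cannot come from Lemma~\ref{lem:Maslov-1} or from ``varying $\w$'': every admissible choice of auxiliary points places some $w_i$ inside $A_i$ (the hypotheses force $w_i\in A_i\cap B_1$, etc.), so $n_{w_i}(A_i)=1\neq 0$ for all such choices, and your first route of re-running Lemma~\ref{lem:Maslov-1} with a different point-set can never reach the generators $A_i$, $B_j$ --- this is exactly the obstacle you flag, but the resolution requires new input rather than bookkeeping. Saying the correcting domain is ``already known to satisfy the formula'' or has ``controlled'' $\mu$ and $H$ is therefore circular as stated. The paper closes this by an independent computation: Lipshitz's index formula gives $\mu(A_i)=\chi(A_i)=2-2g(A_i)$ and $\mu(B_j)=\chi(B_j)$ as periodic domains, while $\big\langle c_1(\spinc),H(A_i)\big\rangle=\chi(A_i)$ and $\big\langle c_1(\spinc),H(B_j)\big\rangle=\chi(B_j)$ since $H(A_i)$, $H(B_j)$ are the boundary-parallel closed surfaces in $\ovl X$; hence the correcting domains do satisfy the identity, and the subtraction argument goes through. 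With that one computation added, your outline coincides with the paper's proof; without it, the argument is incomplete.
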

\begin{proof}
Moving the curves by isotopies does not change the two sides of the above
equality. We may thus assume that we have $m=k+l-1$ points
$w_1,...,w_m$ on $\Sigma$
such that $w_i\in A_i\cap B_{1}$ for $1\le i\le k$,
and $w_{i+k}\in A_k\cap B_{i+1}$ for $1\leq i<l$.
Let us denote $n_{w_i}(\Pcal)$ by $n_i$, and set
\begin{displaymath}
\Qcal=\Pcal-\Big(\sum_{i=1}^k n_i A_i\Big)-\Big(\sum_{i=1}^{l-1}(n_{i+k}-n_k)B_{i+1}\Big).
\end{displaymath}
Clearly $n_{w_i}(\Qcal)=0$ for $i=1,...,m$, and lemma~\ref{lem:Maslov-1} implies
(setting $\spinc=[\relspinc(\x)]$, and regarding $\Qcal$ as an element in $\pi_2(\x,\x)$)
\begin{equation}\label{eq:Maslov-P}
\begin{split}
\mu(\Qcal)&=\big\langle c_1(\spinc),H(\Qcal)\big\rangle\\
&=\big\langle c_1(\spinc),H(\Pcal)\big\rangle
-\Big(\sum_{i=1}^k n_i \big\langle c_1(\spinc), H(A_i)\big\rangle\Big)\\
&\ \ \ \ \ \ \ \ \ -\Big(\sum_{i=1}^{l-1}(n_{i+k}-n_k)\big\langle c_1(\spinc), H(B_{i+1})\big\rangle\Big)\\
&=\big\langle c_1(\spinc),H(\Pcal)\big\rangle
-\Big(\sum_{i=1}^k n_i \chi(A_i)\Big)-\Big(\sum_{i=1}^{l-1}(n_{i+k}-n_k)\chi(B_i)\Big).
\end{split}
\end{equation}
In the last equation we denote by $\chi(A_i)$ and $\chi(B_i)$ the expressions
$2-2g_{A_i}$ and $2-2g_{B_i}$, respectively, where $g_{A_i}$ and $g_{B_i}$ denote
the genera of the components in $\Rr^-(\tau)$ and $\Rr^+(\tau)$ which
correspond to $A_i$ and $B_i$, respectively.\\

On the other hand, the formula of Lipshitz (\cite{Robert-cylindrical}) may be used to compute
$\mu(A_i)$ and $\mu(B_j)$ as periodic domains in $\pi_2(\x,\x)$. As such,
we will have
\begin{equation}\label{eq:Maslov-Ai-Bi}
\mu(A_i)=\chi(A_i),\ \ i=1,...,k,\ \ \&
\ \ \mu(B_j)=\chi(B_i),\ \ j=1,...,l.
\end{equation}
Combining equations~\ref{eq:Maslov-Ai-Bi} and~\ref{eq:Maslov-P} we obtain
 \begin{displaymath}
\begin{split}
\mu(\Pcal)=
\big\langle c_1(\spinc),H(\Pcal)\big\rangle
&-\Big(\sum_{i=1}^k n_i \chi(A_i)\Big)-\Big(\sum_{i=1}^{l-1}(n_{i+k}-n_k)\chi(B_i)\Big)\\
&+\Big(\sum_{i=1}^k n_i \mu(A_i)\Big)+\Big(\sum_{i=1}^{l-1}(n_{i+k}-n_k)\mu(B_i)\Big)\\
=\big\langle c_1(\spinc),H(\Pcal)\big\rangle&.
\end{split}
\end{displaymath}
This completes the proof of the lemma.
\end{proof}
\newpage

\section{Algebra input}\label{sec:quasi-isomorphism}\label{sec:algebra}
\subsection{The $\ta$-chain complexes}
Let us assume that $\Ring$ is a (commutative)
finitely generated $\F$-algebra.
\begin{defn}
If $\PP$  is another (commutative) ring, together
with a homomorphism $\phi_\PP:\Ring\ra \PP$
we will call $(\PP,\phi_\PP)$ a {\emph{test ring}} for $\Ring$.
\end{defn}
In other words, a test ring is a ring that
has the structure of an $\Ring$-module.
We may then define $\phi_\PP(a)=a.1_\PP$ for $a\in\Ring$.
We will denote the image of an element $a\in\Ring$ in $\PP$
by $[a]^\PP$, and drop $\phi_\PP$ from the notation for simplicity.
\\

\begin{defn}
A chain complex with {\emph{coefficient ring}} $\Ring$,
or simply an $\ta$ {\emph{chain complex}}, is an $\Ring$-module
$C$, together with a homomorphism
of $\Ring$-modules $d:C\ra C$, such that $d\circ d=0$.
\end{defn}

 Let us assume that $(C,d)$ is an $\ta$ chain
 complex. Choose a test ring $\PP$ and let
 $C(\PP)=C\otimes_{\Ring}\PP$.
The differential $d$ of the complex $(C,d)$ induces a differential
$d^\PP:C(\PP)\ra C(\PP)$.

\begin{defn}
If $(C_1,d_1)$ and $(C_2,d_2)$ are $\ta$ chain complexes,
a homomorphism $f:C_1\ra C_2$ of $\Ring$-modules
is called an $\ta$ {\emph{chain map}}
if  $f\circ d_1=d_2\circ f$.
\end{defn}
The following lemma is an immediate consequence of the definitions.
\begin{lem}\label{lem:induced-chain-map}
Let $\Ring$ be as above and
suppose that $\PP$ is a test ring for $\Ring$.
If $(C_1,d_1)$ and $(C_2,d_2)$ are $\ta$ chain complexes and
$f:C_1\ra C_2$ is  an $\ta$ chain map, then $f$ induces a $\PP$ chain map
$$f^\PP:(C_1(\PP),d_{1}^\PP)\lra (C_2(\PP),d_{2}^\PP),$$
where $d_{i}^\PP$ denotes the differential induced by $d_i$
on  $C_i(\PP)$, $i=1,2$.
\end{lem}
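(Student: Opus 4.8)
The plan is to produce $f^\PP$ by base change along $\Ring\to\PP$ and to verify the chain-map identity formally. First I would recall that, for a fixed test ring $\PP$, the assignment $M\mapsto M\otimes_\Ring\PP$ is a functor from $\Ring$-modules to $\PP$-modules (here the commutativity of $\Ring$, assumed throughout, is what makes the target a $\PP$-module). Since $f\colon C_1\to C_2$ is a homomorphism of $\Ring$-modules --- which is part of the definition of an $\ta$ chain map --- it therefore induces a $\PP$-module homomorphism
$$f^\PP:=f\otimes_\Ring\Id_\PP\colon\ C_1(\PP)=C_1\otimes_\Ring\PP\ \lra\ C_2\otimes_\Ring\PP=C_2(\PP),$$
determined on elementary tensors by $f^\PP(c\otimes p)=f(c)\otimes p$. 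By the same functoriality the induced differentials are $d_i^\PP=d_i\otimes_\Ring\Id_\PP$ for $i=1,2$, and $-\otimes_\Ring\PP$ respects composition of $\Ring$-linear maps, i.e. $(g\circ h)\otimes_\Ring\Id_\PP=(g\otimes_\Ring\Id_\PP)\circ(h\otimes_\Ring\Id_\PP)$ for $\Ring$-linear $g,h$.

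The chain-map condition is then a one-line computation on tensor factors, using the hypothesis $f\circ d_1=d_2\circ f$:
$$f^\PP\circ d_1^\PP=(f\circ d_1)\otimes_\Ring\Id_\PP=(d_2\circ f)\otimes_\Ring\Id_\PP=d_2^\PP\circ f^\PP,$$
so $f^\PP$ is a $\PP$ chain map between $(C_1(\PP),d_1^\PP)$ and $(C_2(\PP),d_2^\PP)$, as claimed.

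There is essentially no obstacle here: the only point that requires care is that $f$, $d_1$, and $d_2$ be genuinely $\Ring$-linear rather than merely $\F$-linear, which is exactly what the definitions of an $\ta$ chain complex and an $\ta$ chain map provide; this is what makes the tensored maps well-defined (via the universal property of $-\otimes_\Ring\PP$) and validates the displayed identities. Accordingly this is indeed an immediate consequence of the definitions, and the proof amounts to recording the two observations above.
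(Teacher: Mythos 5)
Your argument is correct and is exactly the content the paper intends: it states the lemma as an immediate consequence of the definitions, and your base-change computation $f^\PP\circ d_1^\PP=(f\circ d_1)\otimes_\Ring\Id_\PP=(d_2\circ f)\otimes_\Ring\Id_\PP=d_2^\PP\circ f^\PP$ is precisely that verification spelled out. Nothing further is needed.
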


Associated with any $\ta$-chain complex $(C,d)$, and any test ring $\PP$,
we consider the homology group
$$H_*(C,d;\PP):=H_*(C(\PP),d^\PP).$$
We may  denote this homology group by  $H_*(C;\PP)$,
if there is no confusion.
 If $(C_1,d_1)$ and $(C_2,d_2)$ are $\ta$ chain complexes and
$f:C_1\ra C_2$ is  an $\ta$ chain map, then the above lemma
implies that $f$ induces a homomorphism
$$f^\PP_*:H(C_1,d_1;\PP)\lra H(C_2,d_2;\PP).$$
\begin{defn}
An $\ta$-chain map $f:(C_1,d_1)\ra (C_2,d_2)$ between $\ta$ chain complexes
is called {\emph{null-homotopic}} if there is another
 $\ta$ chain map $H:(C_1,d_1)\ra (C_2,d_2)$
such that $f=H\circ d_1-d_2\circ H$.
$f$ is called a \emph{homotopy equivalence }of $\ta$ chain complexes
if there exist an $\ta$-chain map
$g:(C_2,d_2)\ra (C_1,d_1)$
such that $g\circ f-Id_{C_1}$ and $f\circ g- Id_{C_2}$ are null-homotopic.
$f:(C_1,d_1)\ra (C_2,d_2)$
is called a {\emph{quasi-isomorphism}} if the induced map
 $$f^\PP_*:H(C_1,d_1;\PP)\lra H(C_2,d_2;\PP)$$
 is an isomorphism for any test ring $\PP$.
 More generally, if $\mathfrak{B}$ is a family of test rings for $\Ring$,
 the $\ta$ chain map $f$ is called a $\mathfrak{B}${\emph{-isomorphism}} if
 $f^\PP_*$ is an isomorphism for any test ring $\PP\in \mathfrak{B}$.
 Two $\ta$ chain complexes
  $(C_1,d_1)$ and $(C_2,d_2)$ are {\emph{quasi-isomorphic}} if there is a
  third $\ta$ chain complex $(C,d)$, together with quasi-isomorphisms
  $f_i:(C_i,d_i)\ra (C,d)$, $i=1,2$. Similarly, we may define
  $\mathfrak{B}${\emph{-isomorphic}}
  $\ta$ chain complexes.
\end{defn}
\begin{lem}
If $f:(C_1,d_1)\ra (C_2,d_2)$ is a homotopy equivalence of $\ta$ chain
complexes, then $f$ is a quasi-isomorphism.
\end{lem}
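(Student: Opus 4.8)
The plan is to fix a test ring $\PP$ for $\Ring$ and reduce the statement, over $\PP$, to the elementary fact that a null-homotopic chain map of ordinary chain complexes induces the zero map on homology, together with the additivity and functoriality of $-\otimes_{\Ring}\PP$.

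First I would choose an $\ta$-chain map $g:(C_2,d_2)\ra (C_1,d_1)$ realizing the homotopy equivalence, so that $g\circ f-\Id_{C_1}$ and $f\circ g-\Id_{C_2}$ are null-homotopic. Unwinding the definition, there are $\Ring$-module homomorphisms $H_1:C_1\ra C_1$ and $H_2:C_2\ra C_2$ with
$$g\circ f-\Id_{C_1}=H_1\circ d_1-d_1\circ H_1,\qquad f\circ g-\Id_{C_2}=H_2\circ d_2-d_2\circ H_2.$$

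Next I would apply the functor $-\otimes_{\Ring}\PP$ to these identities. Since $(h\circ h')^\PP=h^\PP\circ (h')^\PP$, $(h+h')^\PP=h^\PP+(h')^\PP$, $(\Id)^\PP=\Id$, and $d_i^\PP$ is by definition $d_i\otimes\Id_\PP$, the displayed equations descend to
$$g^\PP\circ f^\PP-\Id_{C_1(\PP)}=H_1^\PP\circ d_1^\PP-d_1^\PP\circ H_1^\PP,\qquad f^\PP\circ g^\PP-\Id_{C_2(\PP)}=H_2^\PP\circ d_2^\PP-d_2^\PP\circ H_2^\PP,$$
with each $H_i^\PP$ a $\PP$-module homomorphism. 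Thus $g^\PP\circ f^\PP-\Id$ and $f^\PP\circ g^\PP-\Id$ are null-homotopic chain maps of the chain complexes $(C_i(\PP),d_i^\PP)$ over $\PP$. Then I would invoke the standard observation that a null-homotopic chain map $\psi=K d-dK$ sends a cycle $z$ to $-dKz$, a boundary, hence induces the zero map on homology; applying this and the functoriality of $H(-;\PP)$ gives $g^\PP_*\circ f^\PP_*=\Id$ on $H(C_1,d_1;\PP)$ and $f^\PP_*\circ g^\PP_*=\Id$ on $H(C_2,d_2;\PP)$. Therefore $f^\PP_*$ is an isomorphism with inverse $g^\PP_*$, and since $\PP$ was an arbitrary test ring, $f$ is a quasi-isomorphism by definition (cf. Lemma~\ref{lem:induced-chain-map}).

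There is essentially no serious obstacle here; the only point to be careful about is that the chain homotopies $H_i$ are genuine $\Ring$-module homomorphisms (not merely maps of underlying $\F$-modules), so that $-\otimes_{\Ring}\PP$ may legitimately be applied to them and yields $\PP$-module chain homotopies on $C_i(\PP)$ — without this, the argument would not pass to the tensored complexes.
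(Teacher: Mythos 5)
Your proof is correct and follows essentially the same route as the paper: take a homotopy inverse $g$ with homotopies $H$, $K$, tensor everything with an arbitrary test ring $\PP$ (noting the homotopies are $\Ring$-module maps), and conclude $f^\PP_*$ is an isomorphism with inverse $g^\PP_*$. The paper's own proof is just a terser version of exactly this argument.
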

\begin{proof}
If $g:(C_2,d_2)\ra (C_1,d_1)$ is the inverse of $f$
such that
$$g\circ f-Id_{C_1}=H\circ d_1-d_1\circ H,\ \
\&\ \ f\circ g- Id_{C_2}=K\circ d_2-d_2\circ K,$$
for homotopy maps $H$ and $K$, we obtain the
induced maps $f^\PP,g^\PP,H^\PP$ and $K^\PP$
over the induced complexes associated with any test
ring $\PP$. Thus, $f^\PP_*$ is an isomorphism
for any test ring $\PP$, and $g^\PP_*$ is its inverse.
\end{proof}
\subsection{The mapping cones of $\ta$-chain maps}
Most part of this sub-section is borrowed from Ozsv\'ath
and Szab\'o's \cite{OS-branched-double-cover} (subsection 4.1) with minor modifications.\\

If $(A_1,d_1)$ and $(A_2,d_2)$ are $\ta$  chain complexes
and $f:A_1\ra A_2$ is an $\ta$  chain map, we can form
the mapping cone $\M(f)$ of $f$, whose underlying complex
is the direct sum $A_1\oplus A_2$, which is equipped with
the differential
\begin{equation}
d_\M=\left(\begin{array}{cc}d_1&0\\ f&-d_2\end{array}\right).
\end{equation}
The chain complex $\M(f)$  inherits the structure of an
$\Ring$-module from $A_1$ and $A_2$, and its differential
respects the $\Ring$-module structure, since $d_1$ and $d_2$
 do so  and $f$ is an $\ta$ chain map. The following lemma
 follows immediately.\\

\begin{lem}
With the above notation, we have $\M(f)(\PP)=\M(f^\PP)$.
\end{lem}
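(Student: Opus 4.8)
The final statement to prove is the lemma asserting that $\M(f)(\PP) = \M(f^\PP)$ for a test ring $\PP$ and an $\ta$-chain map $f : A_1 \to A_2$.

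\medskip

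The plan is to unwind both sides of the claimed identity and observe that they agree both as $\PP$-modules and as differentials, using only the right-exactness of $\otimes_\Ring \PP$ applied to the split exact situation at hand. First I would recall that as an $\Ring$-module $\M(f) = A_1 \oplus A_2$, so since tensor product commutes with finite direct sums we get a canonical identification of $\PP$-modules
\begin{equation*}
\M(f)(\PP) = (A_1 \oplus A_2) \otimes_\Ring \PP \;\cong\; (A_1 \otimes_\Ring \PP) \oplus (A_2 \otimes_\Ring \PP) \;=\; A_1(\PP) \oplus A_2(\PP),
\end{equation*}
which is exactly the underlying $\PP$-module of $\M(f^\PP)$. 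So the only content is that the differential on the left transported through this isomorphism equals the matrix differential defining $\M(f^\PP)$ on the right.

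\medskip

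Next I would check the differentials entrywise. The differential $d_\M$ on $\M(f)$ is the $\Ring$-linear map given by the matrix $\left(\begin{smallmatrix} d_1 & 0 \\ f & -d_2 \end{smallmatrix}\right)$; by definition the induced differential $d_\M^\PP$ on $\M(f)(\PP)$ is $d_\M \otimes_\Ring \mathrm{id}_\PP$. Under the canonical isomorphism above, $d_\M \otimes \mathrm{id}_\PP$ is computed entrywise: the $(1,1)$-entry becomes $d_1 \otimes \mathrm{id}_\PP = d_1^\PP$, the $(2,1)$-entry becomes $f \otimes \mathrm{id}_\PP = f^\PP$ (which is an $\ta$-chain map by Lemma~\ref{lem:induced-chain-map}), the $(2,2)$-entry becomes $-d_2 \otimes \mathrm{id}_\PP = -d_2^\PP$, and the $(1,2)$-entry is $0$. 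Hence
\begin{equation*}
d_\M^\PP = \left(\begin{array}{cc} d_1^\PP & 0 \\ f^\PP & -d_2^\PP \end{array}\right),
\end{equation*}
which is precisely the differential $d_{\M(f^\PP)}$ appearing in the definition of the mapping cone of $f^\PP$. This proves the identity on the nose.

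\medskip

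The only point requiring any care — and really the only candidate for a "main obstacle", though it is quite mild — is making the entrywise computation of $d_\M \otimes \mathrm{id}_\PP$ rigorous: one wants the canonical isomorphism $(A_1 \oplus A_2)\otimes_\Ring \PP \cong (A_1\otimes_\Ring \PP)\oplus(A_2\otimes_\Ring\PP)$ to intertwine the tensored block matrix with the block matrix of the tensored blocks. This is exactly the statement that $-\otimes_\Ring \PP$ is an additive functor, so it respects finite biproducts and the matrix description of maps between them; no commutativity or flatness hypothesis on $\PP$ is needed since we are only using direct sums, not kernels. Thus the lemma follows formally, and one should present it in a line or two, as the excerpt already signals by saying it "follows immediately."
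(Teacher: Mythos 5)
Your proof is correct and matches the paper's treatment: the paper states this lemma without proof, declaring it an immediate consequence of the definitions, and your entrywise computation (direct sums commute with $\otimes_\Ring\PP$, and the block-matrix differential tensors entry by entry by additivity of the functor) is exactly the verification being taken for granted. Nothing further is needed.
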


There is a short exact sequence of $\ta$ chain complexes
\begin{displaymath}
\begin{diagram}
0&\rTo{}&A_2(\PP)&\rTo{\imath^\PP}&\M(f)(\PP)&\rTo{\pi^\PP}&A_1(\PP)&\rTo&0,
\end{diagram}
\end{displaymath}
induced from the natural sequence
\begin{displaymath}
\begin{diagram}
0&\rTo{}&A_2&\rTo{\imath}&\M(f)&\rTo{\pi}&A_1&\rTo&0.
\end{diagram}
\end{displaymath}
For each test ring $\PP$ for $\Ring$ we thus obtain
a long exact sequence in homology
\begin{displaymath}
\begin{diagram}
\dots&\rTo&H(A_2,d_2;\PP)&\rTo&H(\M(f),d_\M;\PP)&\rTo
&H(A_1,d_1;\PP)&\rTo{f^\PP_*}&H(A_2,d_2;\PP)&\rTo&\dots
\end{diagram}
\end{displaymath}
 The construction of the mapping cone is
natural in the sense that a commutative
diagram  of $\ta$ chain maps
\begin{displaymath}
\begin{diagram}
A_1&\rTo{f}&A_2\\
\dTo{\phi_1}&&\dTo{\phi_2}\\
B_1&\rTo{g}&B_2
\end{diagram}
\end{displaymath}
induces an $\ta$-chain map $\m(\phi_1,\phi_2):\M(f)\ra\M(g)$ such
that there is a homotopy commutative diagram with exact rows
\begin{displaymath}
\begin{diagram}
0&\rTo&A_2&\rTo{\imath_f}&\M(f)&\rTo{\pi_f}       &A_1&\rTo    &0\\
 &    &\dTo{\phi_2}&     &\dTo{\m(\phi_1,\phi_2)}&&\dTo{\phi_1}&&\\
0&\rTo&B_2&\rTo{\imath_g}&\M(g)&\rTo{\pi_g}       &B_1&\rTo    &0.\\
\end{diagram}
\end{displaymath}
The following lemma is the main algebraic ingredient
in the study of holomorphic triangles in this paper.

\begin{lem}\label{lem:quasi-isomorphism}{\bf{(c.f.
lemma 4.4 from \cite{OS-branched-double-cover})}}
Let $\{(A_i,d_i)\}_{i=1}^\infty$ be a collection of
$\ta$ chain complexes and $\{f_i:A_i\ra A_{i+1}\}$
be a collection of $\ta$ chain maps between these
complexes which satisfy the following two properties:\\
(1) There are $\ta$ homomorphisms $H_i:A_i\ra A_{i+2}$ such that
$$f_{i+1}\circ f_i=H_i\circ d_i+d_{i+2}\circ H_i,$$
i.e. $f_{i+1}\circ f_i$ is null-homotopic via
$\ta$ chain homotopy maps $H_i$.\\
(2)The difference
$$f_{i+2}\circ H_i-H_{i+1}\circ f_i:A_i\ra A_{i+3}$$
is a homotopy equivalence for $i=1,2,...$.\\
Then $\M(f_i)$ is homotopy equivalent to $A_{i+2}$
for $i\geq 2$. Moreover,
if $$f_{i+2}\circ H_i-H_{i+1}\circ f_i:A_i\ra A_{i+3}$$
is a $\mathfrak{B}$-isomorphism for some family
$\mathfrak{B}$ of test rings for $\Ring$ and
for $i=1,2,...$, then $\M(f_i)$ is $\Bb$-isomorphic
to $A_{i+2}$ for $i\geq 2$.
\end{lem}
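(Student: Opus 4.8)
The plan is to imitate the proof of Lemma 4.4 in \cite{OS-branched-double-cover}, working throughout in the category of $\ta$ chain complexes and $\ta$ chain maps, and invoking the already-established naturality of the mapping cone. First I would observe that, because of property (1), the data $(f_i, H_i)$ assemble into a genuine $\ta$ chain map from $\M(f_i)$ to $A_{i+2}$: explicitly, define $\Phi_i \colon \M(f_i) = A_i \oplus A_{i+1} \longrightarrow A_{i+2}$ by $\Phi_i(a,b) = f_{i+1}(b) - H_i(a)$ (up to sign conventions dictated by the form of $d_\M$), and check directly that $\Phi_i \circ d_{\M(f_i)} = d_{i+2} \circ \Phi_i$; the verification uses exactly that $d_{i+1} f_i = f_{i+1} d_i$ (chain map) and $f_{i+1} f_i = H_i d_i + d_{i+2} H_i$ (null-homotopy), nothing more. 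This is the $\ta$-linear map that I claim is the homotopy equivalence (resp.\ $\Bb$-isomorphism) of the statement.

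Next I would construct a candidate homotopy inverse $\Psi_i \colon A_{i+2} \longrightarrow \M(f_i)$. Here one is forced to use the \emph{next} stage of data: since $\M(f_{i+1}) = A_{i+1} \oplus A_{i+2}$, there is a natural $\ta$ chain inclusion and projection, and the map $\Psi_i$ is built from $f_{i+2}$ together with $H_{i+1}$, landing in the second summand $A_{i+1}$ of $\M(f_i)$ after composing with $f_i$; schematically $\Psi_i(c) = \big(?\,,\, f_{i+1}^{-1}(\cdots)\big)$ — the point being that $\Psi_i$ will not be canonical, it will be defined up to $\ta$ chain homotopy, which is all we need. The composites $\Phi_i \circ \Psi_i$ and $\Psi_i \circ \Phi_i$ are then computed: one of them is $\pm(f_{i+2}\circ H_i - H_{i+1}\circ f_i)$ up to a term that is visibly null-homotopic through $\ta$ chain homotopies (the relevant homotopies being assembled from $H_i$, $H_{i+1}$ and the structure maps $d_j$), and the other is handled symmetrically by the same bookkeeping one stage up. By hypothesis (2) the map $f_{i+2}\circ H_i - H_{i+1}\circ f_i$ is a homotopy equivalence of $\ta$ chain complexes, so after post-composing $\Psi_i$ with a homotopy inverse of it we obtain genuine two-sided $\ta$ chain homotopy inverses, proving $\M(f_i) \simeq A_{i+2}$ for $i \geq 2$ (the restriction $i\ge 2$ is exactly what is needed so that both $\Phi_i$ and the $\Psi_i$ built from the $(i{+}1)$-st data are available).

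For the refined statement, I would not redo any diagram chasing: instead I apply $-\otimes_\Ring \PP$ for $\PP \in \Bb$ and use Lemma~\ref{lem:induced-chain-map} together with $\M(f)(\PP) = \M(f^\PP)$ (the lemma just above the statement), so that all the $\ta$-level identities descend verbatim to identities over $\PP$. If $f_{i+2}\circ H_i - H_{i+1}\circ f_i$ is a $\Bb$-isomorphism, then its induced map on $H_*(-;\PP)$ is an isomorphism for every $\PP\in\Bb$, and the same composite-and-homotopy computation shows $\Phi_i^\PP$ induces an isomorphism on homology for all $\PP\in\Bb$, i.e.\ $\M(f_i)$ is $\Bb$-isomorphic to $A_{i+2}$. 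The main obstacle I anticipate is purely organizational rather than conceptual: keeping the signs in $d_\M = \left(\begin{smallmatrix} d_1 & 0 \\ f & -d_2\end{smallmatrix}\right)$ consistent while writing down $\Phi_i$, $\Psi_i$ and the explicit $\ta$ chain homotopies relating $\Phi_i\Psi_i$ and $\Psi_i\Phi_i$ to the identity, and making sure every auxiliary homotopy map one writes down is genuinely $\Ring$-linear (which it is, since $d_j$, $f_j$, $H_j$ all are, and we only ever compose and add them) — so that the whole argument takes place in the $\ta$-module category and the passage to test rings $\PP$ is automatic.
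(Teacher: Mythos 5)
Your first step is fine and agrees with the paper: the map $\Phi_i(a,b)=f_{i+1}(b)-H_i(a)$ is exactly the map the paper calls $\alpha_i\colon\M(f_i)\ra A_{i+2}$, and verifying it is an $\ta$ chain map uses only hypothesis (1). The gap is in everything after that. There is no map $\Psi_i\colon A_{i+2}\ra \M(f_i)$ that can be assembled from $f_{i+2}$ and $H_{i+1}$: all maps one can build from the given data go \emph{forward} along the sequence, and your schematic involves $f_{i+1}^{-1}$, which does not exist. Moreover, the identity you propose to check cannot hold for type reasons: $\Phi_i\circ\Psi_i$ would be an endomorphism of $A_{i+2}$ and $\Psi_i\circ\Phi_i$ an endomorphism of $\M(f_i)$, while $f_{i+2}\circ H_i-H_{i+1}\circ f_i$ maps $A_i$ to $A_{i+3}$, so neither composite can equal it "up to a null-homotopic term". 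The actual mechanism (both here and in Ozsv\'ath--Szab\'o's Lemma 4.4) never constructs an inverse of $\Phi_i$ directly. One introduces the forward map $\beta_i\colon A_i\ra\M(f_{i+1})$, $\beta_i(a)=(f_i(a),H_i(a))$, checks that $\phi_i=(-1)^i(f_{i+2}\circ H_i-H_{i+1}\circ f_i)$ is a chain map making the relevant squares homotopy commute (via $L_i=H_{i+2}\circ H_i$, again using hypothesis (1)), observes $\alpha_{i+1}\circ\beta_i=\pm\phi_i$, and then shows by an explicit computation, with a homotopy built from $H_{i+1}$, that $\beta_{i+2}\circ\alpha_i$ is chain homotopic to the induced cone map $\m(\phi_i,\phi_{i+1})\colon\M(f_i)\ra\M(f_{i+3})$, which is a homotopy equivalence because $\phi_i$ and $\phi_{i+1}$ are. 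Since $\alpha_{i+3}\circ\beta_{i+2}=\pm\phi_{i+2}$ is also an equivalence, a formal two-out-of-three argument gives first that $\beta_{i+2}$, then that $\alpha_i=\Phi_i$, is a homotopy equivalence. This auxiliary map $\beta$ and the comparison with $\m(\phi_i,\phi_{i+1})$ are precisely what your outline omits; the phrase "handled symmetrically by the same bookkeeping one stage up" is where the real work lives.

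The $\Bb$-refinement also does not follow by "the same computation": there $\phi_i$ is only assumed to induce isomorphisms on $H_*(-;\PP)$ for $\PP\in\Bb$, so it need not have a homotopy inverse to post-compose with. The paper instead tensors the homotopy-commutative ladder whose rows come from the short exact sequences $0\ra A_{i+1}\ra\M(f_i)\ra A_i\ra 0$ with $\PP$, passes to the long exact sequences in homology, and applies the five lemma to conclude that $(\beta_{i+2}\circ\alpha_i)^\PP$ is an isomorphism on homology; combined with $\alpha_{i+3}\circ\beta_{i+2}=\pm\phi_{i+2}$ being a $\Bb$-isomorphism, this gives that $\alpha_i$ is a $\Bb$-isomorphism. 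Some version of this exact-sequence/five-lemma step is unavoidable under the weaker hypothesis, and your plan does not supply it.
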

\begin{proof}
The maps $\phi_i=(-1)^i\left(f_{i+2}\circ H_i-H_{i+1}\circ f_i\right):A_i\ra A_{i+3}$ are
$\ta$ chain maps,  making the following diagram
homotopy commutative
\begin{equation}\label{eq:commutativity}
\begin{diagram}
A_i&\rTo{f_i}&A_{i+1}\\
\dTo{\phi_i}&&\dTo{\phi_{i+1}}\\
A_{i+3}&\rTo{f_{i+3}}&A_{i+4}.
\end{diagram}
\end{equation}
In fact, using the first property in the
statement of the lemma we will have
$$\phi_{i+1}\circ f_i-f_{i+3}\circ \phi_i=(-1)^i\left(
(H_{i+2}\circ H_i)\circ d_i-d_{i+4}\circ (H_{i+2}\circ H_i)\right),$$
and $\phi_{i+1}\circ f_i-f_{i+3}\circ\phi_i$ is thus null-homotopic.
Let us denote $H_{i+2}\circ H_i$ by $L_i:A_i\ra A_{i+4}$.
We then define $\alpha_i:\M(f_i)\ra A_{i+2}$
and $\beta_i:A_i\ra \M(f_{i+1})$
by
$$\alpha_i(a_i,a_{i+1})=f_{i+1}(a_{i+1})-H_i(a_i),\ \ \&\ \
\beta_i(a_i)=(f_i(a_i),H_i(a_i))$$
 respectively. Then $\alpha_{i+1}\circ\beta_i=(-1)^i\phi_i$
 is a homotopy equivalence by the second property
above. All the squares in
the following diagram commute up to homotopy
\begin{equation}\label{eq:commutative-diagram}
\begin{diagram}
A_i&\rTo{f_{i}}&A_{i+1}&\rTo{\imath_{i+1}}
&\M(f_i)&\rTo{(-1)^{i+1}\pi_{i}}&A_i&\rTo{f_i} &A_{i+1}\\
\dTo{=} &    &\dTo{=}&     &\dTo{\alpha_i}
&&\dTo{\phi_i}&&\dTo{\phi_{i+1}}\\
A_i&\rTo{f_i}&A_{i+1}&\rTo{f_{i+1}}&A_{i+2}
&\rTo{f_{i+2}}&A_{i+3}&\rTo{f_{i+3}} &A_{i+4}\\
\dTo{\phi_i} &    &\dTo{\phi_{i+1}}&
&\dTo{\beta_{i+2}}&&\dTo{=}&&\dTo{=}\\
A_{i+3}&\rTo{f_{i+3}}&A_{i+4}&\rTo{(-1)^i\imath_{i+4}}
&\M(f_{i+3})&\rTo{\pi_{i+3}}&A_{i+3}&\rTo{f_{i+3}} &A_{i+4}.\\
\end{diagram}
\end{equation}
The commutativity of the two squares on the right
and the two squares on the left
already follows from the commutativity of the
square in equation~\ref{eq:commutativity}.
The definition of $\alpha_i$ and $\beta_{i+2}$
imply the equalities
$$f_{i+2}=\pi_{i+3}\circ \beta_{i+2},\ \ \&\ \
f_{i+1}=\alpha_i\circ \imath_{i+1}.$$
For the remaining two squares, let us define
\begin{displaymath}
\begin{split}
K^1_i:\M(f_i)\ra A_{i+3},\ \ \
&K^1_i(a_i,a_{i+1}):=H_{i+1}(a_{i+1}),\\
K^2_i:A_i\ra \M(f_{i+2}),\ \ \
&K^2_i(a_i)=(H_i(a_i),0)
\end{split}
\end{displaymath}
We can then compute
\begin{displaymath}
\begin{split}
&(-1)^{i+1}\phi_i\circ \pi_{i}-f_{i+2}\circ \alpha_i
=K^1_i\circ d_{\M_i}-d_{i+3}\circ K^1_i,\ \ \&\\
&\beta_{i+2}\circ f_{i+1}-(-1)^i\imath_{i+4}\circ \phi_{i+1}
=K_{i+1}^2\circ d_{i+1}+d_{\M_{i+3}}\circ K_{i+1}^2,
\end{split}
\end{displaymath}
where $d_{\M_i}$ denotes the differential of $\M_i=\M(f_i)$.
We first claim that $$F_i=\beta_{i+2}\circ
\alpha_i:\M(f_i)\ra \M(f_{i+3})$$
is a chain homotopy equivalence.
In fact, note that
\begin{displaymath}
\begin{split}
F_i(a_i,a_{i+1})&=\beta_{i+2}(f_{i+1}(a_{i+1})-H_i(a_i))\\
&=\Big(f_{i+2}\big(H_i(a_i)-f_{i+1}(a_{i+1})\big),
H_{i+2}\big(H_i(a_i)-f_{i+1}(a_{i+1})\big)\Big)\\
&=\m(\phi_i,\phi_{i+1})(a_i,a_{i+1})+\big(d_{\M_{i+3}}
\circ H^i+H^{i+3}\circ d_{\M_i}\big)(a_i,a_{i+1})\\
\text{where }&\begin{cases}
H^i(a_i,a_{i+1}):=\big(H_{i+1}(a_{i+1}),0\big),\\
\m(\phi_i,\phi_{i+1})(a_i,a_{i+1}):=\big((-1)^{i-1}\phi_i(a_i)
,(-1)^i\phi_{i+1}(a_{i+1})-L_i(a_i)\big)\end{cases}.
\end{split}
\end{displaymath}
Since $\m(\phi_i,\phi_{i+1})$ is a chain
homotopy equivalence, it follows that the same
is true for $F_i$. Since $\alpha_{i+3}\circ
\beta_{i+2}=(-1)^i\phi_{i+2}$ is a chain homotopy equivalence
as well, it follows that $\alpha_2$ is a
chain homotopy equivalence (one needs to use the fact that
$\alpha_{i+3},\beta_{i+2}$ and $F_i$ are  all
chain maps).\\

For the $\Bb$-isomorphism statement, note
that for any test ring $\PP\in\Bb$ for the ring
$\Ring$, we may replace the complexes $A_i$
with $A_i(\PP)$ and $\M(f_i)$ with
$\M(f_i^\PP)$ in the commutative
diagram~\ref{eq:commutative-diagram}. Then
 the maps induced on homology associated with the
first and the third row of the above diagram are exact.
From the five lemma, it follows that
the map induced on homology by
$\beta_{i+2}^\PP\circ \alpha_i^\PP$ is an isomorphism.
Since $\alpha_{i+3}\circ \beta_{i+2}=\phi_{i+2}$
is a $\Bb$-isomorphism, we conclude that $\beta_{i+2}$,
and hence $\alpha_{i}$ are $\Bb$-isomorphisms as well.
\end{proof}

\subsection{Filtration by a $\Z$-module}
Let us assume that $\Ring$ is an algebra over $\F$ which is
generated, as a free module over $\F$, by a set
$G(\Ring)$ of generators. We will assume that $1\in G(\Ring)$.
The choice of this  basis for $\Ring$ as
a free module over $\F$ will be implicit in our notation.
Furthermore, let $\Hbb$ be a $\Z$-module.
\begin{defn}
By a {\emph{filtration}} for $\Ring$ with values in $\Hbb$
we mean a choice of the basis $G(\Ring)$ for the free $\F$-module
$\Ring$, and a map
$$\chi:G(\Ring)\lra \Hbb$$
which satisfies $\chi(1)=0$ and $\chi(ab)=\chi(a)+\chi(b)$
for all $a,b\in G(\Ring)$.
The pair $(\Ring, \chi:G(\Ring)\ra \Hbb)$ is called
a {\emph{coefficient ring filtered by}} $\Hbb$.
We will typically drop $\chi$ and the choice of
$G(\Ring)$ from the notation, if there is no confusion, and
will denote the filtered ring by the pair $(\Ring,\Hbb)$.
\end{defn}
Suppose that $\phi_\Ringg:\Ring\ra \Ringg$ is
a test ring for $\Ring$ which is a free $\F$-module on its own with
basis $G(\Ringg)$, and that
$$\chi_\Ringg:G(\Ringg)\lra \Hbb$$
is a filtration for $\Ringg$.
\begin{defn}
We say that
$\chi_\Ringg$ is {\emph{compatible}}
with $\chi$ if
$$\phi_\Ringg(a)\in \big\langle \chi_\Ringg^{-1}
\big(\chi(a)\big)\big\rangle_\F\subset \Ringg,\ \
\forall \ \ a\in G(\Ring).$$
If this is the case, we will call the triple
$(\Ringg,\phi_\Ringg,\chi_\Ringg)$ a {\emph{filtered test ring}} for
$(\Ring,\Hbb)$. Again, when there is no confusion we will
denote this triple by $(\Ringg,\Hbb)$.
\end{defn}
Let us assume that $(C,d)$ is an $\Ring$ chain complex,  that
$\chi:G(\Ring)\ra \Hbb$ is a filtration for $\Ring$, and
 that $C$ is freely generated over $\Ring$ by  some subset $I$ of $C$.
\begin{defn}
We say that the $\ta$ chain complex $(C,d)$ is a {\emph{filtered}}
$\tab$ chain complex if there is a basis $I\subset C$
for $C$ over $\Ring$ and a {\emph{filtration}}
$$\chi:I\times I\ra \Hbb,$$
which satisfies:\\
1) $\chi(c_1,c_2)=-\chi(c_2,c_1)$ for all $c_1,c_2\in I$.\\
2) $\chi(c_1,c_2)+\chi(c_2,c_3)=\chi(c_1,c_3)$, for all $c_1,c_2,c_3\in I$.\\
3) For any $c\in I$, $d(c)=\sum_{i=1}^N a_ic_i$, with $c_1,...,c_N\in I$
and $a_1,...,a_N\in G(\Ring)$, such that
$$\chi(c,c_i)=\chi(a_i),\ \ \forall\ i\in\{1,...,N\}.$$
We are of course abusing the notation by denoting both the filtration of $C$
and the filtration of $\Ring$ by $\chi$.
\end{defn}
If $(C,d)$ is a filtered $\tab$ chain complex as above, one may think of
$\chi(c_1,c_2)$ as the difference $\chi(c_1)-\chi(c_2)$, where
$\chi:I\ra \Sbb$ and $\Sbb$ is
 an affine space  over the module $\Hbb$.
 With this new notation, $\chi(a.c)$ for $a\in G(\Ring)$ and $c\in I$ may be defined
 as $\chi(a)+\chi(c)\in \Sbb$.\\

 Clearly, taking the tensor
 product of  $(C,d)$ with any filtered test ring $(\Ringg,\Hbb)$ results in a
 filtered $(\Ringg,\Hbb)$ chain complex.\\

 \begin{defn}
 An $\ta$ chain map $f:(C_1,d_1)\ra (C_2,d_2)$ between $\tab$ chain complexes
 $(C_i,d_i)$ with basis $I_i$, $i=1,2$ and filtrations $\chi_1,\chi_2$ is called
 a {\emph{filtered}} $\tab$ chain map if for all $c,c'\in I_1$ we may write
 $$f(c)=\sum_{i=1}^N a_i b_i,\ \ f(c')=\sum_{i=j}^M a_j'b_j',\ \
 a_i,a_j'\in G(\Ring),\ \&\ b_i,b_j'\in I_2,$$
 such that for any $i=1,...,N$ and $j=1,...,M$ we have
 $$\chi_1(c,c')=\chi_2(b_i,b_j')+\chi(a_i)-\chi(a_j').$$
 \end{defn}

In particular, if for some affine space $\Sbb$ over $\Hbb$, there are maps
$\chi_i:I_i\ra \Sbb$ which satisfy $\chi_i(c_1,c_2)=\chi_i(c_1)-\chi_i(c_2)$ for
$i=1,2$ and $c_1,c_2\in I_i$, the above condition may be translated
to $\chi_2(a_ib_i)=\chi_1(c)$ for $i=1,...,N$, whenever  $f(c)=\sum_{i=1}^N a_i b_i$
with $a_i\in G(\Ring)$ and $b_i\in I_2$.\\

Similarly, we may define the notion of a chain homotopy  respecting
the filtrations (i.e. $\tab$ chain homotopy), and filtered $\tab$
chain homotopy equivalence. Mapping cones of filtered $\tab$ chain maps
are filtered $\tab$ chain complexes. Moreover, the following refinement of
lemma~\ref{lem:quasi-isomorphism} may be proved with with a similar argument.\\

\begin{lem}\label{lem:quasi-iso-filtered}
With the notation of  lemma~\ref{lem:quasi-isomorphism},
if the $\ta$ chain complexes $A_i$ are all filtered $\tab$ chain complexes,
the $\ta$ chain maps $f_i$, as well as the $\ta$-homomorphisms $H_i$ are
all $\tab$ filtered, and $f_{i+2}\circ H_i-H_{i+1}\circ f_i$ are all
filtered $\tab$ chain homotopy equivalences,
$\M(f_i)$ is filtered $\tab$ chain homotopy equivalent to $A_{i+2}$.
\end{lem}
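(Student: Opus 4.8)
The plan is to rerun the proof of Lemma~\ref{lem:quasi-isomorphism} essentially verbatim, checking at each step that the maps produced there are \emph{filtered} $\tab$ chain maps, filtered $\tab$ chain homotopies, and filtered $\tab$ chain homotopy equivalences. Once this is done, every conclusion of that proof upgrades automatically from the $\ta$-linear homotopy category to the filtered one, and in particular the final equivalence $\alpha_i\colon\M(f_i)\to A_{i+2}$ becomes a filtered $\tab$ chain homotopy equivalence for $i\ge 2$, which is the assertion.

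The preliminary observation is a list of formal closure properties of the filtered category. First, the class of filtered $\tab$ chain complexes is closed under finite direct sums, and the mapping cone $\M(f)$ of a filtered $\tab$ chain map $f\colon A_1\to A_2$ is again a filtered $\tab$ chain complex: its basis is the disjoint union of the bases of $A_1$ and $A_2$, and its filtration is inherited from $\chi_1,\chi_2$ once the affine spaces over $\Hbb$ on which these take values are identified through $f$ --- which is possible precisely because $f$ is filtered. Second, filtered $\tab$ chain maps compose to filtered maps, the identity is filtered, and composites and direct sums of filtered $\tab$ chain homotopies are again filtered $\tab$ chain homotopies. Consequently ``being a filtered $\tab$ chain homotopy equivalence'' is a composition-stable property enjoying the two-out-of-three property within the filtered homotopy category. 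Next one checks that every map built in the proof of Lemma~\ref{lem:quasi-isomorphism} is assembled out of the $f_i$, the $H_i$, and the differentials $d_i$ --- all filtered by hypothesis --- using only compositions, signs, and direct sums, hence is itself filtered: this applies to $\phi_i=(-1)^i(f_{i+2}\circ H_i-H_{i+1}\circ f_i)$, to $\alpha_i(a_i,a_{i+1})=f_{i+1}(a_{i+1})-H_i(a_i)$ and $\beta_i(a_i)=(f_i(a_i),H_i(a_i))$, and to the homotopy operators $L_i=H_{i+2}\circ H_i$, $H^i$, $K^1_i$, $K^2_i$, as well as to $\m(\phi_i,\phi_{i+1})$. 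The homotopy identities proved there (the null-homotopy of $\phi_{i+1}\circ f_i-f_{i+3}\circ\phi_i$ via $L_i$, the two displayed relations involving $K^1_i$ and $K^2_i$, and the expression of $F_i=\beta_{i+2}\circ\alpha_i$ as $\m(\phi_i,\phi_{i+1})$ plus a $d_{\M}$-boundary) are identities of $\ta$-linear maps, so they persist unchanged; the only new content is the bookkeeping remark that each operator occurring in them is filtered.

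With this in place the argument concludes as in the unfiltered case. The map $\m(\phi_i,\phi_{i+1})$, being the ``diagonal'' assembly of the $\phi_j$ (filtered homotopy equivalences by hypothesis), is a filtered $\tab$ chain homotopy equivalence, so the formula for $F_i$ shows that $F_i=\beta_{i+2}\circ\alpha_i$ is a filtered $\tab$ chain homotopy equivalence; and $\alpha_{i+3}\circ\beta_{i+2}=(-1)^i\phi_{i+2}$ is one as well. Applying the two-out-of-three property from the preliminary observation --- exactly where the phrase ``chain homotopy equivalence'' is invoked at the end of the proof of Lemma~\ref{lem:quasi-isomorphism}, now read internally to the filtered homotopy category, using that $\alpha_{i+3},\beta_{i+2},F_i$ are filtered chain maps --- yields that $\alpha_i\colon\M(f_i)\to A_{i+2}$ is a filtered $\tab$ chain homotopy equivalence for $i\ge 2$.

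The step I expect to be the main obstacle, and which I would isolate as a small sub-lemma, is the precise two-out-of-three statement just used: given a filtered $\tab$ chain map that is known to be an $\ta$-linear homotopy equivalence, one must be able to \emph{choose} a homotopy inverse together with the two homotopies so that all of them respect the filtration. This is where one exploits that, through the affine-space-over-$\Hbb$ formalism developed in this section, the filtration behaves like a grading, so the standard obstruction-theoretic construction of a homotopy inverse can be carried out compatibly with the filtration degrees. Granting this, no new analytic or combinatorial input is required and the lemma follows formally from Lemma~\ref{lem:quasi-isomorphism}.
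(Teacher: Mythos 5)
Your proposal is correct and is essentially the paper's own approach: the paper gives no separate proof of this lemma, saying only that it "may be proved with a similar argument" to lemma~\ref{lem:quasi-isomorphism}, i.e.\ by rerunning that proof and observing that every map constructed there ($\phi_i$, $\alpha_i$, $\beta_i$, $L_i$, $K^1_i$, $K^2_i$, $H^i$, $\m(\phi_i,\phi_{i+1})$) is built from the filtered data $f_i$, $H_i$, $d_i$ by composition and direct sum, hence is filtered, which is exactly what you do. The sub-lemma you isolate as the main obstacle is not actually needed: the hypothesis already provides \emph{filtered} homotopy inverses and filtered homotopies for $\phi_i=f_{i+2}\circ H_i-H_{i+1}\circ f_i$, so the final two-out-of-three step runs entirely inside the filtered homotopy category (a filtered chain map with a filtered left homotopy inverse and a filtered right homotopy inverse is a filtered homotopy equivalence, by the purely formal argument), and no obstruction-theoretic upgrading of an unfiltered equivalence to a filtered one ever arises.
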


\subsection{The algebra associated with the
boundary of a sutured manifold}\label{subsec:algebra-of-SM}
Let $(X,\tau)$ be a balanced sutured manifold. We will assume that
$$\Rr^-(\tau)=\bigcup_{i=1}^k R_i^-,\ \ \&\ \
\Rr^+(\tau)=\bigcup_{j=1}^l R_j^+.$$
Here $R_i^-$ and $R_j^+$ are the
connected components of $\Rr^-(\tau)$ and
$\Rr^+(\tau)$ respectively, for $i=1,...,k$
and $j=1,...,l$, and let $g_i^-$ denote the genus of $R_i^-$ and $g_j^+$ denote the genus
of $R_j^+$.
The set of sutures $\tau=\{\gamma_1,...,\gamma_\el\}$
determines an algebra over $\F$ as
follows.
Consider the free $\F$-algebra
$$\F[\el]:=\big\langle \la_1,...,\la_\el\big\rangle_\F
:=\F\big[\la_1,...,\la_\el\big],$$
and consider the following elements in it
\begin{displaymath}
\begin{split}
&\la^-=\la^-(\tau):=\sum_{i=1}^k \la(R_i^-),\ \ \&\ \
\la^+=\la^+(\tau):=\sum_{i=1}^l \la(R_i^+),\ \ \ \ \text{where}\\
&\la_i^-=\la(R_i^-):=\prod_{\gamma_j\subset \partial R_i^-}\la_j,\ \ \ \ \   1\leq i\leq k,\ \ \&\\
&\la_i^+=\la(R_i^+):=\prod_{\gamma_j\subset \partial R_i^+}\la_j,\ \ \ \ \   1\leq i\leq l.\\
\end{split}
\end{displaymath}
Consider the following quotients of $\langle \la_1,...,\la_\el\rangle$,
which are  all finitely generated algebras over $\F$:
\begin{displaymath}
\begin{split}
\Rin_\tau&:=\frac{\Big\langle \la_1,...,\la_\el\Big\rangle_\F}
{\Big\langle \la_i^-\ \big|\ g_i^->0\Big\rangle_{\F[\el]}+
\Big\langle \la_j^+\ \big|\ g_j^+>0\Big\rangle_{\F[\el]}},\\
\Ring_\tau&:=\frac{\Big\langle \la_1,...,\la_\el\Big\rangle_\F}
{\Big\langle \la^+(\tau)-\la^-(\tau)\Big\rangle_{\F[\el]}+
\Big\langle \la_i^-\ \big|\ g_i^->0\Big\rangle_{\F[\el]}+
\Big\langle \la_j^+\ \big|\ g_j^+>0\Big\rangle_{\F[\el]}},\ \ \&\\
\ov\Ring_\tau&:=\frac{\Big\langle \la_1,...,\la_\el\Big\rangle_\F}
{\Big\langle \la_i^-\ \big|\ i=1,...,k\Big\rangle_{\F[\el]}+
\Big\langle \la_j^+\ \big|\ j=1,...,l\Big\rangle_{\F[\el]}}.
\end{split}
\end{displaymath}
Clearly  $\Ring=\Ring_\tau, \ov\Ring=\ov\Ring_\tau$ and $\Rin=\Rin_\tau$
are all generated, as modules over $\F$, by
elements of the form $\prod_{i=1}^\el \la_i^{a_i}$, where $a_i$ are
non-negative integers. We will denote the set of all such
monomials by $G(\Ring)=G(\Rin)$. The ring $\Ring_\tau$ will be used
as the coefficient ring for the Ozsv\'ath-Szab\'o chain complexes
which we introduce in this paper, while $\Rin_\tau$ is used to make
the admissibility criteria for the Heegaard diagrams slightly stronger, and
is not really essential in our construction. The ring $\ov\Ring_\tau$
will appear when we discuss a generalization of Juh\'asz' surface
decomposition formula from \cite{Juh} later in \cite{AE-SFH}.\\

One may define a natural map, called the
Poincar\'e duality character in this paper, from $G(\Ring)$ to the $\Z$-module
$\Hbb=\Hbb_\tau:=\Ht^2(X,\partial X,\Z)$ by
\begin{displaymath}
\begin{split}
&\chi:G(\Ring)\lra \Hbb=\Ht^2(X,\partial X;\Z),\\
&\chi\Big(\prod_{i=0}^\el \la_i^{a_i}\Big):=
a_1\mathrm{PD}[\gamma_1]+...+a_\el\mathrm{PD}
[\gamma_\el],\ \ \forall \ a_1,..,a_\el\in\Z^{\geq 0}.
\end{split}
\end{displaymath}
As defined, $\chi$ is just a map from the set of
generators for $\F[\el]$ to $\Ht^2(X,\partial X,\Z)$.
However, since $\chi(\la(R_i^-))=-\mathrm{PD}[\partial R_i^-]=0$
and $\chi(\la(R_j^+))=\mathrm{PD}[\partial R_j^+]=0$
for all $i=1,...,k$ and $j=1,...,l$,
the map is well-defined on $G(\Ring)$ and $G(\Rin)$.
The Poincar\'e duality character gives  filtrations of
$\Ring$ and $\Rin$ by $\Hbb=\Ht^2(X,\partial X;\Z)$.
\\

We may also define a map from the set of positive
Whitney disks associated with a Heegaard diagram
$(\Sig,\alphas,\betas,\z)$ for $(X,\tau)$ to  $G(\Ring)$
and $G(\Rin)$
by computing the local multiplicities of
the domain associated with each  disk
at the marked points in $\z$:
\begin{displaymath}
\begin{split}
&\la=\la_\z:\coprod_{\x,\y\in\Ta\cap\Tb}\pi_2^+(\x,\y)\lra G(\Ring)\\
&\la(\phi):=\prod_{i=1}^\el \la_i^{n_{z_i}(\phi)},\ \ \
\forall\ \x,\y\in\Ta\cap\Tb,\ \&\  \forall\ \phi\in\pi_2^+(\x,\y).
\end{split}
\end{displaymath}
Similarly, we may define $\lam=\lam_\z:\pi_2^+(\x,\y)\ra G(\Rin)$.
Note theta $\la$ is just the composition of $\lam$ with the
quotient homomorphism $\Rin\ra \Ring$.
The composition $\chi(\lambda(\phi))$ in $\Ht^2(X,\partial X,\Z)$
will be denoted by $\Hh(\phi)$ for
any $\phi\in\pi_2^+(\x,\y)$.
Of course, the definition of
$\Hh(\phi)$ may be extended to arbitrary
$\phi\in\pi_2(\x,\y)$ by setting
$$\Hh(\phi)=\sum_{i=1}^\el n_{z_i}(\phi)\PD[\gamma_i].$$
Thus, corollary~\ref{cor:s(x)-s(y)} may be re-stated as
$$\phi\in\pi_2(\x,\y)\ \ \Rightarrow\ \
\relspinc(\x)-\relspinc(\y)=\Hh(\phi).$$

\begin{lem}\label{lem:no-nilpotents}
With the above notation, if $\la\in G(\Rin_\tau)$ is a monomial and
$\la^n=0$ for some positive integer $n\in\Z^+$, then $\la=0$ in $\Rin_\tau$.
In other words, $\Rin_\tau$ does not contain any non-trivial nilpotent monomials.
\end{lem}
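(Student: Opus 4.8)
The plan is to argue purely algebraically from the explicit presentation of $\Rin_\tau$ as a quotient of the polynomial ring $\F[\el]$ by a \emph{monomial} ideal; no Floer-theoretic input is needed. Write $I_\tau=\langle \la_i^-\mid g_i^->0\rangle_{\F[\el]}+\langle \la_j^+\mid g_j^+>0\rangle_{\F[\el]}$, so that $\Rin_\tau=\F[\el]/I_\tau$. The first thing I would record is that each generator of $I_\tau$ is a \emph{squarefree} monomial: by definition $\la_i^-=\prod_{\gamma_k\subset\partial R_i^-}\la_k$ is a product over the distinct sutures occurring as boundary circles of the component $R_i^-$, and since every suture has $\Rr^+(\tau)$ on one side and $\Rr^-(\tau)$ on the other it appears among the boundary circles of a given component of $\Rr^-(\tau)$ at most once; hence each variable $\la_k$ occurs in $\la_i^-$ with exponent $0$ or $1$, and likewise for the $\la_j^+$.

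Second, I would invoke the standard description of monomial ideals: $\F[\el]$ is a free $\F$-module on the set of monomials, the submodule $I_\tau$ is spanned over $\F$ by exactly those monomials divisible in $\F[\el]$ by one of its monomial generators, and hence a monomial $\mu\in\F[\el]$ maps to $0$ in $\Rin_\tau$ if and only if $\mu$ is divisible by some $\la_i^-$ with $g_i^->0$ or some $\la_j^+$ with $g_j^+>0$. I would also note the elementary fact that, for a squarefree monomial $\nu$, any monomial $\mu$, and any $n\in\Z^+$, one has $\nu\mid\mu^n$ if and only if $\nu\mid\mu$ (both conditions say precisely that every variable dividing $\nu$ also divides $\mu$).

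Now the main step. Let $\la\in G(\Rin_\tau)$ be a monomial with $\la^n=0$ in $\Rin_\tau$ for some $n\in\Z^+$. By the first fact, $\la^n=0$ means $\la^n\in I_\tau$, and by the structure of monomial ideals it follows that $\la^n$ is divisible by one of the squarefree generators $\nu$ of $I_\tau$ (one of the $\la_i^-$ with $g_i^->0$, or one of the $\la_j^+$ with $g_j^+>0$). By the squarefree-divisibility remark, $\nu\mid\la^n$ implies $\nu\mid\la$, whence $\la\in I_\tau$, i.e. $\la=0$ in $\Rin_\tau$. This is exactly the assertion of the lemma.

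I do not anticipate a genuine obstacle here: once the two preliminaries are in place the conclusion is immediate. The only point deserving careful statement is the topological input that each $\la_i^\pm$ is squarefree — it is precisely this that makes $\nu\mid\la^n\Rightarrow\nu\mid\la$ valid; were some suture to contribute a higher power to a generator, a nonzero monomial could well become nilpotent in $\Rin_\tau$.
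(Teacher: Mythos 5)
Your proof is correct and follows essentially the same route as the paper's: both hinge on the observation that $\Rin_\tau$ is the quotient of $\F[\el]$ by a monomial ideal whose generators $\la_i^\pm$ are squarefree, so that a generator dividing a power $\la^n$ must divide $\la$ itself. The only cosmetic difference is that the paper first reduces to the case $n=2$ while you treat general $n$ directly (and you spell out the topological reason the generators are squarefree, which the paper merely asserts).
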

\begin{proof}
It suffices to show that $\la^2=0$ implies $\la=0$. Since the monomials are linearly independent (over $\Q$) in
$\Z[\el]$, $\la^2=0$ implies that at least one monomial in
$$\Big\{\la_i^+\ \big|\ g_i^+>0\Big\}\bigcup\Big\{\la_i^-\ \big|\ g_i^->0\Big\}$$
divides $\la^2$ in $\Z[\el]$. Note that all the monomials in this set
are square free. Thus,
if $\la_i^+$ for some index $i$ with $g_i^+>0$
divides $\la^2$ in $\Z[\el]$, it should divide
$\la$ as well, and $\la=0$ in $\Rin$.
Similarly,  if $\la_i^-$ for some index $i$ with $g_i^->0$
divides $\la^2$  in $\Z[\el]$, then $\la=0$ in $\Rin$ and we are done.
\end{proof}

\begin{example}
If the boundary of the sutured manifold $(X,\tau)$
consists  of a torus $T$, and if
on  $T$ we have $2n$ parallel simple closed
curves $\mu_1,...,\mu_{2n}$, we may assume that
the corresponding sutured manifold is defined by the following data:
\begin{displaymath}
\begin{split}
&\tau=\{\mu_1,...,\mu_{2n}\},\ \ \mu_{2n+1}:=\mu_1,\\
&\Rr^+(\tau)=\bigcup_{j=1}^{n}R_{j}^+,\ \ \&\ \ \Rr^-(\tau)=\bigcup_{j=1}^{n}R_{j}^-\\
&\partial R_{j}^+=\mu_{2j-1}+\mu_{2j},\ \
\&\ \ \partial R_{j}^-=-\mu_{2j}-\mu_{2j+1},\ \ j=1,...,n.
\end{split}
\end{displaymath}
The sutured manifold
 $(X,\tau)$ determines a knot $K$ inside the closed
 three-manifold $Y$, obtained by gluing a $2$-handle to one of
$\mu_i$ and then a $3$-handle to the spherical boundary
of the new three-manifold. The torus $T$ may then be pictured as the boundary
of a neighborhood of $K$ in the resulting closed three-manifold.\\
Let $\la_j$ denote the variable associated with the suture $\mu_j$, $j=1,...,2n$.
Then, since all the components in $\Rr(\tau)$ are surfaces of genus zero,
the following relation is the only relation satisfied in $\Ring_\tau$:
$$\la_1\la_2-\la_2\la_3+...+\la_{2n-1}\la_{2n}-\la_{2n}\la_1=0.$$
In other words, we have
$$\Ring_\tau=\frac{\F\big[\la_1,...,\la_{2n}\big]}
{\big\langle \la_1\la_2+...+\la_{2n-1}\la_{2n}
=\la_2\la_3+...+\la_{2n}\la_1 \big\rangle}$$
In particular, for $n=1$, the above relation is trivial and
$\Ring_\tau=\F[\la_1,\la_2]$ is the coefficient ring used by Ozsv\'ath and Szab\'o
in defining $\mathrm{CF}^-(Y,K)$.
\end{example}

Suppose that $(X,\tau)$ is a balanced sutured manifold, and
$H=(\Sig,\alphas,\betas,\z=\{z_1,...,z_\el\})$ is a
Heegaard diagram associated with it, and
$\Ring_\tau$ be the
corresponding algebra.
Associated with the Heegaard diagram is a free
$\Ring_\tau$-module generated by the intersection points
$\x\in\Ta\cap\Tb$. We will denote this free
$\Ring_\tau$-module by $\CFT(X,\tau;H)$. We thus have
\begin{displaymath}
\begin{split}
\CFT(X,\tau;H):&=\Big\langle \x\ |\ \x\in \Ta\cap\Tb\Big\rangle_{\Ring_\tau}\\
&=\Big\langle a.\x\ |\ \x\in \Ta\cap\Tb,\ \&\ a\in G(\Ring_\tau)\Big\rangle_{\F}.
\end{split}
\end{displaymath}
The assignment of relative $\SpinC$ structures in $\Sbb=\Sbb_\tau=\RelSpinC(X,\tau)$
to the intersection points in $\Ta\cap\Tb$ by the map $\relspinc=\relspinc_\z$ may thus
be regarded as a filtration on the module $\CFT(X,\tau;H)$.
\newpage

\section{Admissible Heegaard diagrams}\label{sec:admissibility}
\subsection{The notion of $\spinc$-admissibility}
Let $(\Sigma,\mbox{\boldmath$\alpha$},\mbox{\boldmath$\beta$},{\bf z}=\{z_1,...,z_\el\})$
 be a Heegaard diagram for the balanced
sutured manifold $(X,\tau=\{\gamma_1,...,\gamma_\el\})$.
As before, we let $$\Sigma-\alphas=\coprod_{i=1}^{k}A_i,\ \ \&\ \
\Sigma-\betas=\coprod_{i=1}^{l}B_i.$$
\begin{defn}
Let $\overline{X}=X(1,...,\el)$ be the three manifold obtained by filling the
sutures in $\tau$. For $\spinc\in\SpinC(\overline{X})$, a
Heegaard diagram $(\Sig,\alphas,\betas,\z)$ is called $\spinc$-admissible if
for any nontrivial periodic domain $\Pcal$ with the property
$\langle c_1(\spinc),H(\Pcal)\rangle=0$
one of the following happens,
\begin{itemize}
\item[(a)] There is a point $w\in\Sig$ such that $n_{w}(\Pcal)<0$.
\item[(b)] We have $\Pcal\geq 0$ and $\lam(\Pcal)=0$ in $\Rin$.
\end{itemize}
\end{defn}
\begin{lem}\label{finiteD}
For $\spinc\in\SpinC(\ovl{X})$, let $(\Sig,\alphas,\betas,\z)$
be an $\spinc$-admissible Heegaard diagram for the balanced sutured manifold
$(X,\tau)$.
Then for any two intersection points $\x,\y\in\Ta\cap\Tb$ with
$\relspinc(\x),\relspinc(\y)\in\spinc\subset \RelSpinC(X,\tau)$,
and for any integer $j$ there are only finitely many
$\phi\in\pi_2(\x,\y)$ such that $\mu(\phi)=j$, $\Dcal(\phi)\geq 0$ and
$\lam(\phi)\neq0$.
\end{lem}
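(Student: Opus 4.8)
The plan is to argue by contradiction, combining the admissibility hypothesis with the well-partial-ordering of $\Z_{\ge 0}^m$, where $\Dcal_1,\dots,\Dcal_m$ are the connected components of $\Sig-\alphas-\betas$. Since the space of periodic domains is identified with $\pi_2(\x,\x)$ (as recorded just before Lemma~\ref{lem:Maslov-1}), a class $\phi\in\pi_2(\x,\y)$ is determined by its domain $\Dcal(\phi)$; I will therefore identify $\phi$ with the element $\Dcal(\phi)$ of the free abelian group $\bigoplus_{i=1}^m\Z\Dcal_i\cong\Z^m$, and the condition $\Dcal(\phi)\ge 0$ means this element lies in the submonoid $\bigoplus_{i=1}^m\Z_{\ge 0}\Dcal_i\cong\Z_{\ge 0}^m$.

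Suppose the set $\mathcal V$ of $\phi\in\pi_2(\x,\y)$ with $\mu(\phi)=j$, $\Dcal(\phi)\ge 0$ and $\lam(\phi)\ne 0$ is infinite. Then $\{\Dcal(\phi):\phi\in\mathcal V\}$ is an infinite subset of $\Z_{\ge 0}^m$, so by Dickson's lemma (equivalently, since $(\Z_{\ge 0}^m,\le)$ is a well-partial-order) there exist $\phi\ne\phi'$ in $\mathcal V$ with $\Dcal(\phi)\le\Dcal(\phi')$ coefficientwise. Put $\Pcal:=\Dcal(\phi')-\Dcal(\phi)$. As the difference of the domains of two classes in $\pi_2(\x,\y)$ it is a periodic domain; it is nonzero (since $\phi\ne\phi'$ and $\phi\leftrightarrow\Dcal(\phi)$ is injective) and satisfies $\Pcal\ge 0$ by the choice of the pair. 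Its Maslov index is $\mu(\Pcal)=\mu(\phi')-\mu(\phi)=0$, so Lemma~\ref{lem:Maslov-index}, together with $[\relspinc(\x)]=\spinc$, yields $\langle c_1(\spinc),H(\Pcal)\rangle=0$. Now $\spinc$-admissibility applies to $\Pcal$: alternative (a) is impossible because $\Pcal\ge 0$, so alternative (b) holds, i.e. $\lam(\Pcal)=0$ in $\Rin$. Finally $n_{z_i}(\phi')=n_{z_i}(\phi)+n_{z_i}(\Pcal)$ for every $i$, with all terms non-negative, so in $\Rin$ we have $\lam(\phi')=\lam(\phi)\cdot\lam(\Pcal)=0$, contradicting $\phi'\in\mathcal V$. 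Hence $\mathcal V$ is finite.

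I expect the one genuinely load-bearing step to be the passage from the index constraint $\mu(\phi)=j$ to the vanishing $\langle c_1(\spinc),H(\Pcal)\rangle=0$ for the excess periodic domain $\Pcal$, since this is precisely the hypothesis under which the $\spinc$-admissibility dichotomy is available; it relies on additivity of the Maslov index under splicing of Whitney disks and on Lemma~\ref{lem:Maslov-index} (and on the identification $[\relspinc(\x)]=\spinc$, which is where the assumption $\relspinc(\x)\in\spinc$ enters). The remaining ingredients — injectivity of $\phi\mapsto\Dcal(\phi)$, Dickson's lemma, and the stability of the defining ideal of $\Rin$ under multiplication — are formal. If one prefers not to invoke the injectivity of $\phi\mapsto\Dcal(\phi)$, one can instead fix a single class $\phi_0\in\pi_2(\x,\y)$ and run the same Dickson argument on the periodic domains $\Dcal(\phi)-\Dcal(\phi_0)$, all of which lie in the fixed coset $\{\Pcal:\langle c_1(\spinc),H(\Pcal)\rangle=j-\mu(\phi_0)\}$ inside the lattice of periodic domains; nothing else changes.
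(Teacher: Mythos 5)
Your argument is correct, and it takes a genuinely different route from the paper's. The paper also reduces to studying excess periodic domains (writing $\phi=\phi_0+\Pcal$ with $\mu(\Pcal)=\langle c_1(\spinc),H(\Pcal)\rangle=0$), but then proceeds asymptotically: it assumes the set of such $\Pcal$ is infinite, normalizes a sequence $\Pcal_i/\|\Pcal_i\|$, extracts a non-negative real limit, approximates it by a positive rational periodic domain, scales to an integral one $N\Pcal$, invokes admissibility to get $\lam(N\Pcal)=0$, and finally needs Lemma~\ref{lem:no-nilpotents} (no nilpotent monomials in $\Rin_\tau$) to pass from $\lam(\phi_i)^N=0$ back to $\lam(\phi_i)=0$. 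You instead apply Dickson's lemma to the non-negative domains themselves, producing two comparable classes whose difference is already a nonzero, non-negative, integral periodic domain of Maslov index zero; admissibility then gives $\lam(\Pcal)=0$, and the contradiction $\lam(\phi')=\lam(\phi)\lam(\Pcal)=0$ follows from ideal absorption alone, with no rescaling, no rational approximation, and no appeal to the nilpotency lemma. The load-bearing step is the same in both proofs (additivity of the Maslov index plus Lemma~\ref{lem:Maslov-index} and $[\relspinc(\x)]=\spinc$, so that the admissibility dichotomy applies), and both use the standard identification of classes with their domains, which you correctly flag and for which your $\phi_0$-based variant is an adequate substitute. What your approach buys is brevity and fewer prerequisites; what the paper's buys is uniformity, since the same limiting technique is reused almost verbatim in the proof of Lemma~\ref{lem:s-admissible} on the existence of $\spinc$-admissible diagrams, where a one-shot Dickson argument does not directly apply because the diagram (and hence the positivity of domains) changes under the winding isotopies.
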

\begin{proof}
 Suppose that, for an integer $j$, there are infinitely many
 $\phi\in\pi_2(\x,\y)$ such that $\mu(\phi)=j$, $\Dcal(\phi)\geq 0$ and $\lam(\phi)\neq0$.
 Fix an element $\phi_0\in\pi_2(\x,\y)$ with these properties.
Any $\phi\in\pi_2(\x,\y)$ with these properties can then
be written as $\phi=\phi_0+\mathcal{P}$ where $\mathcal{P}\in\pi_2(\x,\x)$
and $\mu(\mathcal{P})=\langle c_1(\spinc),H(\Pcal)\rangle=0$.
Thus the set
 $$Q=\Big\{\mathcal{P}\in\pi_2(\x,\x)\ \Big|\ \mu(\mathcal{P})=0,\
 \mathcal{P}+\Dcal(\phi_0)\ge0, \lam(\phi_0+\Pcal)\neq 0\Big\}$$
 is not finite. Let $m$ denote the total number of domains in $\Sig-\alphas-\betas$, and
 $D_i$, for $i=1,...,m$, denote the corresponding domains. Consider
 $Q$ as a subset of the set of all lattice points
 in the  vector space
 $$V=\Big\langle \Pcal\in\pi_2(\x,\x)\ \big|\
 \langle c_1(\spinc),H(\Pcal)\rangle=0\Big\rangle_\R
 \subset \R^m.$$
 Here, $\R^m$ is the vector space generated by the domains ${D}_i, i=1,...,m$ over
 $\R$. If $Q$ is not finite,
 there is a sequence $(\Pcal_i)_{i=1}^\infty$ in $Q$, such that $\|\Pcal_i\|\to\infty$.
By passing to a subsequence if necessary, we may assume that
 the sequence $\{\frac{\Pcal_i}{\|\Pcal_i\|}\}_{i=1}^\infty$ is convergent on the
 unit ball inside $V$. Since $\|\Pcal_i\|\to\infty$, and $\Pcal_i+\Dcal(\phi_0)\geq 0$,
 it converges to a real vector in $\R^m$ with non-negative entries.
 Denote the limit of $(\frac{\Pcal_i}{\|\Pcal_i\|})$ by $\ti{\Pcal}$,
 which is a  periodic domain with non-negative real entries.
 There is a positive rational periodic domain $\Pcal$,
 sufficiently closed to $\ti{\Pcal}$, such that
 \begin{itemize}
\item[$\bullet$] The Maslov index $\mu(\Pcal)=0$, i.e. $\Pcal\in V$
\item[$\bullet$] If the coefficients of $\ti \Pcal$ in  some domain $D_i$ is zero,
the coefficient of  ${\Pcal}$ in $D_i$ is zero as well.
\end{itemize}
Thus for a sufficiently large number $M$, $M\ti{\Pcal}-\Pcal$ is a positive periodic domain.
After multiplying $\Pcal$ with an appropriate positive integer $N$,
we obtain a positive  periodic
 domain $N\mathcal{P}$ with integral coefficients, and  with Maslov index zero i.e.
 $\langle c_1(\spinc),H(N\Pcal)\rangle=0$. The $\spinc$-admissibility
 condition implies that $\lam(N\Pcal)=0$.
 Since
 $$\lim_{i\ra \infty}\frac{MN}{\|\Pcal_i\|}(\Dcal(\phi_0)+\Pcal_i)=MN\ti{\Pcal},$$
 and $\Dcal(\phi_0)+\Pcal_i\geq 0$, there exists a sufficiently large
 integer $K>0$ such that
 $$\frac{MN}{\|\Pcal_i\|}(\Dcal(\phi_0)+\Pcal_i)-N\Pcal\geq 0,\ \ \ \forall\ i>K.$$
 Note that $\|\Pcal_i\|\to\infty$, thus for an $K$ sufficiently large we have
 $$\frac{M}{\|\Pcal_i\|}<1,\ \ \&\ \
 N(\Dcal(\phi_0)+{\Pcal_i})-N\Pcal\geq 0,\ \ \forall\ i>K.$$
 The equality $\lam(N\Pcal)=0$ implies that
 $\lam(\phi_i)^N=\lam(N(\Dcal(\phi_0)+\Pcal_i))=0$ for any $i>K$.
 Since there are no nilpotent monomials in $\Rin_\tau$ by lemma~\ref{lem:no-nilpotents},
  we should have $\lam(\phi_i)=0$ for any $i>K$,
  which is in contradiction with the assumption that the map $\lam$ is nonzero over the classes $\phi_i$.
\end{proof}
\begin{remark}\label{remark:weak-admissibility}
When we use a test ring $\Ringg$ for $\Ring_\tau$ (which comes together
with a ring homomorphism $\rho_\Ringg:\Ring_\tau \ra \Ringg$)
as the ring of coefficients for the
chain complex, it suffices to assume that the Heegaard diagram $(\Sig,\alphas,\betas,\z)$
is admissible in the following weaker sense: If $\Pcal$ is a periodic domain with
$\Pcal\geq 0$ and $\langle c_1(\spinc), H(\Pcal)\rangle =0$, then
$\rho_\Ringg(\la(\Pcal))=0$. In particular, for $\Ringg=\Z$ this gives
us the notion of weak admissibility used by Juh\'asz \cite{Juh}. More generally, define
\begin{displaymath}
\Ringg_\tau=\frac{\Big\langle \la_1,...,\la_\el\Big\rangle_\Z}
{\Big\langle\prod_{i=1}^\el \la_i^{n_i}\neq 1\ |\ n_i\in\Z^{\geq 0}\ \&\
\sum_{i=1}^\el n_i[\gamma_i]=0\ \text{in } \Ht_1(X;\Z)/\mathrm{Tors}\Big\rangle}.
\end{displaymath}
Clearly $\Ringg_\tau$ is a quotient of $\Ring_\tau$. Let us denote the quotient map
by $$\rho_\tau:\Ring_\tau\lra \Ringg_\tau.$$
Any positive periodic domain
$\Pcal$ with $\la_\z(\Pcal)=\prod_{i=1}^\el \la_i^{n_i}$ determines a $2$-chain in
$X$ with boundary equal to $\sum_{i=1}^\el n_i\gamma_i$.
This implies that $\rho_\tau(\la_\z(\Pcal))=0$, unless $n_1=n_2=...=n_\el=0$.
Thus, the notion of admissibility for the coefficient ring $\Ringg_\tau$ is a
direct consequence of weak admissibility in the sense of Juh\'asz \cite{Juh}.
\end{remark}
\subsection{Existence of $\spinc$-admissible Heegaard diagrams}.
Performing special isotopies on the curves in $\alphas$, as in  \cite{OS-3m1},
produces $\spinc$-admissible Heegaard diagrams.
\begin{defn}
Let $\gamma$ be an oriented simple closed curve in $\Sigma$.
Consider the coordinate system $(t,\theta)\in (-\epsilon,\epsilon)
\times S^1$ in a neighborhood of $\gamma=\{0\}\times S^1$.
The diffeomorphism of $\Sigma$ obtained by integrating a vector
field $\zeta$ supported in this neighborhood of $\gamma$ with the property
$d\theta(\zeta)> 0$ is called {\emph{winding}} along $\gamma$.
Let $\alpha$ be a simple closed curve which intersects $\gamma$
in one point and $\phi$ be a winding around $\gamma$. If
$\phi(\alpha)$ intersects $\alpha$ in $2n$ points then we
say that $\phi$ is an isotopy which {\emph{winds}} $\alpha$ $n$-times around $\gamma$.
\end{defn}
\begin{lem}\label{lem:s-admissible}
Let $(X,\tau)$ be a balanced sutured manifold as before,
$\ovl{X}$ be the three-manifold obtained from
$(X,\tau)$ by filling the sutures,
and $\spinc\in\SpinC(\ovl{X})$ be a  $\SpinC$-structure.
Then $(X,\tau)$ admits an $\spinc$-admissible Heegaard diagram.
Moreover, every Heegaard diagram $(\Sig,\alphas,\betas,\z)$ for $(X,\tau)$ may be
modified to an $\spinc$-admissible Heegaard diagram by performing isotopies (supported
away from the marked points) on the curves in $\alphas$.
\end{lem}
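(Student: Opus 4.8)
The plan is to adapt the winding argument of Ozsv\'ath and Szab\'o \cite{OS-3m1}, in the form refined by Juh\'asz \cite{Juh}, taking advantage of the fact that clause (b) of the definition of $\spinc$-admissibility permits \emph{positive} periodic domains as long as they die in $\Rin_\tau$. Start from an arbitrary Heegaard diagram $(\Sig,\alphas,\betas,\z)$ for $(X,\tau)$. First reduce the problem: if $\Pcal$ is a nontrivial periodic domain with $\langle c_1(\spinc),H(\Pcal)\rangle=0$ which is not everywhere nonnegative, then some component $D_r$ of $\Sig-\alphas-\betas$ carries a negative coefficient, and any $w\in D_r$ realizes clause (a). Hence it suffices to arrange that every nontrivial \emph{positive} periodic domain $\Pcal$ with $\langle c_1(\spinc),H(\Pcal)\rangle=0$ satisfies $\lam(\Pcal)=0$ in $\Rin_\tau$.

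The mechanism is winding. Following \cite{OS-3m1}, wind each curve $\alpha_i$ a large number $n$ of times along an auxiliary curve (or arc) $\delta_i$ disjoint from $\z$, with the $\delta_i$ chosen so that the resulting windings collectively detect the $\alpha$-boundary multiplicity of every periodic domain; such $\delta_i$ exist because $[\alpha_1],\dots,[\alpha_\ell]$ are linearly independent in $H_1(\Sig-\z;\Z)$. Since this is an isotopy of the $\alpha$-curves supported away from $\z$, the result is again a Heegaard diagram for $(X,\tau)$ realizing the same relative $\SpinC$ structures, and the group of periodic domains together with the maps $H(\cdot)$, $\mu(\cdot)$ and $\lam(\cdot)$ is canonically identified with that of the original diagram. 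The technical heart is then the standard local-multiplicity estimate: for $n$ large, if $\partial\Pcal$ has a nonzero coefficient on some $\alpha_i$, then $\Pcal$ attains a strictly negative local multiplicity inside the $i$-th winding region, so $\Pcal$ satisfies clause (a). Consequently, in the wound diagram every positive periodic domain has boundary supported entirely on the $\beta$-curves.

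To finish, note that such a $\Pcal$ is constant on each component $B_i$ of $\Sig-\betas$, so $\Pcal=\sum_{i=1}^{l}c_iB_i$ with all $c_i\geq 0$. Each $B_i$ is a periodic domain whose class $H(B_i)$ is represented by the corresponding boundary component of $\ovl X$ (namely the component $R_i^+$ of $\Rr^+(\tau)$ with its sutures capped off), so $\langle c_1(\spinc),H(B_i)\rangle=\chi(B_i)=2-2g_i^+$, independently of $\spinc$, compatibly with Lemma~\ref{lem:Maslov-index} and equation~\ref{eq:Maslov-Ai-Bi}. Therefore, for a nontrivial $\Pcal$ with $\langle c_1(\spinc),H(\Pcal)\rangle=0$,
$$0=\sum_{i=1}^{l}c_i\,(2-2g_i^+),$$
and since some $c_i>0$, not every index with $c_i>0$ can have $g_i^+=0$ (otherwise the sum would be positive); pick $j$ with $c_j>0$ and $g_j^+>0$. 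Because the sutures lying on $\partial R_i^+$ are precisely the circles $\gamma_k$ with $z_k\in B_i$, we have $\lam(\Pcal)=\prod_{i=1}^{l}(\la_i^+)^{c_i}$, which is divisible by $\la_j^+$ in $\F[\el]$; and $\la_j^+=0$ in $\Rin_\tau$ because $g_j^+>0$. Hence $\lam(\Pcal)=0$ in $\Rin_\tau$, which is clause (b). As the whole procedure started from an arbitrary diagram and only isotoped $\alphas$ away from $\z$, this also yields the ``moreover'' statement; winding the $\beta$-curves instead would symmetrically eliminate positive periodic domains with $\alpha$-boundary using the regions $A_i$ and the monomials $\la_i^-$ with $g_i^->0$, and only one of the two is needed.

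The main obstacle is the second step: making the winding estimate precise, and in particular choosing the curves $\delta_i$ correctly when the $\alpha_i$ fail to be linearly independent in $H_1(\Sig;\Z)$ --- which can occur here although it does not in the closed-manifold setting of \cite{OS-3m1} --- so that every $\alpha$-boundary coefficient is genuinely detected by the windings. This is the point where Juh\'asz's refinement of the argument enters, and where one must verify that the winding regions can be kept disjoint from the marked points so that the identification of periodic domains and of the maps $H(\cdot)$, $\lam(\cdot)$ before and after winding is transparent.
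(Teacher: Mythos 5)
There is a genuine gap at the heart of your winding step. The claim that, after winding, ``every positive periodic domain has boundary supported entirely on the $\beta$-curves'' is false: the closure $A_i$ of each component of $\Sig-\alphas$ is itself a positive periodic domain whose boundary is a nontrivial sum of $\alpha$-curves, and no isotopy of the $\alpha$-curves can make it acquire a negative multiplicity --- after winding it is still a union of complementary regions with local multiplicities $0$ and $1$. The same phenomenon destroys the uniformity you need: your local-multiplicity estimate (``if $\partial\Pcal$ has a nonzero $\alpha$-coefficient then $\Pcal$ goes negative in the winding region'') holds for each \emph{fixed} $\Pcal$ once $n$ is large enough, but the lemma requires a single diagram, hence a single finite winding parameter, that works for \emph{all} periodic domains at once; and for any fixed $n$ one can add a large multiple of the $A_i$ containing the winding region (every point of $\Sig$ lies in some $A_i$) to a domain with essential $\alpha$-boundary and restore positivity without killing $\lam$ when $g(A_i)=0$. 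So your final step, which only treats domains of the form $\sum c_iB_i$, misses not only the easy cases $\sum a_iA_i+\sum b_jB_j$ (repairable by the same genus argument with $\la_i^-$) but the genuinely problematic mixed domains. In addition, the existence of the dual winding curves is not automatic --- an $\alpha_i$ that separates $\Sig$ has zero algebraic intersection with every closed curve even though it is nonzero in $H_1(\Sig-\z)$ --- and you flag this but do not resolve it.

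The paper's proof is organized precisely around these two difficulties. It first performs finger moves (isotopies of $\alphas$) so that $A_i\cap B_j\neq\emptyset$ for all $i,j$; it then builds the winding curves via a graph whose vertices are the $A_i$ and whose edges are the $\alpha$-curves on their boundaries, using Hall's theorem to match loop classes with $\alpha$-curves, and winds along these curves together with oppositely-oriented parallel copies. The effect of winding is only to detect the component of a periodic domain transverse to $\Span\{A_i,B_j\}$ (the decomposition $Q=(Q\cap\langle A_i,B_j\rangle)\oplus P$), not to eliminate $\alpha$-boundary altogether. Uniformity is then obtained by contradiction and compactness, as in Lemma~\ref{finiteD}: a sequence of bad domains $\Qcal_n$ for winding parameter $n$ is rescaled, its limit is shown to lie in the nonnegative span of the $A_i,B_j$ (this is where $A_i\cap B_j\neq\emptyset$ is used to make the limiting coefficients nonnegative), a rational approximation is taken, the Maslov-index/genus computation forces some positive-genus component to appear so that $\lam$ of the approximating domain vanishes, and Lemma~\ref{lem:no-nilpotents} (no nilpotent monomials in $\Rin_\tau$) then forces $\lam(\Qcal_n)=0$ for large $n$, a contradiction. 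Your proposal contains the correct genus-versus-$\lam$ endgame, but without the $P$-versus-$\langle A_i,B_j\rangle$ splitting, the Hall's-theorem construction of the winding curves, and the compactness argument, the reduction to that endgame does not go through.
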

\begin{proof}
Let $(\Sig,\alphas,\betas,\z)$ be a Heegaard diagram for $(X,\tau)$. Let
$$\Sig-\alphas=\coprod_{i=1}^{k} A_i,\ \ \&\ \ \Sig-\betas=\coprod_{i=1}^{l}B_i,$$
be the connected components in the complement of $\alphas$ and $\betas$ respectively.
For any $i\in\{1,...,k\}$ and $j\in\{1,...,l\}$
we can find a curve $\gamma$ such that $\gamma$
connects $\partial A_i$ to $\partial B_j$, and avoids $A_i$ and
$B_j$. By finger moving those $\alpha$ curves which intersect the
curve $\gamma$ (simultaneously) along $\gamma$, we create a new
Heegaard diagram with the property that $A_i\cap B_j\neq \emptyset$.
Repeating this procedure for all $i=1,...,k$ and $j=1,...,l$,
we may thus assume that $$A_i\cap B_j\neq\emptyset,\ \ \forall\ i=1,...,k,\ j=1,...,l.$$

Let ${\alphas}_0$ be a set of disjoint simple closed curves
on $\Sig$, disjoint from $\alphas$,
such that $\Sig-\alphas-{\alphas}_0$ has the same number of
connected components as $\Sig-\alphas$,
and all of its connected components have genus zero.
Furthermore, let $\alphas=\alphas_1\cup\alphas_2$ where
$$\alphas_2=\Big\{\alpha_i\in\alphas\ \big|\ \exists j\ \
\alpha_i\subset\partial A_j\Big\}.$$
For $i=0,1,2$, let us denote the number of elements in
$\alphas_i$ by $\ell_i$.
Thus, in particular, $\ell=\ell_1+\ell_2$.\\

We define a graph $G$ with $k$ vertices corresponding
to the domains $A_1,...,A_{k}$. The edges of $G$ correspond
to the elements of $\mbox{\boldmath$\alpha$}_2$, i.e. if
$\alpha\in \mbox{\boldmath$\alpha$}_2$ is a curve in
$\partial A_i\cap\partial A_j$, we put an edge in $G$ connecting $A_i$ to $A_j$
associated with $\alpha$. If $
=\Sigma[\alphas_1]$ is the surface obtained from $\Sigma$
by surgering out the elements of $\alphas_1$,
each loop in $G$ corresponds to a homologically nontrivial
simple closed curve in $\Sig[\alphas_1]$ which is disjoint from $\alphas_0$.
In other words, each loop in $G$ corresponds to a homologically nontrivial
simple closed curve in $\Sig[\alphas_0\cup\alphas_1]$.
Furthermore,
$h=\mathrm{dim}(\Ht_1(G,\mathbb{Z}))$
is the genus of $\Sig[\alphas_0\cup\alphas_1]$. One may easily compute
$h=\ell_2-k+1$.\\

Consider a set of pairwise disjoint simple closed curves
$$\gammas=\gammas_1\cup\gammas_2=\big\{\gamma^1,...,\gamma^{\ell_1}
\big\}\bigcup\big\{\gamma^{\ell_1+1},...,\gamma^{\ell-k+1}\big\}$$
on $\Sigma$ with the following properties.
First of all, we assume that $\gammas_1$ is a dual set for
$\alphas_1$ i.e. each element of $\gammas_1$
intersects exactly one element of $\alphas_1$ with intersection
number one, and for each element of $\alphas_1$ there is one element
of $\gammas_1$ intersecting it (with intersection number one).
Furthermore, we assume that
$$\gammas_1\cap\alphas_0=\gammas_1\cap\alphas_2=\emptyset.$$
The set  $\gammas_2$
corresponds to a basis for $\Ht_1(G,\mathbb{Z})$ which is a set of disjoint, oriented,
and linearly independent
simple closed curves on $\Sigma[\alphas_0\cup\alphas_1]$.
There is a one to one map $i:\gammas_2\ra\alphas_2$ with the property that
for each $\gamma\in\gammas_2$ the curve $i(\gamma)$
has nonempty intersection with $\gamma$.
 In fact if this is not true, by Hall's theorem  there is a subset
 of $\gammas_2$ with $n$ elements, say
 $\{\gamma^{i_1},...,\gamma^{i_n}\}\subset\gammas_2$, such that for
 $$A=\Big\{\alpha\in\alphas_2\ \Big|\ \exists j\in\{1,...,n\}\ \ s.t.\ \
 \alpha\cap\gamma^{i_j}\neq\emptyset\Big\}$$
we have $|A|<n$. Since the sum of the genera of the connected components of
$\Sigma[\alphas]$ is $\ell_0$,
$\Sigma[\alphas-A]$ is a surface whose genus is less than or equal to
$|A|+\ell_0$.
Furthermore, the curves in $\{\gamma^{i_1},...,\gamma^{i_n}\}
\cup{\alphas}_0$ are linearly independent in
$$\Ht_1(\Sigma[\alphas-A],\mathbb{Z}).$$
Thus the genus of $\Sigma[\alphas_2-A]$ is at least $n+\ell_0$ which
is in contradiction with the assumption $|A|<n$.\\

Choose a parallel copy of each curve $\gamma^i$ for $i=1,...,\ell-k+1$,
with the opposite orientation and denote it by $\ba\gamma^i$.
We will assume that $\ba\gamma^i$ is drawn on $\Sig$ very close to $\gamma^i$.
Let $v_i\in\gamma^i$ be points which are not contained in any of the $\alphas$ or $\betas$ curves for any $1\le i\le \ell+k-1$ and denote the corresponding points on $\ba\gamma^i$ by $\ba v_i$.
For any integer $N$, by {\it winding the $\alpha$ curves $N$ times along the $\gamma$ curves}
we mean winding all the $\alpha$-curves which
cut $\gamma^i$ (and hence $\ba\gamma^i$) $N$ times along $\gamma^i$ and $N$ times along
$\ba\gamma^i$, for any of the curves $\gamma^i, i=1,...,\ell-k+1$.
The windings around either of $\gamma^i$ and $\ba\gamma^i$ will be done simultaneously
for all the $\alpha$ curves, so that the new $\alpha$-curves remain disjoint
from each other.\\

Let $Q$ be the $\Q$-vector space generated by the
periodic domains $\Pcal$ such that
$$\mu(\Pcal)=\langle c_1(\spinc),H(\Pcal)\rangle=0.$$
One may write  $Q$ as a direct sum
$$Q=\Big(Q\cap \big\langle A_1,...,A_k,B_1,...,B_l\big\rangle_\Q\Big)\oplus P,$$
for sum subspace $P$ of $Q$ which is generated by the periodic domains
$\{\Pcal_1,...,\Pcal_b\}$.  Thus any periodic domain $\Pcal$ in the vector space
$Q$ is of the form
$$\Pcal=\sum_{i=1}^{b}q_i\Pcal_i+\sum_{i=1}^{k}a_iA_i+\sum_{i=1}^{l}b_iB_i,$$
where of course the coefficients $a_i$ and $b_j$ for $i=1,...,k$ and
 $j=1,...,l$ should satisfy the relation
 $$\mu\big(\sum_{i=1}^ka_iA_i+\sum_{i=1}^lb_iB_i\big)
 =\sum_{i=1}^ka_i(2-2g(A_i))+\sum_{i=1}^{l}b_i(2-2g(B_i))=0.$$
 In the above expression $g(A_i)$ and $g(B_j)$ denote the genus
 of $A_i$ and $B_j$ respectively, for $i=1,...,k$ and $j=1,...,l$.\\

Corresponding to any curve $\gamma^i\in\gammas$ we define
a map $p_{\gamma^i}$ from $P$ to $\Q$. If $\Pcal\in P$ is a
periodic domain and
$$\partial \Pcal=\sum_{i=1}^\ell p_i\alpha_i+\sum_{i=1}^{\ell}
q_i\beta_i=\partial_{\alpha}\Pcal+\partial_{\beta}\Pcal,$$
we may define the functions $p_{\gamma^i}$ by
$$p_{\gamma^i}(\Pcal)=\sum_{j=1}^\ell p_j.\#(\gamma^i.\alpha_j),
\ \ \forall\ i\in\{1,...,\ell-k+1\}.$$
Here $\#(\gamma^i.\alpha_j)$ denotes the intersection
number of $\gamma^i$ with $\alpha_j$. If for some periodic
domain $\Pcal\in P$ we have $p_{\gamma^i}(\Pcal)=0$, for $1\le i\le \ell-k+1$,
we may conclude that
\begin{displaymath}
\begin{split}
&\#(\partial_\alpha \Pcal.\gamma^i)=p_{\gamma^i}(\Pcal)=0,\ \ \
\forall\ \ 1\le i\le \ell-k+1\\
\Rightarrow\ \ &\partial_\alpha\Pcal=\partial\big(\sum_{i=1}^{k}
a_iA_i\big),\ \ \ \text{for some }a_1,...,a_{k}\in\Q \\
\Rightarrow\ \ &\partial\big(\Pcal-\sum_{i=1}^{k}a_iA_i\big)
\in\big\langle \beta_1,...,\beta_\ell\big\rangle_\Q\\
\Rightarrow\ \ &\Pcal=\sum_{i=1}^{k}a_iA_i+\sum_{i=1}^{l}b_iB_i,
\ \ \ \text{for some }b_1,...,b_l\in\Q\\
\end{split}
\end{displaymath}
From the assumption $\Pcal\in P$ we have $\Pcal=0$. Thus the map
\begin{displaymath}
\begin{split}
&e:P\lra \Q^{\ell-k+1}\\
&e(\Pcal):=(p_{\gamma^1}(\Pcal),p_{\gamma^2}(\Pcal),...,
p_{\gamma^{\ell-k+1}}(\Pcal))
\end{split}
\end{displaymath}
is one to one.
By a change of basis in $P$, and changing the order curves in $\gammas$ if necessary,
we can assume that
$$\pi_{i}(e(\mathcal{P}_j))=\delta_{ij}\ \ \ \forall\ \ 1\le i,j \le b,$$
where $\pi_i:\Q^{\ell-k+1}\ra \Q$ is the projection over the $i$-th  factor.\\

We would first like to show that for any positive periodic domain $\Qcal$ in $Q$,
 which is not included in the vector space generated by $A_i$'s and $B_j$'s,
 there is an integer $N=N(\Qcal)$ such that by winding
 $\alpha$-curves $N$ times along the curves in $\gammas$
 (in both positive and negative directions) the new
periodic domain obtained from $\Qcal$ will have some negative coefficient.
Let
$$\Qcal=\sum_{i=1}^b q_i\Pcal_i+\sum_{i=1}^{k}a_iA_i+\sum_{i=1}^{l}b_iB_i$$
be a positive periodic domain in $Q$ such that there is an index  $i$ so that $q_i\neq 0$.
Then we may choose an integer $N$ such that
$$|q_i|N>\max\big\{n_{v_i}(\Qcal),n_{\bar{v}_i}(\Qcal)\big\}.$$
Wind the $\alpha$ curves $N$ times along $\gamma$ curves.
In the new diagram (obtained after the above winding procedure) let $$\Big\{\Pcal'_1,...,\Pcal'_b,A_1',...,A'_{k},B_1',...,B_{l}\Big\}$$
be the new set of periodic domains obtained from
$$\Big\{\Pcal_1,...,\Pcal_b,A_1,...,A_{k},B_1,...,B_{l}\Big\}.$$
 Note that we may compute the coefficients of these new domains at $v_i$ and
 $\ba v_i$ from the following equations
\begin{displaymath}
\begin{split}
&n_{v_i}(A'_j)=n_{v_i}(A_j),\ \ \forall\ j=1,...,k,\ \
n_{v_i}(B'_j)=n_{v_i}(B_j),\ \ \forall\ j=1,...,l,\\
&\ \ \ \&\ \ \
n_{v_i}(\Pcal'_j)=\begin{cases}
      n_{v_i}(\Pcal_j)\ \ \ &\emph{if}\ i\neq j\\
      n_{v_i}(\Pcal_j)+N\ \ \ &\emph{if}\ i=j\\
      \end{cases}.\ \
\end{split}
\end{displaymath}
Similar equations are satisfied for the local coefficients at $\ba v_i$. In fact, we will have
$n_{\ba v_i}(\Pcal_i')=n_{\ba v_i}(\Pcal_i)-N$, while the rest of local coefficients remain
unchanged.
If $q_i<0$ we thus have  $$n_{v_i}(\Qcal')=n_{v_i}(\Qcal)+q_iN<0,$$
and if $q_i>0$ then $n_{\ba{v}_i}(\Qcal')<0$.\\

To finish the proof, first suppose that there is an integer $N$ such that, after winding the $\alpha$
curves $N$ times along the curves in $\gammas$,  any periodic
domain $\Qcal\in Q$ with integer coefficients either
has some negative coefficient or $\lam(\Qcal)=0$.
Then we are clearly done with the proof of the lemma. So, let us
assume otherwise, that for any integer $n$ there exists a periodic domain
$\Qcal_n$ with integer coefficients in $Q$ such that after winding
the $\alpha$ curves $n$
times along the curves in $\gammas$,  the resulting domain
$\Qcal_n'$ is positive and satisfies  $\lam(\Qcal_n')\neq 0$.
Let $\{\Qcal_n\}_{n=1}^\infty$ be the sequence constructed from these
elements of $Q$. As in the proof of lemma~\ref{finiteD},
after passing to a subsequence if necessary, we may assume that
the sequence
$\{\frac{\Qcal_n}{\|\Qcal_n\|}\}_{n=1}^\infty$ is convergent.

Let us assume that
$$\ti{\Qcal}=\lim_{i\ra \infty}\frac{\Qcal_n}{\|\Qcal_n\|}\in Q\otimes_\Q\R.$$
If $\ti{\Qcal}$ is not in the real
vector space generated by $A_i$'s and $B_j$'s,  there is an
integer $N$ with the property  that after winding the $\alpha$-curves  $N$ times
along all  the curves in $\gammas$, the resulting domain $\ti{\Qcal}'$
will have some negative coefficient. So there is an integer $K$ such that
for any
$i>K$, $\Qcal_i'$ has some negative coefficient after winding
the $\alpha$-curves  $N$ times along $\gammas$.
This is in contradiction with the definition of $\Qcal_i$ if
$i>N$. Thus $\ti{\Qcal}$ may be written as
\begin{equation}\label{eq:Qcal}
\ti{\Qcal}=\sum_{i=1}^{k}a_iA_i+\sum_{i=1}^{l}b_iB_i
\end{equation}
for some coefficients $a_i$ and $b_i$ in $\R$.\\

Note that $\ti\Qcal\geq 0$, which implies that for any
$w\in \Sig$, we have
$$\sum_{i=1}^{k}a_in_w(A_i)+\sum_{i=1}^{l}b_in_w(B_i)\geq0.$$
Since $A_i\cap B_j\neq \emptyset$ for $i=1,...,k$ and $j=1,...,l$,
we may pick $w=w_{ij}$ to be a point in this intersection.
But for this choice of $w$, the above inequality reads as $a_i+b_j\geq 0$.
If $b_j$ is the smallest of all $b_1,...,b_l$, the above
consideration implies that $a_i+b_j\geq0$ for all $i=1,...,k$. We may thus write
$$\ti{\Qcal}=\sum_{i=1}^{k}(a_i+b_j)A_i+\sum_{i=1}^{l}(b_i-b_j)B_i,$$
since $\sum_iA_i=\sum_iB_i=\Sig$. However, all the coefficients in the
above expression are non-negative. As a result,
after replacing these new coefficients, we may assume that
the real numbers $a_i$ and $b_j$ in equation~\ref{eq:Qcal} are positive. \\

As in the proof of
lemma~\ref{finiteD}, choose a positive rational periodic
domain $$\Qcal=\sum_{i=1}^{k}a'_iA_i+\sum_{j=1}^{l}b'_jB_j,$$
which is sufficiently close to $\ti{\Qcal}$, and such
that the coefficient of $\Qcal\in Q$   in the domains where $\ti{\Qcal}$
has zero coefficient is zero as well. As before, there are integers $N$
and $M$ such that $N\Qcal$ is a periodic domain with integer coefficients
and $M\ti{\Qcal}-\Qcal>0$. The positivity of the coefficients of $N\Qcal$
imply that $\lam(N\Qcal)=0$. Moreover, there is some positive integer $K>0$
such that for $i>K$ we have
\begin{displaymath}
\begin{split}
&MN\frac{\Qcal_i}{\|\Qcal_i\|}-N\Qcal\geq 0,\ \&\ \frac{M}{\|\Qcal_i\|}<1
\\
\Rightarrow\ \ &N\Qcal_i-N\Qcal\geq 0.
\end{split}
\end{displaymath}
This means that for $i>K$, we have  $\lam(N\Qcal_i)=\lam(\Qcal_i)^N=0$.
Thus we may conclude, using lemma~\ref{lem:no-nilpotents}, that
 $\lam(\Qcal_i)=0$ for $i>K$. This is in clear contradiction with our
 assumption on the integral periodic domains $\Qcal_i$.\\

The above argument shows that there is an integer $N$ with the property
 that after winding the curves in $\alphas$ a total of $N$ times along the
 curves in $\gammas$  we obtain an $\spinc$-admissible Heegaard diagram.
This completes the proof of the lemma.
\end{proof}
\begin{remark}\label{remark:admissibility}
The argument of lemma~\ref{lem:s-admissible} may be extended to show that
for any balanced sutured manifold $(X,\tau)$ and any $\SpinC$ class
$\spinc\in\SpinC(\ovl X)$ there is a Heegaard diagram $(\Sig,\alphas,\betas,\z)$
which is admissible in the following stronger sense. If $\Pcal$ is a
periodic domain with $\Pcal\geq 0$, and $\lam(\Pcal)\neq 0$ in $\Rin_\tau$
then  $$\big \langle c_1(\spinc), H(\Pcal)\big\rangle > 0.$$
When there are genus zero components in $\Rr(\tau)$, the above criteria
is the same as $\spinc$-admissibility condition. However, in certain situations
where all the connected components of $\Rr(\tau)$ have positive genus, using
such Heegaard diagrams may be useful. We face this situation in section~\ref{sec:stabilization}.
\end{remark}
\newpage

\section{The chain complex associated with a balanced sutured manifold}\label{sec:chain-complex}\label{sec:analytic-aspects}
\subsection{Holomorphic disks and boundary degenerations; orientation issues}
Let us assume that $(\Sig,\alphas,\betas,\z)$ is an $\spinc$-admissible Heegaard diagram for
a balanced sutured manifold $(X,\tau)$ and $\spinc\in\SpinC(\ovl X^\tau)$.
We assume that $|\alphas|=|\betas|=\ell$ and that $\z=\{z_1,...,z_\el\}$.
We have already defined $\pi_2(\x,\y)$ for any two intersection points $\x,\y\in\Ta\cap\Tb$.
In discussing the analytic aspects of a Floer theory, we need to consider boundary degenerations
and sphere bubblings as well. We recall the following definitions from \cite{OS-linkinvariant}.
\begin{defn}
Suppose that $\x\in\Ta\cap\Tb$ is an arbitrary  intersection point. A continuous map
$$\psi:\R\times [0,\infty)\lra\mathrm{Sym}^\ell(\Sig)$$ satisfying the boundary conditions
\begin{displaymath}
\begin{split}
&\psi\Big(\R\times\big\{0\big\}\Big)\subset\Ta,\\
\lim_{|s|\ra\infty}&\psi(s,t)=\x,\ \ \&\ \ \
\lim_{t\ra\infty}\psi(s,t)=\x
\end{split}
\end{displaymath}
is called an $\alphas$-{\emph{boundary degeneration}}. The space
of homotopy classes of such maps is denoted by $\pi_2^{\alpha}(\x)$.
The space $\pi_2^{\beta}(\x)$ of $\betas$-{\emph{boundary degenerations}}
is defined similarly.
\end{defn}
If $\{J_t=\Sym^\ell(j_t)\}_{t\in[0,1]}$ is a generic path of almost complex structure,
associated with any $\phi\in\pi_2(\x,\y)$ we may consider the moduli space
$\Mod(\phi)$ of the representatives $$u:[0,1]\times \R \ra \Sym^\ell(\Sig)$$
of $\phi$ which satisfy the time dependent Cauchy-Riemann equation
$$\frac{\partial u}{\partial t}(t,s)+J_t\frac{\partial u}{\partial s}(t,s)
=0,\ \ \ \forall\ (t,s)\in[0,1]\times \R.$$
Similarly, for any $\psi\in\pi_2^\beta(\x)$, $\Nod(\psi)$ consists of the representatives
$u:[0,\infty)\times \R \ra \Sym^\ell(\Sig)$ of $\psi$ which are $J_0$-holomorphic. Also,
for  any $\psi\in\pi_2^\alpha(\x)$, $\Nod(\psi)$ consists of the representatives
$u:[0,\infty)\times \R \ra \Sym^\ell(\Sig)$ of $\psi$ which are $J_1$-holomorphic. \\

The determinant line bundle associated with the linearization of  the
(time dependent) Cauchy-Riemann operator over
the moduli of representatives of any of the above homotopy classes is trivial.
This makes it possible to equip
the corresponding moduli space with an orientation.
Following Ozsv\'ath and Szab\'o's approach in
\cite{OS-3m1}, we may choose a {\emph{coherent system of orientations}}
as follows. \\

As in the previous sections, let us assume that
$$\Sig-\alphas=\coprod_{i=1}^k A_i,\ \ \&\ \ \Sig-\betas=\coprod_{j=1}^l B_j,$$
where $A_i$ and $B_j$ correspond to the components $R_i^-\subset \Rr^-(\tau)$ and
$R_j^+\subset \Rr^+(\tau)$ respectively. Thus, the genus of $A_i$ is $g_i^-$ and
the genus of $B_j$ is $g_j^+$. Without loosing on generality, let us assume
that $l\geq k$.
Let $\x_0,...,\x_m$ be all the intersection points in
$\Ta\cap\Tb$ which correspond to the $\SpinC$ class $\spinc$.
Choose a disk class $\phi_i\in\pi_2(\x_0,\x_i)$ for each $i$, and
complete the classes of the boundary degenerations $A_1,...,A_k$ and
$B_1,...,B_{l-1}$ to a basis for the space of periodic domains
in $\pi_2(\x_0,\x_0)$. Note that
$$B_l=A_1+...+A_k-(B_1+...+B_{l-1})$$
is the only relation satisfied among $A_1,...,A_k$ and $B_1,...,B_l$.
Let us denote this basis by $\psi_1,....,\psi_n$. The choice of an orientation
(i.e. one of the two classes represented by a non-vanishing section) on the determinant
line bundle associated with the classes $\phi_1,...,\phi_m$ and $\psi_1,...,\psi_n$
induces an orientation on the moduli space corresponding to any class
$\phi\in \pi_2(\x_i,\x_j)$, $0\leq i,j\leq m$.
In fact,  $\phi+\phi_i-\phi_j$
is a periodic domain in $\pi_2(\x_0,\x_0)$, and is thus a linear combination of
the classes $\psi_1,...,\psi_n$. As a result, $\phi$ is a juxtaposition of
(possibly several copies of) classes in
$$\Big\{\phi_1,...,\phi_m,\psi_1,...,\psi_n\Big\},$$
and thus inherits a natural orientation in our system of
coherent orientations.\\

Let us study the boundary degenerations and their assigned orientation more
carefully.
Any periodic domain $\psi\in\pi_2(\x,\x)$ such that $\partial \Dcal(\psi)$ is a
union of $\alpha$-curves determines the class of an $\alpha$ boundary degeneration.
Thus, the domain of any $\alpha$ boundary degeneration $\psi\in\pi_2^\alpha(\x)$
is a linear combination of $A_1,...,A_k$:
$$\Dcal(\psi)=a_1A_1+...+a_kA_k.$$
 We may use Lipshitz' index formula to compute
the Maslov index of $\psi$:
$$\mu(\psi)=a_1\chi(A_1)+...+a_k\chi(A_k).$$
If furthermore, $\Dcal(\psi)$ is a positive domain, e.g. if $\psi$ is a holomorphic boundary
degeneration, then all $a_i$ are non-negative.
We may then define the map
\begin{displaymath}
 \begin{split}
 &\la:\coprod_{\x\in\Ta\cap\Tb}\Big(\pi_2^{\beta,+}(\x)
 \coprod \pi_2^{\alpha,+}(\x)\Big)\lra \Ring_\tau, \ \ \ \ \
 \la(\psi):=\prod_{i=1}^\el \la_i^{n_{z_i}(\psi)}.
\end{split}
\end{displaymath}
Here, we use $\pi_2^{\alpha,+}(\x)$ (respectively, $\pi_2^{\alpha,+}(\x)$) to denote the
subset of $\pi_2^{\alpha}(\x)$ (respectively, $\pi_2^{\beta}(\x)$) which consists of
the classes $\psi$ with $\Dcal(\psi)\geq 0$.\\

If  an $\alpha$ boundary degeneration $\psi$ as above is positive and $\la(\psi)\neq 0$,
we may conclude that for $i=1,...,k$, either $a_i=0$ or  the genus of $A_i$ is zero.
Without loosing on generality, assume that the genus of $A_1,...,A_{k_0}$ is zero, and
that the rest of $A_i$ have positive genus. Thus $\Dcal(\psi)\geq 0$ and $\la(\psi)\neq 0$
implies that
$$\Dcal(\psi)=a_1A_1+...+a_{k_0}A_{k_0}.$$
Consequently $\mu(\psi)=2(a_1+...+a_{k_0})$. Similarly, we may assume that the
genera of $B_1,...,B_{l_0}$ are zero, and
that the rest of $B_i$ have positive genus. This would imply that for
any $\psi\in\pi_2^\beta(\x)$
with $\Dcal(\psi)\geq 0$, we will either have $\la(\psi)=0$, or
$$\Dcal(\psi)=b_1B_1+...+b_{l_0}B_{l_0},\ \ \&\ \ \mu(\psi)=2(b_1+...+b_{l_0}).$$

In theorem 5.5 from \cite{OS-linkinvariant}, Ozsv\'ath and Szab\'o prove the following
statement. In fact, the statement of their result is less general, but the proof applies in
the following more general setup as well.

\begin{lem}\label{lem:disk-degeneration}
Let $\psi$ be the class of a boundary degeneration, and that
a coherent choice of orientation is fixed for the Heegaard diagram
$(\Sig,\alphas,\betas,\z)$. If $\Dcal(\psi)\geq 0$, $\la(\psi)\neq 0$, and
$\mu(\psi)\leq 2$ then $\Dcal(\psi)=A_i$ or $\Dcal(\psi)=B_j$ for
some $1\le i\le k_0$ or $1\leq j\leq l_0$ (or $\psi$ is the class of the constant map).
In the first case (i.e. $\Dcal(\psi)=A_i$) we have
\begin{displaymath}
\n(\psi)=
\begin{cases}
0 \ \ \  \ \ \ &\text{if}\ \ k=1\\
\pm1 \ \ \  \ \ \ &\text{if}\ \ k>1.
\end{cases}
\end{displaymath}
where $\n(\psi)=\#\widehat{\Ncal}(\psi)$.
Similarly, for $\Dcal(\psi)=B_j$ we have  $\n(\psi)=0$
if $l=1$ and $\n(\psi)=\pm 1$ if $l>1$.
Here
$\widehat{\Ncal}(\psi)$ is the quotient of ${\Ncal}(\psi)$ under the action of
the subgroup
\begin{displaymath}
\Group=\left\{\left(\begin{array}{cc}a&b\\ 0&\frac{1}{a}\end{array}\right)\ \Big|\
a\in\R^+,\ b\in\R
\right\}
<\mathrm{PSL}_2(\R).
\end{displaymath}
\end{lem}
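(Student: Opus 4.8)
The plan is to reduce the statement to the corresponding result of Ozsv\'ath and Szab\'o (Theorem~5.5 of \cite{OS-linkinvariant}) by a careful bookkeeping of which boundary degenerations can survive the constraints $\Dcal(\psi)\geq 0$, $\la(\psi)\neq 0$, and $\mu(\psi)\leq 2$. The first step is to record that, as observed just before the statement, a positive $\alpha$-boundary degeneration $\psi$ with $\la(\psi)\neq 0$ must have $\Dcal(\psi)=a_1A_1+\dots+a_{k_0}A_{k_0}$ with all $a_i\geq 0$ (only the genus-zero components $A_1,\dots,A_{k_0}$ can appear, since $\la_i^-=0$ in $\Ring_\tau$ whenever $g_i^->0$), hence $\mu(\psi)=2(a_1+\dots+a_{k_0})$ by Lipshitz' index formula. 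Consequently $\mu(\psi)\leq 2$ forces either $\psi$ constant ($\mu=0$) or $\Dcal(\psi)=A_i$ for a single index $i\leq k_0$ (and $\mu(\psi)=2$). The symmetric discussion handles $\beta$-boundary degenerations with $\Dcal(\psi)=B_j$, $j\leq l_0$. This already yields the classification of domains; what remains is the signed count $\n(\psi)=\#\widehat{\Ncal}(\psi)$ in the case $\Dcal(\psi)=A_i$ (and symmetrically $B_j$).

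For the count, I would argue exactly as in Ozsv\'ath--Szab\'o: the moduli space $\Ncal(A_i)$ of $J_1$-holomorphic representatives of the single-component degeneration $A_i$ (a sphere with one boundary circle mapping to $\Ta$, asymptotic to $\x$) can be analyzed by restricting attention to the component $A_i\subset\Sigma$ itself. Since $A_i$ is a genus-zero surface with boundary a union of $\alpha$-curves, a holomorphic representative is governed by the topology of $A_i$: when $k=1$ there is a single component $A_1=\Sigma-\alphas$ and the relevant moduli space, after quotienting by the automorphism group $\Group<\mathrm{PSL}_2(\R)$, is either empty or carries an even count, giving $\n(\psi)=0$; when $k>1$ the component $A_i$ is a proper subsurface and the Gromov-compactified moduli space $\widehat{\Ncal}(A_i)$ is a compact zero-dimensional manifold whose signed count is $\pm 1$. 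The key analytic input here is precisely Theorem~5.5 of \cite{OS-linkinvariant} together with the remark in the excerpt that its proof applies verbatim in this more general setting; I would spell out why the hypotheses of that theorem are met (the index computation above shows $\mu(A_i)=\chi(A_i)=2$ since $g_i^-=0$, which is the expected dimension, and the coherent orientation system has been fixed).

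**Main obstacle.** The genuinely delicate point is the sign/parity assertion $\n(\psi)\in\{0,\pm1\}$ with the dichotomy governed by whether $k=1$ or $k>1$ — i.e. whether the degeneration "uses all of $\Sigma$" or only a proper piece of it. This is not a formal consequence of the index count; it relies on the structure of the compactified moduli space $\widehat{\Ncal}(A_i)$ and on how the coherent orientations interact with the $\Group$-action. In the closed/link case this is handled by the explicit analysis of boundary degenerations in \cite{OS-linkinvariant}; here one must check that nothing in the passage to sutured Heegaard diagrams (the extra marked points $\z$, the weaker $\spinc$-admissibility, the possibly-disconnected $\Rr^\pm(\tau)$) disturbs that analysis. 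Concretely, the argument must verify that when $k>1$ the relevant $\dbar$-operator on the subsurface $A_i$ is still surjective with one-dimensional kernel before quotienting, so that $\widehat{\Ncal}(A_i)$ is a single point counted with a well-defined sign, whereas when $k=1$ a dimension/symmetry argument forces the count to vanish. I expect this to be the step requiring the most care, and I would import it wholesale from \cite{OS-linkinvariant}, indicating explicitly where the greater generality is harmless.
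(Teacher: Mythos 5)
Your proposal matches the paper's treatment: the domain classification ($\Dcal(\psi)=A_i$ or $B_j$ with $i\le k_0$, $j\le l_0$, via vanishing of $\la$ on positive-genus components and Lipshitz' index formula) is exactly the discussion preceding the lemma, and the signed count $\n(\psi)\in\{0,\pm1\}$ is, as in the paper, imported directly from Theorem~5.5 of Ozsv\'ath--Szab\'o's link-invariants paper, with the only new remark being that the fixed coherent orientations turn the mod-$2$ count into a signed one. No substantive difference in approach.
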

\begin{proof}
See~\cite{OS-linkinvariant} theorem 5.5. Note that the moduli spaces are now equipped
with an orientation, and we may thus count the points of the moduli spaces with sign,
instead of working modulo $2$. The choice of the plus or minus sign comes from the choice
on the orientation associated with the homotopy classes $A_i$ and $B_j$ of $\alpha$ and $\beta$
boundary degenerations respectively.
\end{proof}
 The argument of Ozsv\'ath and Szab\'o in fact implies that there is a natural choice of
 orientation for $A_1,...,A_{k_0}$ and $B_1,...,B_{l_0}$ which makes the value of $\n(\psi)$
 equal to $+1$. This choice of orientation basically comes from the complex structure on the
 surface $\Sig$, since the moduli space of boundary degenerations associated with
 any of $A_1,...,A_{k_0}$ and $B_1,...,B_{l_0}$ is eventually, possibly after stretching the
necks, is identified with the group $\Group$ via Riemann mapping theorem.
This implies a form of compatibility among these preferred orientations on
$A_1,...,A_{k_0}$ and $B_1,...,B_{l_0}$. More precisely, if $k=k_0$ and $l=l_0$,
choosing the preferred orientation associated with $A_1,...,A_k,B_1,...,B_{l-1}$ induces
an orientation on the moduli space corresponding to $B_l$. This orientation
is the same as the preferred orientation associated with $B_l$.
If $l_0<l$ (or if $k_0<k$), we are free to choose the preferred orientation
associated with the classes $A_1,...,A_{k_0}$ and $B_1,...,B_{l_0}$ without
any compatibility requirement.
We may thus present the following definition. \\

\begin{defn}
A {\emph{coherent system of orientations}} associated with the Heegaard diagram
$(\Sig,\alphas,\betas,\z)$ for the balanced sutured manifold $(X,\tau)$
and the $\SpinC$ class $\spinc\in\SpinC(\ovl X^\tau)$
is an assignment $\Or$ of an orientation to
the determinant line bundle of the linearized Cauchy-Riemann operator
associated with all
classes in $\pi_2(\x,\y)$ (for all $\x,\y\in\Ta\cap\Tb$) with the following properties:\\

$\bullet$ $\Or(\phi_1\star \phi_2)$ is the orientation induced by $\Or(\phi_1)$ and
$\Or(\phi_2)$ via juxtaposition, for any $\x,\y,\z\in\Ta\cap\Tb$ representing $\spinc$, and any
$\phi_1\in\pi_2(\x,\y)$ and $\phi_2\in\pi_2(\y,\z)$.\\

$\bullet$ For any $\x\in\Ta\cap\Tb$ representing $\spinc$, any $R_i^-\subset\Rr^-(\tau)$ with $g_i^-=0$,
and any $R_j^+\subset\Rr^+(\tau)$ with $g_j^+=0$, let us denote by $\psi^-\in\pi_2^\alpha(\x)$ and
$\psi^+\in\pi_2^\beta(\x)$ the classes of boundary degenerations corresponding to $R_i^-$ and $R_j^+$
respectively. Then
$\Or(\psi^-)$ and $\Or(\psi^+)$
are the preferred orientation of Ozsv\'ath and Szab\'o on $\Nod(\psi^-)$ and $\Nod(\psi^+)$
respectively, which give $\n(\psi^-)=1$ and $\n(\psi^+)=1$ (if $k>1$ and $l>1$ respectively).
\end{defn}

The last assumption implies, in particular, that the orientation induced on
the periodic domain determined by $\Sig$ is the natural orientation on it, as defined in
section 3.6 of \cite{OS-3m1}.\\

Let us assume that $\psi\in \pi_2^\alpha(\x_0)$
satisfies $\Dcal(\psi)\in\{A_1,...,A_{k_0}\}$, say
$\Dcal(\psi)=A_1$. Furthermore, assume that a preferred
orientation on $\Nod(\psi)$ is fixed as before.
At the same time $\psi$ may be regarded as a
class in $\pi_2(\x_0,\x_0)$, and a
moduli space $\Mod(\psi)$ may also be associated
with $\psi$. This moduli space is smooth and
two dimensional as well, and gives an open $1$-manifold
$\ov\Mod(\psi)$ after we mod out by the
translation action of $\R$. The choice of
 orientation on $\Nod(\psi)$ induces an
orientation on $\Mod(\psi)$ as well. The reason
is that the determinant line bundle of the
(time dependent) Cauchy-Riemann operator on
both these moduli spaces is pulled back from
the same model, as discussed in subsection 3.6
in \cite{OS-3m1}.\\

According to the discussion
of section~5 from \cite{OS-linkinvariant},
$\ov\Nod(\psi)$ will then appear
as a boundary point of the smooth one dimensional manifold
$\ov\Mod(\psi)$. This induces a second orientation
on $\ov\Nod(\psi)$, as the boundary of
the oriented moduli space $\ov\Mod(\psi)$.
Whether this second orientation
agrees with the orientation of $\ov\Nod(\psi)$
as the quotient of $\Nod(\psi)$
under the action of $\Group$
or not depends on our convention for the
embedding of the translation group
$\R$ (which acts on on $\Mod(\psi)$) in
$\Group$, as will be discussed in more detail below.
The same discussion is valid for $\beta$ boundary degenerations.\\

By the Riemann mapping theorem, the half plane
$$\Hbb^+=\R\times[0,+\infty) \subset \C$$
is conformal to the unit disk, or to the strip
$[0,1]\times \R \subset \C$.
We may thus think of $\Hbb^+$ as the domain of the class
$\psi$, when considered as an element in $\pi_2(\x,\x)$. We may then fix
a real number $r\in\R$ and interpret $\psi$ as a class
with
$$\psi\Big([r,+\infty)\times\{0\}\Big)\subset \Ta,\ \ \&\ \
\psi\Big((-\infty,r]\times\{0\}\Big)\subset \Tb.$$
Furthermore, we have to assume that $\psi(r,0)$ and $\psi(\infty)$
are both the intersection point $\x\in\Ta\cap\Tb$.
The group $\Group$ consists of the maps $\rho_{a,b}$ for $a>0$ and
$b\in\R$ which are defined by
$$\rho_{a,b}(z):=az+b.$$
With this notation fixed, the re-parametrization group of the domain
of $\psi\in\pi_2(\x,\x)$ is then identified as
$$\R_r=\left\{ \rho_{a,b} \Big|\
a\in\R^+\ \&\  b=r(1-a)\right\}<\Group.$$
If we identify $a$ as the exponential of a real number, the subgroup
$\R_r$ is identified with $\R$. This induces an orientation on the
one dimensional subgroup $\R_r$ of $\Group$.\\

As $r$ approaches $-\infty$, the limit of the subgroups
$\R_r$ determines the embedding of the translation group
in the automorphism group of the domain of $\alpha$ boundary
degenerations. In the this case, assuming $r<<0$ we may write
$$a=1-\frac{c}{r},\ \ c\in(r,\infty),\ \ \Rightarrow\ b=c.$$
As $c$ grows large, $a$ grows large as well. Thus the above
parametrization of $\R_r$  by the interval $(r,+\infty)$ is
orientation preserving. With $r$ converging to $-\infty$,
the sequence $\{\rho_{1-(c/r),c}\}_r$ converges to
$$\rho_{1,c}:\Hbb^+\ra \Hbb^+,\ \ \rho_{1,c}(z)=z+c.$$
The limit of $\R_r$, as $r\ra -\infty$, is thus the translation
subgroup
$$\R_\alpha=\left\{ \rho_{1,c} \Big|\
c\in\R\right\}<\Group,$$
and the above parametrization of $\R_\alpha$ is orientation preserving.\\

On the other hand, when $r$ approaches $+\infty$,
the limit of the subgroups $\R_r$ determines the embedding of the translation group
in the automorphism group of the domain of $\beta$ boundary
degenerations. In the this later case,
assuming $r>>0$ we may write
$$a=1-\frac{c}{r},\ \ c\in(-\infty,r),\ \ \Rightarrow\ b=c.$$
This time, as $c$ grows large, $a$ becomes small. Thus the above
parametrization of $\R_r$  by the interval $(-\infty,r)$ is
orientation reversing. With $r$ growing large,
the sequence $\{\rho_{1-(c/r),c}\}_r$ converges to
$$\rho_{1,c}:\Hbb^+\ra \Hbb^+,\ \ \rho_{1,c}(z)=z+c.$$
The limit of $\R_r$, as $r\ra +\infty$, is thus the translation
subgroup
$$\R_\beta=\left\{ \rho_{1,c} \Big|\
c\in\R\right\}<\Group,$$
but this time,
the above parametrization of $\R_\beta$ is orientation reversing.\\

With the above conventions for the orientations of $\Group$,
$\R_r$, $\R_\alpha$ and $\R_\beta$ fixed, we have thus proved the following lemma.

\begin{lem}\label{lem:orientation}
Let $\phi\in\pi_2^\alpha(\x)$ and $\psi\in\pi_2^\beta(\x)$ be the
classes of $\alpha$ and $\beta$ boundary degenerations respectively.
Furthermore, assume that $\mu(\phi)=\mu(\psi)=2$, and that
$\Nod(\phi)$ and $\Nod(\psi)$ are smooth manifolds. Then the
orientation induced on $\ov\Nod(\phi)$ agrees with the boundary
orientation induced from $\ov\Mod(\phi)$, while the
orientation induced on $\ov\Nod(\psi)$ is the opposite of the boundary
orientation induced from $\ov\Mod(\psi)$.
\end{lem}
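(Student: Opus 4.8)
The plan is to organize the sign bookkeeping already carried out in the paragraphs preceding the statement; no new analytic input is needed. First I would recall that to a class $\phi\in\pi_2^\alpha(\x)$ with $\mu(\phi)=2$ one attaches two moduli spaces: the space $\Nod(\phi)$ of $J_1$-holomorphic $\alpha$-boundary degenerations, carrying a free $\Group$-action with quotient $\ov\Nod(\phi)$, and — regarding $\phi$ as a class in $\pi_2(\x,\x)$ — the space $\Mod(\phi)$ of holomorphic strips, which is two-dimensional and carries the translation action of $\R$, with one-dimensional quotient $\ov\Mod(\phi)$. Following subsection~3.6 of \cite{OS-3m1}, the determinant line bundles of the linearized Cauchy--Riemann operators over these two spaces are pulled back from the same model over the relevant space of domains, so a coherent system of orientations orients both simultaneously; and by the degeneration analysis of section~5 of \cite{OS-linkinvariant}, $\ov\Nod(\phi)$ occurs as a boundary point of the one-manifold $\ov\Mod(\phi)$, thereby acquiring a second (``boundary'') orientation to be compared with its orientation as the quotient $\Nod(\phi)/\Group$. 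The same discussion applies verbatim to $\psi\in\pi_2^\beta(\x)$ with $J_0$ in place of $J_1$.

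The main step is the comparison of these two orientations, which I would reduce entirely to comparing how the translation group $\R$ embeds into $\Group$. Identifying the half-plane $\Hbb^+=\R\times[0,+\infty)$ conformally with the strip and marking the two special points at $(r,0)$ and at $\infty$, the reparametrization group of the domain of $\phi$ viewed in $\pi_2(\x,\x)$ is $\R_r=\{\rho_{a,b}:a\in\R^+,\ b=r(1-a)\}<\Group$, oriented by identifying $a$ with the exponential of a real parameter; this $\R_r$ is the incarnation of the translation group for finite $r$. I would then invoke the two limits already computed: as $r\to-\infty$, the parametrization $c\mapsto\rho_{1-c/r,\,c}$ of $\R_r$ by $c\in(r,+\infty)$ is orientation preserving (because $a$ increases with $c$) and converges to an orientation-preserving parametrization of the translation subgroup $\R_\alpha$ acting on the domains of $\alpha$-boundary degenerations, whereas as $r\to+\infty$ the parametrization of $\R_r$ by $c\in(-\infty,r)$ is orientation reversing (because $a$ decreases as $c$ grows) and converges to an orientation-reversing parametrization of $\R_\beta$. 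Since the coherent system of orientations pins down the orientations of $\Nod(\phi)$ and $\Nod(\psi)$ as the preferred Ozsv\'ath--Szab\'o orientations (those realizing $\n=1$ in Lemma~\ref{lem:disk-degeneration}), this shows the boundary orientation on $\ov\Nod(\phi)$ agrees with the quotient orientation while the boundary orientation on $\ov\Nod(\psi)$ is its opposite — which is the assertion.

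The delicate point, and the one I expect to be the real obstacle, lies in the first step: one must invoke the normalization of subsection~3.6 of \cite{OS-3m1} precisely enough to be certain that the identification of the determinant line of the strip operator with that of the half-plane operator carries no hidden orientation twist, so that the whole sign discrepancy is accounted for by the group-theoretic comparison above. This is exactly the content of that normalization, used as in section~5 of \cite{OS-linkinvariant}; once it is cited, the remainder of the argument is the explicit limiting computation of the subgroups $\R_r$ already carried out, which is why the discussion preceding the statement already constitutes the proof.
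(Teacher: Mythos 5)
Your proposal is correct and follows essentially the same route as the paper: the paper's own proof is precisely the preceding discussion — the determinant lines pulled back from a common model as in subsection~3.6 of \cite{OS-3m1}, the identification of $\ov\Nod$ as a boundary point of $\ov\Mod$ via section~5 of \cite{OS-linkinvariant}, and the limiting computation of the subgroups $\R_r\subset\Group$ as $r\to\mp\infty$ giving the orientation-preserving embedding $\R_\alpha$ versus the orientation-reversing embedding $\R_\beta$. The only superfluous remark is the appeal to the preferred (Ozsv\'ath--Szab\'o) orientations realizing $\n=1$, which the lemma does not actually require; the sign comparison is independent of that normalization.
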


\subsection{Energy bounds and relative gradings}
Recall that for a Riemannian manifold $(M,g)$ and a domain $\Omega\subset \C$
the energy of a smooth map $u:\Omega\ra X$ is defined by
$$E(u)=\frac{1}{2}\int_{\Omega}\|du\|_g^2.$$
Suppose that $(\Sig,\alphas,\betas,\z)$ is a Heegaard diagram for
a balanced sutured manifold $(X,\tau)$. Let
$$\Sig-\alphas-\betas=\coprod_{i=1}^m D_i$$
be the connected components in the complement of the curves, and
$\eta$ denotes a K\"ahler form on $\Sig$. We denote the area of
$D_i$ with respect to $\eta$ by $\Ar(D_i)$, and for a domain
$\Dcal=\sum_{i=1}^ma_i D_i$ we define
$$\Ar(\Dcal)=\sum_{i=1}^m a_i\Ar(D_i).$$
The following lemma is basically lemma~3.5 from \cite{OS-3m1} and
theorem~6.3 from \cite{Juh}.

\begin{lem}\label{lem:energy-bound}
There is a constant $C$ which depends only on the Heegaard
diagram $(\Sig,\alphas,\betas,\z)$ and the K\"ahler form $\eta$
such that for any pseudo-holomorphic Whitney disk
$$u:(\D,\partial \D)\lra (\Sym^\ell(\Sig),\Ta\cup\Tb)$$
we have
$$E(u)\leq C.\Ar\left(\Dcal(u)\right).$$
\end{lem}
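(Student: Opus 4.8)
The plan is to bound the energy of a $J_t$-holomorphic disk by the $\omega$-area of its image for a carefully chosen symplectic form $\omega$ on $\Sym^\ell(\Sig)$, and then to recognize that $\omega$-area as $\Ar(\Dcal(u))$; this follows the scheme of Lemma~3.5 of \cite{OS-3m1} and Theorem~6.3 of \cite{Juh}. First I would fix the symplectic data: extend $\eta$ to a K\"ahler form $\omega$ on $\Sym^\ell(\Sig)$ which agrees with the symmetrized form $\Sym^\ell(\eta)$ outside an arbitrarily small neighbourhood $U$ of the diagonal and represents the same (relative) cohomology class, using the standard construction in Section~3 of \cite{OS-3m1}. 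The tori $\Ta=\alpha_1\times\cdots\times\alpha_\ell$ and $\Tb=\beta_1\times\cdots\times\beta_\ell$ are disjoint from the diagonal, and $\Sym^\ell(\eta)$ restricts trivially to each of them since each $\eta|_{\alpha_i}$ and $\eta|_{\beta_i}$ vanishes for dimension reasons; hence $\Ta$ and $\Tb$ are Lagrangian for $\omega$. Since the path $\{j_t\}_{t\in[0,1]}$ underlying $\{J_t=\Sym^\ell(j_t)\}$ is a compact family and $\Sym^\ell(j_t)$ is genuinely $\Sym^\ell(\eta)$-compatible on the complement of $U$, after rescaling $\omega$ on $U$ one may arrange that every $J_t$ is tamed by $\omega$; compactness of $U$ and of $[0,1]$ then yields a uniform constant $c>0$ with $\omega(v,J_t v)\ge c\,|v|_g^2$ for a fixed reference metric $g$ on $\Sym^\ell(\Sig)$ and all $t\in[0,1]$.

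Next I would establish the pointwise comparison of the energy density with $u^\ast\omega$. For a $J_t$-holomorphic $u\colon[0,1]\times\R\to\Sym^\ell(\Sig)$ the Cauchy--Riemann equation reads $\partial_t u=-J_t\,\partial_s u$, so $u^\ast\omega=\omega(\partial_t u,\partial_s u)\,dt\wedge ds=\omega(\partial_s u,J_t\,\partial_s u)\,dt\wedge ds\ge c\,|\partial_s u|_g^2\,dt\wedge ds$, while $|du|_g^2=|\partial_s u|_g^2+|J_t\,\partial_s u|_g^2\le (1+C_1)\,|\partial_s u|_g^2$ for a constant $C_1$ depending only on $g$ and the compact path. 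Hence $\frac12|du|_g^2\,dt\wedge ds\le C\,u^\ast\omega$ pointwise on the strip, for a constant $C$ depending only on $\eta$, the Heegaard diagram, and the chosen path of complex structures; integrating over the source gives $E(u)\le C\int_{\D}u^\ast\omega$.

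Finally I would evaluate $\int_{\D}u^\ast\omega$ topologically. As $\omega$ is closed and vanishes on $\Ta\cup\Tb$, this integral depends only on the relative homotopy class $[u]\in\pi_2(\x,\y)$, hence only on $\Dcal(u)$, and it equals the pairing of the cohomology class of $\omega$ with $[u]$. By the choice of $\omega$ this pairing agrees with the pairing of $[\Sym^\ell(\eta)]$ with $[u]$, which by the defining property of the domain is $\sum_{i=1}^m n_{p_i}(u)\,\Ar(D_i)=\Ar(\Dcal(u))$, where $D_1,\dots,D_m$ are the components of $\Sig-\alphas-\betas$ (here one uses that a $J_t$-holomorphic disk meets the diagonal in a subset of the source of measure zero, so $\int_\D u^\ast\Sym^\ell(\eta)$ is a well-defined $L^1$ integral computing this sum). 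Combining the two estimates proves $E(u)\le C\cdot\Ar(\Dcal(u))$.

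The main obstacle is the interplay between the time-dependence of $\{J_t\}$ and the singularity of $\Sym^\ell(\eta)$ along the diagonal: one must produce a single smooth symplectic form $\omega$ that simultaneously tames the entire path $\{J_t\}$ and still coincides with $\Sym^\ell(\eta)$ away from a neighbourhood of the diagonal, so that the uniform pointwise energy estimate of the second step and the clean area identification of the third step are both available at once. Once such an $\omega$ is fixed, each remaining step is routine and parallels the closed case in \cite{OS-3m1} and the sutured case in \cite{Juh}.
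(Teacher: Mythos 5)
Your overall scheme---dominate the energy by a symplectic area and then identify that area with $\Ar(\Dcal(u))$---is the standard one; note that the paper itself gives no proof of this lemma, it simply cites Lemma~3.5 of \cite{OS-3m1} and the corresponding statement in \cite{Juh}. However, your write-up has a genuine gap at precisely the point you flag as ``the main obstacle.'' You need one smooth \emph{closed} form $\omega$ on $\Sym^\ell(\Sig)$ which (a) agrees with the pushforward form $\pi_*(\eta^{\times\ell})$ off a neighbourhood $U$ of the diagonal, (b) lies in the same relative class, and (c) tames $J_t=\Sym^\ell(j_t)$ for \emph{every} $t\in[0,1]$. Your justification of (c), ``after rescaling $\omega$ on $U$,'' is not an argument: rescaling a closed form over an open set by a nonconstant (or locally constant but discontinuous) factor destroys closedness, and the known smoothing constructions (Ozsv\'ath--Szab\'o's, or Perutz's) produce a form that is K\"ahler for \emph{one} fixed $\Sym^\ell(j_{t_0})$. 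Since $\{j_t\}$ is an arbitrary path of complex structures on $\Sig$, positivity of such a form on $J_t$-complex lines inside $U$ can fail for $t$ far from $t_0$; openness of taming plus compactness of $[0,1]$ only helps once you already have a single candidate taming each $J_t$, and the intersection of the relevant taming cones is not automatically nonempty. Both your pointwise estimate and your cohomological identification rest on (c), so the proof as written is incomplete.

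The cited arguments avoid this corner by never introducing a smoothed closed form: the comparison with the domain area is made \emph{pointwise} against the singular pushforward form, and in the present paper, where $J_t=\Sym^\ell(j_t)$ globally, this is especially clean. Any area form $\eta$ on $\Sig$ is compatible with every complex structure $j_t$, so away from the diagonal $\pi_*(\eta^{\times\ell})$ is compatible with every $J_t$; since $\|d\pi\|$ is bounded on the compact product $\Sig^{\times\ell}$ and the metrics attached to $(\eta,j_t)$ are uniformly comparable for $t$ in the compact interval, one gets a constant $c>0$, uniform in $t$ and uniform up to (but off) the diagonal, with $\pi_*(\eta^{\times\ell})(v,J_tv)\ge c\,|v|_g^2$. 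Because $\Ta$ and $\Tb$ are disjoint from the diagonal and intersections of a holomorphic disk with the diagonal are isolated (the same positivity-of-intersections used to define $n_z(\phi)$), $u^{-1}(\mathrm{diag})$ has measure zero, so integrating the pointwise inequality gives $E(u)\le C\int_{\D}u^*\pi_*(\eta^{\times\ell})$, while the tautological (Fubini-type) identity $\int_{\D}u^*\pi_*(\eta^{\times\ell})=\Ar(\Dcal(u))$ holds exactly---no taming hypothesis near the diagonal and no comparison of cohomology classes is needed. If you prefer your smoothed-form route, you must actually prove (c) (e.g.\ adapt the smoothing construction to a whole path of complex structures), which is not in the literature you invoke; otherwise replace the first two steps by the pointwise argument above.
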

The existence of energy bounds is needed in Gromov compactness arguments.\\

Finally, note that
$$\pi_1\left(\Sym^\ell(\Sig)\right)=\Ht_1\left(\Sym^\ell(\Sig);\Z\right)$$
provided that $\ell>1$. Throughout the construction, we will assume that
the requirement $\ell>1$ is satisfied, by stabilizing Heegaard diagrams if
necessary.

\begin{defn}
For $\spinc\in \SpinC(\ovl X)$ let
$$\mathfrak{d}(\spinc)=\mathrm{gcd}_{h\in\Ht_2(\ovl X,\Z)}\big\langle c_1(\spinc), h\big\rangle.$$
If $H=(\Sig,\alphas,\betas,\z)$ is an $\spinc$-admissible Heegaard diagram for
$(X,\tau)$ for any $\x,\y\in\Ta\cap\Tb$ with $\relspinc(\x),\relspinc(\y)\in\spinc$,
and for $\phi\in\pi_2(\x,\y)$,
we define the relative grading of $\x$ and $\y$ by
$$\mathrm{gr}(\x,\y)=\mu(\phi)\ \ \ \left(\text{mod }\ \mathfrak{d}(\spinc)\right).$$
Thus, $\mathrm{gr}(\x,\y)\in\Z_{\mathfrak{d}(\spinc)}=\frac{\Z}{\mathfrak{d}(\spinc)\Z}$.
\end{defn}

The relative grading is independent of the choice of $\phi$. It induces a relative grading
on the module $\CFT(X,\tau,\spinc;H)$. For this purpose, we should determine the
grading associated with the generators $\la_1,...,\la_\el\in G(\Ring)$.
Each $\la_i$ corresponds to the class
\begin{displaymath}
[\gamma_i]\in \Ker\left((\imath_X)_*:\Ht_1(X;\Z)\ra \Ht_1(\ovl X;\Z)\right),
\end{displaymath}
where $\imath_X:X\ra \ovl X$ is the inclusion map. It is thus the boundary of an
integral $2$-chain $A_i=A_{[\gamma_i]}$ in $\ovl X$, which is well defined up to addition of $2$-cycles.
The evaluation  $d_i=-\langle c_1(\spinc), A_i\rangle$ is thus well-defined as an
element of  $\frac{\Z}{\mathfrak{d}(\spinc)\Z}$. We may then define the grading on $G(\Ring)$ by
setting
\begin{displaymath}
\mathrm{gr}\left(\prod_{i=1}^\el \la_i^{n_i}\right)
:=\sum_{i=1}^\el d_in_i\in \frac{\Z}{\mathfrak{d}(\spinc)\Z},\ \
\ \ \forall \ \ \prod_{i=1}^\el \la_i^{n_i}\in G(\Ring).
\end{displaymath}
If $\phi\in\pi_2(\x,\x)$ is a positive disk, it determines a periodic domain, and
$$\mu(\phi)=\Big\langle c_1(\spinc),H(\phi)\Big\rangle
=\mathrm{\gr}\left(\la_\z(\phi)\right) \ \ \ \left(\text{mod }\ \mathfrak{d}(\spinc)\right).$$

The $\Ring$-module $\CFT(X,\tau,\spinc;H)$ is thus equipped with a relative homological
grading by the elements in $\frac{\Z}{\mathfrak{d}(\spinc)\Z}$.
The differential of the corresponding Ozsv\'ath-Szab\'o complex
$\CFT(X,\tau,\spinc;H)$ which will be defined in the following subsection lowers this
relative grading by one. In particular, a relative grading is induced on the homology groups
corresponding to any test ring $\Ringg$ for $\Ring_\tau$.\\

\subsection{The construction of the chain complex}
Let $(X,\tau)$ be a balanced sutured manifold.
As discussed in section~\ref{sec:quasi-isomorphism}
we associate a coefficient ring $\Ring=\Ring_{\tau}$ with $\tau$,
which is a $\F$-algebra.
Let us denote by $\overline{X}$ the three-manifold
(with positive and negative boundary components)
obtained from $X$ by filling out the sutures in $\tau$.
Let $\spinc\in\RelSpinC(\overline{X})$ be a  $\SpinC$ structure on $\overline{X}$.
Consider an $\spinc$-admissible
Heegaard diagram $(\Sigma,\alphas,\betas,\z)$ for $(X,\tau)$.
Associated with this Heegaard diagram,
let
$$\CFT(\Sig,\alphas,\betas,\z;\spinc)
=\Big\langle \x\in \Ta\cap\Tb\ |\ \relspinc_\z(\x)\in\spinc\Big\rangle_{\Ring}$$
be a free $\Ring$-module generated by the
intersection points in $\Ta\cap\Tb$ which represent the $\SpinC$ class $\spinc$.
Note that the "exact sequence"
\begin{displaymath}
\begin{diagram}
0&\rTo &\big\langle\PD[\gamma_i]\big\rangle_{i=1}^\el
&\rTo&\RelSpinC(X,\tau)&\rTo{[.]=s_{\{1,...,\el\}}}&
\SpinC(\ovl X)&\rTo&0
\end{diagram}
\end{displaymath}
implies that the assignment of  relative $\SpinC$ structures gives
a filtration on the $\A$-module $\CFT(\Sig,\alphas,\betas,\z;\spinc)$, which is compatible
with the filtration $\chi:G(\Ring)\ra \Hbb$.
This  module may be decomposed using the filtration by
relative $\SpinC$ structures:
\begin{equation}\label{eq:spin-decomposition}
\begin{split}
\CFT(\Sig,\alphas,\betas,\z;\spinc)&=
\bigoplus_{{\relspinc\in\spinc\subset \RelSpinC(X,\tau)}}
\CFT(\Sig,\alphas,\betas,\z;\relspinc)\\
\CFT(\Sig,\alphas,\betas,\z;\relspinc)&=
\Big\langle \la\x\ |\
\x\in\Ta\cap\Tb,\ \la\in G(\Ring),\ \&\
\relspinc_\z(\x)+\chi(\la)=\relspinc\Big\rangle_{\F}.
\end{split}
\end{equation}
Here $G(\Ring)$ denotes the set of generator of the form
$\la=\prod_{i=1}^\el \la_i^{a_i}$ for
$\Ring$, as a  module over $\F$.\\

Furthermore, fix a coherent system $\Or$ of orientations on the determinant line bundles
of the linearization of Cauchy-Riemann operators associated with the classes
of the Whitney disks (corresponding to $\spinc$). We will drop $\Or$ from the notation,
unless an issue related to the orientation should be discussed.\\

Define an $\Ring$-module homomorphism by the following equation
\begin{displaymath}
\begin{split}
&\partial:\CFT(\Sig,\alphas,\betas,\z;\spinc)
\lra\CFT(\Sig,\alphas,\betas,\z;\spinc)\\
&\partial (\x):=\sum_{y\in\Ta\cap\Tb}
\sum_{\{\phi\in\pi_2^+(\x,\y)|\mu(\phi)=1\}}
\big(\m(\phi)\la(\phi)\big).\y.
\end{split}
\end{displaymath}
Here $\m(\phi)=\#\widehat{\mathcal{M}}(\phi)$ is the algebraic count
(i.e. with the signs determined by the orientation) of the
points in $\ov\Mod(\phi)$ for
any positive Whitney disk class $\phi\in\pi_2(\x,\y)$
such that $\la(\phi)\neq 0$. For other disk classes,
the contribution $\m(\phi)\la(\phi)$
is trivial by definition.\\

It is important to note that for any $\phi\in\pi_2^+(\x,\y)$
with $\mu(\phi)=1$ and $\la(\phi)\neq 0$,
the moduli space $\ov\Mod(\phi)$ is smooth, zero dimensional, oriented,
and compact. Smoothness and zero dimensionality of the moduli space
follows from the generic choice of the path of complex structures on
$\Sym^\ell(\Sig)$ (see the general discussion of \cite{OS-3m1}, section 3).
The compactness is however more critical. If $u_i$ is a sequence of
pseudo-holomorphic representatives of $\phi$, the amount of energy
$E(u_i)$ remains bounded by lemma~\ref{lem:energy-bound}. We may thus use the Gromov
compactness theorem to describe the possible limits of this sequence.
In fact, any possible Gromov limit of the sequence is the juxtaposition of some
pseudo-holomorphic representative $u$ of a class $\phi'\in\pi_2^+(\x,\y)$
with boundary degenerations and sphere bubblings. Let us assume that
$v_1,...,v_p$ are the classes of degenerations and bubbles. Then
the domains of $u$ and $v_i$, $i=1,...,p$ are positive and
$$\la(\phi)=\la(u)\la(v_1)...\la(v_p)\neq 0.$$
This implies that the domain of each $v_i$ is a linear combination of
the domains
$A_1,...,A_{k_0}$ or $B_1,...,B_{l_0}$ (with non-negative coefficients).
Here $A_1,...,A_{k_0}$ are the zero genus components in $\Sig-\alphas$ and
$B_1,...,B_{l_0}$ are the zero genus components in $\Sig-\betas$.
The Maslov index of each $v_i$
is thus a positive even number. Since the $\Mod(\phi')\neq \emptyset$,
$\mu(\phi')$ is non-negative and $p$ is thus forced to be $0$.
However, this means that the Gromov limit of $u_i$ is in $\ov\Mod(\phi)$,
i.e. $\ov\Mod(\phi)$ is compact, and thus finite.
In other words,  for any class
$\phi\in\pi_2^+(\x,\y)$ with $\mu(\phi)=1$, either $\phi$ does not contribute
to the coefficient of $\y$ in $\partial (\x)$ (e.g. $\la(\phi)=0$),
or $\m(\phi)$ is finite.\\

The Heegaard diagram $(\Sig,\alphas,\betas,\z;\spinc)$
is  $\spinc$-admissible, so by lemma
~\ref{lem:s-admissible}, for any intersection point $\y\in\Ta\cap\Tb$ there are only finitely many
$\phi\in\pi_2(\x,\y)$ such that $\mu(\phi)=1$, $\Dcal(\phi)\ge 0$, and $\la(\phi)\neq 0$.
Thus there are only finitely many classes $\phi\in\pi_2^+(\x,\y)$ with $\mu(\phi)=1$
and $\la(\phi)\neq 0$
which admit holomorphic representative.
This shows that the terms which contribute to the coefficient of $\y$ in $\partial (\x)$
are finite, and  that the map $\partial$ is thus well-defined.\\

The map $\partial$ is, by definition, a homomorphism of $\Ring$-modules.
It is obvious from the definition and the discussion of
subsection~\ref{subsec:algebra-of-SM} that $\partial$ preserves the decomposition of
equation~\ref{eq:spin-decomposition}, and we thus obtain a set of $\F$-module homomorphisms
\begin{displaymath}
\partial:\CFT(\Sig,\alphas,\betas,\z;\relspinc)\lra \CFT(\Sig,\alphas,\betas,\z;\relspinc),
\end{displaymath}
for any relative $\SpinC$ class $\relspinc\in\RelSpinC(X,\tau)$.
\begin{thm}\label{differential}
The filtered $\Ring$-module
$\mathrm{\CF}(\Sig,\alphas,\betas,\z;\spinc)$ is a filtered $\tab$ chain complex,
where $\Ring=\Ring_\tau$ is the coefficient ring associated with $\tau$ and
the filtration by the elements of the $\Z$-module $\Hbb=\Ht^2(X,\partial X;\Z)$
is given by the assignment of the relative $\SpinC$ classes in
$\RelSpinC(X,\tau)$ to the generators in $\Ta\cap\Tb$ using the map $\relspinc_\z$.
\end{thm}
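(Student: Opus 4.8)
Two statements must be checked: that $\partial$ is a filtered map for the filtration of $\CFT(\Sig,\alphas,\betas,\z;\spinc)$ by $\Hbb=\Ht^2(X,\partial X;\Z)$ coming from $\relspinc_\z$, and that $\partial\circ\partial=0$. I take $I=\Ta\cap\Tb$ (those intersection points representing $\spinc$) as the $\Ring$-basis and set $\chi(\x,\y):=\relspinc(\x)-\relspinc(\y)\in\Hbb$, using the affine structure of $\RelSpinC(X,\tau)$ over $\Hbb$. Properties (1) and (2) of a filtered $\tab$ chain complex hold tautologically, and (3) is immediate from Corollary~\ref{cor:s(x)-s(y)}: if $\phi\in\pi_2^+(\x,\y)$ contributes to the coefficient of $\y$ in $\partial\x$, then the monomial $\la(\phi)$ satisfies $\chi(\la(\phi))=\Hh(\phi)=\relspinc(\x)-\relspinc(\y)=\chi(\x,\y)$, so every monomial appearing in that coefficient has $\chi$-value $\chi(\x,\y)$. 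That $\partial$ is a well-defined $\Ring$-module map preserving the $\RelSpinC$-decomposition, that the sums defining it are finite, and that the index-$1$ moduli spaces $\ov\Mod(\phi)$ with $\la(\phi)\neq0$ are compact zero-manifolds, has already been established in the discussion preceding the theorem (using Lemmas~\ref{finiteD},~\ref{lem:s-admissible},~\ref{lem:energy-bound},~\ref{lem:no-nilpotents}). So the content is $\partial^2=0$.

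\textbf{Setting up $\partial^2=0$.} Fix $\x$ with $\relspinc(\x)\in\spinc$. Regrouping a two-story flow $\phi_1\star\phi_2$ into a single class $\phi$, the coefficient of $\z$ in $\partial^2\x$ is $\sum_{\phi}\la(\phi)\,N(\phi)$, summed over $\phi\in\pi_2^+(\x,\z)$ with $\mu(\phi)=2$ and $\la(\phi)\neq0$, where $N(\phi)$ is the signed count of two-story ends of the smooth oriented $1$-manifold $\ov\Mod(\phi)$ (here $N(\phi)=\sum_{\phi_1\star\phi_2=\phi}\m(\phi_1)\m(\phi_2)$, all terms with a vanishing $\la$-weight being zero). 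By Lemma~\ref{lem:energy-bound}, Gromov compactness, and $\spinc$-admissibility (Lemmas~\ref{finiteD} and~\ref{lem:s-admissible}), $\ov\Mod(\phi)$ compactifies to a compact oriented $1$-manifold whose boundary consists of two-story flows, $\alpha$- and $\beta$-boundary degenerations bubbling off, and sphere bubbles. Since the signed count of the boundary of a compact oriented $1$-manifold vanishes, $N(\phi)$ equals the negative of the signed count of the degenerate and bubbled ends.

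\textbf{The degenerate ends.} For a generic path of complex structures (with $\ell>1$) sphere bubbles occur only in codimension $\ge 2$, so they do not appear on the boundary of a $1$-manifold. For a boundary degeneration splitting off, the argument already given before the theorem shows its domain is a nonnegative combination of the genus-zero components $A_1,\dots,A_{k_0}$ of $\Sig-\alphas$ (in the $\alpha$-case) or $B_1,\dots,B_{l_0}$ of $\Sig-\betas$ (in the $\beta$-case), of Maslov index a positive even integer; as $\mu(\phi)=2$, the bubble is exactly one such region and the complementary disk class has $\mu=0$, hence is constant for generic $J$. Therefore degenerate ends occur only for $\phi\in\pi_2(\x,\x)$ with $\Dcal(\phi)=A_i$ $(1\le i\le k_0)$ or $\Dcal(\phi)=B_j$ $(1\le j\le l_0)$; in particular $N(\phi)=0$ whenever the target of $\phi$ is different from $\x$, so the coefficient of $\z$ in $\partial^2\x$ vanishes for $\z\neq\x$. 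For $\phi$ with $\Dcal(\phi)=A_i$ (resp. $B_j$), Lemma~\ref{lem:disk-degeneration} identifies the unique boundary degeneration class $\psi$ and gives $\n(\psi)=\pm1$ if $k>1$ (resp. $l>1$) and $\n(\psi)=0$ if $k=1$ (resp. $l=1$); the coherent system of orientations $\Or$ normalizes this signed count to $+1$, while Lemma~\ref{lem:orientation} shows the boundary orientation on $\ov\Nod(\psi)$ agrees with its quotient orientation in the $\alpha$-case and is opposite in the $\beta$-case. Since $\la(A_i)=\la_i^-$ and $\la(B_j)=\la_j^+$, assembling the contributions gives, up to an overall sign,
\begin{displaymath}
\text{coefficient of }\x\text{ in }\partial^2\x \;=\; \pm\Big(\sum_{i=1}^{k_0}\la_i^-\;-\;\sum_{j=1}^{l_0}\la_j^+\Big)\;=\;\pm\big(\la^-(\tau)-\la^+(\tau)\big)\;=\;0\quad\text{in }\Ring_\tau,
\end{displaymath}
using that $\la_i^-=0$ for $i>k_0$ and $\la_j^+=0$ for $j>l_0$ in $\Ring_\tau$, and the defining relation $\la^+(\tau)=\la^-(\tau)$. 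When $k=1$ the first sum is absent, but balancedness then forces $g_1^->0$ once $l>1$, so $\la^-(\tau)=\la_1^-=0$ and hence $\la^+(\tau)=0$ in $\Ring_\tau$ as well; symmetrically for $l=1$. In every case the diagonal contribution is zero, so $\partial^2=0$.

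\textbf{Main obstacle.} The routine inputs (energy bounds, Gromov compactness, exclusion of sphere bubbles, finiteness) are covered by the earlier lemmas. The delicate point is the orientation bookkeeping for the boundary-degeneration ends: one must verify that, with the conventions fixed for $\Group$, $\R_\alpha$ and $\R_\beta$ and the coherent system $\Or$, the $\alpha$- and $\beta$-degeneration contributions enter with \emph{opposite} relative sign and normalized count $1$, so that their total is exactly $\pm(\la^-(\tau)-\la^+(\tau))$ rather than, say, $\la^-(\tau)+\la^+(\tau)$ — it is precisely this relative sign (Lemma~\ref{lem:orientation}) combined with the relation $\la^+(\tau)=\la^-(\tau)$ built into $\Ring_\tau$ that makes $\partial^2=0$ hold. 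This is also where the choice of coefficient ring is essential: over a ring not satisfying that relation the boundary degenerations would obstruct $\partial^2=0$.
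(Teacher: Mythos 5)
Your proposal follows essentially the same route as the paper's proof: filtration compatibility is exactly Corollary~\ref{cor:s(x)-s(y)}, and $\partial^2=0$ is obtained by counting ends of the index-two moduli spaces, with off-diagonal coefficients vanishing because any degeneration with nonzero $\la$-weight has even Maslov index at least two, and the diagonal coefficient reducing, via Lemma~\ref{lem:disk-degeneration-2} and the relative sign of Lemma~\ref{lem:orientation}, to $\pm\big(\la^-(\tau)-\la^+(\tau)\big)=0$ in $\Ring_\tau$. (Your exclusion of sphere bubbles by a transversality/codimension count is slightly different from the paper's argument via $\la(S)$ and the index of $S$, but it is the standard argument and is fine.) In the main case $k,l>1$ your argument is complete and agrees with the paper.

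The one step that fails is your treatment of the special case $k=1$, $l>1$ (and the symmetric case $l=1$, $k>1$). You assert that balancedness forces $g_1^->0$; it does not. Balancedness gives $\chi(R_1^-)=\chi\big(\Rr^+(\tau)\big)$, i.e.\ $g_1^-=1-l+\sum_{j=1}^{l}g_j^+$, which forces some $g_j^+>0$ (so $l>l_0$) but is perfectly consistent with $g_1^-=0$: for instance $R_1^-$ planar with all $\el$ sutures on its boundary, one positive region of genus one and the rest of genus zero. Hence you cannot conclude $\la_1^-=0$ from a genus condition on the negative side, and as written the surviving $\beta$-degeneration contribution $\mp\la^+(\tau)$ is not shown to vanish. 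The needed vanishing is nevertheless true, and this is how the paper argues: since every suture lies on $\partial R_1^-$, one has $\la_1^-=\prod_{i=1}^{\el}\la_i=\prod_{j=1}^{l}\la_j^+$, and the factor $\la_j^+$ with $g_j^+>0$ is zero in $\Ring_\tau$, so $\la_1^-=0$ and, by the relation $\la^+(\tau)=\la^-(\tau)$, also $\la^+(\tau)=0$; the symmetric argument (some $g_i^->0$, and $\la_1^+=\prod_i\la_i^-$) repairs the case $l=1<k$. With this substitution your proof coincides with the paper's.
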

Before we start proving the above theorem we re-phrase lemma~\ref{lem:disk-degeneration}
in the presence of a coherent system of orientation.

\begin{lem}\label{lem:disk-degeneration-2}
With the notation of lemma~\ref{lem:disk-degeneration} fixed,
let  $\Or$
be a coherent system of orientations associated with the Heegaard diagram.
Let $\psi$ be the class of a boundary degeneration.
If $\Dcal(\psi)\geq 0$, $\la(\psi)\neq 0$, and
$\mu(\psi)\leq 2$ then $\Dcal(\psi)=A_i$ or $\Dcal(\psi)=B_j$ for
some $1\le i\le k_0$ or $1\leq j\leq l_0$ (or $\psi$ is the class of the constant map).
In the first case (i.e. $\Dcal(\psi)=A_i$) we have
\begin{displaymath}
\n(\psi)=
\begin{cases}
0 \ \ \  \ \ \ &\text{if}\ \ k=1\\
1 \ \ \  \ \ \ &\text{if}\ \ k>1.
\end{cases}
\end{displaymath}
Similarly, for $\Dcal(\psi)=B_j$ we have  $\n(\psi)=0$
if $l=1$ and $\n(\psi)=1$ if $l>1$.
\end{lem}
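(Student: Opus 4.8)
The plan is to obtain Lemma~\ref{lem:disk-degeneration-2} directly from Lemma~\ref{lem:disk-degeneration} together with the definition of a coherent system of orientations $\Or$ that immediately precedes the statement; the only genuinely new content is the normalization of the \emph{sign} of the algebraic count $\n(\psi)$. First I would invoke Lemma~\ref{lem:disk-degeneration} verbatim: since $\Or$ is in particular \emph{a} coherent choice of orientation in the sense used there, and the hypotheses $\Dcal(\psi)\geq 0$, $\la(\psi)\neq 0$, $\mu(\psi)\leq 2$ are unchanged, we conclude that $\psi$ is either the class of a constant map, or $\Dcal(\psi)=A_i$ for some $1\leq i\leq k_0$, or $\Dcal(\psi)=B_j$ for some $1\leq j\leq l_0$. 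Moreover, the same lemma already gives $\n(\psi)=0$ in the case $\Dcal(\psi)=A_i$ when $k=1$, and $\n(\psi)=0$ in the case $\Dcal(\psi)=B_j$ when $l=1$; these two assertions involve no sign, so nothing further is needed for them.

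It then remains to upgrade the conclusion $\n(\psi)=\pm 1$ of Lemma~\ref{lem:disk-degeneration} to $\n(\psi)=+1$ in the cases $\Dcal(\psi)=A_i$ with $k>1$ and $\Dcal(\psi)=B_j$ with $l>1$. Here I would use the discussion preceding the definition of a coherent system of orientations: Ozsv\'ath and Szab\'o's argument produces, for each genus-zero component $A_i$ (respectively $B_j$), a \emph{preferred} orientation on $\Nod(\psi)$ arising from the complex structure on $\Sig$ via the Riemann mapping theorem, and the algebraic count with respect to this orientation is $+1$ (once $k>1$, respectively $l>1$). By the second bullet in the definition of a coherent system of orientations, the orientation $\Or(\psi)$ assigned to the class of a boundary degeneration with $\Dcal(\psi)=A_i$, $g_i^-=0$ (respectively $\Dcal(\psi)=B_j$, $g_j^+=0$) is exactly this preferred Ozsv\'ath--Szab\'o orientation. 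Hence $\n(\psi)=+1$ in precisely the asserted cases, completing the argument.

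The main point to be careful about is simply a matching of conventions: one must check that the orientation with respect to which $\n(\psi)$ is computed in Lemma~\ref{lem:disk-degeneration} and the orientation used in the present statement are the same object, namely the component of the fixed coherent system $\Or$ at the class $\psi$; once this identification is made, the conclusion is a direct consequence of the normalization built into the definition of $\Or$, so in effect this lemma just records that consequence. A minor bookkeeping remark, worth including for completeness, is that when $g_i^->0$ (respectively $g_j^+>0$) the hypothesis $\la(\psi)\neq 0$ already excludes $\Dcal(\psi)=A_i$ (respectively $\Dcal(\psi)=B_j$), so no sign needs to be discussed for those components at all.
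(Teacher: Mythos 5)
Your proposal is correct and is essentially the paper's own argument: the paper presents Lemma~\ref{lem:disk-degeneration-2} as a re-phrasing of Lemma~\ref{lem:disk-degeneration} in the presence of a coherent system of orientations, whose defining second condition is exactly the normalization $\n(\psi^-)=\n(\psi^+)=1$ coming from the preferred Ozsv\'ath--Szab\'o orientations that you invoke to fix the sign.
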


Now we can prove theorem ~\ref{differential} using the above lemma.
The proof is similar to  the proof of lemma 4.3 in \cite{OS-linkinvariant}.

\begin{proof}({\bf of Theorem}~\ref{differential}.)
Clearly, for any Whitney disk $\phi\in\pi_2(\x,\y)$,
which has a holomorphic representative,
we have $\Dcal(\phi)\geq 0$ and
$\la(\phi)$ is thus a well-defined element of $\Ring$. Thus we only need to prove
$\partial\circ\partial=0$. \\

Let $\x$ and $\y$ be two intersection points in $\Ta\cap\Tb$.
Fix a class $\phi\in\pi_2(\x,\y)$ such that $\mu(\phi)=2$.
Consider the ends of the moduli space $\widehat{\sM}(\phi)$.
This space has three types of ends, which are in correspondence
with the broken flow-lines. More precisely, these
are (respectively) the ends corresponding to a holomorphic
Whitney disk $\phi_1$ connecting $\x$ to an intersection point
$\w$ juxtaposed with a holomorphic Whitney disk $\phi_2$
connecting $\w$ to $\y$ such that $\mu(\phi_1)=\mu(\phi_2)=1$,
the ends corresponding to a sphere bubbling off i.e. a holomorphic
Whitney disk $\phi'$ connecting $\x$ to $\y$ juxtaposed with a
 holomorphic sphere $S$ in $\mathrm{Sym}^\ell(\Sig)$, and the ends
 corresponding to a boundary bubbling i.e. a holomorphic Whitney
 disk $\phi'$ connecting $\x$ to $\y$ juxtaposed with a
 holomorphic boundary degeneration.\\

 If $\x\neq\y$ the space $\widehat{\sM}(\phi)$ does not have
 any boundary of the second and the third types, since
 any holomorphic boundary degeneration or holomorphic sphere
 with the property that its associated monomial in $\Ring$ is
 non-trivial will have Maslov index at least 2.
 Thus the remaining
 Whitney disk should have Maslov index less than or equal to
 zero. This implies that the moduli space associated
 with the Whitney disk is empty, or consists of a constant
 function (which can not happen by the assumption $\x\neq\y$).
 When $\x\neq\y$ the Gromov ends of this moduli space thus consist of
\begin{displaymath}
\coprod_{\w\in\Ta\cap\Tb}
\coprod_{\substack{\phi_1\in\pi_2(\x,\w)\\ \phi_2\in\pi_2(\w,\y)\\
\phi_1*\phi_2=\phi}}
\Big(\widehat{\sM}(\phi_1)\times\widehat{\sM}(\phi_2)\Big)
\end{displaymath}
For any fixed  $\la\in G(\Ring)$
the coefficient of $\la\y$ in $\partial^2\x$ (assuming $\x\neq \y$)
is equal to
\begin{displaymath}
\sum_{\substack{\phi\in\pi_2(\x,\y)\\\mu(\phi)=2\\
\la(\phi)=\la}}\sum_{\w\in\Ta\cap\Tb}
\sum_{\substack{\phi_1\in\pi_2(\x,\w)\\ \phi_2\in\pi_2(\w,\y)\\
\phi_1*\phi_2=\phi}}\Big(\m(\phi_1).\m(\phi_2)\Big).
\end{displaymath}
For each $\phi\in\pi_2(\x,\y)$ the amount of the two interior
sums in the above formula is the total number, counted with the sign
determined by the coherent system $\Or$ of orientations, of the ends of
the moduli space $\widehat{\sM}(\phi)$,
which is zero.
Consequently the total sum in the above formula is trivial,
and the coefficient of
$\la\y$ in $\partial\big(\partial(\x)\big)$ is thus equal to $0$. \\

Let us now assume that $\x=\y$. Let us denote the class of the
generator of holomorphic spheres by $S$. The domain $\Dcal(S)$ associated
with $S$ is the surface $\Sig$:
$$\Dcal(S)=A_1+...+A_k=B_1+...+B_l,\ \ \Rightarrow
\la(S)=\la(A_1)...\la(A_k)=\la(B_1)...\la(B_l).$$
Thus $\la(S)=0$ unless $k=k_0=l_0=l$. In this later case,
the Maslov index of $S$ is $2k$, which is greater than $2$
unless $k=1$. Combining with lemma~\ref{lem:disk-degeneration-2},
we may thus conclude that in all possible cases, the
total contribution to $\partial^2(\x)$ from sphere bubblings
is trivial.\\

We may thus assume, without loosing on generality,
that the ends of the moduli space
$\widehat{\sM}(\phi)$ do not contain any sphere bubblings. If
the ends of this moduli space contain a boundary disk degeneration,
then the degeneration would consist of
 the juxtaposition of a constant function and a holomorphic
 boundary degeneration with Maslov index $2$. If we denote
 the boundary degeneration by $\psi$,
 lemma~\ref{lem:disk-degeneration-2} implies that $\Dcal(\psi)=A_i$
 or $\Dcal(\psi)=B_j$.\\

In the above situation, if $\Dcal(\phi)=\Dcal(\psi)=A_i$ or
 $B_j$, the boundary disk degeneration among the ends
 of $\widehat{\sM}(\psi)$ are described by lemma~\ref{lem:disk-degeneration-2}.
 Suppose first that $k,l>1$.
 Let $\Dcal(\phi)=B_j$,  and let $\psi$ be the corresponding
 boundary disk degeneration with the same domain.
 Then the ends of $\sM(\phi)$ consist of
\begin{displaymath}
\widehat{\Ncal}(\psi)~\bigcup~\Big(\coprod_{\w\in\Ta\cap\Tb}
\coprod_{\substack{\phi_1\in\pi_2(\x,\w)\\ \phi_2\in\pi_2(\w,\x)\\ \phi_1\star
\phi_2=\phi}}\big(\widehat{\sM}(\phi_1)\times\widehat{\sM}(\phi_2)\big)\Big).
\end{displaymath}
According to lemma~\ref{lem:orientation}, the orientation of $\ov\Nod(\psi)$
agrees with the orientation induced from $\ov\Mod(\psi)$.
Thus the total number of the ends for this moduli space is equal to
\begin{displaymath}
\n(\psi)+\sum_{\w\in\Ta\cap\Tb}\sum_{\substack{\phi_1\in
\pi_2(\x,\w)\\ \phi_2\in\pi_2(\w,\x)\\ \phi_1\star\phi_2=\phi}}
\Big(\m(\phi_1).\m(\phi_2)\Big)=0.
\end{displaymath}
By lemma~\ref{lem:disk-degeneration-2}, we have $\n(\psi)=1$,
thus the total value of the second sum is equal to $-1$.\\
Note that $\la(\psi)=\la(R_j^+)=\la_j^+$, since the domain
associated with $\psi$ is  $B_j$.
Thus, such degenerations contribute to the
coefficient of $\la(B_j)\x$, i.e. the contribution of $\psi$ to
$\partial^2(\x)$ is $-\la_j^+.\x$.
Similarly, for a $\alpha$ boundary degeneration $\psi$ with $\Dcal(\psi)=A_i$,
we obtain the equality
\begin{displaymath}
\sum_{\w\in\Ta\cap\Tb}\sum_{\substack{\phi_1\in
\pi_2(\x,\w)\\ \phi_2\in\pi_2(\w,\x)\\ \phi_1\star\phi_2=\phi}}
\Big(\m(\phi_1).\m(\phi_2)\Big)=1.
\end{displaymath}
Thus the contribution of $\psi$ to $\partial^2(\x)$ is
$\la_i^-.\x$.\\

The coefficient of $\x$ in $\partial\big(\partial(\x)\big)$
is equal to
\begin{displaymath}
\begin{split}
&\sum_{\substack{\phi\in\pi_2(\x,\x)\\ \Dcal(\phi)=A_i, 1\le i\le
k}}\la(A_i)\Big(\sum_{\w\in\Ta\cap\Tb}\sum_{\substack{\phi_1\in\pi_2(\x,\w)\\
\phi_2\in\pi_2(\w,\y)\\ \phi_1\star\phi_2=\phi}}\big(\m(\phi_1).\m(\phi_2)\big)\Big)+\\
%&\ \ \ +
&\sum_{\substack{\phi\in\pi_2(\x,\x)\\ \Dcal(\phi)=B_j, 1\le j\le
l}}\la(B_j)\Big(\sum_{\w\in\Ta\cap\Tb}\sum_{\substack{\phi_1\in\pi_2(\x,\w)\\
\phi_2\in\pi_2(\w,\y)\\ \phi_1\star\phi_2=\phi}}\big(\m(\phi_1).\m(\phi_2)\big)\Big)+\\
%&\ \ \ +
&\sum_{\substack{\phi\in\pi_2(\x,\x)\\ \Dcal(\phi)\neq A_i ~\text{or} ~B_j}}
\la(\phi)\Big(\sum_{\w\in\Ta\cap\Tb}\sum_{\substack{\phi_1\in\pi_2(\x,\w)\\
\phi_2\in\pi_2(\w,\y)\\ \phi_1\star\phi_2=\phi}}\big(\m(\phi_1).\m(\phi_2)\big)\Big).
\end{split}
\end{displaymath}
Our argument shows  that in the above the sum, the sums in
the first and the second line combine to give the following
expression:
\begin{displaymath}
\sum_{1\leq i\leq k}\la(A_i)-\sum_{1\leq i\leq l}\la(B_i)=\sum_{1\leq i\leq k}\la(R_i^+)-
\sum_{1\leq i\leq l}\la(R_i^-)=0.
\end{displaymath}
Thus the sum of the contributions from the first two lines in the above expression is zero.
The last line is a sum of zero terms, by an argument similar to the case $\x\neq \y$,
so it is trivial. Consequently, the coefficient of $\x$
in $\partial\big(\partial(\x)\big)$ is zero.\\

When $k=1$ or $l=1$ one should be cautious. If $k=l=1$, the contributions from
both $\alpha$ and $\beta$ boundary degenerations is zero by lemma~\ref{lem:disk-degeneration-2}. However, if
$k=1$ and $l>1$, since the Heegaard diagram is balanced, we may conclude that $l>l_0$.
However,
$$\la(A_1)=\la(\Sig)=\prod_{i=1}^l \la(B_i)=0\ \ \Rightarrow\ \ \sum_{i=1}^l\la_i^+=\la_1^-=0.$$
The rest of the argument in this case if completely identical with the case $k,l>1$.
This completes the proof of the theorem.
\end{proof}

In the following  section we  will prove the following theorem.
\begin{thm}\label{thm:invariance}
The filtered $\tab$ chain homotopy type of the filtered
$\tab$ chain complex $\CFT(\Sig,\alphas,\betas,\z;\spinc)$
is an invariant of the balanced sutured manifold and the $\SpinC$ class $\spinc\in\SpinC(\overline{X})$.
In particular, for any filtered test ring $\Ringg$ for $(\Ring,\Hbb)$ and
for any $\relspinc\in\RelSpinC(X,\tau)$, the chain homotopy type of
$$\CFT(\Sig,\alphas,\betas,\z;\relspinc;\Ringg)\subset
\CFT(\Sig,\alphas,\betas,\z;\spinc)\otimes_\Ring\Ringg$$
is also an invariant of $(X,\tau,\relspinc)$.
\end{thm}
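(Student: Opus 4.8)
The plan is to follow the standard invariance scheme of Ozsváth--Szabó, adapted to the $\Ring_\tau$-module setting and with care for the filtration by $\Hbb=\Ht^2(X,\partial X;\Z)$ and for the boundary degenerations. By Proposition~2.15 of \cite{Juh}, any two Heegaard diagrams for $(X,\tau)$ are connected by a finite sequence of isotopies, handleslides, and (de)stabilizations, all supported away from $\z$; combined with Lemma~\ref{lem:s-admissible} (which allows us to wind the $\alphas$-curves to reach $\spinc$-admissibility) and the freedom in the generic path $\{J_t\}$ of almost complex structures, it suffices to prove that the filtered $\tab$ chain homotopy type is unchanged under each elementary modification, and that the various auxiliary choices do not matter.

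First I would dispatch the easy cases. For a change of the path $\{J_t\}$ and for an exact Hamiltonian isotopy of $\alphas$ or $\betas$, the usual continuation maps apply: one counts index-$0$ holomorphic disks for a generic path of data, weighted by $\la_\z(\phi)\in G(\Ring)$, with finiteness guaranteed by the energy bound of Lemma~\ref{lem:energy-bound} together with $\spinc$-admissibility. These maps are $\Ring$-module homomorphisms by construction, and they are filtered since, by Corollary~\ref{cor:s(x)-s(y)}, $\chi(\la_\z(\phi))=\relspinc(\x)-\relspinc(\y)$, so the weight of a disk records precisely the $\SpinC$ shift. The chain homotopies witnessing that these are mutual inverses come in the standard way from index-$(-1)$ disks in one-parameter families; the only new input is that the ends of such families can include boundary degenerations, but, exactly as in the proof of Theorem~\ref{differential}, a nonzero-weight boundary degeneration has Maslov index $\ge 2$ and hence cannot appear in a $\le 1$-dimensional family. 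Independence of the choice of $\spinc$-admissible representative within a fixed combinatorial diagram is then a special case of isotopy invariance.

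The substantive step is handleslide invariance. Here one introduces a Heegaard triple $(\Sig,\alphas,\betas,\gammas,\z)$ with $\gammas$ obtained from $\betas$ by a handleslide and small isotopies, and---after a triple-admissibility argument which is the appropriate generalization of Lemma~\ref{lem:s-admissible}, phrased so that $\la_\z$ of any nonnegative triply-periodic domain $\Pcal$ with $\langle c_1(\spinc),H(\Pcal)\rangle=0$ vanishes in $\Rin_\tau$---one defines $\Psi(\x)=\sum_{\y}\sum_{\psi}\m(\psi)\la_\z(\psi)\cdot\y$, counting index-$0$ holomorphic triangles with one vertex at the top generator $\Theta$ of $(\Sig,\betas,\gammas,\z)$. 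One verifies that $\Psi$ is a filtered $\tab$ chain map by examining the ends of index-$1$ triangle moduli spaces: broken triangles (yielding $\partial\Psi+\Psi\partial$), disk--triangle breakings at the $\Theta$ vertex (contributing $0$ because $\Theta$ is a cycle), and boundary degenerations (again controlled since nonzero-weight ones have index $\ge 2$, using the oriented count of Lemmas~\ref{lem:disk-degeneration-2} and~\ref{lem:orientation}). That $\Psi$ is a filtered homotopy equivalence follows from the associativity (pentagon) relation for triangle counts: composing $\Psi$ with the triangle map for the reverse handleslide is chain homotopic to the map induced by the top generator of the triangle count over $(\Sig,\betas,\betas',\betas'',\z)$, which is the identity up to homotopy. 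The associativity argument itself produces boundary-degeneration contributions, and one must check---exactly as in the $\la^+-\la^-$ cancellation inside the proof of Theorem~\ref{differential}, using Lemma~\ref{lem:no-nilpotents}---that these cancel in pairs in $\Ring_\tau$. Stabilization invariance is then routine: an index one/two stabilization adds a handle to $\Sig$ and a pair of curves $\alpha_{\ell+1},\beta_{\ell+1}$ meeting in a single point, with no new marked point, so $\Ring_\tau$ is unchanged; a nearest-point/Gromov-limit argument as in \cite{OS-3m1} identifies the new complex, up to filtered $\tab$ chain homotopy equivalence, with the old one, the forced extra intersection point carrying trivial weight.

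Assembling these steps proves invariance of the filtered $\tab$ chain homotopy type. For the ``in particular'' statement, observe that every map and homotopy produced above is a filtered $\tab$ chain map, hence by definition respects the decomposition $\CFT(\Sig,\alphas,\betas,\z;\spinc)=\bigoplus_{\relspinc\in\spinc}\CFT(\Sig,\alphas,\betas,\z;\relspinc)$ coming from the exact sequence $0\to\langle\PD[\gamma_i]\rangle\to\RelSpinC(X,\tau)\to\SpinC(\ovl X)\to 0$; it therefore restricts to each summand, and the conclusion for a filtered test ring $\Ringg$ follows after tensoring via Lemma~\ref{lem:induced-chain-map}. The main obstacle throughout is the bookkeeping of boundary degenerations in the one-parameter families underlying the homotopies and in the associativity relations---showing their weighted contributions either have too large an index to appear or cancel in pairs in $\Ring_\tau$---together with formulating the correct triple-admissibility hypothesis that makes the holomorphic triangle maps well defined with coefficients in $\Ring_\tau$.
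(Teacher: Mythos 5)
Your proposal is correct and follows essentially the same route as the paper: continuation maps for the almost complex structure and isotopies, handleslide invariance via an admissible Heegaard quadruple $(\Sig,\alphas,\betas,\gammas,\deltas,\z)$ with triangle maps against the closed top generators, the square (associativity) map producing the homotopy, and the small-triangle/energy-filtration argument identifying the composite with the identity, with stabilization and the remaining moves deferred to the arguments of \cite{OS-3m1}; the ``in particular'' clause is likewise handled by noting all maps are filtered $\tab$ chain maps and tensoring with $\Ringg$. The only slight inaccuracy is in your description of the associativity step: there nonzero-weight boundary degenerations are ruled out entirely by the index argument (they would force the remaining component to have negative expected dimension), so no pairwise cancellation in $\Ring_\tau$ is needed at that stage --- the cancellation of $\la^+$ against $\la^-$ is only required in the proof that $\partial^2=0$.
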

\begin{defn}
We may thus denote the filtered $\tab$ chain homotopy type of the filtered
$\tab$ chain complex $\CFT(\Sig,\alphas,\betas,\z;\spinc)$ and its invariant decomposition
into chain complexes  $\CFT(\Sig,\alphas,\betas,\z;\relspinc)$ by
$$\CFT(X,\tau;\spinc)=\bigoplus_{\relspinc\in\spinc\subset\RelSpinC(X,\tau)}
\CFT(X,\tau;\relspinc).$$
\end{defn}
\subsection{Additional algebraic structure}
From the definitions, it is clear that the multiplication by a generator $\la\in G(\Ring)$ gives a map
$$m_\la:\CFT(X,\tau;\relspinc)\lra
\CFT(X,\tau;\relspinc+\chi(\la)).$$
This map shifts the homological grading by $\mathrm{gr}(\la)$, and generalizes the
$U$-action in the original construction of Ozsv\'ath and Szab\'o.\\

Let us assume, once again,  that $(\Sig,\alphas,\betas,\z)$ is an $\spinc$-admissible
Heegaard diagram for the sutured manifold $(X,\tau)$, where $\spinc\in\SpinC(\ovl X)$ is a
fixed $\SpinC$ structure.
Let $\Omega(\Ta,\Tb)$ denote the space of paths connecting $\Ta$ to $\Tb$ in
$\Sym^\ell(\Sig)$.
Any intersection point $\x\in\Ta\cap\Tb$, viewed as a constant path, is a point
in $\Omega(\Ta,\Tb)$, and for any $\x,\y\in\Ta\cap\Tb$ and any Whitney disk
$u$ representing a class $\phi\in\pi_2(\x,\y)$, $u$ may be viewed as a path
connecting $\x$ to $\y$ in $\Omega(\Ta,\Tb)$. The homotopy class of this path
depends only on $\phi$.
As in section 4 of \cite{OS-3m1} for any one-cocycle
\begin{displaymath}
\zeta\in Z^1\left(\Omega(\Ta,\Tb),\Z\right)
\end{displaymath}
the evaluation $\zeta(\phi)$ is  well-defined. Correspondingly, we may  define the map
\begin{displaymath}
\begin{split}
A_\zeta&:\CFT(X,\tau;\spinc)\lra \CFT(X,\tau;\spinc)\\
A_\zeta(\x)&:=\sum_{\substack{\y\in \Ta\cap\Tb\\ \relspinc(\y)\in\spinc}}
\sum_{\substack{\phi\in\pi_2^+(\x,\y)\\ \mu(\phi)=1}}
\left(\zeta(\phi).\la_\z(\phi).\m(\phi)\right).\y,\ \ \forall\ \x\in\Ta\cap\Tb,\
\ s.t.\ \  \relspinc(\x)\in\spinc.
\end{split}
\end{displaymath}
The map $A_\zeta$ is then extended as a homomorphism of $\Ring$-modules
to $\CFT(X,\tau;\spinc)$. It respects the decomposition according to relative
$\SpinC$ structures in $\spinc\subset \RelSpinC(X,\tau)$.
As in lemmas 4.18 and 4.19 from \cite{OS-3m1} one may prove that the map $A_\zeta$ satisfies
\begin{equation}\label{eq:properties-of-action}
\begin{split}
&(i)\ \ \ \ \ \ \partial\circ A_\zeta+ A_\zeta\circ \partial =0,\ \ \&\\
&(ii)\ \ \ \ A_\zeta=\partial \circ H_\zeta- H_\zeta\circ \partial,\ \ \
\text{if }\ \zeta\ \text{is a coboundary},
\end{split}
\end{equation}
for some $\Ring$-module homomorphism $H_\zeta$ which respects the
filtration in $\Hbb$.
As a result, the following proposition may be proved in this generalized setup.
\begin{prop}\label{prop:homological-action}
There is a natural action of $\Ht^1(\Omega^1(\Ta,\Tb);\Z)$ on $\CFT(X,\tau;\spinc)$
lowering degree by one, which is well-defined up to filtered chain homotopy equivalence.
Furthermore, this induces an action of the exterior algebra
\begin{displaymath}
\wedge^*\left(\Ht_1(\ovl X;\Z)/\mathrm{Tors}\right)
\subset \wedge^*\left(\Ht^1\left(\Omega(\Ta,\Tb),\Z\right)\right)
\end{displaymath}
on the module $\CFT(X,\tau;\spinc)$, which is well-defined up to
chain homotopy equivalence.
\end{prop}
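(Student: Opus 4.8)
The plan is to reproduce the arguments of \cite{OS-3m1}, Section~4, with the coefficient ring $\Ring=\Ring_\tau$ and the $\Hbb$-filtration carried along, exactly in the spirit of the proof of Theorem~\ref{differential}. First I would record the structural properties of the operators $A_\zeta$. From the defining formula, $A_\zeta$ is a homomorphism of $\Ring$-modules, since the weight $\la_\z(\phi)$ depends only on $\phi$; it respects the decomposition of $\CFT(X,\tau;\spinc)$ by relative $\SpinC$ structures, because the term contributed by a class $\phi$ lies in filtration level $\chi(\la_\z(\phi))+\relspinc(\y)=\relspinc(\x)$ by Corollary~\ref{cor:s(x)-s(y)}, independently of $\zeta(\phi)$; and it lowers the relative homological grading by one, as each contributing $\phi$ has $\mu(\phi)=1$ and the weight $\la_\z(\phi)$ carries grading $\gr(\la_\z(\phi))$. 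Granting the identities (i) and (ii) of the excerpt --- obtained by reading the proofs of Lemmas~4.18 and~4.19 of \cite{OS-3m1} with the $\Ring$-coefficients inserted --- the map $A_\zeta$ anticommutes with $\partial$ (so is a degree $-1$ chain map in the signed sense), is an $\Hbb$-filtered homomorphism, and is filtered null-homotopic whenever $\zeta$ is a coboundary, via an $\Ring$-homomorphism $H_\zeta$ respecting the $\Hbb$-filtration. Hence $[\zeta]\mapsto[A_\zeta]$ descends to an additive assignment from $\Ht^1(\Omega(\Ta,\Tb);\Z)$ to the group of filtered $\tab$ chain homotopy classes of degree $-1$ maps. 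Independence of the Heegaard diagram and of the path of almost complex structures then follows from the naturality of this construction under Heegaard moves and neck-stretchings: the filtered chain homotopy equivalences realizing Theorem~\ref{thm:invariance} intertwine the $A_\zeta$ up to filtered chain homotopy, by the usual moduli-space arguments of \cite{OS-3m1}.

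For the exterior algebra statement I would first produce the relevant subgroup. As in \cite{OS-3m1}, the isomorphism $\Ht^1(\Sym^\ell(\Sig);\Z)\cong\Ht^1(\Sig;\Z)$ (valid since $\ell>1$) together with restriction to $\Omega(\Ta,\Tb)$ yields a homomorphism $\Ht^1(\ovl X;\Z)\lra\Ht^1(\Omega(\Ta,\Tb);\Z)$: a class pulled back from $\ovl X$ is precisely one whose restriction to $\Sig$ annihilates all $[\alpha_i]$ and $[\beta_i]$, and such a class, represented by a one-submanifold of $\Sig$ disjoint from $\z$, defines a cocycle $\zeta_c$ by weighting a disk along its boundary, exactly as in \cite{OS-3m1}. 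Since $\Ht^1(\ovl X;\Z)$ is torsion free, the Universal Coefficient Theorem identifies it with $\Ht_1(\ovl X;\Z)/\mathrm{Tors}$, and this realizes $\Ht_1(\ovl X;\Z)/\mathrm{Tors}$ as a subgroup of $\Ht^1(\Omega(\Ta,\Tb);\Z)$. Choosing cocycles $\zeta_1,\dots,\zeta_r$ representing a basis of $\Ht_1(\ovl X;\Z)/\mathrm{Tors}$, I would then verify the relations
\begin{displaymath}
A_{\zeta_i}\circ A_{\zeta_j}+A_{\zeta_j}\circ A_{\zeta_i}\ \simeq\ 0\ \ (i\neq j),\qquad\text{and}\qquad A_{\zeta_i}\circ A_{\zeta_i}\ \simeq\ 0
\end{displaymath}
up to filtered chain homotopy, by analyzing the ends of the compactified two-dimensional moduli spaces $\ov\Mod(\phi)$, $\mu(\phi)=2$, with the ends weighted by the appropriate products of cocycle evaluations and of $\la_\z$; additivity $\zeta(\phi_1\star\phi_2)=\zeta(\phi_1)+\zeta(\phi_2)$ produces the required homotopies, built from weighted counts of index-two disks, just as in \cite{OS-3m1}. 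The assignment $\gamma_{i_1}\wedge\dots\wedge\gamma_{i_s}\mapsto A_{\zeta_{i_1}}\circ\dots\circ A_{\zeta_{i_s}}$ then gives the asserted action of $\wedge^*(\Ht_1(\ovl X;\Z)/\mathrm{Tors})$, well defined up to filtered chain homotopy; carried out for arbitrary cocycles the same arguments give the ambient action of $\wedge^*(\Ht^1(\Omega(\Ta,\Tb);\Z))$.

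The main obstacle --- the only point where the argument goes beyond \cite{OS-3m1} --- is the control of boundary-degeneration ends in all of these index-two end-counts, the same phenomenon that required the orientation analysis in Theorem~\ref{differential}. In each such count one must show that a Gromov limit containing a holomorphic $\alpha$- or $\beta$-boundary degeneration $\psi$ contributes nothing. When the corresponding component $R_i^\pm\subset\Rr(\tau)$ has positive genus this is automatic, since $\la_\z(\psi)=\la_i^\pm=0$ in $\Ring_\tau$ by Lemma~\ref{lem:disk-degeneration-2}; when it has genus zero, one must check that the contributions of the genus-zero degenerations cancel in pairs exactly as in the proof of Theorem~\ref{differential}, now with the additional cocycle weights. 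Here the key point is that $\zeta_c$ is pulled back from $\ovl X$, hence is compatible with the defining relations $\sum_i\la(R_i^-)=\sum_j\la(R_j^+)$ of $\Ring_\tau$ and evaluates predictably on the $\alpha$- and $\beta$-curve boundaries of the degeneration domains. A secondary technical point, since the ground ring is $\F=\Z$ rather than $\Z/2$, is the sign bookkeeping needed to pass from the end-counts and the coherent orientation system $\Or$ (via Lemma~\ref{lem:orientation}) to the signed relations displayed above; these signs are determined exactly as in \cite{OS-3m1} once $\Or$ is fixed.
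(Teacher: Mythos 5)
Your outline follows the same broad route as the paper---adapt Section~4 of \cite{OS-3m1}, take the identities (i) and (ii) of equation~\ref{eq:properties-of-action} as given, and reduce the exterior-algebra statement to vanishing of index-two end counts---but the two proofs part ways exactly at the one point where something beyond \cite{OS-3m1} is needed. The paper uses the splitting $\Ht^1(\Omega(\Ta,\Tb);\Z)\cong\pi_2(\Sym^\ell(\Sig))\oplus\Hom(\Ht^1(\ovl X,\Z),\Z)$ to reduce everything to the single identity $A_\zeta\circ A_\zeta=0$ (for every $\zeta$, which by bilinearity already gives anticommutativity), and proves it by representing $\zeta$ by a map $f:\Omega(\Ta,\Tb)\to S^1$, fixing two generic level sets $V_p,V_q$, and studying the one-dimensional constrained space $\Xi_{p,q}(\phi)$ of pairs $(s,u)$ with $u([0,1]\times\{s\})\subset V_p$ and $u([0,1]\times\{-s\})\subset V_q$ for $\mu(\phi)=2$; its ends at $s\to\infty$ give precisely the terms of $A_\zeta^2$. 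Your basis-wise verification of $A_{\zeta_i}A_{\zeta_j}+A_{\zeta_j}A_{\zeta_i}\simeq 0$ and $A_{\zeta_i}^2\simeq 0$ would serve the same purpose, so the difference there is organizational rather than substantive.

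The genuine divergence is your final paragraph. In the paper's argument no cancellation of boundary-degeneration ends is needed at all: if $\la_\z(\phi)\neq 0$, any boundary degeneration or sphere bubble in a Gromov limit carries Maslov index at least $2$, so the remaining disk component has non-positive index, and together with the constraints coming from the generic level sets these configurations do not occur in the one-dimensional moduli spaces being counted. Your proposal instead asserts that in the genus-zero case the degeneration contributions ``cancel in pairs exactly as in the proof of Theorem~\ref{differential}, now with the additional cocycle weights,'' resting on the claim that a class pulled back from $\ovl X$ ``evaluates predictably'' on the degeneration domains. That step is not established: once each term is weighted by a cocycle evaluation there is no a priori reason the $\sum_i\la(A_i)=\sum_j\la(B_j)$ cancellation of Theorem~\ref{differential} survives, and you give no computation showing the weights attached to the $A_i$- and $B_j$-ends agree. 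So as written this is the unsubstantiated link in your chain; it is also unnecessary, since substituting the index-plus-genericity exclusion used in the paper closes the argument without any new cancellation.
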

\begin{proof}
One should simply repeat the proof of proposition 4.17 from \cite{OS-3m1}.
From the properties stated in equation~\ref{eq:properties-of-action}
and the isomorphisms
\begin{displaymath}
\Ht^1\left(\Omega(\Ta,\Tb);\Z\right)
\cong \mathrm{Hom}\left(\pi_1\left(\Omega(\Ta,\Tb)\right),\Z\right)
\cong \pi_2\left(\Sym^\ell(\Sig)\right)\oplus \mathrm{Hom}\left(\Ht^1(\ovl X,\Z),\Z\right),
\end{displaymath}
the proof of the above proposition is reduced to showing
$A_\zeta\circ A_\zeta=0$.
For this purpose, let $f:\Omega(\Ta,\Tb)\ra S^1$ denote a representative
of $\zeta$. For a generic point $p\in S^1$ we set $V_p=f^{-1}(p)$ and observe that
for any generator $\x\in\Ta\cap\Tb$ representing the $\SpinC$ class $\spinc$
\begin{displaymath}
\begin{split}
&A_\zeta(\x)=\sum_{\substack{\y\in \Ta\cap\Tb\\ \relspinc(\y)\in\spinc}}
\sum_{\substack{\phi\in\pi_2^+(\x,\y)\\ \mu(\phi)=1}}
\left(a(\zeta,\phi).\la_\z(\phi)\right).\y, \\
\text{where }\ \ &
a(\zeta,\phi)=\#\Big\{u\in\Mod(\phi)\ \big|\ u\left([0,1]\times \{0\}\right)\in V_p\Big\}.
\end{split}
\end{displaymath}
Let us now consider a positive homotopy class $\phi\in\pi_2^+(\x,\y)$ with $\mu(\phi)=2$.
Associated with $\phi$, and for generic points $p,q\in S^1$, we consider the one-dimensional moduli space
\begin{displaymath}
\begin{split}
\Xi_{p,q}(\phi):=\left\{(s,u)\in[0,\infty)\times \Mod(\phi)\ \big|\
\begin{array}{c}
u\left([0,1]\times \{s\}\right)\subset V_p\\
u\left([0,1]\times \{-s\}\right)\subset V_q\\
\end{array}\right\}.
\end{split}
\end{displaymath}
This one-manifold does not have any boundary at $s=0$.
Furthermore, if we set $I_0=[0,1]\times\{0\}$,
the boundary at infinity (i.e. the structure of
the moduli space as $s\ra \infty$) is modeled on
\begin{displaymath}
\coprod_{\substack{\phi_1\star\phi_2=\phi\\ \mu(\phi_1)=\mu(\phi_2)=1}}
\left(\Big\{u_1\in\Mod(\phi_1)\ \big|\ u_1(I_0)\subset V_p\Big\}
\times\Big\{u_2\in\Mod(\phi_2)\ \big|\ u_2(I_0)\subset V_q\Big\}\right).
\end{displaymath}
Other possible boundary points correspond to boundary disk degenerations and
sphere bubblings. If we furthermore assume that $\la_\z(\phi)\neq 0$,
any boundary disk degeneration or sphere bubbling will reduce the Maslov index
at least by $2$. Thus the moduli space corresponding to such degenerations
would be empty, if we choose a generic path of almost complex structures.\\

The number of points in the boundary of $\Xi_{p,q}(\phi)$, counted
with sign, would vanish. On the other hand, this total count corresponds to
the contribution of the pairs $(\phi_1,\phi_2)$ with $\phi=\phi_1\star\phi_2$
and $\mu(\phi_1)=\mu(\phi_2)=1$ to the coefficient of
$\la_\z(\phi).\y$ in $A_\zeta^2(\x)$. Thus $A_\zeta^2=0$
for all $\zeta\in \Ht^1\left(\Omega(\Ta,\Tb),\Z\right)$.
Thus  the action descends to an action of the exterior
algebra
$$\wedge^*\left(\Ht^1\left(\Omega(\Ta,\Tb),\Z\right)\right).$$
This completes the proof of the proposition.
\end{proof}

\newpage

\section{Invariance of the filtered chain homotopy type}
\subsection{Pseudo-holomorphic $m$-gons}\label{subsec:m-gons}
Let us assume that the Heegaard diagram
$H=(\Sig,\alphas^1,\alphas^2,...,\alphas^m,\z)$
is given, so that $\Sig$ is a
closed Riemann surface of genus $g$, each $\alphas^i$
is a set of $\ell$ disjoint simple closed curves on $\Sig$, and
$\z=\{z_1,...,z_\el\}$ is a set of marked points in
$$\Sig-\alphas^1-\alphas^2-...-\alphas^m.$$
We assume that for all $i=1,..,m$, every connected component
in $\Sig-\alphas^i$ contains at least one element of $\z$.
The Heegaard diagram $(\Sig,\alphas^i,\alphas^j,\z)$
determines a balanced sutured manifold $(X_{ij},\tau_{ij})$.
Let $\overline{X_{ij}}$ denote the
three-manifold obtained from $X_{ij}$ by filling out
the sutures in $\tau_{ij}$, and fix
the $\SpinC$ classes $\spinc_{ij}\in\SpinC(\overline{X_{ij}})$.
Assume that for any pair of indices $i<j$,
$(\Sig,\alphas^i,\alphas^j,\z)$ is an  $\spinc_{ij}$-admissible
Heegaard diagram for the balanced sutured manifold $(X_{ij},\tau_{ij})$.
Furthermore, let $\Or_{ij}$ be a coherent system of orientations
on $(\Sig,\alphas^i,\alphas^j,\z)$ associated with $\spinc_{ij}$.
Finally, suppose that
$$\CFT(\Sig,\alphas^i,\alphas^j,\z;\spinc_{ij})=
\bigoplus_{\relspinc_{ij}\in\spinc_{ij}}
\CFT(\Sig,\alphas^i,\alphas^j,\z;\relspinc_{ij})$$
is the corresponding chain complex, and its
decomposition into relative $\SpinC$ classes.
Let us assume that
$$\Sig-\alphas^i=\coprod_{j=1}^{k_i}A^i_j, \ \ \ i=1,2,...,m,$$
 are the connected components in the complements of the
 curves in $\alphas^i$. We will denote the genus of $A^i_j$ by
 $g^i_j\in\Z^{\geq 0}$. We will also denote
 $$\sum_{p=1}^{k_i} \la(A_p^i)\in \Big\langle \la_1,...,\la_\el\Big\rangle_\F$$
 by $\la(\alphas^i)$.
For any subset $I$ of the set of indices $\{1,...,m\}$
introduce the $\F$-algebra
\begin{displaymath}
\Ring_I=\frac{\Big\langle \la_1,...,\la_\el\Big\rangle_\F}
{\Big\langle \la(\alphas^i)=\la(\alphas^j)\
\big|\ \ \forall \ i,j\in I\Big\rangle\oplus
\Big\langle \la(A^i_j)\ \big|\ i\in I,\ g_j^i>0\Big\rangle}.
\end{displaymath}
If for two subsets $I,J\subset \{1,...,m\}$ we have
$I\subset J$, then $\Ring_J$ would be a quotient
of $\Ring_I$, and we have a natural homomorphism
$$\rho_{IJ}:\Ring_I\lra \Ring_J.$$
This homomorphism may be used to give $\Ring_J$ the
structure of an $\Ring_I$-module. As a result, from
any $\Ring_I$ chain complex
$(C,d)$, we obtain a natural $\Ring_J$ chain complex
$C\otimes_{\Ring_I}\Ring_J$.
In particular, for any index set $I$ which contains
$i,j$, we may consider the $\Ring_I$ chain complex
$$C_{ij}(I)=\CFT(\Sig,\alphas^i,\alphas^j,\z;
\spinc_{ij})\otimes_{\Ring_{ij}}\Ring_I.$$
We will denote $C_{ij}\big(\{1,...,m\}\big)$ by
$C_{ij}$ for simplicity.\\

Associated with each set of curves $\alphas^i$ is
a torus $\Tai\subset \mathrm{Sym}^{\ell}(\Sig)$.
A Whitney $m$-gon is a continuous map
$u$ from the standard $m$-gon $\DD_m$ into $\mathrm{Sym}^{\ell}(\Sig)$
which maps the $i$-th edge of the $m$-gon to $\Tai$. If we fix
$$\x_i\in\Tai\cap\mathbb{T}_{\alpha^{i+1}}, \ \ i=1,...,m-1,
\ \ \&\ \ \x_m\in\mathbb{T}_{\alpha^{m}}\cap\mathbb{T}_{\alpha^{1}},$$
we may let $\pi_2(\x_1,...,\x_{m})$ denote the set of Whitney
$m$-gons which map the vertex $v_i$ between the $i$-th edge and the $(i+1)$-th
edge to $\x_i$ (for $i=1,...,m-1$), and the vertex $v_m$ between
the $m$-th edge and the first edge to $\x_m$.\\

Let us fix a generic continuous family $\{J_p\}_{p\in\DD_m}$
of almost complex structures on $\Sym^\ell(\Sig)$
determined by a family $\{j_p\}_{p\in \DD_m}$ of complex
structures on $\Sig$. Furthermore, we will assume that under
a fixed identification of a neighborhood of the
$i$-th vertex $v_i$ of $\DD_m$ with $[0,1]\times (0,\infty)$
the family is translation invariant, i.e.
$$j_{(s,t)}=j_{(s,t+R)},\ \ \ \forall\ (s,t)\in[0,1]\times (0,\infty),\ \ R\in\R^+.$$
We will drop this generic family $\{J_p\}_{p\in\DD_m}$ from our notation.
For $\phi\in\pi_2(\x_1,...,\x_m)$ we let $\Mod(\phi)$ denote the
set of pseudo-holomorphic representatives of $\phi$.\\

Fix a subset $I=\{i_1<i_2<...<i_p\}\subset\{1,...,m\}$.
This subset determines a sub-diagram
$$H_I=(\Sig,\alphas^{i_1},...,\alphas^{i_p},\z)$$
of $H$. Correspondingly, we may consider the $p$-gons
associated with $H_I$. We will say that two $p$-gons
$\phi\in\pi_2(\x_{i_1},...,\x_{i_p})$ and $\phi'\in
\pi_2(\y_{i_1},...,\y_{i_p})$ are equivalent if
and only if there exists Whitney disk classes $\psi_{i_j}\in
\pi_2(\x_{i_j},\y_{i_j})$ for $j=1,...,p$ such that
$\phi$ is obtained from $\phi'$ by juxtaposition of
the disk $\psi_{i_j}$ at the vertices $\y_{i_j}$ for
$j=1,...,p$. The set of equivalence classes of such
$p$-gons will be denoted by $\RelSpinC(H,I)$. It is
important to note that $\RelSpinC(H,\{i,j\})$ determines
a subset of the set of  $\SpinC$ structures on  the
three-manifold $\overline{X_{ij}}$, which are realized
by the Heegaard diagram.\\

\begin{defn}
Suppose that we have a pair of index
sets $I,J\subset \{1,...,m\}$ such that
$I=\{i_1<i_2<...<i_p\}$ and
$J=\{i_r=j_1<j_2<...<j_q=i_{r+1}\}$.
We will call the pair $I,J$ {\emph{attachable}},
and define
$$I\star J:=\Big\{i_1<...<i_r=j_1<j_2<...<j_q=i_{r+1}<i_{r+2}<...<i_p\Big\}.$$
We will denote $r$ by $r(I,J)$ for future reference.
Suppose that $I$ and $J$ are attachable index sets as above, and that we are given a $p$-gon
$\phi$ and a $q$-gon $\phi'$
$$\phi\in\pi_2(\x_{i_1},...,\x_{i_p}),\ \
\&\ \ \psi\in\pi_2(\y_{j_1},...,\y_{j_p}),$$
where $\x_{i_s}\in\mathbb{T}_{\alpha^{i_s}}
\cap\mathbb{T}_{\alpha^{i_{s+1}}}$ and $\y_{j_s}\in
\mathbb{T}_{\alpha^{j_s}}\cap\mathbb{T}_{\alpha^{j_{s+1}}}$.
Furthermore, assume that $\x_{i_r}=\y_{j_q}$.
Then we may {\emph{juxtapose}} $\phi$ and $\psi$ to obtain
the class of some $(p+q-2)$-gon, which will be
denoted by $\phi\star\psi$.
\end{defn}

Let us now restrict ourselves to the polygons whose
vertices correspond to the fixed set
$\Ss=\{\spinc_{ij}\}_{i<j}$ of $\SpinC$ structures.
Let us denote the subset of $\RelSpinC(H,I)$ which consists of
polygons such that the $\SpinC$ structures
associated with the vertices are in $\Ss$
by $\RelSpinC(H,I;\Ss)$.
Then, the above construction gives a well-defined
operation between the equivalence classes
of polygons in $\RelSpinC(H,I;\Ss)$. More precisely,
if $I$ and
$J$ are a pair of attachable index sets,
we will have a map
$$\big(.\star.\big):\RelSpinC(H,I;\Ss)\times
\RelSpinC(H,J;\Ss)\lra \RelSpinC(H,I\star J;\Ss),$$
defined by the above juxtaposition process.\\

\begin{defn}\label{def:SpinC}
With the above notation fixed, a {\emph{coherent system of
$\SpinC$ structures on polygons}}
for the Heegaard diagram $H=(\Sig,\alphas^1,...,\alphas^m,\z)$,
and compatible with
$\Ss$ is a choice of classes
$$\Tt=\Big\{\phi_I\in\RelSpinC(H,I;\Ss)\ \big|\ I\subset \{1,...,m\},\ |I|\geq 3\Big\}$$
 such that the following is satisfied.
If $I$ and
$J$ are attachable index sets,
then we have $$\phi_{I\star J}=\phi_I\star\phi_J.$$
\end{defn}

\begin{lem}\label{lem:polygon-compatibility}
Let us assume that a coherent system $\{\phi_I\}_I$ of
$\SpinC$ structures is fixed
for the Heegaard diagram $H$. If $K=I\star J$ and
a polygon $\psi_K$ in
the same class as $$\phi_K=\phi_I\star\phi_J\in\SpinC(H,K;\Ss)$$
is decomposed as $\psi_K=\psi_I\star\psi_J$,
then the the class of $\psi_I$ in $\RelSpinC(H,I;\Ss)$ is
equal to the class of $\phi_I$ and
the class of $\psi_J$ in $\RelSpinC(H,J;\Ss)$ is equal
to the class of $\phi_J$.
\end{lem}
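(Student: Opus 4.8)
The plan is to reduce the statement to a simple uniqueness observation about decomposing an equivalence class of $(p+q-2)$-gons into an equivalence class of $p$-gons juxtaposed with one of $q$-gons. First I would recall the definition of the equivalence relation on polygons from subsection~\ref{subsec:m-gons}: two $p$-gons are equivalent precisely when they differ by juxtaposition of Whitney disks at the vertices, and the quotient is $\RelSpinC(H,I;\Ss)$. The key point is that the juxtaposition operation
$$\big(.\star.\big):\RelSpinC(H,I;\Ss)\times \RelSpinC(H,J;\Ss)\lra \RelSpinC(H,I\star J;\Ss)$$
is well-defined on equivalence classes, which was already asserted just before Definition~\ref{def:SpinC}; so if $\psi_I$ and $\psi_J$ are any representatives with $\psi_K = \psi_I\star\psi_J$ in the same class as $\phi_K$, then $[\psi_I]\star[\psi_J] = [\phi_I]\star[\phi_J]$ in $\RelSpinC(H,K;\Ss)$.

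Next I would show that the map $\star$ on equivalence classes is \emph{injective} in each slot once the other slot is fixed — equivalently, that knowing the class of $\psi_I\star\psi_J$ together with the class of $\psi_J$ determines the class of $\psi_I$, and symmetrically. This is the heart of the argument. Concretely, given $I=\{i_1<\dots<i_p\}$ and $J=\{i_r=j_1<\dots<j_q=i_{r+1}\}$, a $(p+q-2)$-gon in class $\phi_K$ has its domain supported on $\Sig$ with prescribed boundary behaviour along $\Tai[i_1],\dots,\Tai[i_p]$ together with the extra edges $\Tai[j_2],\dots,\Tai[j_{q-1}]$; cutting at the vertex $\x_{i_r}=\y_{j_q}$ separates the domain into the $I$-part and the $J$-part, and the $\SpinC$-type of each part is read off from the local multiplicities at $\z$ (via $\relspinc_{\z}$ and the character $\chi$), exactly as in Corollary~\ref{cor:s(x)-s(y)}. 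Thus the class of the $I$-part is determined by the class of $\psi_K$ together with the class of the $J$-part, since the difference of two choices of $\psi_I$ would have to be cancelled by a corresponding difference in $\psi_J$, but the vertices of $\psi_J$ are fixed to lie in the class of $\phi_J$, so that difference is a sum of periodic-type disk classes attached only at the $I$-vertices, which by definition does not change the class. Applying this with $\psi_J$ in the class of $\phi_J$ forces $[\psi_I]=[\phi_I]$, and the symmetric argument forces $[\psi_J]=[\phi_J]$.

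The main obstacle I anticipate is bookkeeping the shared vertex and making the cutting-at-$\x_{i_r}$ construction precise: one must check that when a $(p+q-2)$-gon is cut at the shared vertex the two pieces are genuine $p$- and $q$-gons (not just chains), and that the equivalence relations on the pieces are compatible with the equivalence relation on the glued polygon — i.e. that a juxtaposed disk at a vertex of $\psi_K$ pulls back to a juxtaposed disk at the corresponding vertex of exactly one of $\psi_I$ or $\psi_J$. Once that compatibility is established the conclusion is immediate from well-definedness and injectivity of $\star$ on classes. I would phrase the whole proof at the level of equivalence classes to avoid choosing representatives, which keeps the argument short: it is essentially the statement that in $\RelSpinC(H,-;\Ss)$ the partial composition $\star$ is cancellative.
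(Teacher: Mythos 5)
Your proposal has a genuine gap, and it is structural rather than cosmetic. You reduce the lemma to a "cancellativity" statement — that the class of $\psi_K$ together with the class of the $J$-part determines the class of the $I$-part — and then conclude $[\psi_I]=[\phi_I]$ by "applying this with $\psi_J$ in the class of $\phi_J$". But $[\psi_J]=[\phi_J]$ is half of the conclusion of the lemma, not a hypothesis, and your "symmetric argument" for it would in turn presuppose $[\psi_I]=[\phi_I]$; the reduction is circular. Slot-wise injectivity of $\star$ would not suffice anyway: the hypothesis only gives $[\psi_I]\star[\psi_J]=[\phi_I]\star[\phi_J]$ with both factors a priori different, so what is needed is joint uniqueness of the factorization, which is exactly the lemma itself.

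The missing idea is the one the paper's proof turns on. After adding disk classes at the outer corners so that $\psi_I\star\psi_J$ and $\phi_I\star\phi_J$ have the same corners (note the internal corner $\y$ of the $\psi$-decomposition cannot be normalized and may differ from $\x_{i_ri_{r+1}}$), one sets $\Dcal=\Dcal(\phi_I)-\Dcal(\psi_I)=\Dcal(\psi_J)-\Dcal(\phi_J)$ and uses both presentations of this single $2$-chain: the first shows its boundary avoids all $\alphas^j$ with $j\notin I$, the second that it avoids all $\alphas^i$ with $i\notin J$, hence $\partial\Dcal$ lies on $\alphas^{i_r}\cup\alphas^{i_{r+1}}$ and $\Dcal$ is the domain of a Whitney disk in $\pi_2(\x_{i_ri_{r+1}},\y)$; therefore $\psi_I$ and $\phi_I$ (resp. $\psi_J$ and $\phi_J$) differ by a disk juxtaposed at the shared vertex and are equivalent. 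Your sketch gestures at the cutting but replaces this two-sided boundary constraint with two unsupported claims: that the $\SpinC$-type of each piece "is read off from the local multiplicities at $\z$" as in Corollary~\ref{cor:s(x)-s(y)}, and that the difference "is a sum of periodic-type disk classes attached only at the $I$-vertices". The corollary only computes differences of relative $\SpinC$ structures of corners of a disk, and multiplicities at $\z$ do not determine a polygon's class in $\RelSpinC(H,I;\Ss)$ (classes with the same corners can differ by periodic domains with vanishing multiplicities at $\z$); and the second claim is precisely what has to be proved, its proof being the boundary argument above.
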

\begin{proof}
Fix the above notation and let $K=I\star J$. After  addition of disk
classes we may assume that the corners of $\psi_K$ are the same as
the corners of $\phi_K$ (i.e. both are chosen from $\{\x_{ij}\}_{i<j}$).
This means that
\begin{displaymath}
\begin{split}
&\phi_I\in\pi_2\Big(\x_{i_1i_2},...,\x_{i_{p-1}i_p},\x_{i_1i_p}\Big),\\
&\psi_I\in\pi_2\Big(\x_{i_1i_2},...,\x_{i_{r-1}i_r},
\y,\x_{i_{r+1}i_{r+2}},...,\x_{i_{p-1}i_p},\x_{i_1i_p}\Big),\\
&\phi_J\in\pi_2\Big(\x_{j_1j_2},...,\x_{j_{q-1}j_q},
\x_{j_1j_q}\Big),\ \ \&\ \
\psi_J\in\pi_2\Big(\x_{j_1j_2},...,\x_{j_{q-1}j_q},\y\Big)
\end{split}
\end{displaymath}
and we have $\phi_I\star\phi_J=\psi_I\star\psi_J$.
We thus have the following relation among the
associated domains:
$$\Dcal(\phi_I)-\Dcal(\psi_I)=\Dcal(\psi_J)-\Dcal(\phi_J)=\Dcal.$$
The coefficients of the domains in the expression appearing on the left hand side of the above
equality on both sides of any curve in $\alphas^i$, with $i\notin I$, are equal.
Similarly, the coefficients of the domains in the expression appearing as the middle term
 in the above equality on the two sides of any curve
in $\alphas^j$, with $j\notin J$, are equal. This
implies that $\partial(\Dcal)$ is included in
$$\coprod_{i\in I\cap J}\alphas^i=\alphas^{i_r}
\coprod\alphas^{i_{r+1}}.$$
Thus, $\Dcal$ is the domain associated with a disk
in $\pi_2(\x_{i_ri_{r+1}},\y)$, and
the $\SpinC$ class of $\psi_I$ is the same as that
of $\phi_I$. Similarly, the $\SpinC$ class of
$\psi_J$ is the same as that of $\phi_J$.
This completes the proof of the lemma.
\end{proof}

The above lemma implies that a coherent system of
$\SpinC$ structures on polygons for $H$ is completely determined
by the choice of triangle classes
$$\Big\{\phi_{ijk}\in \SpinC(H,\{i,j,k\};\Ss)\ \big|\ 1\leq i<j<k\leq m\Big\},$$
which satisfy the following compatibility relation
\begin{equation}\label{eq:SpinC-compatibility}
\phi_{ikl}\star\phi_{ijk}=\phi_{ijl}\star\phi_{jkl}\ \ \forall\ 1\leq i<j<k<l\leq m.
\end{equation}
Furthermore, the above lemma implies that for $\phi_{ijk},\phi_{ikl}$ and $\phi_{ijl}$
as above, there exists at most one class $\phi_{jkl}$ such that equation~\ref{eq:SpinC-compatibility}
is satisfied.
This observation implies that a coherent system of $\SpinC$ classes
of polygons for the Heegaard diagram $H$ is determined by the
family of triangle classes
$$\Big\{\phi_{1ij}\ \big|\ 1<i<j\leq m\Big\}.$$
However, this family should have the property that for any triple $1<i<j<k\leq m$
of indices, there is a triangle class $\psi$ such that
\begin{equation}\label{eq:SpinC-compatibility-2}
\phi_{1jk}\star\phi_{1ij}=\phi_{1ik}\star\psi.
\end{equation}
If this is the case, we will write
$$\Tt=\{\phi_I\}_I=\Big\langle \phi_{1ij}\ |\ 1<i<j\leq m \Big\rangle. $$

Let us fix a system $\Tt$ of compatible
$\SpinC$ structures for the Heegaard diagram
$H$ as above, which is generated by the triangle classes $\phi_{1ij}$.
The set of periodic domains for polygons in
$\Tt$ is generated by periodic domains for each pair
$(\alphas^i,\alphas^j)$. To be more precise, let us denote
by $\Pp_{ij}$ the set of periodic domains for the
Heegaard diagram $(\Sig,\alphas^i,\alphas^j)$.
Then any periodic domain which appears as the difference
of two $q$-gons with the same set of vertices
$$\y_{j}\in\mathbb{T}_{\alpha^{i_j}}
\cap\mathbb{T}_{\alpha^{i_{j+1}}},\ \ \ j=1,...,q,\
i_{q+1}:=i_1,\ \&\ i_1<i_2<...<i_q, $$
and representing the same $\SpinC$ class
may be written as a sum of periodic domains in
$\Pp_{i_1i_2},\Pp_{i_2i_3},...,\Pp_{i_{q-1}i_q},$ and $\Pp_{i_1i_q}$.
\begin{defn}
Let the Heegaard diagram $H=(\Sig,\alphas^1,\alphas^2,...,\alphas^m,\z)$
and $\Ss$, $\Tt$, and $\Pp_{ij}$ be as above.
The  Heegaard diagram $H$ is called {\emph{$\Ss$-admissible}}
if for any index set $I=\{i_1<...<i_q\}$, and any periodic domain
 $$\Pcal=\Pcal_{i_1i_2}+\Pcal_{i_2i_3}+...+\Pcal_{i_{q-1}i_q}+\Pcal_{i_1i_q}$$
 with $\Pcal_{ij}\in \Pp_{ij}$, the following is true. If
\begin{displaymath}
\begin{split}
&\sum_{j=1}^q{\Big\langle c_1(\spinc_{i_ji_{j+1}}),
H(\Pcal_{i_ji_{j+1}})\Big\rangle}=0
\end{split}
\end{displaymath}
then either the coefficients of the domain
$\Pcal$ at some point $w$ is negative, or
$\la(\Pcal)=0$ in $\Ring_I$.
\end{defn}
The existence of $\Ss$-admissible Heegaard diagrams,
and the possibility of modifying $H$ to an admissible Heegaard
 diagram using finger moves, follows with an argument
completely similar to the arguments of section~\ref{sec:admissibility}.
Furthermore, the $\Ss$-admissibility of the Heegaard diagram
$H$ implies that for any index set $$I=\{i_1<...<i_q\}\subset \{1,...,m\},$$
any integer $N$,
and any set of corners
$$\y_{j}\in\mathbb{T}_{\alpha^{i_j}}
\cap\mathbb{T}_{\alpha^{i_{j+1}}},\ \ \ j=1,...,q,\
i_{q+1}:=i_1,\ \&\ i_1<i_2<...<i_q, $$
such that $\relspinc_\z(\y_j)\in\spinc_{i_ji_{j+1}}$,
there are at most finitely many classes
$\phi\in\pi_2(\y_1,...,\y_q)$ satisfying the following three conditions.\\

$\bullet$ $\phi=\phi_I\in\SpinC(H,I;\Ss)$.

$\bullet$ $\mu(\phi)=N$.

$\bullet$ $\Dcal(\phi)\geq 0$ and $\la_\z(\phi;I)\neq 0$, where $\la_\z(\phi;I)$
is defined by
$$\la_\z(\phi;I):=\prod_{i=1}^\el \la_i^{n_{z_i}(\phi)}\in \Ring_I.$$

The construction of Ozsv\'ath and Szab\'o in subsection 8.2 from \cite{OS-3m1}
may be extended to this more general context without any major modification.
Namely, for any index set $I\subset \{1,...,m\},$ and any polygon class
$\phi\in\phi_I$, the determinant line bundle of the Cauchy-Riemann
operator over $\Mod(\phi)$ is trivial, and one may thus choose an
orientation, i.e. one of the two classes of nowhere vanishing
sections of this determinant line bundle, associated with $\phi$.

\begin{defn}
A {\emph{coherent system of orientations}} associated with the Heegaard diagram
$H$ and the coherent system $\Tt$ of $\SpinC$ classes of polygons of $H$ is a choice
of orientation $\Or_I(\phi)$ for any polygon class $\phi$ with
$\phi=\phi_I \in \SpinC(H,I;\Ss)$, such that the following are satisfied.\\

$\bullet$ For any $1\leq i<j\leq m$, $\Or_{ij}$ is a coherent system of
orientations associated with the $\SpinC$ class $\spinc_{ij}$ for the
Heegaard diagram $(\Sig,\alphas^i,\alphas^j,\z)$.\\

$\bullet$ For any pair $I,J$ of attachable index sets and any attachable
polygon classes $\phi$ and $\psi$, with
$$\phi=\phi_I\in \SpinC(H,I;\Ss),\ \ \&\ \ \psi\in\SpinC(H,J;\Ss),$$
we have $$(-1)^{r(I,J)|J|}\Or_I(\phi)\wedge \Or_J(\psi)=\Or_{I\star J}(\phi\star \psi).$$
\end{defn}

Lemma~\ref{lem:polygon-compatibility} implies that in order for us to obtain
a coherent system of orientations associated with
the Heegaard diagram $H$ and the coherent system of $\SpinC$ classes of polygons
$\Tt$, it suffices to determine $\Or_{ij}$ and $\Or_{1ij}$ for any pair of
indices $1\leq i<j\leq m$. This observation implies that the following lemma,
which was proved in \cite{OS-3m1} as lemma 8.7, is valid in our setup.
Although the Heegaard diagrams are more general, the proof carries over without
any major modification.

\begin{lem}\label{lem:Choice-of-Orientation}
Suppose that the Heegaard diagram $H$, and the coherent system of $\SpinC$ classes of
polygons $\Tt$ are as above. Then for any choice of coherent systems of orientations
$\Or_{1i}$  corresponding to the $\SpinC$ classes $\spinc_{1i}$ (with $1<i\leq m$),
and any choice of $\Or_{1ij}(\phi_{1ij})$ for $1<i<j\leq m$, there always exists
a coherent system of orientations $\Or=\{\Or_I\}_I$ such that $\Or_{1i}$ is
the initial choice of the coherent system of orientations corresponding to
$\spinc_{1i}$ and $\Or_{1ij}(\phi_{1ij})$ is the prescribed orientation.
\end{lem}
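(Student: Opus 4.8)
The plan is to reduce the statement, exactly as in \cite{OS-3m1} Lemma~8.7, to a purely combinatorial bookkeeping problem about signs of juxtapositions, using Lemma~\ref{lem:polygon-compatibility} to control which data are genuinely free. First I would observe that by Lemma~\ref{lem:polygon-compatibility} a coherent system of $\SpinC$ classes of polygons $\Tt$ is generated by the triangle classes $\{\phi_{1ij}\mid 1<i<j\le m\}$, and every polygon class $\phi_I$ for a general index set $I$ can be obtained from these by a finite sequence of juxtapositions together with additions of bigon (disk) classes. Consequently, once $\Or_{1i}$ is fixed for each $1<i\le m$ and an orientation $\Or_{1ij}(\phi_{1ij})$ is chosen for each $1<i<j\le m$, the defining compatibility relation $(-1)^{r(I,J)|J|}\Or_I(\phi)\wedge\Or_J(\psi)=\Or_{I\star J}(\phi\star\psi)$ \emph{forces} the orientation on every other polygon class. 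So the content is: (a) this forced assignment is well-defined, i.e.\ independent of the chosen way of decomposing a polygon into triangles and bigons; and (b) the resulting assignment is genuinely coherent, i.e.\ it satisfies the juxtaposition rule for \emph{all} attachable pairs $I,J$, not merely the ones used to build it.

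For step (a), I would argue as follows. Any two decompositions of a fixed polygon class $\phi_K$ into triangles $\phi_{1ij}$ and disk classes differ by a finite sequence of ``elementary moves'': the associativity relation among four consecutive triangles, which at the level of $\SpinC$ classes is precisely equation~\ref{eq:SpinC-compatibility}, and the reordering of a disk class past a triangle vertex. The associativity of the wedge product of determinant lines, combined with the sign $(-1)^{r(I,J)|J|}$, is exactly what makes the two ways of bracketing $\phi_{ikl}\star\phi_{ijk}$ versus $\phi_{ijl}\star\phi_{jkl}$ produce the same induced orientation on the hexagon class $\phi_{ijkl}$; this is the identical sign computation carried out by Ozsv\'ath and Szab\'o, and since our polygons live in $\Sym^\ell(\Sig)$ with totally real tori $\Tai$ just as in the closed case, the computation transfers verbatim. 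For the disk-reordering move one uses that the coherent systems $\Or_{ij}$ are already assumed coherent in the sense of the earlier definition (so juxtaposition with a bigon behaves correctly), which is the $m=2$ case handled in section~\ref{sec:chain-complex}. Since the moves generate all relations between decompositions, the forced orientation $\Or_I(\phi_I)$ is well-defined; one then extends $\Or_I$ to every polygon class in $\phi_I$ by juxtaposing with bigon classes, using Lemma~\ref{cor:s(x)-s(y)} style reasoning that any two representatives differ by disk classes. Step (b) is then automatic: the juxtaposition rule for a general attachable pair $I,J$ follows by decomposing both $\phi_I$ and $\phi_J$ into triangles, applying the rule one triangle at a time (where it holds by construction), and collecting the accumulated signs, which match $(-1)^{r(I,J)|J|}$ by the same bookkeeping.

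The main obstacle I expect is the sign bookkeeping in step (a): verifying that the chosen normalization $(-1)^{r(I,J)|J|}$ is the \emph{consistent} one, i.e.\ that the two routes around the ``associativity pentagon/square'' of polygon juxtapositions pick up equal total signs, and that these are compatible with the orientation conventions for boundary degenerations fixed in Lemma~\ref{lem:orientation} and the coherent systems $\Or_{ij}$. This is delicate because the exponent depends on the position $r(I,J)$ at which $J$ is inserted and on $|J|$, and a reordering of the insertion order must be checked to contribute a sign that cancels correctly. My strategy for this is to follow \cite{OS-3m1} Lemma~8.7 essentially line by line, noting at each step that the only input used there is the triviality of the determinant line bundle of the linearized Cauchy--Riemann operator over $\Mod(\phi)$ (which holds here by the discussion following Lemma~\ref{lem:energy-bound}) and the gluing/juxtaposition behavior of these determinant lines, both of which are insensitive to the fact that our diagrams have $\ell<g$ curves and marked points $\z$ rather than a single basepoint. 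Hence the proof reduces to citing that argument after establishing the generation statement above, and I would write it up in precisely that form: ``The generating statement is Lemma~\ref{lem:polygon-compatibility}; with it in hand, the proof of \cite{OS-3m1} Lemma~8.7 applies without change.''
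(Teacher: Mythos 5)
Your proposal is correct and follows essentially the same route as the paper: reduce via Lemma~\ref{lem:polygon-compatibility} to the fact that every polygon class decomposes (uniquely, up to disk classes) into the generating triangles $\phi_{1ij}$, so that the whole system is determined by the pair systems $\Or_{ij}$ together with the chosen $\Or_{1i}$ and $\Or_{1ij}(\phi_{1ij})$, and then invoke the argument of Lemma~8.7 of \cite{OS-3m1} to produce the $\Or_{ij}$ for $1<i<j\leq m$. The extra well-definedness and sign-bookkeeping checks you flag are exactly what the paper absorbs into the uniqueness of these decompositions and the citation of \cite{OS-3m1}, so no essential difference remains.
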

\begin{proof}
For an index set $I=\{i_1<...<i_q\}$ let $i(I)=i_1$ and $j(I)=i_q$ denote the smallest and
largest element of $I$ respectively.
Let us assume that $\phi$ is a $q$-gon class in the same
$\SpinC$ class as $\phi_I$.\\

If $1\in I$, then we may assume $|I|\geq 3$, since otherwise, we already have
a choice of orientation. In this case, we may write, in a unique way,
\begin{displaymath}
\begin{split}
&\phi=\phi_I\star \phi_1\star ...\star \phi_q,\ \ \ \ \ \ \phi_j\in\pi_2(\y_j,\x_{i_ji_{j+1}})\\
&\phi_I=\phi_{1i_2i_3}\star\phi_{1i_3i_4}\star ...\star \phi_{1i_{q-1}i_{q}}.
\end{split}
\end{displaymath}
Thus $\Or_I(\phi)$ is determined if we determine all the maps $\Or_{ij}$ for
$1<i<j\leq m$ in a compatible way. Note that $\Or_{1ij}(\phi_{1ij})$ is already defined.
If otherwise $1\notin I$, we may write, again in a unique way
\begin{displaymath}
\begin{split}
&\phi=\phi_I\star \phi_1\star ...\star \phi_q,\ \ \ \ \ \ \phi_j\in\pi_2(\y_j,\x_{i_ji_{j+1}})\\
&\phi_{1i(I)j(I)}\star \phi_I=
\phi_{1i_1i_2}\star\phi_{1i_2i_3}\star ...\star \phi_{1i_{q-1}i_{q}}.
\end{split}
\end{displaymath}
Thus, in order to determine the orientation $\Or_{I}(\phi)$, it suffices to
determine all maps $\Or_{ij}$ for $1<i<j\leq m$. In order to determine
the $\Or_{ij}$ from $\Or_{1i}$, $\Or_{1j}$ and $\Or_{1ij}(\phi_{1ij})$,
one may then use the argument of lemma 8.7 from \cite{OS-3m1}.
\end{proof}

\begin{remark}
Note that the choice of $\Or_{1i}$ for $1<i\leq m$ determines the orientation
for all boundary degenerations in a unique way. In fact, suppose that
$\psi$ is the class of
some $\alphas^i$ boundary degeneration corresponding to the corner
$\y\in\mathbb{T}_{\alpha^i}\cap\mathbb{T}_{\alpha^j}$, say for some $j>i$,
and that $\phi$ is a Whitney disk in $\pi_2(\x_{ij},\y)$.
Furthermore, let $\psi'$ denote the class in $\pi_2^{\alpha^i}(\x_{1i})$
which has the same domain as $\psi$. We may then write
$$\phi_{1ij}\star \phi\star \psi= \phi_{1ij}\star\psi'\star \phi,$$
implying that $\Or_{ij}(\psi)$ is uniquely determined  by $\Or_{1i}(\psi')$,
and is equal to it as the class of an $\alpha^i$ boundary degeneration.
\end{remark}

Let $H$ be an $\Ss$-admissible Heegaard diagram,
and $\Tt$ be a system of compatible $\SpinC$ structures
as before. Correspondingly, assume that
$$\Or=\Big\{\Or_I\ |\ I\subset \{1,...,m\},\ |I|\geq 2\Big\}$$
is a coherent system of orientations associated with $\Tt$.
Associated with any subset $I=\{i_1,...,i_q\}\subset \{1,...,m\}$
of indices, we may define a holomorphic polygon map
\begin{displaymath}
\begin{split}
&f_{I}:\bigotimes_{j=1}^{q-1}
\CFT(\Sig,\alphas^{i_j},\alphas^{i_{j+1}},\z;\spinc_{i_ji_{j+1}})
\otimes_{\Ring_{\{i_j,i_{j+1}\}}}\Ring_I\\
&\ \ \ \ \ \ \ \ \ \ \ \ \ \
\lra\CFT(\Sig,\alphas^{i_1},\alphas^{i_{q}},\z;\spinc_{i_1i_{q}})
\otimes_{\Ring_{\{i_1,i_{q}\}}}\Ring_I.\\
\end{split}
\end{displaymath}
In other words, if $\{i<j\}\lhd I$ denotes that  $i$ and $j$
are consecutive elements in $I$, and $i(I),j(I)$ denote the smallest and
largest elements of $I$ respectively, we will have
a map
\begin{displaymath}
\begin{split}
&f_I:\bigotimes_{\{i<j\}\lhd I}C_{ij}(I)=\bigotimes_{j=1}^{q-1}
C_{i_ji_{j+1}}(I)\lra C_{i_1i_q}(I)=C_{i(I),j(I)}(I)\\
&f_I\big(\y_1\otimes\y_2\otimes ...\otimes\y_{q-1}\big):=
\sum_{\substack{\y_q\in\mathbb{T}_{\beta^{1}}\cap
\mathbb{T}_{\beta^{q}}\\
[\relspinc(\y_q)]=\spinct_{1q}}}\sum_{\substack{\phi\in
\pi_2(\y_1,\y_2,...,\y_{q})\\ \mu(\phi)=3-q\\ \phi\in (\phi_I)}}
\big(\m(\phi)\la_\z(\phi;I)\big).\y_{q},
\end{split}
\end{displaymath}
where $\betas^j=\alphas^{i_j}$ and $\spinct_{1q}=\spinc_{i_1i_q}$.\\

Since $H$ is  admissible, it follows that only
finitely many terms would contribute to the above sum, and
$f_I$ is thus well-defined.\\

These maps satisfy a generalized
associativity property,
which may be stated in our setup as follows
(we will only state the associativity corresponding to the
full index set $\{1,...,m\}$).
\begin{thm}\label{thm:general-associativity}
With the above notation fixed, if we set $[m]=\{1,...,m\}$
as the full index set, the map
\begin{equation}\label{eq:general-associativity}
\begin{split}
F_{[m]}&:C_{12}\big([m]\big)\otimes C_{23}\big([m]\big)\otimes...
\otimes C_{m-1,m}\big([m]\big) \lra C_{1m}\big([m]\big), \\
F_{[m]}&:=\sum_{1\leq i< j\leq m}(-1)^{ij}f_{\{1,2,...,i,j,j+1,...,m\}}
\circ f_{\{i,i+1,...,j\}}
\end{split}
\end{equation}
is trivial.
\end{thm}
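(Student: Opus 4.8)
The plan is to follow Ozsv\'ath and Szab\'o's proof of associativity for holomorphic polygons (subsection~8.2 of \cite{OS-3m1}), counting the ends of one-dimensional moduli spaces of pseudo-holomorphic $m$-gons, with the one new point -- the non-contribution of boundary degenerations and sphere bubbles -- handled exactly as in the proof of Theorem~\ref{differential}. Each $f_I$ is a well-defined $\Ring_I$-module homomorphism by the $\Ss$-admissibility discussion preceding the theorem, so $F_{[m]}$ is well-defined, and it suffices to show $F_{[m]}(\y_1\otimes\cdots\otimes\y_{m-1})=0$ for generators $\y_j$ with $\relspinc_\z(\y_j)\in\spinc_{j,j+1}$. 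Fixing such generators together with a generator $\y_m$, the coefficient of $\y_m$ is a sum, over $\phi\in\pi_2(\y_1,\dots,\y_m)$ with $\phi\in\phi_{[m]}$, $\Dcal(\phi)\geq 0$, $\la_\z(\phi;[m])\neq 0$ and $\mu(\phi)=4-m$, of signed contributions coming from the various broken configurations; for such $\phi$ the expected dimension of $\Mod(\phi)$ is $\mu(\phi)+(m-3)=1$.

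For a generic family $\{J_p\}_{p\in\DD_m}$ the space $\Mod(\phi)$ is a smooth $1$-manifold, and by Lemma~\ref{lem:energy-bound} together with $\Ss$-admissibility it has a Gromov compactification $\overline{\Mod}(\phi)$. Its ends are of three kinds. First, the conformal structure of $\DD_m$ degenerates along a diagonal, producing a $p$-gon juxtaposed with a $q$-gon along a vertex, with $p+q=m+2$; by Lemma~\ref{lem:polygon-compatibility} the coherent $\SpinC$ classes of the two pieces are forced to be $\phi_I$ and $\phi_J$ for the attachable pair $I,J$ with $I\star J=[m]$ cut out by that diagonal, and since the index splits as $\mu(\phi)=\mu(\phi_I)+\mu(\phi_J)$ with both moduli of non-negative expected dimension, both pieces have expected dimension zero, i.e.\ $\mu(\phi_I)=3-|I|$ and $\mu(\phi_J)=3-|J|$. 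These are precisely the configurations counted by the terms $f_{\{1,\dots,i,j,\dots,m\}}\circ f_{\{i,i+1,\dots,j\}}$ of $F_{[m]}$, the cases $|J|=2$ or $|I|=2$ being those in which one of the two pieces is a bigon and the corresponding $f$ is a differential $\partial_{k,k+1}$. Second, a Maslov-index-one Whitney disk breaks off at a vertex -- this is the special case $|I|=2$ or $|J|=2$ already listed. Third, a sphere bubble or a boundary degeneration splits off; exactly as in the proof of Theorem~\ref{differential}, the assumption $\la_\z(\phi;[m])\neq 0$ forces the domain of such a bubble to be a non-negative combination of genus-zero components of some $\Sig-\alphas^i$, hence of Maslov index $\geq 2$ by Lipshitz' formula, so the remaining polygon would have index $\leq 2-m<0$ and empty moduli space for generic $\{J_p\}$. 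Thus ends of the third kind do not occur, and $\overline{\Mod}(\phi)$ is a compact oriented $1$-manifold whose boundary consists only of configurations of the first kind.

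Since the signed count of the boundary of a compact oriented $1$-manifold vanishes, it remains to identify this signed count, summed over $\phi$ and $\y_m$, with the coefficient of $\y_m$ in $F_{[m]}(\y_1\otimes\cdots\otimes\y_{m-1})$. This is the bookkeeping of the coherent system of orientations $\Or$: the signs $(-1)^{r(I,J)|J|}$ built into the definition of such a system, and the signs $(-1)^{ij}$ in the definition of $F_{[m]}$, are arranged so that the two broken configurations at the two ends of each component of $\overline{\Mod}(\phi)$ enter with opposite signs. This is the content of the sign computation of \cite{OS-3m1} (the analogue of its Lemma~8.7 and the surrounding discussion), which uses only the coherent-orientation formalism and Lemma~\ref{lem:Choice-of-Orientation}, and hence carries over verbatim. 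Combining, $F_{[m]}=0$.

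The only conceptually new step relative to \cite{OS-3m1} is the exclusion of sphere and boundary bubbles, and this is exactly the Maslov-index argument already used for $\partial^2=0$ in Theorem~\ref{differential}; the transversality, Gromov-compactness, gluing, and (most error-prone but routine) sign verifications are straightforward adaptations of the closed-manifold case, so the main obstacle is really just assembling these ingredients carefully rather than proving anything fundamentally new.
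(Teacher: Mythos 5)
Your proposal is correct and follows essentially the same route as the paper's proof: counting the boundary of the one-dimensional moduli spaces of $m$-gons of Maslov index $4-m$ in the class $\phi_{[m]}$, excluding sphere bubbles and boundary degenerations via the Maslov-index/$\la_\z\neq 0$ argument already used for $\partial^2=0$, and matching the sign $(-1)^{r(I,J)|J|}$ from the coherent orientation system with the $(-1)^{ij}$ in $F_{[m]}$. Your explicit appeal to Lemma~\ref{lem:polygon-compatibility} to pin down the $\SpinC$ classes of the two broken pieces is a slightly more careful articulation of a point the paper leaves implicit, but the argument is the same.
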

\begin{proof}
Let us denote the set ${\{1,2,...,i,j,j+1,...,m\}}$
of indices by $I(i,j)$, and $\{i,i+1,...,j\}$  by $J(i,j)$.
%Denote the sign $(-1)^{(i+1)(j+1)}$ by $\epsilon(i,j)$.
We have to show that for any set $\y_1,...,\y_{m}$ of
intersection points with
$\y_i\in\Tai\cap\mathbb{T}_{\alpha^{i+1}}$, and such that
$\relspinc(\y_i)\in\spinc_{i,(i+1)}$, the coefficient
of $\y_m$ in
\begin{displaymath}
 \begin{split}
  \sum_{1\leq i< j\leq m}(-1)^{ij}f_{I(i,j)}\Big(\y_1\otimes...
  \otimes\y_{i-1}\otimes f_{J(i,j)}\big(\y_i\otimes...\otimes
  \y_{j-1}\big)\otimes\y_{j}\otimes...\otimes\y_{m-1}\Big)
 \end{split}
\end{displaymath}
is zero.
Let us consider a Whitney polygon class $\psi\in\pi_2(\y_1,...,\y_m)$
with Maslov index $4-m$ and in the same class as $\psi_{[m]}$,
and consider the ends of $\sM(\psi)$. The ends of this moduli space
do not contain any boundary disk degenerations or sphere bubblings.
The reason is that the Maslov index of the holomorphic boundary disk
degenerations and holomorphic spheres are greater than or equal
to $2$ if the corresponding element of the coefficient ring is non-trivial.
This would imply that the remaining component should have
Maslov index at most $2-m$. As a result, the moduli space
associated with the remaining part would be empty. \\
Thus all degenerations
of this moduli space (for dimensional reasons) are degenerations
along an arc which connects two different edges of the $m$-gon. The
ends corresponding to a degeneration along an arc connecting the
$i$-th edge to the $j$-th edge, with $i<j$ correspond
to a degeneration of $\psi$ into the
juxtaposition of a holomorphic Whitney $(j-i+1)$-gon connecting
$\y_i,...,\y_{j-1}$ to an intersection point
$\x\in\Tai\cap\Taj$ with Maslov index $2-j+i$, with
a holomorphic $(m-j+i+1)$-gon  connecting
$\y_1,...,\y_{i-1},\x,\y_j,...\y_{m-1}$ to $\y_m$ with Maslov
index $2-m+j-i$. Thus,  the ends of $\Mod(\psi)$ will have the following form.
\begin{displaymath}
\begin{split}
\partial \Mod(\psi)=\coprod_{\substack{1\leq i<j\leq m\\
\x\in\Tai\cap\Taj}}\ \ \ \coprod_{\substack{\phi_{ij}
\in\pi_2^{2-j+i}(\y_i,...,\y_{j-1},\x)\\
\psi_{ij}\in\pi_2^{2-m+j-i}(\y_1,...,\y_{i-1},\x,\y_j,...,\y_m)\\ \psi_{ij}\star
\phi_{ij}=\psi}}\Big(\Mod(\psi_{ij})\times\Mod(\phi_{ij})\Big).
\end{split}
\end{displaymath}
In the above decomposition, we are dropping the condition
that the polygons represent the  $\SpinC$ class
determined by $\Tt$. The sign difference between the orientation we  assign to the component
$\Mod(\psi_{ij})\times\Mod(\phi_{ij})$,
and its orientation as a
 boundary component of $\partial \Mod(\psi)$ is computed as
\begin{displaymath}
\epsilon\Big(\Mod(\psi_{ij})\times\Mod(\phi_{ij})\Big)=
(-1)^{r(I(i,j),J(i,j))|J(i,j)|}=(-1)^{i(j-i+1)}=(-1)^{ij}.
\end{displaymath}
Note that the total number of points in the moduli space on the right hand side
of the above equation, when counted with the above induced signs, will be zero. We
should of course mod out by possible automorphisms of the domain, when necessary.
Fix a generator $\la\in G(\Ring_{[m]})$.
The coefficient of $\la.\y_m$ in
 \begin{displaymath}
 \begin{split}
  \sum_{1\leq i< j\leq m}(-1)^{ij}\Big(\y_1\otimes...\otimes\y_{i-1}\otimes f_{J(i,j)}
\big(\y_i\otimes...\otimes\y_{j-1}\big)\otimes\y_{j}\otimes...\otimes\y_{m-1}\Big)
 \end{split}
\end{displaymath}
is equal to
\begin{displaymath}
\begin{split}
&\sum_{\substack{1\leq i<j\leq m\\ \x\in\Tai\cap\Taj}}
\ \ \ \sum_{\substack{\phi_{ij}\in\pi_2^{2-j+i}(\y_i,...,\y_{j-1},\x)\\
\psi_{ij}\in\pi_2^{2-m+j-i}(\y_1,...,\y_{i-1},\x,\y_j,...,\y_m)\\
\la(\phi_{ij})\la(\psi_{ij})=\la}}(-1)^{ij}\Big(\m(\psi_{ij})\m(\phi_{ij})\Big)\\
&\ \ \ \ \ \ \ \ =\sum_{\substack{\psi\in\pi_2^{4-m}(\y_1,...,\y_m)\\
\la(\psi)=\la}}\#\Big(\partial \big(\Mod(\psi)\big)\Big)=0.
\end{split}
\end{displaymath}
The above computation thus completes the proof of the theorem.
\end{proof}
\begin{remark}
The maps $$f_I:\bigotimes_{\{i<j\}\lhd I}C_{ij}(I)\ra C_{i(I)j(I)}(I)$$
will sometimes refine to the maps respecting the relative $\SpinC$ structures.
We will face this situation in the upcoming sections several times. Each time
we will give a separate argument for such an splitting, to avoid the complexity of
a general treatment.
 \end{remark}
 In order to prove the above associativity, we do not
 need to use the full system $\Pp$ of
 compatible $\SpinC$ structures. In fact, a subsystem
 containing the classes of polygons
 associated with the index sets $I(i,j)$ and $J(i,j)$
 suffices for this purpose. In other words,
 we only make use of the $\SpinC$ classes in the subset
 $$\Tt_1=\Big\{\phi_{I(i,j)}\ |\ 1\leq i<j\leq m\Big\}
 \cup\Big\{\phi_{J(i,j)}\ |\ 1\leq i<j\leq m\Big\}\subset \Tt$$
 for defining the maps appearing  on the left-hand-side
 of equation~\ref{eq:general-associativity}.
\begin{defn}
The set
\begin{displaymath}
\begin{split}
\Tt_1=\Big\{&\phi_{I(i,j)}\in\SpinC(H,I(i,j))\ |\ 1\leq i<j\leq m\Big\}\\
&\bigcup\Big\{\phi_{J(i,j)}\in\SpinC(H,J(i,j))\ |\ 1\leq i<j\leq m\Big\}
\end{split}
\end{displaymath}
of polygon classes is called a {\emph{system of first
degenerations}} for $\phi_{[m]}\in\SpinC(H,[m];\Ss)$
if
$$\phi_{I(i,j)}\star\phi_{J(i,j)}=\phi,\ \ \ \forall\ 1\leq i<j\leq m.$$
\end{defn}
Thus, instead of $\Tt$, we may fix a system of first
degenerations for a class $\phi_{[m]}\in\SpinC(H,[m];\Ss)$,
together with a compatible system of coherent orientations
associated with them.
Then theorem~\ref{thm:general-associativity} would still remain true.\\

\subsection{Special Heegaard diagrams corresponding to handle slides}\label{subsec:special-HD}
Let us assume that $(\Sig,\alphas,\betas,\z)$ is a Heegaard diagram, which
 corresponds to a balanced sutured manifold $(X,\tau)$. Let us assume that
$$\alphas=\big\{\alpha_1,...,\alpha_\ell\big\},\ \ \&\ \ \betas=\big\{\beta_1,...,\beta_\ell\big\},$$
and that $\beta_i$ is the image of $\alpha_i$ under a small Hamiltonian isotopy for $i=2,...,\ell$
so that $\beta_i$ is disjoint from $\alpha_j$ for $j\neq 1,i$ and cuts $\alpha_i$ in a pair of canceling
intersection points. The area bounded between $\alpha_i$ and $\beta_i$ is thus of the form
$\Pcal_i=D_i^+-D_i^-$, such that $D_i^+$ and $D_i^-$ are two of connected components in
$$\Sig-\alphas-\betas,$$
and $\partial \Pcal_i=\alpha_i-\beta_i$. Furthermore, assume that $\beta_1$ is obtained from
$\alpha_1$ by first moving it by a small Hamiltonian isotopy, and then doing a handle slide along
$\alpha_2$. Thus, the only curve in $\alphas\cup\betas$ that intersects $\beta_1$
is $\alpha_1$, which cuts $\beta_1$ in a pair of intersection points. These two intersection points
are connected by a bi-gon, which we will denote by $D_1^+$. There is a domain with small
area which is bounded between $\alpha_1,\beta_1,\alpha_2$ and $\beta_2$, denoted by
$D_1^-$, so that $\Pcal_1=D_1^+-D_1^--D_2^-$ is a periodic domain satisfying
$$\partial \Pcal_1=\alpha_1+\alpha_2-\beta_1.$$
We will assume that none of  the marked points $\z=\{z_1,..,z_\el\}$ are in
any of  $D_1^+,...,D_\ell^+$ or $D_1^-,...,D_\ell^-$. Let us assume
$$\Sig-\alphas-\betas=\Big(\coprod_{i=1}^\ell D_i^+\Big)\bigcup
\Big(\coprod_{i=1}^\ell D_i^-\Big)\bigcup \Big(\coprod_{i=1}^m E_i\Big),$$
and that $\z^i=\{z^i_1,...,z^i_{j_i}\}$ are the marked points in $E_i$ for
$i=1,...,m$. Thus, $\z=\z^1\cup ...\cup\z^m$. In the ring $\Ring_\tau$, let
$\la(z^i_j)$ denote the element associated with $z^i_j\in\z$. Furthermore,
define
$$\mu_i=\prod_{j=1}^{j_i}\la(z^i_j)\in \Ring_\tau,\ \ \ i=1,...,m.
%\ \ \&\ \ \Ringg=\big\langle\mu_1,...,\mu_m\big\rangle\subset \Ring_\tau.
$$
%Thus, $\Ringg$ is the sub-algebra of $\Ring$ generated by $\mu_1,...,\mu_m$.
If $\Sig-\alphas=\coprod_{i=1}^{m_a} A_i$ and $\Sig-\betas=
\coprod_{i=1}^{m_b} B_i$, we will have $m_a=m_b=m$, and
after renaming the indices if necessary, we may assume
$\z\cap A_i=\z\cap B_i$, and $\la(A_i)=\la(B_i)=\mu_i$.
Let us denote by $\A_\mu$ the sub-ring of $\A_\tau$ generated by
$\mu_1,...,\mu_m$.\\

Any pair of curves $(\alpha_i,\beta_i)$ intersect in a pair $x_i^+,x_i^-$
of points, so that the bi-gon $D_i^+$ connects $x_i^+$ to $x_i^-$. Any
map $\epsilon:\{1,...,\ell\}\ra \{+,-\}$ thus corresponds to an intersection
point
$$\x^\epsilon=\big\{x_1^{\epsilon(1)},x_2^{\epsilon(2)},...,x_\ell^{\epsilon(\ell)}\big\}\in\Ta\cap\Tb.$$
These all correspond to the same $\SpinC$ class in $\SpinC(\ovl{X}^{\tau})$, which will be
denoted by $\spinc_0$. For $\epsilon:\{1,...,\ell\}\ra \{+,-\}$
let $|\epsilon|$ denote the number of elements in $\epsilon^{-1}\{+\}$.
We may refine the homological grading of the generators of $\CFT(\Sig,\alphas,\betas,\z;\spinc_0)$
into a relative $\Z$-grading by setting
$$\mathrm{gr}(\epsilon,\delta)=|\epsilon|-|\delta|,\ \ \ \forall \
\epsilon,\delta:\{1,...,\ell\}\lra \{+,-\}.$$
We will show below that this gives a well-defined relative grading in an appropriate sense.
\\

\begin{figure}[ht!]
\begin{center}
\includegraphics[totalheight=0.3\textheight]{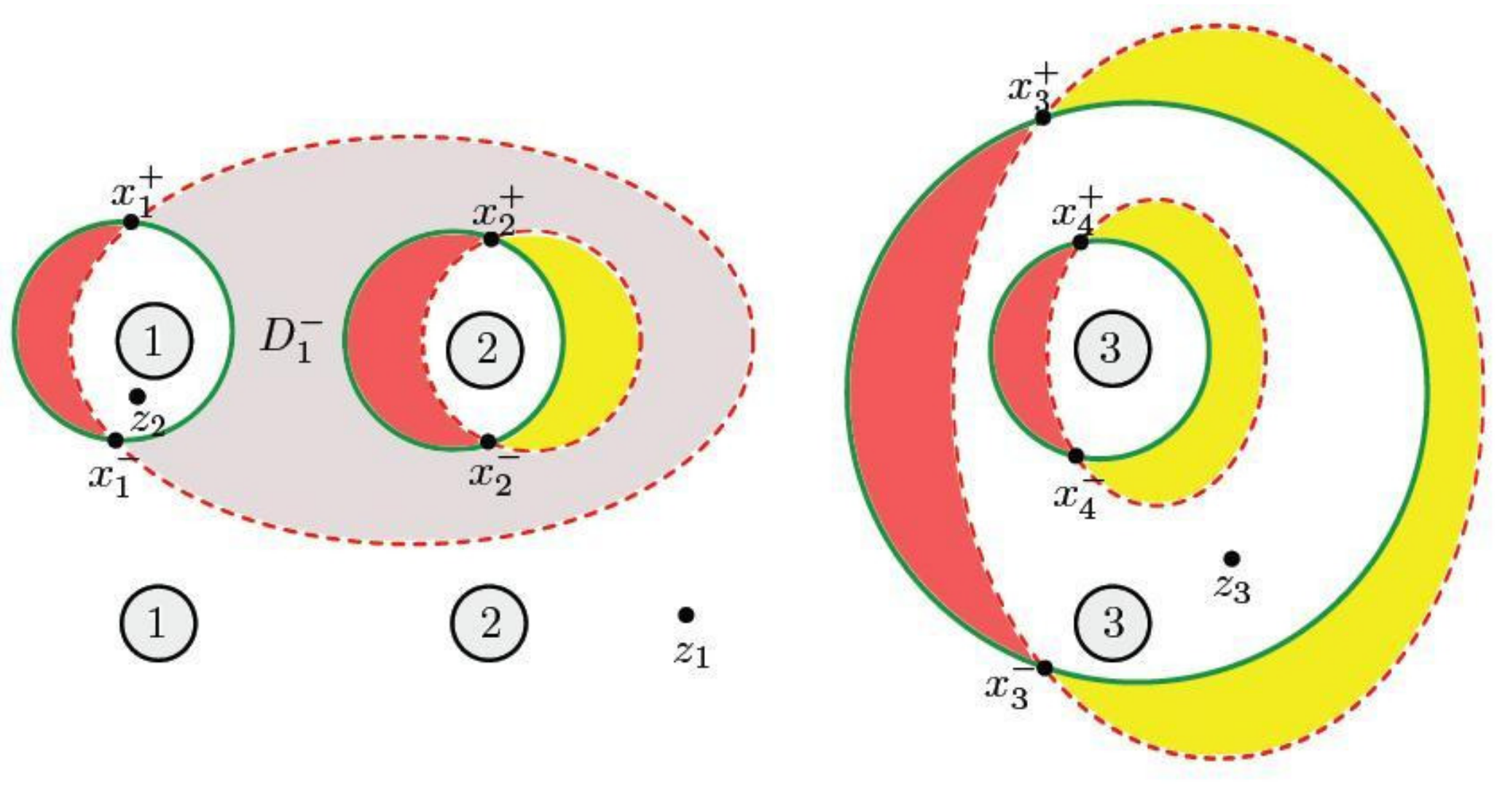}
\caption{The green curves denote the elements of $\alphas$ and the dashed red curves denote the
elements of $\betas$. The domains $D_i^+$ are shaded red, while
the domains $D_i^-$ are shaded yellow. The domain
$D_1^-$  is shaded gray.}\label{fig:Special-HD}
\end{center}
\end{figure}

%\begin{figure}[ht]
%\def\svgwidth{12cm}
%\input{fig001.pdf_tex}
%\caption{The green curves denote the elements of $\alphas$ and the dashed red curves denote the
%elements of $\betas$. The domains $D_i^+$ are shaded red, while
%the domains $D_i^-$ are shaded yellow. The domain
%$D_1^-$  is shaded gray.}\label{fig:Special-HD}
%\end{figure}

The periodic domains corresponding to the above Heegaard diagram
are generated, as a free abelian group, by
$\Pcal_1,...,\Pcal_\ell, A_1,...,A_m$.
Note that $\la(\Pcal_i)=1$. If $$\Pcal=q_1\Pcal_1+...
+q_\ell\Pcal_\ell+a_1A_1+...+a_mA_m\geq 0$$
is a positive periodic domain with $\langle c_1(\spinc), H(\Pcal)\rangle=0$ for some
$\spinc\in\SpinC(\ovl X^\tau)$, we will have $a_1,...,a_m\geq 0$ and
$$0=a_1(2-2g(A_1))+a_2(2-2g(A_2))+...+a_m(2-2g(A_m)).$$
Here $g(A_i)$ denotes the genus of $A_i$.
The reason is that the evaluation of the $\SpinC$ classes over the above
periodic domains is independent of $\spinc$. If moreover we know that
$\la(\Pcal)\neq 0$, we may conclude that  if $a_i\neq 0$ then $g(A_i)=0$.
Let us assume that $A_1,...,A_k$ are the components of genus zero,
and the rest of $A_i$ have positive genus. This implies that
$$a_1,...,a_m\geq 0,\ \ 0=a_1+...+a_k,\ \ \& \ \ a_{k+1}=...=a_m=0.$$
Thus all $a_i$ are zero, and $\Pcal=q_1\Pcal_1+...+q_\ell\Pcal_\ell$.
Since $\Pcal_2,...,\Pcal_\ell$ are disjoint, and all $\Pcal_i$ have both
positive and negative coefficients, one can easily conclude that
$\Pcal$ has both positive and negative coefficients. Thus the
constructed Heegaard diagram is admissible for all $\SpinC$
classes.\\

One may choose an orientation $\Or$ for this Heegaard diagram.
For any choice of orientation, one may observe that $D^+_i$ and
$D^-_i$ are both domains of Whitney disks, for $i=2,...,\ell$, and
the number of points in
$$\ov\Mod(D^+_i)\bigcup \ov\Mod(D^-_i)$$
is zero (in fact, the orientation of the two moduli spaces are the opposite
of one another, in some sense). Thus there is a filtered chain complex
$\CFT(\Sig,\alphas,\betas,\z;\spinc_0)$
associated with the above Heegaard diagram, which is freely generated by
the set $\{\x^\epsilon\}_\epsilon$. This set consists of $2^\ell$ generators.
\\

The complex $\CFT(\Sig,\alphas,\betas,\z;\spinc)$ is thus trivial for
$\spinc\neq \spinc_0$ and is equal to
$$\CFT(\Sig,\alphas,\betas,\w;\spinc_0)\otimes_{\Ring_\mu}\Ring_\tau $$
for $\spinc=\spinc_0$ where $\w=\{z^1_1,z^2_1,...,z^m_1\}$. For the rest
of the computation, we may thus assume that $\w=\z$, i.e. that there is a
single marked point in each connected component of $\Sig-\alphas$.
Each $\mu_i$ (or under the assumption $\w=\z$, each $\la_i$)
corresponds to some component $A_i$. Thus $\mu_i=0$ if the genus
of $A_i$ is positive. With our previous notation, this means that
$\mu_{k+1}=...=\mu_{m}=0$. We set the degree associated with
$\mu_i$, $i=1,...,k$, equal to $-2=\langle c_1(\spinc_0), H(A_i)\rangle$.
This gives a grading on the complex
$\CFT(\Sig,\alphas,\betas,\w;\spinc_0)$.
\\

Let us assume that $\epsilon,\delta:\{1,...,\ell\}\ra \{-,+\}$ are a pair of
indices. After re-naming the elements of $\{1,...,\ell\}$ we may assume that
\begin{displaymath}\begin{cases}
\epsilon(i)=+\ \&\ \delta(i)=-\ \ &\text{if } 1\leq i\leq \ell_1\\
\epsilon(i)=-\ \&\ \delta(i)=+\ \ &\text{if } \ell_1< i\leq \ell_2\\
\epsilon(i)=\delta(i)\ \ &\text{if }\ell_2<i\leq \ell,
  \end{cases}
\end{displaymath}
for some $1\leq \ell_1\leq \ell_2\leq \ell$.
Then $\Dcal=D^+_1+...+D^+_{\ell_1}-D^+_{\ell_1+1}-...-D^+_{\ell_2}$ is the domain
of a disk connecting $\x^\epsilon$ to $\x^\delta$. If $\Pcal$ is a periodic domain
and $\Dcal+\Pcal$ is the domain of a positive disk $\phi$ with %$\mu(\phi)=1$ and
$\la(\phi)\neq 0$, the same argument as before implies that
$$\Pcal=a_1A_1+...+a_kA_k+q_1\Pcal_1+...+q_\ell \Pcal_\ell.$$
Furthermore, the assumption $\Dcal(\phi)\geq 0$ implies that all $a_i$ are
non-negative. In order to prove that the above grading assignment
is well-defined, one only needs to check the following easy equality
\begin{equation}\label{eq:index}
\mu(\phi)=2\left(\sum_{i=1}^k a_i\right)+2\ell_1-\ell_2.
\end{equation}

If the coefficient ring $\Rinn_\tau$ is used instead of
$\Ring_\tau$, the corresponding quotient $\Rinn_\mu$ of
$\Ring_\mu$ will be equal to $\Z$.
One would then quickly conclude from the above presentation of the domain $\Pcal$ that
if $\la(\phi)\neq 0$ (as an element in the quotient $\Rinn_\tau$), then  $a_1=...=a_k=0$
and $\ell_2=\ell_1$.
If $\mu(\phi)=1$ then $\ell_1=\ell_2=1$. We can then carry out the rest of the argument for
any choice of indices $\epsilon$ and $\delta$, if the coefficient ring is replaced
with $\Rinn_\tau$.\\

With coefficients in $\Ring$, however, in order to complete
our investigation we need to assume $\epsilon(i)=\{+\}$ for $i=1,...,\ell$. The corresponding
generator is often called the {\emph{top generator}}. In this case the equality
$\ell_1=\ell_2$ is automatic. Replacing $\mu(\phi)=1$ in equation~\ref{eq:index}
we obtain $a_1=...=a_k=0$ and $\ell_1=1$. From here we will have (from positivity of the domain)
 that $q_{2}=...=q_\ell=0$.
This means that if the top intersection point $\x^\epsilon$ is connected to
an intersection point $\x^\delta$ by a  positive domain $\phi$ of index $1$,
such that $\la(\phi)\neq 0$,
$\delta$ differs from $\epsilon$ only over one element of $\{1,...,\ell\}$, where
$\epsilon$ gives $+$ and $\delta$ gives $-$. Let us assume that this element is
$i\in\{1,...,\ell\}$.\\

 If $i\neq 1$, the possible domains one may obtain as $\Dcal+\Pcal$ are
$D^+_i$ and $D_i^-$, and the total contribution of $\x^\delta$ to
$\partial (\x^\epsilon)$ is zero. However, if $i=1$, the possible domains are
$D^+_1, D^-_1+D^-_2$ and $D^-_1+D^+_2$. Again, the total contribution of these
three domains is zero by the argument of \cite{OS-3m1} (lemma~9.4).
The above discussion implies that the top generator $\x^\epsilon$
is closed and represents a non-trivial element of the homology groups
corresponding to either of chain complexes
$$\CFT(\Sig,\alphas,\betas,\w;\spinc_0)\ \ \&\ \
\CFT(\Sig,\alphas,\betas,\z;\spinc_0)=
\CFT(\Sig,\alphas,\betas,\z;\spinc_0)\otimes_{\Ring_\mu}\Ring_\tau.$$
Moreover, the same argument implies that
all the generators of the form $\x^\delta$
are closed, when the coefficient ring $\Rinn_\tau$ is used instead of $\Ring_\tau$,
giving rise to an identification of the chain complexes:
$$\CFT(\Sig,\alphas,\betas,\z;\spinc_0;\Rinn)=
\ov{\mathrm{HF}}(\#^\ell S^1\times S^2,\spinct_0)\otimes_{\Z}\Rinn_\tau.$$
The above equality means that the differential on the right hand side of the
above equality is trivial. Any module isomorphism of the right hand side which
respects the filtration by relative $\SpinC$ structures is thus a
filtered chain homotopy equivalence.
The top generator $\x^\epsilon$ of $\CFT(\Sig,\alphas,\betas,\z;\spinc_0)$ is usually denoted by $\Theta$, or
$\Theta_{\alpha\beta}$.\\

The above example illustrates how the arguments of Ozsv\'ath and Szab\'o for the
study of the special Heegaard diagrams, i.e. Heegaard diagrams where most of $\beta_i$ are
Hamiltonian isotopes of curves in $\alphas$, may be generalized to the present situation.
The above type of Heegaard diagrams appear in the arguments for the invariance under
handle-slide.
We will face similar Heegaard diagrams again. In particular this happens when we study the exact triangles.
Each time, a separate argument should be presented for computing the contribution
of holomorphic disks and polygons. However %, as in the above case,
the argument is always a straight forward modification of the corresponding
argument for Heegaard diagrams arising from closed three-manifolds.\\

\subsection{The triangle map and the invariance}
Fix a  Heegaard triple
$$H=(\Sig,\alphas,\betas,\gammas,\z)$$ and  assume that
$\z=\{z_1,...,z_\el\}$.  We will denote the balanced sutured manifold
associated
with $(\Sig,\alphas,\betas,\z)$ by
$(X,\tau)=(X_{\alpha\beta},\tau_{\alpha\beta})$,
 and the corresponding
coefficient ring by $\Ring=\Ring_\tau$.
Similarly, let $(X_{\alpha\gamma},\tau_{\alpha\gamma})$
and $(X_{\beta\gamma},\tau_{\beta\gamma})$ be the
balanced sutured manifolds associated with the Heegaard diagrams
$(\Sig,\alphas,\gammas,\z)$, and
$(\Sig,\betas,\gammas,\z)$ respectively.
Suppose that
$$\Sig-\alphas=\coprod_{i=1}^k A_i,\ \ \Sig-\betas=
\coprod_{i=1}^lB_i,\ \ \&\ \ \Sig-\gammas=\coprod_{i=1}^mC_i,$$
where $A_i,B_i$ and $C_i$ are the connected components
of the curve complements. We will furthermore assume that $m=l$, and that
these components are labeled  so that for
each $i=1,...,l$ we have
$C_i\cap \z=B_i\cap \z$, and $g(C_i)=g(B_i)$. This implies that
$\la(\betas)=\la(\gammas)$ in $\langle \la_1,...,\la_\el\rangle_\F$, and
more importantly, $\la(C_i)=\la(B_i)$ for $i=1,...,l$.
\\

Assume that the coefficient rings $\Ring_{\beta\gamma}$ and
$\Ring_{\alpha\gamma}$ are
associated with the Heegaard
diagrams $(\Sig,\betas,\gammas,\z)$ and
$(\Sig,\alphas,\gammas,\z)$ respectively.
Then the above observation implies that
$$\Ring_{\beta\gamma}=\frac{\Big\langle\la_1,...,\la_\el\Big\rangle_\F}
{\Big\langle \la(B_i)\ |\ g(B_i)=g(C_i)>0\Big\rangle}
%=\F\big[\la_1,...,\la_\el\big]
$$
is naturally mapped by a quotient homomorphism
$$\rho_{\beta\gamma}:\Ring_{\beta\gamma}\lra \Ring_{\alpha\beta}=\Ring_\tau$$
to $\Ring$.
We may thus
consider the $\Ring$-module
$$\CFT(\Sig,\betas,\gammas,\z)\otimes_{\Ring_{\beta\gamma}}\Ring,$$
which will have the structure of a filtered $\tab$ chain complex,
with $\Hbb=\Ht^2(X,\partial X;\Z)$.\\

Any triangle class
$\psi_H\in \RelSpinC\big(H,\{\alphas,\betas,\gammas\}\big)$
determines a set of three $\SpinC$ structures
\begin{displaymath}
 \begin{split}
 & \spinc_{\alpha\beta}%=\spinc_{\alpha\beta}(\psi_H)
 \in\SpinC\Big(\overline{X_{\alpha\beta}}(\tau_{\alpha\beta})\Big),\ \ \
\spinc_{\alpha\gamma}%=\spinc_{\alpha\gamma}(\psi_H)
\in\SpinC\Big(\overline{X_{\alpha\gamma}}(\tau_{\alpha\gamma})\Big),\ \ \&\ \
  \spinc_{\beta\gamma}%=\spinc_{\beta\gamma}(\psi_H)
 \in\SpinC\Big(\overline{X_{\beta\gamma}}(\tau_{\beta\gamma})\Big).
\end{split}
\end{displaymath}
These three $\SpinC$ classes, together with the triangle class
of $\psi_H$ give a coherent system of  $\SpinC$ structures for
$H$ which will be denoted by $\Tt$.  We will assume that $\Tt$,
or equivalently the triangle class $\psi_H$, is fixed, and
will drop them from the notation when there is no confusion.
In particular, by the admissibility of a Heegaard diagram
we would mean  $\Tt$-admissibility.\\

Any choice of coherent systems of orientations $\Or_{\alpha\beta}$ and
$\Or_{\alpha\gamma}$ associated with the $\SpinC$ classes $\spinc_{\alpha\beta}$
and $\spinc_{\alpha\gamma}$ may be completed to a  coherent system of
orientations for $\Tt$ by lemma~\ref{lem:Choice-of-Orientation}. Furthermore,
we are free to choose the orientation associated with a fixed representative
of $\psi_H$. Let us fix such a coherent system $\Or$ of orientations.
Once again, we will drop this choice of orientation from the notation. \\

Assuming that the
 Heegaard triple $H$ is admissible,
the triangle map corresponding to $H$ (and the triangle
class $\psi_H$) is
defined via the construction of subsection~\ref{subsec:m-gons}.
\begin{displaymath}
\begin{split}
&f_{\alpha\beta\gamma}:\CFT(\Sig,\alphas,\betas,\z;
\spinc_{\alpha\beta})\otimes
\Big(\CFT(\Sig,\betas,\gammas,\z;\spinc_{\beta\gamma})
\otimes_{\Ring_{\beta\gamma}}
\Ring_\tau\Big)\lra\CFT(\Sig,\alphas,\gammas,\z;
\spinc_{\alpha\gamma})\\
&f_{\alpha\beta\gamma}(\x\otimes\q)=\sum_{\y\in\Ta\cap\Tg}
\sum_{\substack{\psi\in\pi_2^0(\x,\q,\y)\\
(\psi)=(\psi_H)}}
\big(\m(\psi){\la_\z(\psi)}\big).\y
\end{split}
\end{displaymath}
As usual, $\la_\z$ is the map
\begin{displaymath}
\begin{split}
&\la_\z:\coprod_{\x\in\Ta\cap\Tb}\coprod_{\y\in\Tb\cap\Tc}
\coprod_{\w\in\Tc\cap\Ta}\pi_2^+(\x,\y,\w)\lra G(\Ring)\\
&\la_\z(\psi):=\prod_{i=1}^\el \la_i^{n_{z_i}(\psi)}\in G(\Ring).
\end{split}
\end{displaymath}
The admissibility of the Heegaard diagram implies that
$f_{\alpha\beta\gamma}$ is well-defined.\\

\begin{lem}
The map $f_{\alpha\beta\gamma}$ is an $\ta$ chain
map. More precisely
$$f_{\alpha\beta\gamma}\Big(\partial(\x)\otimes\q\Big)+
f_{\alpha\beta\gamma}\Big(\x\otimes\partial(\q)\Big)=
\partial \Big(f_{\alpha\beta\gamma}\big(\x\otimes\q\big)\Big)$$
for all $\x\in\Ta\cap\Tb$ and $\q\in\Tb\cap\Tc$ corresponding to the
$\SpinC$ classes $\spinc_{\alpha\beta}$ and $\spinc_{\beta\gamma}$ respectively.
\end{lem}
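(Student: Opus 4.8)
The plan is to mimic the standard proof that the triangle map commutes with the differentials, which in turn is an application of the associativity theorem~\ref{thm:general-associativity} specialized to the case $m=3$ with the index set $\{1,2,3\}$ corresponding to $(\alphas,\betas,\gammas)$. First I would set up the moduli-space counting: for fixed generators $\x\in\Ta\cap\Tb$ and $\q\in\Tb\cap\Tc$ in the appropriate $\SpinC$ classes, and a fixed target generator $\y\in\Ta\cap\Tg$, I would consider a triangle class $\psi\in\pi_2(\x,\q,\y)$ with $\mu(\psi)=1$ lying in the class $(\psi_H)$, and examine the ends of the one-dimensional moduli space $\ov{\sM}(\psi)$. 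The Gromov compactification, combined with the admissibility hypothesis (so that only finitely many relevant classes have positive domains with $\la_\z(\psi)\neq 0$) and the energy bound of lemma~\ref{lem:energy-bound}, shows that $\ov{\sM}(\psi)$ is a compact one-manifold with boundary.

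Next I would enumerate the boundary components. As in the proof of theorem~\ref{thm:general-associativity}, the ends fall into the three types: degeneration of the triangle into a Maslov-index-$1$ Whitney disk at the $\alpha\beta$-corner juxtaposed with a Maslov-index-$0$ triangle; degeneration at the $\beta\gamma$-corner; and degeneration into a Maslov-index-$0$ triangle juxtaposed with a Maslov-index-$1$ Whitney disk at the $\alpha\gamma$-corner. The key point, exactly as in theorem~\ref{thm:general-associativity} and the proof of theorem~\ref{differential}, is that the ends coming from boundary disk degenerations and sphere bubblings do not occur: any such degeneration with nontrivial associated monomial in $\Ring_\tau$ raises the Maslov index by at least $2$ (by lemma~\ref{lem:disk-degeneration-2}), forcing the remaining component to have negative Maslov index, hence empty moduli space. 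Counting the three surviving types of ends with the signs coming from the coherent system of orientations $\Or$, and using that the signed count of ends of a compact oriented one-manifold is zero, I obtain that for each fixed generator $\la\y$ the coefficient of $\la\y$ in
\begin{displaymath}
f_{\alpha\beta\gamma}\big(\partial(\x)\otimes\q\big)+f_{\alpha\beta\gamma}\big(\x\otimes\partial(\q)\big)+\partial\big(f_{\alpha\beta\gamma}(\x\otimes\q)\big)
\end{displaymath}
vanishes; since we work over $\F=\Z$ one must be a little careful, but over $\F_2$ or after incorporating the orientation signs (which match those built into the differential and triangle map) the three terms are exactly the three boundary contributions, so their sum is zero. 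This gives the desired identity, recalling the sign conventions under which $\partial\circ\partial=0$ and the claimed chain-map equation have the stated form.

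The main obstacle I anticipate is bookkeeping of signs: one must check that the orientation $\Or$ on $\ov{\sM}(\psi)$ induces on its three types of boundary faces precisely the signs appearing in $\m(\phi_1)\m(\phi_2)$ for the two broken-disk degenerations and $\m(\psi')\m(\phi')$ for the broken-triangle degeneration, so that the signed-count-of-boundary-equals-zero statement translates literally into $f_{\alpha\beta\gamma}\circ(\partial\otimes\mathrm{Id})+f_{\alpha\beta\gamma}\circ(\mathrm{Id}\otimes\partial)=\partial\circ f_{\alpha\beta\gamma}$ rather than some sign-twisted variant. This is handled by the compatibility axioms in the definition of a coherent system of orientations (the juxtaposition rule $\Or(\phi_1\star\phi_2)$ and the factor $(-1)^{r(I,J)|J|}$), exactly as in subsection~8.2 of \cite{OS-3m1}; since our index set has only three elements the sign $(-1)^{r(I,J)|J|}$ is tame and the argument of Ozsv\'ath--Szab\'o carries over verbatim. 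The only genuinely new feature, the presence of the weights $\la_\z$, causes no trouble because $\la_\z$ is multiplicative under juxtaposition ($\la_\z(\phi_1\star\phi_2)=\la_\z(\phi_1)\la_\z(\phi_2)$), so it factors out of each boundary term uniformly.
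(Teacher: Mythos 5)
Your proposal is correct and follows essentially the same route as the paper: the paper's proof is a one-line observation that the identity is precisely the $m=3$ case of theorem~\ref{thm:general-associativity}, whereas you additionally unpack the degeneration analysis (exclusion of boundary degenerations and sphere bubblings via lemma~\ref{lem:disk-degeneration-2}, counting the three corner breakings of the index-one triangle moduli space with coherent orientations), which is exactly the argument underlying that theorem. So your extra work is a re-proof of the cited theorem in the special case rather than a different method, and it is sound, modulo the minor notational point that for triangles there is no $\R$-translation to quotient by.
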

\begin{proof}
 The equality
$$f_{\alpha\beta\gamma}\Big(\partial(\x)\otimes\q\Big)
+f_{\alpha\beta\gamma}\Big(\x\otimes\partial(\q)\Big)
=\partial \Big(f_{\alpha\beta\gamma}\big(\x\otimes\q\big)\Big)$$
in the above lemma is nothing but the following special
case (i.e. the case $m=3$) of theorem~\ref{thm:general-associativity}:
$$f_{\alpha\beta\gamma}\Big(f_{\alpha\beta}(\x)\otimes\q\Big)
+f_{\alpha\beta\gamma}\Big(\x\otimes f_{\beta\gamma}(\q)\Big)-
f_{\alpha\gamma} \Big(f_{\alpha\beta\gamma}\big(\x\otimes\q\big)\Big)=0.$$
\end{proof}

As in \cite{OS-3m1}, holomorphic triangle maps satisfy
an associativity law, which comes from considering  Heegaard quadruples.
Let $K=(\Sig,\alphas,\betas,\gammas,\deltas,\z)$ be an
admissible  Heegaard quadruple.
This means that we have a coherent system $\Tt$ of
$\SpinC$ classes of polygons, which consists of a square class
$$\psi_K\in \RelSpinC\Big(K,\{\alphas,\betas,\gammas
,\deltas\}\Big) $$ and
triangle classes
\begin{displaymath}
 \begin{split}
  &\psi_\alpha\in \RelSpinC\Big(K,\{\betas,\gammas,
  \deltas\}\Big),\ \ \ \ \psi_\beta\in \RelSpinC\Big(
  K,\{\alphas,\gammas,\deltas\}\Big)\\
&\psi_\gamma\in \RelSpinC\Big(K,\{\alphas,\betas,\deltas\}
\Big),\ \ \&\ \ \psi_\delta\in \RelSpinC\Big(K,\{\alphas,
\betas,\gammas\}\Big).
 \end{split}
\end{displaymath}
We implicitly assume that the set of corners of these
representatives of the triangle
classes is a fixed set of $6$ intersection points between
the pairs of tori from $\{\Ta,\Tb,\Tc,\Td\}$.
These classes have to satisfy the following compatibility criteria
$$\psi_K=\psi_\alpha\star\psi_\gamma=\psi_\beta\star\psi_\delta.$$
The triangle classes also determine $\SpinC$ structures on the Heegaard
diagrams determined by any pair of curve collections.
These $\SpinC$ structures will be
denoted by $\spinc_{\alpha\beta}\in\RelSpinC(\overline{X_{\alpha\beta}}
(\tau_{\alpha\beta}))$, etc.. Moreover,
we will assume that
$$\Sig-\alphas=\coprod_{i=1}^k A_i,\ \ \Sig-\betas=\coprod_{i=1}^lB_i,
\ \ \Sig-\gammas=\coprod_{i=1}^lC_i,\ \ \&\ \ \Sig-\deltas=\coprod_{i=1}^lD_i,$$
are labeled so that $B_i\cap\z=C_i\cap\z=D_i\cap\z$ for
$i=1,...,l$. Furthermore, we will assume that $g(B_i)=g(C_i)=g(D_i)$ for $i=1,...,l$.
Then we will have $\la(\betas)=\la(\gammas)=\la(\deltas)$
in $\langle \la_1,...,\la_\el\rangle$, and $\la(B_i)=\la(C_i)=\la(D_i)$ for  $i=1,...,l$.\\

One may also choose a coherent system of orientations associated
with $\Tt$. In fact, we are free to choose
$\Or_{\alpha\beta},\Or_{\alpha\gamma}, \Or_{\alpha\delta}$, and the
orientation of the triangle classes $\psi_\beta, \psi_\gamma$ and
$\psi_\delta$. Once again, we keep such a coherent system of orientations
implicit in our notation.\\

We may thus consider the following filtered
$\tab$  chain complexes, which are relevant for the associativity:
\begin{displaymath}
\begin{split}
C_{\alpha\beta}=\CFT(\Sig,\alphas,\betas,\z;
\spinc_{\alpha\beta}),\ \ \ \ \ \ \
&C_{\beta\gamma}=\CFT(\Sig,\betas,\gammas,\z;\spinc_{\beta\gamma})
\otimes\Ring_\tau\\
C_{\alpha\gamma}=\CFT(\Sig,\alphas,\gammas,\z;\spinc_{\alpha\gamma})
,\ \ \ \ \ \ \ &C_{\beta\delta}=\CFT(\Sig,\betas,
\deltas,\z;\spinc_{\beta\delta})\otimes\Ring_\tau\\
C_{\alpha\delta}=\CFT(\Sig,\alphas,\deltas,\z;\spinc_{\alpha\delta}),
\ \ \& \ \ &C_{\gamma\delta}=\CFT(\Sig,\gammas,
\deltas,\z;\spinc_{\gamma\delta})\otimes\Ring_\tau.
\end{split}
\end{displaymath}
Following the construction of subsection~\ref{subsec:m-gons},
we  define a rectangle map as in \cite{OS-3m1}:
\begin{displaymath}
\begin{split}
&h_{\alpha\beta\gamma\delta}:C_{\alpha\beta}
\otimes C_{\beta\gamma}\otimes C_{\gamma\delta}\lra C_{\alpha\delta}\\
&h_{\alpha\beta\gamma\delta}(\x\otimes\p\otimes\q)=\sum_{\y\in\Ta\cap\Td}
\sum_{\substack{\psi\in\pi_2(\x,\p,\q,\y)\\
\mu(\psi)=-1}}\big(\m(\psi)\la(\psi)\big)\y.
\end{split}
\end{displaymath}
\begin{lem}\label{lem:associativity}
The rectangle map $h_{\alpha\beta\gamma\delta}$
gives a chain homotopy between the chain maps
$f_{\alpha\gamma\delta}(f_{\alpha\beta\gamma}
(.\otimes.)\otimes.)$ and $f_{\alpha\beta\delta}
(.\otimes f_{\beta\gamma\delta}(.\otimes.))$
in the sense that
\begin{displaymath}
\begin{split}
f_{\alpha\gamma\delta}&\Big(f_{\alpha\beta\gamma}
\big(\x\otimes\p\big)\otimes\q\Big)-f_{\alpha\beta\delta}\Big(\x\otimes
f_{\beta\gamma\delta}\big(\p\otimes\q\big)\Big)\\
&=\partial\Big(h_{\alpha\beta\gamma\delta}
\big(\x\otimes\p\otimes\q\big)\Big)+
h_{\alpha\beta\gamma\delta}\Big(\partial
\big(\x\otimes\p\otimes\q\big)\Big)
\end{split}
\end{displaymath}
for any $\x\in\Ta\cap\Tb,\p\in\Tb\cap\Tc$, and $\q\in\Tc\cap\Td$.
\end{lem}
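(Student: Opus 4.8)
\textbf{Proof strategy for Lemma~\ref{lem:associativity}.}
The plan is to realize the homotopy identity as the special case $m=4$ of the generalized associativity Theorem~\ref{thm:general-associativity}, exactly as the previous lemma realized the chain-map property of $f_{\alpha\beta\gamma}$ as the case $m=3$. First I would apply Theorem~\ref{thm:general-associativity} to the admissible Heegaard quadruple $K=(\Sig,\alphas,\betas,\gammas,\deltas,\z)$ together with the fixed coherent system $\Tt$ of $\SpinC$ classes of polygons (with square class $\psi_K$) and the fixed coherent system $\Or$ of orientations, whose existence is guaranteed by Lemma~\ref{lem:Choice-of-Orientation}. For $m=4$ the full index set is $[4]=\{1,2,3,4\}$ and the alternating sum $F_{[4]}=\sum_{1\le i<j\le 4}(-1)^{ij}f_{I(i,j)}\circ f_{J(i,j)}$ vanishes on $C_{\alpha\beta}\otimes C_{\beta\gamma}\otimes C_{\gamma\delta}$. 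The task is then purely bookkeeping: identify which pairs $(i,j)$ give the two composite triangle maps, which give the rectangle map, and which give the terms of the differential of the mapping-cone complex.

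The pairs to enumerate are $(1,2),(1,3),(1,4),(2,3),(2,4),(3,4)$. The pair $(1,3)$ gives $f_{\{1,3,4\}}\circ f_{\{1,2,3\}}=f_{\alpha\gamma\delta}(f_{\alpha\beta\gamma}(\cdot\otimes\cdot)\otimes\cdot)$, and the pair $(2,4)$ gives $f_{\{1,2,4\}}\circ f_{\{2,3,4\}}=f_{\alpha\beta\delta}(\cdot\otimes f_{\beta\gamma\delta}(\cdot\otimes\cdot))$; these are the two composites whose difference we want, and a sign check shows $(-1)^{1\cdot 3}=(-1)^{2\cdot 4}=-1$ up to the conventional sign, so they enter with opposite effective sign after one keeps track of the $(-1)^{ij}$. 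The pair $(1,4)$ gives $f_{\{1,4\}}\circ f_{\{1,2,3,4\}}$, i.e.\ the $2$-gon differential $\partial_{\alpha\delta}$ precomposed with the rectangle map $h_{\alpha\beta\gamma\delta}$; the pairs $(1,2)$, $(2,3)$, $(3,4)$ give $f_{\{1,3,4,\dots\}}\circ f_{\{1,2\}}$-type terms where the inner map is the $2$-gon differential $\partial_{\alpha\beta}$, $\partial_{\beta\gamma}$, $\partial_{\gamma\delta}$ respectively, composed with $h_{\alpha\beta\gamma\delta}$; these four assemble precisely into $\partial\circ h_{\alpha\beta\gamma\delta}+h_{\alpha\beta\gamma\delta}\circ\partial$ on the tensor-product complex (using the Leibniz rule for the differential on a triple tensor product, which introduces the usual Koszul signs). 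The remaining pair $(2,3)$ after the above accounting is the one producing $h$ precomposed with $\partial_{\beta\gamma}$; all six pairs together exhaust the claimed identity. I would also note, as a sub-step, that one must use $\la(B_i)=\la(C_i)=\la(D_i)$ so that all the relevant chain complexes are honest $\Ring_\tau$-modules and the coefficient-ring quotient maps $\rho_{\beta\gamma},\rho_{\beta\delta},\rho_{\gamma\delta}$ are compatible with juxtaposition of domains; this was already arranged in the setup preceding the lemma.

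The main obstacle I anticipate is not the existence of the homotopy — that is immediate from Theorem~\ref{thm:general-associativity} — but pinning down the signs so that the two composites appear with a genuine minus sign between them and the $h$-terms assemble into $\partial h + h\partial$ rather than some twisted version. Concretely one must reconcile the sign $(-1)^{ij}$ appearing in $F_{[m]}$, the sign $(-1)^{r(I,J)|J|}$ governing how the coherent orientations juxtapose (Definition of coherent system of orientations), and the Koszul signs coming from differentiating a triple tensor product. The cleanest route is to fix the representatives of the corners once and for all (as already stipulated, a fixed set of six intersection points), run the $m=4$ instance of Theorem~\ref{thm:general-associativity} verbatim, and then absorb the overall sign by the standard device of replacing $h_{\alpha\beta\gamma\delta}$ by $\pm h_{\alpha\beta\gamma\delta}$ if necessary — exactly as in the proof of the analogous associativity lemma in \cite{OS-3m1}. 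Since the excerpt explicitly says that each such argument is "a straightforward modification of the corresponding argument for Heegaard diagrams arising from closed three-manifolds," I would invoke that principle: the count of ends of the one-dimensional moduli spaces $\Mod(\psi)$ for $\mu(\psi)=-1$ rectangle classes $\psi\in(\psi_K)$ is handled identically, with the only new input being the admissibility (which forbids sphere bubbles and boundary degenerations from contributing, since these raise Maslov index by at least $2$ whenever their coefficient monomial in $\Ring_\tau$ is nonzero).
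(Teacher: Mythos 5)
Your proposal takes essentially the same approach as the paper: the paper's entire proof of Lemma~\ref{lem:associativity} is to cite Theorem~\ref{thm:general-associativity} with $m=4$ and the data fixed above it, and your term-by-term enumeration of the pairs $(i,j)$ is exactly the bookkeeping implicit in that citation. The only quibble is the harmless slip that $(-1)^{2\cdot 4}=+1$ rather than $-1$, which is in fact precisely why the two triangle composites enter $F_{[4]}$ with opposite signs and hence produce the stated difference.
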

\begin{proof}
Once again, this is a special case of theorem~
\ref{thm:general-associativity}, where we put $m=4$ and
use the above data.
\end{proof}
\begin{proof}{\bf Theorem}~\ref{thm:invariance}
The proof of the independence  from the choice of
the path of almost complex structures, as well
as the proof of the
isotopy %and the stabilization
invariance of the
filtered $\tab$ chain homotopy type of $\CFT(\Sig,\alphas,\betas,\z;\spinc)$
is the same as the proof of the special case discussed in \cite{OS-3m1}. We only need to
keep track of the marked points, and that the constructed chain
homotopy equivalence respects the decomposition into
relative $\SpinC$ classes in $\RelSpinC(X,\tau)$.
The same is almost true for the handle slides
supported away from the marked points. We will present the
proof in this case, to give an illustration of
the procedure, which involves the use of holomorphic triangles
and squares introduced above.\\

Fix a $\SpinC$ class $\spinc\in\SpinC(\overline{X})$ and let
$\relspinc\in\spinc\subset \RelSpinC(X,\tau)$ be a fixed relative
$\SpinC$ class in $\spinc$.
To prove the handle slide invariance consider
the  Heegaard quadruple
$(\Sigma,\alphas,\betas,\gammas,\deltas,\z)$
where $\gammas$ and $\deltas$ are obtained from $\betas$
as follows. Let $\betas=\{\beta_1,...,\beta_\ell\}$.
Then we let $\delta_i$ to be a small Hamiltonian isotope of $\beta_i$
for $i=1,...,\ell$ which cuts it in a pair of transverse canceling
intersection points. Similarly, for $i=2,...,\ell$, we let $\gamma_i$ be
a small Hamiltonian isotope of $\beta_i$ which cuts either of
the curves $\beta_i$ and $\delta_i$ in a pair of transverse canceling intersection points.
Finally, we let $\gamma_1$ be the simple closed curve obtained by first moving
$\beta_1$ by a small Hamiltonian isotopy, and then taking its handle slide over
$\beta_2$. We may assume that $\gamma_1$ cuts either of $\beta_1$ and
$\delta_1$ in a pair of canceling intersection points, while it is disjoint from
 the rest of the curves $\beta_i,\gamma_i$ and $\delta_i$.
 We let
 $$\gammas=\big\{\gamma_1,...,\gamma_\ell\big\},\ \
 \&\ \ \deltas=\big\{\delta_1,...,\delta_\ell\big\}.$$

Consider the (admissible)  Heegaard diagram
$(\Sig,\betas,\gammas,\z)$, which is a standard
Heegaard diagram of the type studied in subsection~\ref{subsec:special-HD}. Note that all
marked points which are in the same connected
component of $\Sig-\betas$ or $\Sig-\gammas$ are in the same
connected component of $\Sig-\betas-\gammas$.
Let $\Theta_{\beta\gamma}$ be the top generator of the complex
$$\CFT(\Sig,\betas,\gammas,\z;\spinc_0)
%=\ov{\mathrm{HF}}(\#^{\ell}S^1\times S^2;\spinct_0)\otimes_\Z \Ring_{\beta\gamma}
$$ corresponding to its canonical $\SpinC$ structure.
This generator is represented by the
intersection point in $\Tb\cap\Tc$ which contains
positive intersection points
between the corresponding curves $\beta_i$ and $\gamma_i$.
Similarly, associated with the  Heegaard diagram $(\Sig,\betas,\deltas,\z)$
the top generator of  the homology is denoted by a
$\Theta_{\beta\delta}$, and is represented by the
positive intersection points
 between the corresponding curves $\beta_i$ and $\delta_i$.
 Finally, $\Theta_{\gamma\delta}$ is defined in a similar way.
 We may consider $\Theta_{\beta\gamma},\Theta_{\gamma\delta}$
 and $\Theta_{\beta\delta}$ as generators of the
 complexes $C_{\beta\gamma},C_{\gamma\delta}$
 and $C_{\beta\delta}$ respectively. Here, we assume
 $\spinc_{\alpha \bullet}=\spinc$ for $\bullet \in\{\beta,\gamma,\delta\}$,
 and that
 $$\spinc_{\beta\gamma}=\spinc_{\beta\delta}=\spinc_{\gamma\delta}=\spinc_0$$
is the canonical $\SpinC$ structure on
$\ovl{X_{\beta\gamma}}=\ovl{X_{\gamma\delta}}=\ovl{X_{\beta\delta}}$.
Note that $\Theta_{\beta\gamma},\Theta_{\gamma\delta}$
and $\Theta_{\beta\delta}$  are connected to each other by a natural
triangle class $\Delta_\alpha$ of small area. Moreover,
for any fixed $\x\in\Ta\cap\Tb$ with  $\relspinc(\x)\in\spinc$,
we have a generator $I(\x)\in\Ta\cap\Td$, determined by
the closest intersection points in $\Ta\cap\Td$  to $\x$.
Similarly, there is a generator $J(\x)$ in $\Ta\cap\Tc$
determined as the closest intersection points between
$\alphas$ and $\gammas$ to $\x$. There is a triangle
class $\Delta_\gamma$ connecting
$\Theta_{\beta\delta}$, $\x$ and $I(\x)$ with very small
area. Similarly, there is a triangle class $\Delta_\delta$ connecting
$\Theta_{\beta\gamma}$, $\x$ and $J(\x)$ with very small
area. Finally, there is a triangle class $\Delta_\beta$ which connects
$I(\x),J(\x)$ and $\Theta_{\gamma\delta}$.
Let $\square$ be the square class
$\Delta_\gamma\star\Delta_\alpha$. Then
$\square$ may also be degenerated as
$\square=\Delta_\delta\star\Delta_\beta$. The data
$$\Pp=\big\{\square,\Delta_\alpha,\Delta_\beta,
\Delta_\gamma,\Delta_\delta\big\}$$
thus gives a coherent system $\Tt$ of  $\SpinC$ classes of polygons for the
Heegaard quadruple,
which will be implicit for the rest of the construction.\\

\begin{lem}
The Heegaard quadruple $H=(\Sig,\alphas,\betas,\gammas,\deltas,\z)$ is
$\Tt$-admissible, provided that $H=(\Sig,\alphas,\betas,\z)$ is
$\spinc$-admissible.
\end{lem}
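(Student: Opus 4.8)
The plan is to reduce the $\Tt$-admissibility of the quadruple $H=(\Sig,\alphas,\betas,\gammas,\deltas,\z)$ to the $\spinc$-admissibility of $(\Sig,\alphas,\betas,\z)$, following the template of the analogous argument in \cite{OS-3m1}. First I would unwind the definition of $\Tt$-admissibility: one must consider a periodic domain of the form $\Pcal=\Pcal_{\alpha\beta}+\Pcal_{\beta\gamma}+\Pcal_{\gamma\delta}+\Pcal_{\delta\alpha}$ (and the various sub-index-set versions obtained by omitting some of the four curve collections), assume the sum of the pairings $\langle c_1(\spinc_{\bullet\bullet}),H(\Pcal_{\bullet\bullet})\rangle$ vanishes, and show that either $\Pcal$ has a negative local multiplicity somewhere or $\la(\Pcal)=0$ in the appropriate ring $\Ring_I$. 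The key structural input is that $\betas$, $\gammas$, $\deltas$ are related by small Hamiltonian isotopies and a single handle slide, so the periodic domains of $(\Sig,\betas,\gammas,\z)$, $(\Sig,\gammas,\deltas,\z)$ and $(\Sig,\betas,\deltas,\z)$ are all of the very restricted ``thin'' type analyzed in subsection~\ref{subsec:special-HD}: each is a combination of the small bigon-type domains $\Pcal_i=D_i^+-D_i^-$ (which carry $\la(\Pcal_i)=1$ and necessarily have both signs of local multiplicity) together with the full-surface classes $A_i$ on which $c_1$ pairs nontrivially with fixed, $\spinc$-independent values.

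The second step is the decomposition argument. I would write a general periodic domain $\Pcal$ for the quadruple as $\Pcal_{\alpha\beta}$ (a periodic domain for $(\Sig,\alphas,\betas,\z)$) plus contributions from the thin diagrams. Because the thin periodic domains are built from $\Pcal_i$'s (which contribute nothing to $\la$ and nothing to the $c_1$-pairing) and from components $A_i$ of the complements — whose $c_1$-pairings are fixed nonzero numbers when the genus is positive, and which contribute $\la(A_i)=0$ in $\Ring_I$ in exactly that case — the hypothesis that the total $c_1$-pairing vanishes forces the positive-genus full-surface components to appear with coefficient zero, while the genus-zero ones (which would contribute to $\la$) can only appear with nonnegative coefficient if we also want $\Pcal\geq 0$ after adding the base class. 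Thus, modulo the thin part which contributes trivially to $\la$, the obstruction to negativity or to $\la(\Pcal)=0$ is governed entirely by $\Pcal_{\alpha\beta}$, and the hypothesis on the total pairing, combined with the vanishing of the thin-part pairings on the $\Pcal_i$'s and the forced vanishing of the positive-genus $A_i$ coefficients, yields $\langle c_1(\spinc_{\alpha\beta}),H(\Pcal_{\alpha\beta})\rangle=0$. Then $\spinc$-admissibility of $(\Sig,\alphas,\betas,\z)$ gives that $\Pcal_{\alpha\beta}$ — and hence $\Pcal$, since the thin part cannot cancel a negativity nor resurrect a vanishing monomial — satisfies condition (a) or (b) of $\spinc$-admissibility. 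One also must handle the lower-cardinality index sets $I\subsetneq\{\alphas,\betas,\gammas,\deltas\}$; the cases involving only $\betas,\gammas,\deltas$ are immediate from the thin-diagram analysis (any such periodic domain with $\la\neq0$ and nonnegative pairing is forced to be a combination of $\Pcal_i$'s, which has mixed sign), and the ones containing $\alphas$ reduce to the $\alpha\beta$ case by the same decomposition.

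I would then remark that this is precisely the structure of the argument in \cite{OS-3m1} (the passage establishing admissibility of the standard quadruple used for handle slide invariance), and that the only new bookkeeping is keeping track of the marked points $\z$ and of which components of the curve complements have positive genus — exactly the same extra bookkeeping already carried out in the proof of lemma~\ref{lem:s-admissible}. The main obstacle I anticipate is not conceptual but careful case management: making sure that the interplay between the ``no negative local multiplicity'' alternative and the ``$\la=0$ in $\Ring_I$'' alternative is handled uniformly across all index sets $I$, in particular ensuring that adding a thin periodic domain (which always has a region of strictly negative multiplicity unless all its $\Pcal_i$-coefficients vanish) to a base domain $\Pcal_{\alpha\beta}$ that already satisfies alternative (a) cannot accidentally produce a domain satisfying neither alternative. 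This is handled by observing that if $\Pcal_{\alpha\beta}$ is negative somewhere away from the thin region then so is $\Pcal$, and if instead $\la(\Pcal_{\alpha\beta})=0$ in $\Ring$ then $\la(\Pcal)=0$ in $\Ring_I$ as well since $\la$ of the thin part is a unit or already zero; the genus-zero full-surface components, which are the only way a thin contribution could carry a nonzero non-unit monomial, are excluded by the vanishing-pairing hypothesis as above.
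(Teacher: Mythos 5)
Your overall strategy is the paper's: decompose a periodic domain of the quadruple into an $(\Sig,\alphas,\betas,\z)$-periodic part plus the small ``thin'' domains created by the Hamiltonian isotopies and the handle slide, use that the thin domains have vanishing Chern-class pairing (Lipshitz' formula) and contain no marked points, transfer positivity, the monomial, and the pairing to the $\alpha\beta$-part, apply $\spinc$-admissibility of $(\Sig,\alphas,\betas,\z)$, and finish with the combinatorial fact that a nonnegative combination of the thin domains vanishes. But your execution has a genuine gap in how you dispose of the \emph{full-surface} components of the auxiliary diagrams, i.e.\ the components $B_i$ of $\Sig-\betas$ and their counterparts for $\gammas$ and $\deltas$, which together with the thin domains generate the periodic domains of $(\Sig,\betas,\gammas,\z)$, $(\Sig,\betas,\deltas,\z)$ and $(\Sig,\gammas,\deltas,\z)$. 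You assert that the vanishing of the total $c_1$-pairing ``forces the positive-genus full-surface components to appear with coefficient zero'' and that the genus-zero ones ``are excluded by the vanishing-pairing hypothesis.'' Neither implication is valid: the hypothesis is a single linear relation, in which the contribution $2-2g$ of a positive-genus component (which is $0$ for genus one, not nonzero as you claim) can cancel against the $+2$'s of genus-zero components and against the pairing of your $\Pcal_{\alpha\beta}$, so no individual coefficient is forced to vanish. Since these components are not supported in the small regions, they can also cancel a negativity of $\Pcal_{\alpha\beta}$ away from the thin part, and they do carry marked points; hence your closing claims that ``the thin part cannot cancel a negativity nor resurrect a vanishing monomial'' and that the residue contributes only units to $\la$ both rest on this unproved vanishing of coefficients.

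The repair is exactly the paper's move, and it requires no control of those coefficients at all: each $B_i$ is bounded by $\beta$-curves alone, and each component of $\Sig-\gammas$ or $\Sig-\deltas$ becomes such a domain after subtracting thin corrections, so all of them are themselves periodic domains of $(\Sig,\alphas,\betas,\z)$. Absorb them, together with your $\Pcal_{\alpha\beta}$, into a single $\alpha\beta$-periodic domain $\Qcal$, so that $\Pcal=\Qcal+(\text{thin part})$. Then $\big\langle c_1(\spinc),H(\Qcal)\big\rangle=0$ follows at once from the hypothesis because the thin domains pair to zero for every $\SpinC$ structure; $\Pcal\geq 0$ gives $\Qcal\geq 0$ because $\Qcal$ is locally constant on the components of $\Sig-\alphas-\betas$ and each such component meets the complement of the small thin supports; and $\la(\Qcal)=\la(\Pcal)$ because no marked point lies in the thin regions. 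The $\spinc$-admissibility of $(\Sig,\alphas,\betas,\z)$ then forces $\Qcal=0$, and the remaining inequality, that a $\Z$-combination of the thin domains is nonnegative, forces all its coefficients to vanish by the combinatorial observation you already state. With that substitution your argument becomes the paper's proof; as written, the ``forced vanishing of coefficients'' step fails.
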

\begin{proof}
We will prove the lemma for the class of $\square$. The rest of the
admissibility claims are similar, and in fact simpler.
Let us denote the small periodic domains constructed as the domain bounded between
$\beta_i$ and $\delta_i$ by $\Qcal_i$ for $i=1,...,\ell$. Thus $\Qcal_i$ is the difference of
two bi-gons. Similarly, let $\Qcal_{i+\ell}$ be the domain bounded between $\beta_i$ and $\gamma_i$
for $i=2,...,\ell$, and $\Qcal_{\ell+1}$ be the domain bounded between $\beta_1,\gamma_1$ and $\beta_2$,
as in the previous subsection. We will thus have
\begin{displaymath}
\begin{cases}
\partial \Qcal_i=\beta_i-\delta_i,\ \ \ &\text{for }i=1,...,\ell,\\
\partial \Qcal_{i+\ell}=\beta_i-\gamma_i,\ \ \ &\text{for }i=2,...,\ell,\ \&\\
\partial \Qcal_{\ell+1}=\beta_1+\beta_2-\gamma_1.&
\end{cases}
\end{displaymath}
Finally, let $A_1,...,A_k,B_1,...,B_l, \Pcal_1,...,\Pcal_m$ be the periodic domains
corresponding to $(\Sig,\alphas,\betas,\z)$.
As before $\Sig-\alphas=\coprod A_i$ and $\Sig-\betas=\coprod B_i$.
It may be checked
then that the space of periodic domains for the Heegaard diagrams
$(\Sig,\betas,\gammas,\z)$, $(\Sig,\betas,\deltas,\z)$ and
$(\Sig,\gammas,\deltas,\z)$  is generated by the following
periodic domains respectively
\begin{displaymath}
\begin{split}
&\Big\langle B_1,....,B_l,\Qcal_{\ell+1},...,\Qcal_{2\ell}\Big\rangle,\ \
\Big\langle B_1,....,B_l,\Qcal_{1},...,\Qcal_{\ell}\Big\rangle,\ \&\\
&\Big\langle \ov{B}_1,....,\ov{B}_l,\Qcal_1+\Qcal_2-\Qcal_{\ell+1},
\Qcal_2-\Qcal_{2+\ell},...,\Qcal_\ell-\Qcal_{2\ell}\Big\rangle.\\
\end{split}
\end{displaymath}
Here, $\ov{B}_i$ is the domain obtained from $B_i$ by adding an
appropriated combination of $\Qcal_j,\ j=1,...,\ell$, so that
its boundary is supported on the curves in $\deltas$.\\

Let us now assume
that we have a periodic domain $\Pcal$ with
\begin{displaymath}
\begin{split}
&\Pcal=\Pcal_{\alpha\beta}+\Pcal_{\beta\gamma}+\Pcal_{\gamma\delta}+\Pcal_{\alpha\delta}\geq 0,\ \
\la(\Pcal)\neq 0,\ \ \&\\
&\Big\langle c_1(\spinc),H(\Pcal_{\alpha\beta})\Big\rangle+
\Big\langle c_1(\spinct_0),H(\Pcal_{\beta\gamma})\Big\rangle+
\Big\langle c_1(\spinct_0),H(\Pcal_{\gamma\delta})\Big\rangle+
\Big\langle c_1(\spinc),H(\Pcal_{\alpha\delta})\Big\rangle=0
\end{split}
\end{displaymath}
$\Pcal$ may then be written as
$$\Pcal=\sum_{i=1}^ka_iA_i+\sum_{i=1}^lb_iB_i+\sum_{i=1}^mp_i\Pcal_i+\sum_{i=1}^{2\ell}q_i\Qcal_i.$$
With the above notation fixed, computing the evaluation of $\SpinC$ classes over the periodic
domains (i.e. re-writing the last equation above) we obtain
\begin{equation}\label{eq:Maslov-index-admissibility}
\begin{split}
0&=\sum_{i=1}^ka_i(2-2g(A_i))+\sum_{i=1}^lb_i(2-2g(B_i))
+\sum_{i=1}^mp_i\big\langle c_1(\spinc),H(\Pcal_i)\big\rangle,
\end{split}
\end{equation}
since the Maslov index of all $\Qcal_i$ are zero for all $\SpinC$ structures,
according to Lipshitz' index formula \cite{Robert-cylindrical}.
Let us set
$$\Qcal=\sum_{i=1}^ka_iA_i+\sum_{i=1}^lb_iB_i+\sum_{i=1}^mp_i\Pcal_i.$$
Then $\Qcal$ is a periodic domain for the Heegaard diagram
$(\Sig,\alphas,\betas,\z)$, with $\Pcal-\Qcal$ only consisting of the domains
with very small area.
The assumption $\Pcal\geq 0$ thus implies that $\Qcal\geq 0$.
Furthermore $\la(\Qcal)=\la(\Pcal)$, since no marked point
lives in the small domains.
The equation~\ref{eq:Maslov-index-admissibility} implies that
$\langle c_1(\spinc),H(\Qcal)\rangle=0$. The $\spinc$-admissibility of
of the Heegaard diagram $(\Sig,\alphas,\betas,\z)$ thus implies that
$\Qcal=0$. As a result,
$$\Pcal=\sum_{i=1}^{2\ell}q_i\Qcal_i\geq 0.$$
It is then an easy combinatorial exercise to check from this last equality
%and a consideration of the coefficients in the local picture presented in figure~.............
that all $q_i$ need to vanish. We have thus shown that $\Pcal=0$. This
completes the proof of the admissibility claim.
\end{proof}
Finally, the last step towards defining the holomorphic triangle map and
the holomorphic square map using the Heegaard diagram $H$ is choosing the
orientation. Note that the choice of orientation over the Heegaard diagrams
$(\Sig,\alphas,\betas,\z)$, $(\Sig,\alphas,\gammas,\z)$, and
$(\Sig,\alphas,\deltas,\z)$ may be done without any restriction, and we may thus
choose the orientation on $(\Sig,\alphas,\betas,\z)$, and the induced orientations
on the other Heegaard diagrams as our preferred choice of orientation.
Orienting the triangles and the square in $\Tt$ will then provide us
with a coherent system $\Or$ of orientations for the Heegaard diagram $H$.\\

 We may thus define the triangle and the square maps associated
with this Heegaard diagram and $\Tt$.
The argument of Ozsv\'ath and Szab\'o from \cite{OS-3m1} (lemma 9.7)
applies here to give
\begin{displaymath}
f_{\beta\gamma\delta}(\Theta_{\beta\gamma}\otimes
\Theta_{\gamma\delta})=\Theta_{\beta\delta}.
\end{displaymath}

We may thus define a map
 $$F=F_{\alpha\beta\gamma}:\CFT(\Sig,\alphas,\betas,\z;\spinc)
 \lra \CFT(\Sig,\alphas,\gammas,\z;\spinc)$$
 by setting $F(\x):=f_{\alpha\beta\gamma}\big(\x\otimes
 \Theta_{\beta\gamma}\big)$. Since $\Theta_{\beta\gamma}$ is
 closed, $F$ is a chain map. More importantly, $F$
 respects the decomposition into relative $\SpinC$
 structures,
and the image of $\CFT(\Sig,\alphas,\betas,\z;\relspinc)$,
for the fixed relative $\SpinC$ structure
$$\relspinc\in\spinc\subset \RelSpinC(X_{\alpha\beta},
\tau_{\alpha\beta})=\RelSpinC(X,\tau)$$
is in $\CFT(\Sig,\alphas,\gammas,\z;\relspinc)$.
Let us denote by $G$ the similar filtered $\tab$ chain map
 $$G=F_{\alpha\gamma\delta}:\CFT(\Sig,\alphas,
 \gammas,\z;\spinc)\lra \CFT(\Sig,\alphas,
 \deltas,\z;\spinc)$$
 defined by $G(\y):=f_{\alpha\gamma\delta}\big(\y
 \otimes\Theta_{\gamma\delta}\big)$.
Also, define the map
$$H=H_{\alpha\beta\gamma\delta}:\CFT(\Sig,\alphas,
\betas,\z;\spinc)\lra\CFT(\Sig,\alphas,\deltas,\z;\spinc)$$
by $H(\x):=h_{\alpha\beta\gamma\delta}\big(\x\otimes
\Theta_{\beta\gamma}\otimes\Theta_{\gamma\delta}\big)$.
Checking that all the above maps respect the relative
$\SpinC$ structures is straight forward.
Using lemma~\ref{lem:associativity}, and the fact that
$f_{\beta\gamma\delta}(\Theta_{\beta\gamma}\otimes
\Theta_{\gamma\delta})=\Theta_{\beta\delta}$ we have
\begin{displaymath}
\begin{split}
G\big(F(\x)\big))-
f_{\alpha\beta\delta}\Big(\x\otimes \Theta_{\beta\delta}\Big)
=\partial\Big(H\big(\x\big)\Big)+
H\Big(\partial\big(\x\big)\Big).
\end{split}
\end{displaymath}
Small triangles which contribute to
$f_{\alpha\beta\delta}\Big(\x\otimes
\Theta_{\beta\delta}\Big)$ may be used to show that
in terms of an appropriate energy filtration we have
$$f_{\alpha\beta\delta}\Big(\x\otimes
\Theta_{\beta\delta}\Big)=I(x)+\epsilon(\x)$$
where  $\epsilon(\x)$ consists of a
combination of generators with smaller energy than $\x$.
This implies that there is a filtered $\tab$ chain equivalence
$$K:\CFT(\Sig,\alphas,\deltas,\z;\spinc)\lra
\CFT(\Sig,\alphas,\betas,\z;\spinc),$$
respecting the decomposition according to relative
$\SpinC$ structures, such that
$K\Big(f_{\alpha\beta\delta}\big(\x\otimes
\Theta_{\beta\delta}\big)\Big)=\x$. Thus
setting $G'=K\circ G$ and $H'=K\circ H$ we have
$$G'\circ F-Id=H'\circ \partial+\partial \circ H',$$
and $G'\circ F$ is chain homotopic to the identity.
The other composition is similarly
chain homotopic to the identity. This completes the
proof of the handle slide invariance.\\

The invariance under isotopy and stabilization-destabilization is
completely similar to the proofs presented in sections 7 and 10
of \cite{OS-3m1}.
Thus the filtered $\tab$ chain homotopy type
of $\CFT(\Sig,\alphas,\betas,\z;\spinc)$ is
an invariant of $(X,\tau,\spinc)$, and
will be denoted by
$$\CFT(X,\tau;\spinc)=
\bigoplus_{\relspinc\in\spinc}\CFT(X,\tau;\relspinc).$$
\end{proof}

Theorem~\ref{thm:invariance}, together with remark~\ref{remark:weak-admissibility}
imply that for computing $\CFT(X,\tau;\spinc;\Ringg)$, given a test ring
$\Ringg$ for $\Ring_\tau$, one may use any  Heegaard diagram
 which is weakly $\spinc$-admissible in the sense of remark~\ref{remark:weak-admissibility}.
 In particular, we can easily prove the following proposition.

\begin{prop}\label{prop:non-taut-sutured-manifolds}
The irreducible balanced sutured manifold $(X,\tau)$ is taut if and only if the filtered
$(\Ringg_\tau,\Hbb_\tau)$ chain homotopy type of the
complex $\CFT(X,\tau;\spinc;\Ringg_\tau)$ is not trivial for
some $\SpinC$ structure $\spinc\in\SpinC(\ovl X)$.
\end{prop}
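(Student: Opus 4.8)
The plan is to deduce the proposition from Juh\'asz' characterization of tautness via $\mathrm{SFH}$, using the fact that $\Ringg_\tau$ is an intermediate test ring between $\Ring_\tau$ and $\Z$. Recall from remark~\ref{remark:weak-admissibility} that there is a quotient map $\rho_\tau:\Ring_\tau\to\Ringg_\tau$, and from the discussion following Theorem~\ref{thm:invariance} that the filtered $(\Ringg_\tau,\Hbb_\tau)$ chain homotopy type of $\CFT(X,\tau;\spinc;\Ringg_\tau)$ is a well-defined invariant (one may compute it from any weakly $\spinc$-admissible Heegaard diagram). Moreover, composing $\rho_\tau$ with the quotient $\Ringg_\tau\to\Z$ that sends every non-trivial monomial to zero recovers the sutured Floer homology: $\mathrm{SFH}(X,\tau,\relspinc)=\mathrm{HF}(X,\tau,\relspinc;\Z)$, as noted in the introduction.

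First I would prove the easy direction. If $(X,\tau)$ is \emph{not} taut, then by Juh\'asz' Theorem 1.4 from \cite{Juh-surface} we have $\mathrm{SFH}(X,\tau)=0$, i.e.\ $\CFT(X,\tau;\spinc;\Z)$ is acyclic for every $\spinc\in\SpinC(\ovl X)$. Fix an $\spinc$-admissible Heegaard diagram $(\Sig,\alphas,\betas,\z)$ (which exists by Lemma~\ref{lem:s-admissible}); such a diagram is in particular weakly $\spinc$-admissible, so it computes $\CFT(X,\tau;\spinc;\Ringg_\tau)$. The key observation is that $\Ringg_\tau$, by its very definition, kills every monomial $\prod\la_i^{n_i}\neq 1$ with $\sum n_i[\gamma_i]=0$ in $\Ht_1(X;\Z)/\mathrm{Tors}$; and since the weights $\la_\z(\phi)$ attached to Whitney disks $\phi\in\pi_2(\x,\y)$ with $\relspinc(\x)=\relspinc(\y)$ (hence $\Hh(\phi)=0$ by corollary~\ref{cor:s(x)-s(y)}, so $\sum n_{z_i}(\phi)\mathrm{PD}[\gamma_i]=0$, which forces $\sum n_{z_i}(\phi)[\gamma_i]$ to be torsion) become monomials that vanish in $\Ringg_\tau$ unless they equal $1$, the differential on each summand $\CFT(\Sig,\alphas,\betas,\z;\relspinc)\otimes_{\Ring_\tau}\Ringg_\tau$ coincides exactly with the $\Z$-coefficient (i.e.\ $\mathrm{SFH}$) differential tensored up. One must be slightly careful here because $\Ringg_\tau$ is not simply $\Z$: it still carries the free $\Z$-module generated by monomials whose suture classes are nonzero in $\Ht_1(X)/\mathrm{Tors}$. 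However, the filtration by $\RelSpinC(X,\tau)$ splits $\CFT(X,\tau;\spinc;\Ringg_\tau)$ into pieces indexed by $\relspinc$, and within a fixed $\relspinc$-summand the module is, as a complex, a direct sum of shifted copies of $\CFT(X,\tau;\relspinc';\Z)$ for the various $\relspinc'$ reachable by multiplication by a monomial, each with the $\mathrm{SFH}$ differential — all of which are acyclic by hypothesis. Hence $\CFT(X,\tau;\spinc;\Ringg_\tau)$ is acyclic, and being a complex of free modules over $\Ringg_\tau$, acyclicity implies filtered $(\Ringg_\tau,\Hbb_\tau)$ chain contractibility, so the chain homotopy type is trivial.

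Conversely, if $(X,\tau)$ is taut, then Juh\'asz' theorem gives $\mathrm{SFH}(X,\tau,\relspinc)\neq 0$ for some $\relspinc\in\SpinC(X,\tau)$, i.e.\ $H_*(\CFT(X,\tau;\relspinc;\Z))\neq 0$. I would then observe that $\Z$ is itself a test ring for $\Ringg_\tau$ (via $\Ringg_\tau\to\Z$), so by Lemma~\ref{lem:induced-chain-map} applied to a contracting homotopy, if $\CFT(X,\tau;\spinc;\Ringg_\tau)$ were filtered chain contractible then so would be its tensor product down to $\Z$, namely $\CFT(X,\tau;\spinc;\Z)=\bigoplus_{\relspinc\in\spinc}\CFT(X,\tau;\relspinc;\Z)$; but this has nonzero homology in the $\relspinc$-summand, a contradiction. (One picks the $\spinc$ equal to the image $[\relspinc]\in\SpinC(\ovl X)$.) Therefore the filtered $(\Ringg_\tau,\Hbb_\tau)$ chain homotopy type of $\CFT(X,\tau;\spinc;\Ringg_\tau)$ is non-trivial for that $\spinc$, completing the equivalence.

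The main obstacle I anticipate is the bookkeeping in the non-taut direction: carefully identifying the $\Ringg_\tau$-coefficient complex, restricted to a fixed relative $\SpinC$ summand, with an explicit direct sum of $\mathrm{SFH}$ complexes, and confirming that \emph{every} disk contributing to the differential within such a summand carries a weight that is either $1$ or zero in $\Ringg_\tau$. This rests on the homological input of corollary~\ref{cor:s(x)-s(y)} together with the defining relations of $\Ringg_\tau$, and on checking that acyclicity of a bounded-below complex of free (hence projective) $\Ringg_\tau$-modules entails the existence of a chain contraction respecting the $\Hbb_\tau$-filtration — a point where one should invoke the filtered refinement (Lemma~\ref{lem:quasi-iso-filtered}-style reasoning) rather than the plain statement, or simply note that the $\SpinC$-splitting reduces everything to finitely-generated pieces where the argument is elementary.
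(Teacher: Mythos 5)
Your taut direction is exactly the paper's argument ($\Z$ is a test ring for $\Ringg_\tau$, so a filtered contraction would kill $\mathrm{SFH}$), and it is fine. The non-taut direction, however, contains a genuine gap. Your key structural claim --- that within a fixed $\relspinc$-summand every disk contributing to the $\Ringg_\tau$-differential carries weight $1$ or $0$, so that the summand is a direct sum of shifted copies of the $\mathrm{SFH}$ complex with the $\mathrm{SFH}$ differential --- is false. Corollary~\ref{cor:s(x)-s(y)} only constrains disks joining generators with \emph{equal} relative $\SpinC$ class, but a summand $\CFT(X,\tau;\relspinc;\Ringg_\tau)$ is spanned by monomial-shifted generators $\la\x$ with $\relspinc(\x)+\chi(\la)=\relspinc$, and the differential also counts disks $\phi$ with $\Hh(\phi)\neq 0$, compensated by the monomial $\la_\z(\phi)$. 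Such a monomial dies in $\Ringg_\tau$ only when $\sum_i n_{z_i}(\phi)[\gamma_i]$ is torsion; if it is non-torsion the weight survives and produces cross-terms between monomial levels. (Already for a knot complement with two meridional sutures, weights $\la_1^a$ and $\la_2^b$ survive in $\Ringg_\tau=\Z[\la_1,\la_2]/(\la_1\la_2)$, and the $\Ringg_\tau$-complex is a minus-type complex, not a sum of hat complexes.) One could try to repair this by filtering each summand by total monomial degree, so that the associated graded differential is the $\mathrm{SFH}$ differential; but even then you only obtain acyclicity, and your final step --- ``acyclic complex of free $\Ringg_\tau$-modules, hence filtered $(\Ringg_\tau,\Hbb_\tau)$-contractible'' --- is not justified, since the complex carries only a relative (cyclic) grading rather than a bounded-below one, and the required contraction must be $\Ringg_\tau$-linear and respect the $\Hbb_\tau$-filtration.

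The paper sidesteps all of this: for an irreducible non-taut $(X,\tau)$ it invokes the construction behind Juh\'asz' Proposition~9.18 of \cite{Juh}, which produces a weakly admissible Heegaard diagram with $\Ta\cap\Tb=\emptyset$. By remark~\ref{remark:weak-admissibility} such a diagram is admissible for the test ring $\Ringg_\tau$, so it computes $\CFT(X,\tau;\spinc;\Ringg_\tau)$ as the zero complex for every $\spinc\in\SpinC(\ovl X)$, and triviality of the filtered chain homotopy type is immediate. If you want to keep your outline, the correct fix is to cite this ``no generators'' statement (which is also what underlies the vanishing of $\mathrm{SFH}$ you quote) rather than trying to bootstrap from the vanishing of $\mathrm{SFH}$ homology alone.
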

\begin{proof}
Suppose that $(X,\tau)$ is an irreducible balanced sutured manifold which is not taut.
As in the proof of proposition 9.18 from \cite{Juh}, there is a weakly admissible
Heegaard diagram $(\Sig,\alphas,\betas,\z)$ for $(X,\tau)$ such that
$\Ta\cap\Tb$ is empty. This Heegaard diagram is $\spinc$-admissible for
the test ring $\rho_\tau:\Ring_\tau\ra \Ringg_\tau$ and for any
$\spinc\in\SpinC(\ovl X)$ by remark~\ref{remark:weak-admissibility},
and may thus be used to compute
$$\CFT(X,\tau;\spinc;\Ringg_\tau)\cong 0,\ \ \ \forall\ \spinc\in\SpinC(\ovl X),$$
where $\cong$ denotes the equivalence of filtered chain homotopy types.\\

Conversely, if $(X,\tau)$ is taut, theorem 1.4 from \cite{Juh-surface}
implies that $\HFT(X,\tau;\Z)\neq 0$. Since $\Z$ is a test ring for
$\Ringg_\tau$ this implies, in particular, that the filtered chain homotopy
type of $\CFT(X,\tau;\Ringg_\tau)$ is non-trivial.
\end{proof}
In fact, the proof of proposition~\ref{prop:non-taut-sutured-manifolds} implies the following corollary.
\begin{cor}
For an irreducible balanced sutured manifold $(X,\tau)$, the filtered $(\Ringg_\tau,\Hbb_\tau)$
chain homotopy type of $\CFT(X,\tau;\Ringg_\tau)$ is trivial if and only if
$\mathrm{SFH}(X,\tau)=0$.
\end{cor}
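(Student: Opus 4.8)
The plan is to deduce this corollary directly from the equivalences established in the proof of Proposition~\ref{prop:non-taut-sutured-manifolds}, combined with Juh\'asz' theorem~1.4 from \cite{Juh-surface} and the detection theorem for sutured Floer homology. First I would observe that there are two cases to treat: when $(X,\tau)$ is taut, and when it is not. In the taut case, the proof of Proposition~\ref{prop:non-taut-sutured-manifolds} already shows that $\CFT(X,\tau;\Ringg_\tau)$ has non-trivial filtered chain homotopy type, and since $\Z$ is a test ring for $\Ringg_\tau$, the homology $\mathrm{SFH}(X,\tau)=\mathrm{HF}(X,\tau;\Z)=\HFT(X,\tau;\Z)$ is non-zero by theorem~1.4 of \cite{Juh-surface}. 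So in this case both sides of the claimed equivalence fail to hold, i.e. neither $\CFT(X,\tau;\Ringg_\tau)$ is trivial nor is $\mathrm{SFH}(X,\tau)=0$, which is consistent with the biconditional.

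Next I would handle the non-taut case. The proof of Proposition~\ref{prop:non-taut-sutured-manifolds} produces, following proposition~9.18 of \cite{Juh}, a weakly admissible Heegaard diagram $(\Sig,\alphas,\betas,\z)$ for $(X,\tau)$ with $\Ta\cap\Tb=\emptyset$. By remark~\ref{remark:weak-admissibility}, this diagram is $\spinc$-admissible for the test ring $\rho_\tau:\Ring_\tau\to\Ringg_\tau$ for every $\spinc\in\SpinC(\ovl X)$, hence may be used to compute $\CFT(X,\tau;\spinc;\Ringg_\tau)$, which is then literally the zero module (it is freely generated over $\Ringg_\tau$ by the empty set $\Ta\cap\Tb$). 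Summing over $\spinc$ gives that $\CFT(X,\tau;\Ringg_\tau)\cong 0$; and the same diagram computes $\mathrm{SFH}(X,\tau)=0$ as well (since $\Z$ is a test ring, or directly from Juh\'asz' original setup). So in the non-taut case both sides hold simultaneously.

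Putting the two cases together yields the biconditional: $\CFT(X,\tau;\Ringg_\tau)$ has trivial filtered chain homotopy type $\iff$ $(X,\tau)$ is not taut $\iff$ $\mathrm{SFH}(X,\tau)=0$, where the second equivalence is exactly Juh\'asz' detection theorem (theorem~1.4 of \cite{Juh-surface}, using irreducibility of $(X,\tau)$). The only mild subtlety — and I expect this to be the one point requiring care rather than a genuine obstacle — is to make sure that "trivial filtered chain homotopy type" for $\CFT(X,\tau;\Ringg_\tau)$ is correctly matched with the vanishing statement: one should note that a filtered $(\Ringg_\tau,\Hbb_\tau)$ chain complex is trivial in the filtered chain homotopy category precisely when, for the test ring $\Z$ (which factors through $\Ringg_\tau$), the resulting homology $H_*(\,\cdot\,;\Z)$ vanishes, because $\Z$ detects acyclicity over $\Ringg_\tau$ in the relevant range; this is where Theorem~\ref{thm:invariance} and remark~\ref{remark:weak-admissibility} are invoked to guarantee the computation is diagram-independent. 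With that bookkeeping in place the corollary follows immediately from the proof of Proposition~\ref{prop:non-taut-sutured-manifolds}.
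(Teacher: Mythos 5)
Your proposal is correct and is essentially the paper's own argument: the paper derives the corollary directly from the proof of Proposition~\ref{prop:non-taut-sutured-manifolds}, exactly via your two cases (non-taut: a diagram with $\Ta\cap\Tb=\emptyset$ makes the complex literally zero and $\mathrm{SFH}=0$; taut: Juh\'asz' theorem gives $\mathrm{SFH}\neq 0$, and since $\Z$ is a test ring for $\Ringg_\tau$, tensoring preserves chain homotopy equivalences, so the filtered chain homotopy type cannot be trivial). One caveat: your closing claim that $\Z$ ``detects acyclicity over $\Ringg_\tau$'' (i.e.\ that vanishing of $H_*(\,\cdot\,;\Z)$ alone forces trivial filtered chain homotopy type) is neither justified nor needed — the case analysis already gives both directions, with triviality in the $\mathrm{SFH}=0$ case coming from the empty-generator diagram together with the diagram-independence of Theorem~\ref{thm:invariance} and remark~\ref{remark:weak-admissibility}, not from any general detection principle.
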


\newpage

\section{Stabilization of sutured manifolds}\label{sec:stabilization}
\subsection{Simple stabilization of a balanced sutured manifold}
Let us fix a balanced sutured manifold $(X,\tau=\{\gamma_1,...,\gamma_\el\})$  and  let
$$\Rr^-(\tau)=\bigcup_{i=1}^k R_i^-\ , \ \&\ \Rr^+(\tau)=\bigcup_{j=1}^lR_j^+,$$
as before.
\begin{defn} We say that
 a sutured manifold $(X,\ov{\tau})$ is obtained by a {\emph{simple stabilization}} of $(X,\tau)$ if $\ov{\tau}=\tau\cup\{\gamma_{\kappa+1},\gamma_{\kappa+2}\}$ and  $\gamma_{\kappa+1}$ and $\gamma_{\kappa+2}$ are oriented simple closed curves  so that $-\gamma_{\kappa+1}$ and $\gamma_{\kappa+2}$ are both
 parallel to an oriented  suture $\gamma_i\in \tau$,
 where $\gamma_i$ is in the common boundary of two genus zero components of $\Rr(\tau)$.
  Moreover, $\gamma_i$ and $\gamma_{\kappa+1}$ bound an annulus in $\partial X-\ov\tau$.
\end{defn}

Without loss of generality we assume that $i=\el$, and that
$\gamma_i=\gamma_{\kappa}\in\partial R_k^-\cap \partial R_{l}^+$ and $\gamma_{\kappa+1},\gamma_{\kappa+2}\subset R_l^+$.
So the genera of both $R_k^-$ and $R_l^+$ are zero. Note that
\begin{displaymath}
\begin{split}
&\Rr^-(\ov{\tau})=\Rr^-(\tau)\coprod A_{\kappa+1,\kappa+2},\ \ \&\\
&\Rr^+(\ov{\tau})=(\Rr^+(\tau)-R_l^+)\coprod A_{\kappa,\kappa+1}\coprod R_{l+1}^+\\
\text{where }\ \ \ \
&R_l^+-\left(\gamma_{\kappa+1}\cup\gamma_{\kappa+2}\right)
=A_{\kappa+1,\kappa+2}\coprod A_{\kappa,\kappa+1}\coprod R_{l+1}^+
\end{split}
\end{displaymath}
 and $A_{\kappa+1,\kappa+2}$ and $A_{\kappa,\kappa+1}$ are the annulus components
 with the boundary sets $\{\gamma_{\kappa+1},\gamma_{\kappa+2}\}$
 and $\{\gamma_\kappa,\gamma_{\kappa+1}\}$ respectively. Thus we have
\begin{displaymath}
\begin{split}
\la^+(\ov{\tau})&=\la^+(\tau)+\la_{\kappa}\la_{\kappa+1}+\la_{l+1}^{+}-\la_l^+,\ \&\\ \la^-(\ov{\tau})&=\la^-(\tau)+\la_{\kappa+1}\la_{\kappa+2}
\end{split}
\end{displaymath}
where $\la_{l+1}^{+}=\la(R_{l+1}^+)$.
The algebra associated to the boundary of $(X,\ov{\tau})$ is defined by
\begin{displaymath}
\Ring_{\ov{\tau}}=\frac{\Big\langle\la_1,...,\la_{\kappa}
,\la_{\kappa+1},\la_{\kappa+2}\Big\rangle_{\Z}}
{\Big\langle\la^{+}(\ov{\tau})-\la^{-}(\ov{\tau})\Big\rangle_{\Z[\kappa+2]}
+\Big\langle\la_i^+|g_i^+>0\Big\rangle_{\Z[\kappa+2]}
+\Big\langle\la_j^-|g_j^->0\Big\rangle_{\Z[\kappa+2]}}
\end{displaymath}
Note that the homomorphism
\begin{displaymath}
\begin{split}
&f:\Z[\la_1,...,\la_{\kappa+2}]\longrightarrow\Z[\la_1,...,\la_{\kappa+1}]\\
&f(\la_i)=
\begin{cases}
\la_i\ \ \ \ &\text{if}\ \ i\neq\kappa+2\\
\la_{\kappa}\ \ \ \ &\text{if}\ \ i=\kappa+2
\end{cases}
\end{split}
\end{displaymath}
induces a homomorphism $\ov{f}:\Ring_{\ov{\tau}}\longrightarrow\Ring_{\tau}[\la_{\kappa+1}]$,
where
$$\Ring_{\tau}[\la_{\kappa+1}]=\Ring_{\tau}
\otimes_{\Z[\la_1,...,\la_{\kappa}]}\Z[\la_1,...,\la_{\kappa+1}].$$
Consequently, $(\Ring_{\tau}[\la_{\kappa+1}],\ov{f})$ is a test ring
for $\Ring_{\ov{\tau}}$.\\

Let $(\Sig,\alphas,\betas,\z)$ be a Heegaard diagram for $(X,\tau)$.
A Heegaard diagram $(\Sig,\ov{\alphas},\ov{\betas},\ov{\z})$ for $(X,\ov{\tau})$
may then be constructed from $(\Sig,\alphas,\betas,\z)$ as follows. We set
$$\ov{\alphas}=\alphas\cup\big\{\alpha_{\ell+1}\big\}, \ \&\
\ov{\betas}=\betas\cup\big\{\beta_{\ell+1}\big\},\ \&\
\ov{\z}=\z\cup\big\{z_{\kappa+1},z_{\kappa+2}\big\}, $$
 where the additional curves $\alpha_{\ell+1}$ and $\beta_{\ell+1}$
 are isotopic simple closed curves on $\Sig$ in the the domain of $\Sig-\alphas\cup\betas$ containing the
 marked point $z_\el$ with the following properties.
 We assume that $\#\alpha_{\ell+1}\cap\beta_{\ell+1}=2$ and that
 $\alpha_{\ell+1}$ and $\beta_{\ell+1}$ bound the disks $A_{k+1}$ and $B_{l+1}$ respectively.
 Furthermore, we assume  that
 $$z_{\kappa}\in B_{l+1}-A_{k+1},\ \ z_{\kappa+1}\in A_{k+1}\cap B_{l+1},\ \
 \&\ \ z_{\kappa+2}\in A_{k+1}-B_{l+1}.$$
 The picture around the marked point $z_{\el}$ is illustrated in figure~\ref{fig:stabilization}.

\begin{figure}[ht]
\def\svgwidth{8cm}
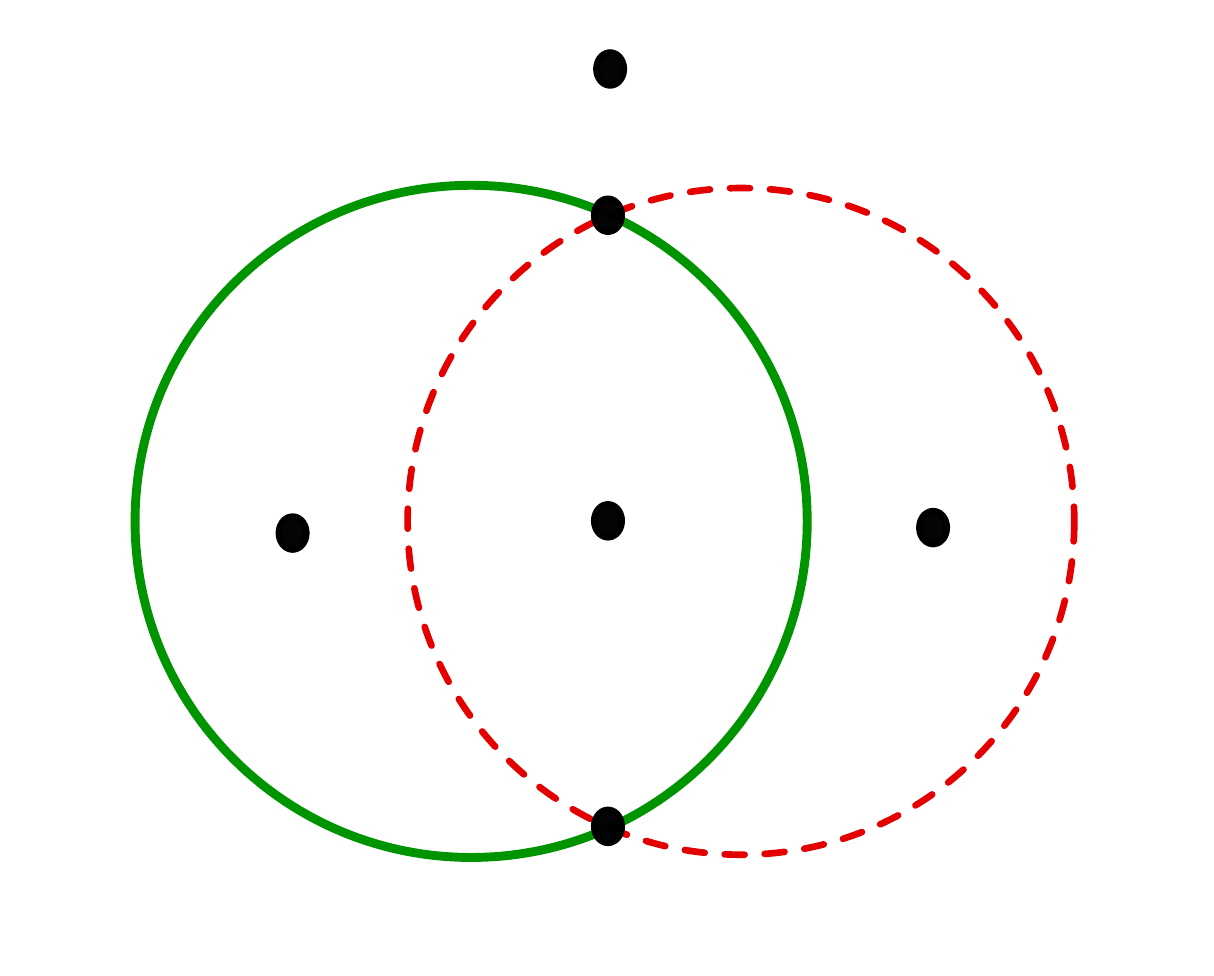
\caption{For simple stabilization, a pair of intersecting null-homotopic simple closed curves
$\alpha_{\ell+1}$ and $\beta_{\ell+1}$ are added to the Heegaard diagram close to the
marked point $z_{\el}$. The locations of the new marked points $z_\el,z_{\el+1}$ and
$z_{\el+2}$ are illustrated in the figure. The marked point $v$ is used as the connected
sum point of the current diagram (on a Riemann sphere) with the old Heegaard diagram.
 }\label{fig:stabilization}
\end{figure}

 Note that $H(A_{k+1})=[S_{\kappa+1,\kappa+2}]$ and $H(B_{l+1})=[S_{\kappa,\kappa+1}]$ in
 $H_2(\ovl{X}_{\ov{\tau}}=\ovl X^{\ov\tau})$ where
 $S_{\kappa,\kappa+1}$ and $S_{\kappa+1,\kappa+2}$ are sphere
 boundary components of $\ovl{X}_{\ov{\tau}}$ corresponding
 to $A_{\kappa,\kappa+1}$ and $A_{\kappa+1,\kappa+2}$ in $\Rr(\ov{\tau})$ respectively.
In the above situation,we say the Heegaard diagram $(\Sig,\ov{\alphas},\ov{\betas},\ov{\z})$
is obtained  from $(\Sig,\alphas,\betas,\z)$ by a simple stabilization.\\

We may define a natural map
$$\is:\SpinC(X,\ov{\tau})\longrightarrow\SpinC(X,\tau)$$
as follows. Fix $\ov{\relspinc}\in\SpinC(X,\ov{\tau})$ and let $\ov{v}$
be a nowhere vanishing vector field on $X$ representing $\ov{\relspinc}$ such that
$\ov{v}|_{\partial X}=v_{\ov{\tau}}$.
Consider a neighborhood $N$ of
$$A=\ovl{A}_{\kappa,\kappa+1}\cup \ovl{A}_{\kappa+1,\kappa+2}\subset\partial X$$
together with a diffeomorphism
\begin{displaymath}
\begin{split}
&\psi:N\longrightarrow S^1\times I\times I,\ \ \ \text{s.t.}\\
&\psi(A)=S^1\times\{0\}\times I,\ \ \&\ \
\psi_{*}\left(\ov{v}|_N\right)|_{S^1\times I\times\{0,1\}}
=\frac{\partial}{\partial s},
\end{split}
\end{displaymath}
 where $I=[0,1]$ is the unit interval and $s$ denotes the standard parameter
 on the third component of the product $S^1\times I\times I$.
 The vector field
 $\psi_{*}(\ov{v})$ may be changed through an isotopy
 to a new vector field $\psi_*(v)$ on $S^1\times I\times I$
 with the property $\psi_*(v)|_{S^1\times\{0\}\times I}=\frac{\partial}{\partial s}$,
 where the vector field remains  fixed through the isotopy on
 $$\left(S^1\times\big\{1\big\}\times I\right)\bigcup
 \left(S^1\times I\times\big\{0\big\}\right)\bigcup
 \left(S^1\times I\times\big\{1\big\}\right).$$
 The vector field $v$ on $N$ may be glued to $\ov v|_{X-N}$ to give a vector field on $X$, still denote by $v$,
which represents an element $\relspinc$ in $\SpinC(X,\tau)$.
Define $\is(\ov{\relspinc}):=\relspinc$. It is easy to see that $\is$ is well-defined and bijective, and that
the following diagram is commutative
\begin{displaymath}
\begin{diagram}
\RelSpinC(X,\ov\tau)&\rTo{\is}&\RelSpinC(X,\tau)\\
\dTo{[.]}&&\dTo{[.]}\\
\SpinC(\ovl X_{\ov\tau})&\rTo{i_{\ovl X}^*}&\SpinC(\ovl X)
\end{diagram},
\end{displaymath}
 where $i_{\ovl X}:\ovl X\ra \ovl X_{\ov\tau}$ denotes the
 inclusion map.\\

\begin{prop}\label{prop:stabilization}
Let $(X,\ov{\tau})$ be a sutured manifold obtained by a simple stabilization
on $(X,\tau)$ and $(\Sig,\ov{\alphas},\ov{\betas},\ov{\z})$ be the Heegaard
diagram for $(X,\ov{\tau})$ obtained by the simple stabilization of the
Heegaard diagram $(\Sig,\alphas,\betas,\z)$ for $(X,\tau)$ as above.
Then for any $\SpinC$ class $\spinc\in\SpinC(\ovl X_{\ov\tau})$
the filtered chain homotopy type of the complex   $$\mathrm{CF}\left(\Sig,\ov{\alphas},\ov{\betas},\ov{\z},\spinc;\Ring_{\tau}[\la_{\kappa+1}]\right)
=\mathrm{CF}(\Sig,\ov{\alphas},\ov{\betas},\ov{\z},\spinc)
\otimes_{\Ring_{\ov{\tau}}}\Ring_{\tau}[\la_{\kappa+1}]$$
is the same as the filtered chain homotopy type of the mapping cone of
$$\mathrm{CF}\left(\Sig,\alphas,\betas,\z,i_{\ovl X}^*(\spinc);\Ring_\tau[\la_{\kappa+1}]\right)
\xrightarrow{\la_{\kappa+1}-\la}\mathrm{CF}\left(\Sig,\alphas,\betas,\z,
i_{\ovl X}^*(\spinc);\Ring_\tau[\la_{\kappa+1}]\right).$$
where we define
\begin{displaymath}
\begin{split}
\mathrm{CF}\left(\Sig,\alphas,\betas,\z,i_{\ovl X}^*(\spinc);\Ring_\tau[\la_{\kappa+1}]\right)
&:=\mathrm{CF}\left(\Sig,\alphas,\betas,\z,i_{\ovl X}^*(\spinc)\right)
\otimes_{\Ring_\tau}\Ring_\tau[\la_{\kappa+1}],\\
\& \ \ \ \ \  \la&:=\prod_{\substack{\gamma_\el\neq \gamma_i\in\partial R_{l}^+}}
\la_{i}\in \Ring_\tau[\la_{\el+1}].
\end{split}
\end{displaymath}
\end{prop}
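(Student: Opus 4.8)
The plan is to analyze the holomorphic disks in the stabilized diagram $(\Sig,\ov\alphas,\ov\betas,\ov\z)$ by localizing near the new pair of curves $\alpha_{\ell+1},\beta_{\ell+1}$, which bound the bigon region around $z_{\el}$ shown in Figure~\ref{fig:stabilization}. First I would observe that the two intersection points $x,y\in\alpha_{\ell+1}\cap\beta_{\ell+1}$ split the generators: every $\ov\x\in\Tai[\ov\alpha]\cap\Tb[\ov\beta]$ is of the form $\{\x,x\}$ or $\{\x,y\}$ for $\x\in\Ta\cap\Tb$, so as an $\Ring_\tau[\la_{\kappa+1}]$-module the stabilized complex splits as $C\otimes\langle x\rangle\ \oplus\ C\otimes\langle y\rangle$ where $C=\CFT(\Sig,\alphas,\betas,\z,i_{\ovl X}^*(\spinc);\Ring_\tau[\la_{\kappa+1}])$. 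The identification of the $\SpinC$ summand uses the commutative square relating $\is$ and $i_{\ovl X}^*$ together with Corollary~\ref{cor:s(x)-s(y)}: the two choices $\{\x,x\}$ and $\{\x,y\}$ differ by $\PD[\gamma_\kappa]$ (the small bigon), so they lie in relative $\SpinC$ classes differing by the action of $\la_\kappa$, and after tensoring with $\Ring_\tau[\la_{\kappa+1}]$ where $\la_{\kappa+2}$ has been identified with $\la_\kappa$ via $\ov f$, this is exactly the shift built into the mapping cone.

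**The differential count.** The key computation is to identify the component of $\ov\partial$ from the $y$-summand to the $x$-summand. Following the standard model computation (as in Ozsv\'ath--Szab\'o's stabilization lemma, c.f.\ the discussion in subsection~\ref{subsec:special-HD}), after a suitable neck-stretching every Maslov-index-one disk in the stabilized diagram with nonzero $\la_{\ov\z}$ either (a) is a disk entirely in the old diagram with a trivial strip at $x$ or at $y$ attached — contributing $\partial$ on each summand — or (b) is supported in the new region, connecting $\{\x,y\}$ to $\{\x,x\}$, and there are exactly two such domains: the small bigon $B_{l+1}$ (whose multiplicity at $z_\kappa$ is $1$, giving weight $\la_\kappa=\la_{\kappa+1}$ after the identification... wait, one must track which marked point lies where) and the complementary region through $z_{\kappa+2}$ (and $z_{\kappa+1}$). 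I would carefully read off from Figure~\ref{fig:stabilization} that these two domains carry weights $\la_{\kappa+1}$ and $\la$ respectively — here $\la=\prod_{\gamma_\el\neq\gamma_i\subset\partial R_l^+}\la_i$ records that the big region sweeps across the rest of $R_l^+$ — so the net off-diagonal differential is multiplication by $\la_{\kappa+1}-\la$. This is precisely the mapping cone differential $d_\M=\left(\begin{smallmatrix}\partial&0\\ \la_{\kappa+1}-\la&-\partial\end{smallmatrix}\right)$ asserted in the statement.

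**Invariance and admissibility.** Next I would check that the requisite admissibility holds: Lemma~\ref{lem:s-admissible} (or the weaker version in Remark~\ref{remark:weak-admissibility}, using that $\Ring_\tau[\la_{\kappa+1}]$ is a test ring for $\Ring_{\ov\tau}$ via $\ov f$) gives an admissible stabilized diagram, and the two new small bigon periodic domains $B_{l+1}$, $A_{k+1}$ (with $\la(B_{l+1})=\la\cdot\la_{\kappa+1}$, $\la(A_{k+1})=\la_{\kappa+1}\la_{\kappa+2}$) do not obstruct this after tensoring. Then by Theorem~\ref{thm:invariance} the filtered $\tab$ chain homotopy type is independent of the choices, so it suffices to compute with this particular stabilized diagram, and the filtration by $\RelSpinC(X,\tau)$ matches under $\is$ by the commutative diagram above. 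Finally I would note the higher-index disks with domains in the new region all have $\la=0$ or index $\ge 2$ (the same positivity-of-index argument used throughout Section~\ref{sec:chain-complex} to kill boundary degenerations), so no further terms appear in $\ov\partial$.

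**Main obstacle.** The delicate point is the \emph{sign and weight bookkeeping} in step two: one must pin down, with the chosen coherent system of orientations, that the two model disks in the new region contribute with opposite signs (so the off-diagonal map is $\la_{\kappa+1}-\la$ and not $\la_{\kappa+1}+\la$), and one must correctly read off which of the three marked points $z_\kappa,z_{\kappa+1},z_{\kappa+2}$ is crossed by each domain to get the weights $\la_{\kappa+1}$ and $\la$ rather than some other monomials. The sign issue is handled exactly as in Ozsv\'ath--Szab\'o's original stabilization argument (the two disks are related by the hyperelliptic-type involution of the model), but the multi-point weight computation is genuinely new here and requires a careful local picture; everything else is a routine adaptation of the closed-manifold case.
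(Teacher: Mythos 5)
Your overall architecture (splitting the generators into the two summands determined by $x,y\in\alpha_{\ell+1}\cap\beta_{\ell+1}$, identifying the diagonal pieces with the old differential, and computing the off-diagonal piece to be $\la_{\kappa+1}-\la$) is the same as the paper's, but there is a genuine gap in your step two: you claim both off-diagonal contributions come from disks ``supported in the new region''. A Maslov-index-one class whose domain lies in the stabilization region can only have multiplicity at $z_{\kappa}$, $z_{\kappa+1}$, $z_{\kappa+2}$, so its weight is a power of $\la_{\kappa},\la_{\kappa+1},\la_{\kappa+2}$; it can never produce the monomial $\la=\prod_{\gamma_\el\neq\gamma_i\in\partial R_l^+}\la_i$, which records marked points spread over the whole component of $\Sig-\betas$ corresponding to $R_l^+$. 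In fact the two local bigons other than the $z_{\kappa+1}$-bigon (the ones through $z_{\kappa}$ and through $z_{\kappa+2}$) connect the generators in the \emph{opposite} direction, and after the identification $\la_{\kappa+2}\mapsto\la_{\kappa}$ in $\Ring_\tau[\la_{\kappa+1}]$ they carry equal weights with opposite signs and cancel; that is how the paper shows one off-diagonal component vanishes, not how the $\la$-term arises.

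The $-\la$ term comes from a global configuration that your proposal never produces: viewing the stabilized diagram as a connected sum at points $w$ (old side) and $v$ (sphere side), the relevant class is the juxtaposition of the bigon on the sphere containing $v$ with a Maslov-index-two $\betas$-boundary degeneration in the original diagram whose domain is the entire genus-zero component of $\Sig-\betas$ corresponding to $R_l^+$. Making this count requires the neck-stretching/fibered-product gluing statement (Theorem~\ref{thm:gluing}, including its hypothesis on a positive-genus component of $\Rr(\tau)$ or $\ell>g(\Sig)$), the analysis of $\sM(\phi_1,t)$ with $v$ pushed toward $\beta_{\ell+1}$ (Lemma~\ref{lem:degeneration}), Lemma~\ref{lem:sphere} for the sphere factor, and a separate limiting argument (moving the connected-sum point $v(r)$ toward $\beta$) to rule out contributions with $n_v(\phi_2)>1$; the sign is fixed by the boundary-degeneration orientation convention (Lemma~\ref{lem:orientation}), not by a hyperelliptic-type pairing of two local disks. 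Relatedly, your identification of the diagonal components as ``old disk with a trivial strip attached'' only covers classes with $n_w=0$; old disks crossing $w$ with multiplicity $k>0$ must be glued to index-$2k$ classes on the sphere and counted via Lemma~\ref{lem:sphere}, and the admissibility needed for all of this is the stronger version of Remark~\ref{remark:admissibility} for the unstabilized diagram. Without the boundary-degeneration mechanism your argument would yield an off-diagonal map $\la_{\kappa+1}$ alone (or an incorrectly weighted one), so the proposal as written does not establish the stated mapping cone.
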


\subsection{The analytic input}
Before we start the proving proposition~\ref{prop:stabilization}, we need to rephrase the statements
of theorem 5.1, lemma 6.3 and lemma 6.4 of \cite{OS-linkinvariant} for balanced sutured manifolds
and the corresponding Heegaard diagrams. \\

Let $(X,\tau)$ be a sutured manifold with the Heegaard diagram $(\Sig,\alphas,\betas,\z)$
and consider a point $v$ on $\Sig$. Let $\phi\in\pi_2(\x,\y)$ be the homotopy class of a
Whitney disk connecting intersection points $\x$ and $\y$, and assume that
 $n_v(\phi)=k\in\Z^{\geq 0}$. We may define a map
\begin{align}
\nonumber
\rho^{v}&:\sM(\phi)\longrightarrow\Sym^k(\D)\\
\nonumber
\rho^v(u)&=u^{-1}(v\times\Sym^{\ell-1}(\Sig))
\end{align}
Correspondingly,  we may define the moduli spaces $\sM(\phi,t)$ and
$\sM(\phi,\Delta)$ by
\begin{displaymath}
\begin{split}
\sM(\phi,t)&=\Big\{u\in\sM(\phi)\ \big|\ (t,0)\in\rho^v(u)\Big\},\ \&\\
\sM(\phi,\Delta)&=\Big\{u\in\sM(\phi)\ \big|\ \rho^{v}(u)=\Delta\Big\}
\end{split}
\end{displaymath}
where $t\in [0,1]$ and $\Delta\in\Sym^{k}(\D)$.\\

%In the cylindrical formulation of Lipshitz  these restriction maps $\rho^{v}$ are to be replaced with
%\begin{align}
%\nonumber
%\widetilde{\rho}^{v}&:\widetilde{\sM}(\phi)\longrightarrow\Sym^k(\D)\\
%\nonumber
%\widetilde{\rho}^v(\widetilde{u})&=(\pi_{\D}\circ\widetilde{u})((\pi_{\Sig}\circ\widetilde{u})^{-1}(\{v\})).
%\end{align}

Let $(X_1,\tau_1)$ and $(X_2,\tau_2)$ be sutured manifolds with corresponding Heegaard diagrams
$(\Sig_1,\alphas_1,\betas_1,\z_1)$ and $(\Sig_2,\alphas_2,\betas_2,\z_2)$. We can form their
connected sum along the points $w$ and $v$ on $\Sig_1$ and $\Sig_2$ to obtain a new
sutured manifold $(X,\tau)$ with the corresponding Heegaard diagram $(\Sig,\alphas,\betas,\z)$ where
$$\Sig=\Sig_1\#\Sig_2,\ \ \  \alphas=\alphas_1\cup\alphas_2,\ \ \
\betas=\betas_1\cup\betas_2\ \ \  \&\ \ \ \z=\z_1\cup\z_2.$$
Note that $\Ta\cap\Tb=(\T_{\alpha_1}\cap\T_{\beta_1})\times(\T_{\alpha_2}\cap\T_{\beta_2})$.
Consider intersection points $\x_1,\y_1\in \T_{\alpha_1}\cap\T_{\beta_1}$ and $\x_2,\y_2\in\T_{\alpha_2}\cap\T_{\beta_2}$. Any homology class
$$\phi\in \pi_2(\x_1\times\x_2,\y_1\times\y_2)$$ can be uniquely decomposed as
$\phi=\phi_1\#\phi_2$ where
$$\phi_1\in\pi_2(\x_1,\y_1),\ \
\phi_2\in\pi_2(\x_2,\y_2),\ \ \&\ \  n_w(\phi_1)=n_v(\phi_2).$$
Conversely, any pair of homology classes $\phi_1\in\pi_2(\x_1,\y_1)$ and $\phi_2\in\pi_2(\x_2,\y_2)$
such that $n_w(\phi_1)=n_v(\phi_2)$ can be combined to a give
homology class $$\phi=\phi_1\#\phi_2\in \pi_2(\x_1\times\x_2,\y_1\times\y_2).$$

\begin{thm}\label{thm:gluing}
Let $(X_1,\tau_1)$ and $(X_2,\tau_2)$ be balanced sutured manifolds with the corresponding
Heegaard diagrams $(\Sig_1,\alphas_1,\betas_1,\z_1)$ and
$(\Sig_2,\alphas_2,\betas_2,\z_2)$ respectively.
%Assume that
%$$\#\alphas_1=\#\betas_1=\ell_1,\ \ \ \&\ \ \ \#\alphas_2=\#\betas_2=\ell_2.$$
Consider the balanced sutured manifold $(X,\tau)$ obtained by taking the
connected sum of the two Heegaard diagrams along $w$ and $v$ as described
above. Furthermore, assume that  $w$ (respectively, $v$) is in a genus zero
connected component of either of $\Sig_1-\alphas_1$, and $\Sig_1-\betas_1$ (respectively,
$\Sig_2-\alphas_2$,$\Sig_2-\betas_2$). For any homotopy class $$\phi=\phi_1\#\phi_2\in\pi_2(\x_1\times\x_2,\y_1\times\y_2)$$ we then have
$$\mu(\phi)=\mu(\phi_1)+\mu(\phi_2)-2k$$
where $\x_1,\y_1\in \T_{\alpha_1}\cap\T_{\beta_1}$, $\x_2,\y_2\in\T_{\alpha_2}\cap\T_{\beta_2}$ and $k=n_{w}(\phi_1)=n_v(\phi_2)$. \\
Suppose furthermore that $\mu(\phi_1)=1$, $\mu(\phi_2)=2k$ and one of the following is true:\\

$\bullet$ At least one component of $\Rr(\tau_2)$ has nonzero genus and $\la(\phi)\neq 0$.

$\bullet$ All the components of $\Rr(\tau_2)$ are genus zero, and
$$\ell_2=|\alphas_2|=|\betas_2|>g(\Sig_2).$$

If the fibered product
$$\sM(\phi_1)\times_{\mathrm{Sym}^k(\D)}\sM(\phi_2)=
\Big\{u_1\times u_2\in\sM(\phi_1)\times\sM(\phi_2)\ \big|\ \rho^w(u_1)=\rho^v(u_2)\Big\}$$
of $\sM(\phi_1)$ and $\sM(\phi_2)$ is a smooth manifold, then by taking the length of the
connected sum tube
sufficiently large there is an identification of this moduli space with $\sM(\phi)$.\\
\end{thm}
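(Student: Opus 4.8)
The statement packages together the sutured analogues of Theorem 5.1 and Lemmas 6.3--6.4 of \cite{OS-linkinvariant}, and the plan is to follow that analysis, inserting the modifications needed in the sutured setting, where the quantity $\la(\phi)$ plays the role that the basepoint crossing numbers play in the link case.

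\emph{The index formula.} First I would establish $\mu(\phi) = \mu(\phi_1) + \mu(\phi_2) - 2k$ by a direct excision computation for the linearized Cauchy--Riemann operator. Writing the domain of a representative of $\phi = \phi_1 \# \phi_2$ as the domains of representatives of $\phi_1$ and $\phi_2$ glued along a neck around $w$ and $v$, the linearized operator of $\phi$ is the glued operator; since $w$ and $v$ lie in genus-zero components of $\Sig_1-\alphas_1$, $\Sig_1-\betas_1$ and $\Sig_2-\alphas_2$, $\Sig_2-\betas_2$ by hypothesis, the neck contribution is modeled on a degree-$k$ branched cover of the disk and accounts for the correction $-2k$. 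Equivalently, one simply applies Lipshitz's cylindrical index formula (\cite{Robert-cylindrical}) to all three domains and checks the Euler-measure and local multiplicity bookkeeping. This step is routine.

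\emph{The identification of moduli spaces.} This is the pregluing/Newton-iteration argument of Sections 5--6 of \cite{OS-linkinvariant}. Given $(u_1,u_2)$ in the fibered product $\sM(\phi_1)\times_{\Sym^k(\D)}\sM(\phi_2)$ with $\rho^w(u_1) = \rho^v(u_2) = \Delta$, I would excise from $\Sig_1$ and $\Sig_2$ neighborhoods of $w$ and $v$ whose size is governed by a neck-length parameter $T$, glue $u_1$ to $u_2$ to produce an approximately holomorphic map $u_T$ of class $\phi$, and --- using the hypothesis that the fibered product is a smooth manifold, which is precisely the statement that the associated linearized operator is surjective with kernel of the expected dimension --- correct $u_T$ to a genuine holomorphic representative of $\phi$ by the implicit function theorem, uniformly for $T \gg 0$. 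Surjectivity of the resulting gluing map comes from Gromov compactness: a sequence in $\sM(\phi)$ with neck length tending to infinity subconverges to a pair $(u_1,u_2) \in \sM(\phi_1)\times\sM(\phi_2)$ whose divisors under $\rho^w$ and $\rho^v$ agree (hence an element of the fibered product), possibly together with sphere bubbles or boundary degenerations. The role of the two alternative hypotheses on $\Rr(\tau_2)$ is exactly to exclude these extra components: in the first case $\la(\phi)\neq 0$ forces any holomorphic boundary degeneration in the limit to be supported on a genus-zero $\alpha$- or $\beta$-region and hence, by Lemma~\ref{lem:disk-degeneration} together with the chosen coherent system of orientations, to have Maslov index at least $2$, contradicting $\mu(\phi_1)=1$ and $\mu(\phi_2)=2k$; in the second case the condition $\ell_2 > g(\Sig_2)$ is the stabilization hypothesis of \cite{OS-linkinvariant}, which rules out both the boundary degenerations and the multiply covered spheres. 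A standard uniqueness statement near the split limit then upgrades the gluing map to a bijection $\sM(\phi_1)\times_{\Sym^k(\D)}\sM(\phi_2) \cong \sM(\phi)$ for $T$ large.

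\emph{The main difficulty.} The genuinely delicate point is the compactness analysis in the sutured context: showing that in the split limit no sphere bubbles and no boundary degenerations survive. In the closed and link settings this is controlled by crossing numbers at the basepoints, whereas here one must run the argument through the principle that ``$\la(\phi)\neq 0$ forbids a component whose domain meets a positive-genus region with positive multiplicity'', combined with the balancedness of $(X,\tau)$, the Maslov index formulas for boundary degenerations, and the dimension count. The accompanying bookkeeping with the matching condition at the neck --- the fibered product over $\Sym^k(\D)$ and the branched-cover model near the connected-sum tube --- is formally identical to \cite{OS-linkinvariant} and should introduce no essentially new idea.
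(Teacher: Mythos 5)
Your overall strategy coincides with the paper's: the index formula comes from excision for the linearized operator in Lipshitz's cylindrical formulation, and the identification of $\sM(\phi)$ with the fibered product is proved by Gromov compactness as the neck is stretched in one direction and by Lipshitz's gluing theorem in the other; in the second bullet case the argument is, as you say, word for word that of Theorem 5.1 of the link-invariant paper. However, one concrete step in your compactness analysis fails as stated. In the first case you assert that any boundary degeneration appearing in the limit has Maslov index at least $2$ and that this "contradicts $\mu(\phi_1)=1$ and $\mu(\phi_2)=2k$". It does not: $\mu(\phi_2)=2k\geq 2$ whenever $k\geq 1$, so an index-$2$ boundary degeneration can split off from $\phi_2$ leaving a class $\phi_2'$ with $\mu(\phi_2')=2k-2\geq 0$; for $k=1$ this is precisely the "boundary degeneration juxtaposed with a constant" configuration that genuinely occurs (see lemma~\ref{lem:degeneration}) and drives the stabilization formula, so no contradiction among the indices alone can exclude it.

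What excludes such configurations --- and what the paper actually argues --- is the matching condition at the neck. If a boundary degeneration or an extra flow line splits off on the $\Sig_2$ side, then $\la(\phi_2)\neq 0$ forces each such piece to have positive index (at least $2$ for a degeneration), so the surviving matched component $u_2$ represents a class $\phi_2'$ with $\mu(\phi_2')<2k$; but $u_2$ must still satisfy $\rho^v(u_2)=\rho^w(u_1)$, and the fiber of $\rho^v:\sM(\phi_2')\to\Sym^k(\D)$ over a generic divisor has expected dimension $\mu(\phi_2')-2k<0$ (with $u_1$ confined to a one-dimensional family, since $\mu(\phi_1)=1$), so generically no such $u_2$ exists and hence $\phi_2'=\phi_2$. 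Sphere bubbles in this case are handled separately: the domain of a holomorphic sphere is all of $\Sig_2$, and the positive-genus component forces $\la(\phi_2)=0$, contradicting $\la(\phi)\neq 0$. You do mention "the dimension count" in your closing paragraph, but the mechanism you actually invoke in the gluing paragraph is the one that needs this repair; with that correction your proposal reproduces the paper's proof.
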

\begin{proof}
The proof is similar to the proof of theorem 5.1 in \cite{OS-linkinvariant}.
As in that proof we use Lipshitz cylindrical formulation.
However, we keep the same notation for the moduli spaces and the corresponding maps
for the sake of simplicity.\\

The formula for Maslov index follows from excision principle for the linearized
$\dbar$ operator, using cylindrical formulation\cite{Robert-cylindrical}.
For the second part of the theorem, if all components of $\Rr(\tau)$ be genus zero and
$\ell_2>g(\Sig_2)$, the proof of  theorem 5.1 from \cite{OS-linkinvariant} applies word by word.
In the other case, the proof requires some  modification, as follows.
We drop the details and only highlight the differences. For more details, we refer the reader
to \cite{OS-linkinvariant}.
\\

 Suppose that $\Rr(\tau)$ has a component with nonzero genus.
 Consider a sequence of almost complex structures $\{J(t)\}_{t\in[1,\infty)}$
 on $\Sig_1\#\Sig_2$, where $J(t)$ denotes the complex structure determined
 by a pair of generic complex structures $j_1$ and $j_2$ on $\Sig_1$ and
 $\Sig_2$, and by setting the neck-length equal to $t$.
 Let us assume that $\ti{\sM}_{J(t)}(\phi)\neq\emptyset$ as  $t\longrightarrow\infty$.
 Consider a sequence of pseudo-holomorphic curves
 $\{{u}_t\}_{t\in\Z^{+}}$ such that ${u}_t\in{\sM}_{J(t)}(\phi)$.
 Under the assumptions $\mu(\phi_1)=1$ and $\mu(\phi_2)=2k$, and using
 Gromov compactness theorem, a subsequence of this sequence is weakly convergent to
 a pseudo holomorphic representative ${u}_1$ of $\phi_1$
 and a broken flow-line representative of $\phi_2$.
 This broken flow-line can not contain any sphere bubblings,
 since otherwise our assumption on $\Rr(\tau)$ implies that
 $\la(\phi_2)=0$, and thus $\la(\phi)=0$. Hence we may continue
 the argument of Ozsv\'ath and Szab\'o from  here there, and conclude that
 there is a component ${u}_2$ of this broken flow line such that
 $\ti{u}_1$ and $\ti{u}_2$ represents a pre-glued flow line, i.e. that
$${\rho}^{w}({u}_1)={\rho}^{v}({u}_2).$$
Let $\phi_2'$ be the homotopy class represented by ${u}_2$. If $\phi_2'\neq\phi_2$
then the above Gromov limit contains boundary degenerations or other flow lines.
The assumption $\la(\phi_2)\neq 0$ then implies that $\mu(\phi_2')<\mu(\phi_2)=2k$.
Let us consider the  map
$${\rho}^v:{\sM}(\phi_2')\longrightarrow\Sym^{k}(\D).$$
For any point  $\Delta\in\Sym^{k}(\D)$ the moduli space $({\rho}^v)^{-1}(\Delta)$
will have the expected dimension equal to $\mu(\phi')-2k<0$.
Thus for a generic choice of $\Delta\in\Sym^{k}(\D)$, this moduli space is empty.
This observation implies that $\phi_2'=\phi_2$.\\

Thus the Gromov limit of a sequence of holomorphic representative of $\phi$,
as we stretch the neck, is a pre-glued flow line representing
$\phi_1$ and $\phi_2$. Conversely, given a pre-glued flow line, one can obtain a
pseudo-holomorphic representative of $\phi$ in ${\sM}(\phi)$ by the
gluing theorem of Lipshitz \cite{Robert-cylindrical}, as in
the proof of theorem 5.1 from \cite{OS-linkinvariant}. This completes the proof
of theorem~\ref{thm:gluing}.
\end{proof}
\begin{lem}\label{lem:degeneration}
Let $(X,\tau)$ be a balanced sutured manifold represented by  the Heegaard diagram
$(\Sig,\alphas,\betas,\z)$. Let $\phi\in\pi_2(\x,\y)$ be the homotopy class of a Whitney
disk connecting the intersection point $\x$ to $\y$. Assume furthermore
that $\Dcal(\phi)\geq 0$ and  that $\la(\phi)\neq 0$.
If $\mu(\phi)=2$ then $\sM(\phi,t)$ is generically a zero dimensional moduli space.
Furthermore, there is a number $\epsilon>0$ such that for all $t\le\epsilon$ the only
non-empty such moduli spaces $\sM(\phi,t)$ are the moduli spaces corresponding to
$\phi\in\pi_2(\x,\x)$ where $\phi$ is obtained by splicing a boundary degeneration
with Maslov index 2 corresponding to one of the genus zero components of $\Rr^+(\tau)$
and a constant flow line. For any such moduli space we have
\begin{displaymath}
\#\sM(\phi,t)=\begin{cases}
0\ \ \ &\text{if }\ l=1\\
1\ \ \ &\text{if }\ l>0
\end{cases}.
\end{displaymath}
\end{lem}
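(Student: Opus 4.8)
The plan is to mimic the proof of Lemma~6.3 from \cite{OS-linkinvariant}, adapting it to the present more general setting where $\Rr(\tau)$ may have components of positive genus. First I would observe that, since $\Dcal(\phi)\geq 0$ and $\la(\phi)\neq 0$, Lemma~\ref{lem:disk-degeneration-2} (together with the discussion preceding it) controls which classes can degenerate: any boundary bubble appearing in a Gromov limit has domain among $A_1,\dots,A_{k_0}$ or $B_1,\dots,B_{l_0}$, and contributes even Maslov index at least $2$. The generic zero-dimensionality of $\sM(\phi,t)$ for $\mu(\phi)=2$ follows from the standard transversality package of \cite{OS-3m1}, Section~3: cutting down by the point constraint $(t,0)\in\rho^v(u)$ drops the dimension of the unparametrized moduli space by one, yielding a zero-manifold after dividing by translation.

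Next I would analyze the $t\to 0$ limit. As $t$ shrinks, the marked point $v$ is forced onto the boundary of the disk, and by the analysis of Section~5 of \cite{OS-linkinvariant} any limiting configuration must consist of a flow line of $\phi$ together with a boundary degeneration passing through $v$. The energy/area bound of Lemma~\ref{lem:energy-bound} prevents escape of energy, so Gromov compactness applies; the only new subtlety relative to the closed case is ruling out sphere bubbles, which follows since $\la(S)=\la(A_1)\cdots\la(A_k)=\la(B_1)\cdots\la(B_l)$ forces $\la(\phi)=0$ unless $k=k_0=l_0=l$, in which case $\mu(S)=2k>2$ unless $k=1$ (handled separately, exactly as in the proof of Theorem~\ref{differential}). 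Because $v$ lies in the marked-point region near $z_\el\in\partial R_l^+$, the boundary degeneration that absorbs $v$ is a $\betas$-boundary degeneration, and by Lemma~\ref{lem:disk-degeneration-2} with $\mu(\psi)\leq 2$ its domain must be exactly some genus-zero $B_j$. Since the remaining flow line then has Maslov index $0$ and $\x\neq$ anything else is impossible, it must be constant, forcing $\x=\y$ and $\phi=\psi$.

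The count $\#\sM(\phi,t)$ is then pinned down by Lemma~\ref{lem:disk-degeneration-2}: for $\Dcal(\psi)=B_j$ with $l=1$ one gets $\n(\psi)=0$, and for $l>1$ one gets $\n(\psi)=1$; this translates directly into $\#\sM(\phi,t)=0$ or $1$ respectively, once one checks that the point constraint $\rho^v(u)=(t,0)$ for small $t$ picks out the boundary-degeneration stratum with multiplicity one (this is the gluing count, identical to the computation in \cite{OS-linkinvariant}, Lemma~6.3, since the Riemann-mapping-theorem identification of the degeneration moduli space with the group $\Group$ is unaffected by the genus of the other components). I expect the main obstacle to be the careful bookkeeping in this last step: verifying that, after the coherent system of orientations is fixed as in the definition preceding Lemma~\ref{lem:disk-degeneration-2}, the stratum contributing to $\sM(\phi,t)$ is counted with the correct sign and multiplicity, and that no additional strata (e.g. degenerations into $\alphas$-boundary bubbles, or domains like $D_1^-+D_2^-$ of the special-Heegaard-diagram type) sneak in. All of these are excluded by the positivity of $\Dcal(\phi)$ combined with $\la(\phi)\neq 0$ and the Maslov-index constraint $\mu(\phi)=2$, but the argument must be stated with enough care to cover the edge cases $k=1$ and $l=1$ separately, exactly as in the proof of Theorem~\ref{differential}.
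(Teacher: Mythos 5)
Your proposal is correct and follows essentially the same route as the paper: the paper's proof simply observes that, with Lemma~\ref{lem:disk-degeneration-2} in hand (supplying the signed count of boundary degenerations and the exclusion of other bubbling via $\la(\phi)\neq 0$), the argument of Lemma~6.3 of \cite{OS-linkinvariant} carries over verbatim, which is exactly the adaptation you spell out. The only slight imprecision is your justification that the absorbing bubble is a $\betas$-degeneration — in the lemma itself this comes from the $t\le\epsilon$ constraint pushing the marked point to the $\Tb$-side of the strip (the choice of $v$ near $\betas$ is only made later, in the stabilization proof) — but this does not affect the argument.
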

\begin{proof}
Given lemma~\ref{lem:disk-degeneration-2}, the proof is exactly
the same as the proof of lemma 6.3 in \cite{OS-linkinvariant}.
\end{proof}
\begin{lem}\label{lem:sphere}
Consider the Heegaard diagram $(S,\alpha,\beta,\z)$, where $S=S^2$ is the Riemann sphere,
and $\alpha$ and $\beta$ are simple closed curves on $S$ intersecting transversely
in two points $\{x,y\}$, and $\z=\{z_1,z_2,z_3,z_4\}$ where there is one marked point
in each one of the four connected components of $S-\alpha-\beta$.
Fix a generic point $\Delta\in\Sym^{k}(\D)$ for some positive integer $k$. Then we have
$$\sum_{\substack{\phi\in\pi_2^{2k}(a,a)\\
%\mu(\phi)=2k,
n_{z_4}(\phi)=0}}\#\sM(\phi,\Delta)=1,$$
for $a\in\{x,y\}$.
\end{lem}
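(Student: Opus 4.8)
The plan is to reduce the statement to an explicit analysis of the small Heegaard diagram $(S^2,\alpha,\beta,\z=\{z_1,z_2,z_3,z_4\})$, where $\alpha$ and $\beta$ meet in exactly two points $x,y$ and the four regions of $S^2-\alpha-\beta$ are labelled so that $z_4$ lies in a fixed one of them. The four regions, call them $D_1,D_2,D_3,D_4$ with $z_i\in D_i$, satisfy one linear relation $D_1+D_2+D_3+D_4=[S^2]$ among them (as a periodic domain this is the sphere class, of Maslov index $2$). The periodic domains in $\pi_2(a,a)$ are generated by the two bigon-type regions meeting at $a$ together with the two others; concretely, taking $a$ to be a vertex, the two regions adjacent to $a$ on the two sides give classes whose domains are $D_i$ and $D_j$, each a bigon of Maslov index $1$ (by Lipshitz's formula, $\mu=\chi-\ldots$, these small bigons contribute $1$ each), while the other two regions have Maslov index $1$ as well and appear with the relation above. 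So I would first write down the lattice of classes $\phi\in\pi_2(a,a)$ with $n_{z_4}(\phi)=0$ and $\mu(\phi)=2k$ explicitly: setting $n_{z_4}=0$ kills the $D_4$ direction, so such a $\phi$ is a non-negative integer combination $\phi=p\,\psi_1+q\,\psi_2+r\,\psi_3$ of the three bigon/region classes not involving $z_4$, with $\mu=p+q+r=2k$ after accounting for the index contributions, and with $\Dcal(\phi)\geq 0$.

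Second, I would count $\#\sM(\phi,\Delta)$ for a generic $\Delta\in\Sym^k(\D)$. The key geometric input is the gluing/degeneration machinery just set up: $\sM(\phi,\Delta)$ records holomorphic representatives of $\phi$ whose preimage of $v\times\Sym^{\ell-1}$ equals the prescribed divisor $\Delta$, and by the cylindrical picture of Lipshitz a holomorphic representative of a sum of bigons on $S^2$ is (for large neck length or by direct Riemann-mapping analysis) a juxtaposition of holomorphic bigons, each an honest disk with a single boundary branch point whose image can be prescribed. The upshot, exactly as in Ozsv\'ath--Szab\'o's proof of their Lemma~6.4 (which is the $z_4=0$, sphere-target special case of their stabilization analysis), is that among all such $\phi$ there is a unique one whose moduli space $\sM(\phi,\Delta)$ is non-empty for generic $\Delta$, and it contributes exactly one point. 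I would verify this by an index/positivity count: the expected dimension of $\sM(\phi,\Delta)$ is $\mu(\phi)-2k=0$, so only $\mu(\phi)=2k$ classes contribute; then among these, one shows using the domain positivity $\Dcal(\phi)\geq 0$ together with the constraint that the $D_4$-coefficient vanishes that the domain is forced (the "correct" choice being the one sweeping out $k$ copies of the two regions straddling $v$ on the $z_4$-free side), and a Riemann-mapping-theorem / winding-number argument gives the count $1$.

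Third, I would assemble these: summing over the finitely many contributing classes $\phi\in\pi_2^{2k}(a,a)$ with $n_{z_4}(\phi)=0$, all but one have empty $\sM(\phi,\Delta)$ for generic $\Delta$, and the remaining one contributes $+1$ (the sign being $+1$ by the coherent choice of orientation coming from the complex structure on $S^2$, exactly as in Lemma~\ref{lem:disk-degeneration-2} where the preferred orientations make the boundary-degeneration counts equal $+1$), so the total is $1$, independent of $k\geq 1$ and of the choice $a\in\{x,y\}$. This matches the cited statement $\sum \#\sM(\phi,\Delta)=1$.

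\textbf{Main obstacle.} The hard part will be the genericity argument that pins down \emph{uniqueness} of the contributing class and shows all other domains give empty moduli spaces for generic $\Delta$: one must control, among infinitely many a priori candidate classes $\phi$ of the right Maslov index, which ones admit holomorphic representatives hitting a prescribed $k$-point divisor, and this is precisely where the dimension count $\mu(\phi)-2k<0$ for the "wrong" classes must be invoked carefully, together with the fact that sphere bubbling is excluded here (either $n_{z_4}=0$ forces the domain away from $[S^2]$, or one appeals to the genus-zero boundary-degeneration bound of Lemma~\ref{lem:disk-degeneration-2}). Once that is in place, the actual count is a routine Riemann-mapping computation identical to \cite{OS-linkinvariant}, so I would simply cite the proof of their Lemma~6.4 and indicate the (minor) modifications needed to accommodate the extra marked points and the coefficient ring.
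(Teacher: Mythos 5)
Your proposal is correct and follows essentially the same route as the paper, whose entire proof consists of citing Lemma~6.4 of \cite{OS-linkinvariant} together with the coherent-orientation statement of Lemma~\ref{lem:disk-degeneration-2}, exactly as you do in your second and third steps. The extra bookkeeping you sketch about which periodic classes with $n_{z_4}=0$ can contribute is the content of the Ozsv\'ath--Szab\'o argument you cite, so nothing beyond that citation and the orientation remark is needed.
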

\begin{proof}
Using lemma~\ref{lem:disk-degeneration-2}, the proof is exactly the same
as the proof of lemma 6.4 in \cite{OS-linkinvariant}.
\end{proof}

\subsection{Proof of the stabilization formula}
In this subsection we prove proposition~\ref{prop:stabilization}.

\begin{proof}{\bf (of proposition~\ref{prop:stabilization}).}
Let $\spinc\in\SpinC(\ovl{X}_{\hat{\tau}})$ be a $\SpinC$ structure on $\ovl{X}_{\hat{\tau}}$.
Consider a Heegaard diagram $(\Sig,\alphas,\betas,\z)$ for $(X,\tau)$, which is
$i_{\ovl X}^*(\spinc)$-admissible.
Furthermore, assume that  for any positive periodic domain $\Pcal$ we
have the following implication:
$$\Big\langle c_1\big(i_{\ovl X}^*(\spinc)\big),H(\Pcal)\Big\rangle\le 0\ \ \Rightarrow\ \ \la(\Pcal)=0.$$
The existence of such a Heegaard diagram is guaranteed by remark~\ref{remark:admissibility}.
Let $\ov H=(\Sig,\ov{\alphas},\ov{\betas},\ov{\z})$ be the Heegaard diagram for $(X,\ov{\tau})$
obtained by a simple stabilization on $(\Sig,\alphas,\betas,\z)$. We claim that this Heegaard diagram is
$\spinc$-admissible.
Suppose that $\Pcal$ is a positive periodic domain corresponding to $\ov H$
such that $\langle c_1(\spinc),H(\Pcal)\rangle=0$.
Then there are integers $a$ and $b$, and a positive periodic domain $\Pcal_0$ for $H$ such that $$\Pcal=\Pcal_0+aA_{k+1}+bB_{l+1}\ \&\
n_{z_{\kappa}}(\Pcal_0)=n_{z_{\kappa+1}}(\Pcal_0)=n_{z_{\kappa+2}}(\Pcal_0).$$
Thus $\Pcal_0$ may be viewed as a periodic domain associated with the Heegaard diagram
$(\Sig,\alphas,\betas,\z)$. If $n_{z_{\kappa}}(\Pcal_0)=d$ then
%\begin{displaymath}
\begin{align}
\nonumber
\big\langle c_1(\spinc), (i_{\ovl X})_*H(\Pcal_0)\big\rangle
&=\big\langle c_1(i_{\ovl X}^*(\spinc)),H(\Pcal_0)\big\rangle+2d.
\nonumber
\end{align}
%\end{displaymath}
 From here we may conclude
 \begin{displaymath}
 \begin{split}
 &\big\langle c_1(\spinc), (i_{\ovl X})_*H(\Pcal_0)\big\rangle=-2(a+b)\\
 \Rightarrow\ \ &\big\langle c_1(i_{\ovl X}^*(\spinc)),H(\Pcal_0)\big\rangle
 =-2(a+b+d)=-n_{z_{\kappa+1}}(\Pcal_0)\leq 0.
 \end{split}
 \end{displaymath}
Since the Heegaard diagram $(\Sig,\alphas,\betas,\z)$ is $i_{\ovl X}^*(\spinc)$-admissible
in the stronger sense of remark~\ref{remark:admissibility}, we conclude that
$\la(\Pcal_0)=0$ in $\Rin_\tau$. The condition that $z_{\kappa}$ is in the genus zero components
of $\Sig-\alphas$ and $\Sig-\betas$ then implies that $\la(\Pcal)=0$ in $\Rin_{\ov\tau}$.
This proves the $\spinc$-admissibility of the Heegaard diagram $\ov H$.
\\

Let us consider the Heegaard diagram $(\Sig,\ov{\alphas},\ov{\betas},\ov{\z})$ as the connected sum $$\left(\Sig,\alphas,\betas,\z\cup\big\{w\big\}-\big\{z_{\kappa}\big\}\right)
\#\left(S,\alpha_{\ell+1},\beta_{\ell+1},
\big\{v,z_{\kappa},z_{\kappa+1},z_{\kappa+2}\big\}\right),$$
where $S$ is a sphere, $w$ and $v$ are the corresponding
connected sum  points such that $w$ is in the same domain
as $z_{\kappa}$ in $(\Sig,\alphas,\betas,\z)$. If
$\alpha_{\ell+1}\cap\beta_{\ell+1}=\{x,y\}$ then
$\T_{\ov{\alpha}}\cap\T_{\ov{\beta}}=(\Ta\cap\Tb)\times \{x,y\},$
and for any $\x\in\Ta\cap\Tb$ we have
$$\relspinc(\x)=\is\left(\relspinc\left(\x\times\big\{x\big\}\right)\right)
=\is\left(\relspinc\left(\x\times\big\{y\big\}\right)\right)+\PD[\gamma_{\el+1}].$$
Let $C_x$ and $C_y$ be the submodules of
$$\mathrm{CF}\left(\Sig,\ov{\alphas},\ov{\betas},\ov{\z},\spinc;\Ring_\tau[\la_{\el+1}]\right)
=\mathrm{CF}\left(\Sig,\ov{\alphas},\ov{\betas},\ov{\z},\spinc\right)
\otimes_{\Ring_{\ov\tau}}\Ring_\tau[\la_{\el+1}]$$
generated by the intersection points containing $x$ and $y$ respectively.
Thus we have a module splitting
$$\mathrm{CF}\left(\Sig,\ov{\alphas},\ov{\betas},\ov{\z},\spinc;\Ring_\tau[\la_{\el+1}]\right)
=C_x\oplus C_y.$$

First we consider the $C_x$-components of the differential of
the complex on the generators of $C_x$.
Let $\x\times\{x\}$ be a generator of
$C_x$ and $\phi\in\pi_2(\x\times\{x\},\y\times\{x\})$ be the
homology class of a Whitney disk with $\mu(\phi)=1$. We may thus
decompose $\phi$ as $\phi=\phi_1\#\phi_2$ where $\phi_1\in\pi_2(\x,\y)$ and
$\phi_2\in\pi_2(x,x)$. Theorem~\ref{thm:gluing} then implies that
$$\mu(\phi)=\mu(\phi_1)+\mu(\phi_2)-2k=\mu(\phi_1)+2n_{z_{\kappa+1}}(\phi_2),$$
where $k=n_{w}(\phi_1)=n_v(\phi_2)$.
If $\sM(\phi)\neq \emptyset$ for long enough neck-length,
then $\phi_2$  admits holomorphic representatives and
$\Dcal(\phi_2)\ge 0$.  This implies that
$$\mu(\phi_2)-2n_{v}(\phi_2)=2n_{z_{\kappa+1}}(\phi)\ge 0,$$ and that
the equality happens if and only if $n_{z_{\kappa+1}}(\phi)=0$.
If $\mu(\phi_2)-2n_{v}(\phi_2)>0$ then $\mu(\phi_1)\le-1$ and $\sM(\phi_1)$
is generically empty.  Thus $n_{z_{\kappa+1}}(\phi_2)$ should be zero
and $\mu(\phi_2)=2n_{v}(\phi_2)=2k$.
Theorem~\ref{thm:gluing} then guarantees  that for a sufficiently large
connected sum length, we have an identification of $\sM(\phi)$.
\begin{displaymath}
\begin{split}
\sM(\phi)&=
\sM(\phi_1)\times_{\mathrm{Sym}^k(\D)}\sM(\phi_2)\\
&=\Big\{u_1\times u_2\in\sM(\phi_1)\times\sM(\phi_2)\ \big|\ \rho^{w}(u_1)=\rho^{v}(u_2)\Big\}\\
\Rightarrow\ \
\#\widehat{\sM}(\phi)&=\sum_{u_1\in\widehat{\sM}(\phi_1)}\#\Big\{u_2\in\sM(\phi_2)\ \big|\
\rho^{w}(u_1)=\rho^{v}(u_2)\Big\}.
\end{split}
\end{displaymath}
The coefficient of $\y\times \{x\}$ in the expression $\partial(\x\times\{x\})$ in $\mathrm{CF}\left(\Sig,\ov{\alphas},\ov{\betas},\ov{\z},\spinc;\Ring_\tau[\la_{\el+1}]\right)$
is thus equal to
\begin{displaymath}
\begin{split}
\sum_{\substack{\phi_1\in\pi_2^1(\x,\y)\\ \phi_2\in\pi_2(x,x)\\ n_{z_{\kappa+1}}(\phi_2)=0}}
\sum_{u_1\in\widehat{\sM}(\phi_1)}\epsilon(u_1)
\left(\prod_{i=1}^{\kappa}\la_{i}^{n_{z_i}(\phi_1)}\right)\#\Big\{u_2\in\sM(\phi_2)\
\big|\ \rho^{w}(u_1)=\rho^{v}(u_2)\Big\},
\end{split}
\end{displaymath}
where $\epsilon(u_1)$ denotes the sign associated with $u_1\in\ov\Mod(\phi_1)$
via a coherent system of orientations for the Heegaard diagram (which is suppressed from
the notation).
Lemma~\ref{lem:sphere} may be used to compute the interior sum. The total value of the
above sum is thus equal to
$$\sum_{\substack{\phi_1\in\pi_2^1(\x,\y)}}
\sum_{u_1\in\widehat{\sM}(\phi_1)}\epsilon(u_1)
\left(\prod_{i=1}^{\kappa}\la_{i}^{n_{z_i}(\phi_1)}\right),$$
which is the coefficient of $\y$ in $\partial\x$ in
$\mathrm{CF}(\Sig,\alphas,\betas,\z,\spinc)\otimes_{\Ring_\tau}\Ring_\tau[\la_{\kappa+1}]$.\\

With the same argument, the $C_y$-component of the differential of the generators in
$C_y$ is identified with differential of
$C_y=\mathrm{CF}(\Sig,\alphas,\betas,\z,\spinc)\otimes_{\Ring_\tau}\Ring_\tau[\la_{\kappa+1}]$.\\

We now consider the $C_x$-component of the differential of a generator in $C_y$.
For any $\phi\in\pi_2(\x\times\{y\},\y\times\{x\})$ we can
write $\phi=\phi_1\#\phi_2$. By theorem~\ref{thm:gluing} if $\mu(\phi)=1$
then $\mu(\phi_1)+\mu(\phi_2)-2n_{v}(\phi_2)=1$.
By Lipshitz' Index formula
we have $$\mu(\phi_2)=2n_v(\phi_2)+2n_{z_{\kappa+1}}(\phi_2)+1.$$
This implies
that $\mu(\phi_2)-2n_v(\phi_2)\ge 1,$ and that the equality holds if and only if
$n_{z_{\kappa+1}}(\phi_2)=0$.  Thus this last equality should be satisfied
 and $\mu(\phi_1)=0$.
Hence $\phi_1$ is constant, $\mu(\phi_2)=1$ and $n_v(\phi_2)=0$.
These conditions imply that the possible domains for $\phi_2$ are
two different bi-gons in $S$ connecting $y$ to $x$, which contain $z_{\kappa}$ and
$z_{\kappa+2}$ respectively. For either of these bi-gons
$\widehat{\sM}(\phi_2)$ consists of one element, while the orientation
assignment for these two bi-gons is different.
Using the relation $\ov f(\la_\el)=\ov f(\la_{\el+2})$ in $\Ring[\la_{\el+1}]$,
we may thus conclude that the coefficient of $\y\times\{x\}$
in $\partial \left(\x\times \{y\}\right)$
is zero, i.e. the $C_x$-component of the differential of the generators
in $C_y$ is trivial.\\

Finally, we consider the $C_y$-component of the differential of a
generator in $C_x$. Again, degenerate
$\phi\in\pi_2(\x\times\{x\},\y\times\{y\})$ with $\mu(\phi)=1$
and $\la(\phi)\neq 0$ as the connected sum $\phi=\phi_1\#\phi_2$.
We thus have $\mu(\phi_1)+\mu(\phi_2)-2n_{v}(\phi_2)=1$, implying
$\mu(\phi_2)-2n_{v}(\phi_2)\le1$. By Lipshitz' index  formula  we have
$$\mu(\phi_2)=2n_{v}(\phi_2)+2n_{z_{\kappa+1}}(\phi_2)-1,$$
which implies that $\mu(\phi_2)$ is an odd number and
$\mu(\phi_2)-2n_{v}(\phi_2)\ge-1$. Thus $\mu(\phi_2)-2n_{v}(\phi_2)$
is equal to 1 or -1.\\

If $\mu(\phi_2)-2n_{v}(\phi_2)=1$ then $\mu(\phi_1)=0$.
Thus $\phi_1$ is constant and $\Dcal(\phi)$ is the bi-gon
containing $z_{\kappa+1}$. In this case $\#\widehat{\sM}(\phi)=1$.
Thus the corresponding component of the differential, as a map from
$C_x$ to $C_y$, is given by
\begin{displaymath}
\begin{split}
&\partial_{xy}^1:C_x\lra C_y\\
&\partial_{xy}^1(\x\times\{x\}):=\la_{\kappa+1}(\x\times\{y\}).
\end{split}
\end{displaymath}

The second possibility is the case where $\mu(\phi_2)-2n_{v}(\phi_2)=-1$.
If $\mu(\phi_2)=n_{v}(\phi_2)=1$ then $\mu(\phi_1)=2$ and $n_{w}(\phi_1)=1$.
If furthermore $\Rr(\tau)$ has at least one component with nonzero genus
then by theorem~\ref{thm:gluing} for sufficiently large connected sum
length $\sM(\phi)$ is identified with
\begin{align}
\nonumber
\sM(\phi_1)\times_{\D}\sM(\phi_2)
&=\Big\{u_1\times u_2\in\sM(\phi_1)\times\sM(\phi_2)\ \big|\
\rho^{w}(u_1)=\rho^{v}(u_2)\Big\}\\\nonumber
&=\Big\{u_1\times u_2\in\sM(\phi_1)\times\sM(\phi_2)\ \big|\
\rho^{w}(u_1)=u_2^{-1}(v)\Big\}.
\end{align}
Now $\mu(\phi_2)=n_v(\phi_2)=1$ implies that the domain of $\phi_2$
is the bi-gon in $S$ containing $v$ and thus it has a unique holomorphic
representative up to translation. We can fix a holomorphic representative
$u_2$ such that $u_2^{-1}(v)=(t,0)$. Using an appropriate
system of coherent orientations we thus have
\begin{align}
\nonumber
\#\widehat{\sM}(\phi)
&=-\#\Big\{u_1\in\sM(\phi_1)\ \big|\ \rho^{w}(u_1)=(t,0)\Big\}\\\nonumber
&=-\#\sM(\phi_1,t)
\end{align}
Let us now assume that the point $v$ is chosen very close to the curve $\beta$.
By lemma~\ref{lem:degeneration}, for $t$ sufficiently large  $\sM(\phi_1,t)$ is nonempty
if and only if $\phi_1\in\pi_2^{\beta}(\x)$ is the class of a $\beta$ boundary degeneration.
If furthermore $l>1$ then $\#\sM(\phi_1,t)=1$. Thus this case contributes to the
$C_y$-component of the restriction of the differential to $C_x$ via a map
\begin{displaymath}
\begin{split}
&\partial_{xy}^2:C_x\lra C_y\\
&\partial^2_{xy}\left(\x\times\{x\}\right)
=-\left(\prod_{\substack{\gamma_{\el}\neq\gamma_i\in\partial R_{l}^+}}
\la_{i}\right).\left(\x\times\{y\}\right).
\end{split}
\end{displaymath}
Similarly, if $l=1$ then $\partial^2_{xy}(\x\times\{x\})=0$.\\

To deal with the other terms corresponding the homotopy
classes $\phi$ with $n_v(\phi_2)>1$ we define a one parameter
family of connected sum points $v(r)$ on $S$ such that when
$r$ goes to infinity, $v(r)$ tends towards a point $v_{\infty}$
on the curve $\beta$. \\

Let $\sM_{r}(\phi)$ be the moduli space of holomorphic
representations of $\phi$ when we used the connected
sum point $v(r)$ in $S$. Assume that  for a sequence
$\{r_i\}$ converges to infinity, the moduli space
$\sM_{r_i}(\phi)\neq \emptyset$ for all choice of connected
sum length. For sufficiently large connected sum length the
moduli space $\sM_{r_i}(\phi)$ is identified with the fibered
product $\sM(\phi_1)\times_{\Sym^{k}(\D)}\sM(\phi_2)$.
Consider a sequence $u_1^{i}\times u_2^i$ in the fibered product.
Let $\ovl{u}_1^{\infty}$ and $\ovl{u}_2^{\infty}$ be Gromov
limits of $\{u_1^i\}$ and $\{u_2^i\}$.
The assumption $\mu(\phi_1)=2$ implies that there are
three possible types for the limit $\ovl{u}_1^{\infty}$.
The limit can be a holomorphic disk or a singly broken
flow line or it can contain a boundary degeneration.
If it contain a boundary degeneration, $\la(\phi)\neq 0$
implies the remaining component has Maslov index zero
and it should be constant. Thus $k=1$ and this
situation is already considered in the previous case.\\

If $\ovl{u}_1^{\infty}$ is not a broken flow line and
it is a holomorphic disk, $\ovl{u}_2^{\infty}$ has a
component $u_2^{\infty}$ such that
$\rho^{w}(\ovl{u}_1^{\infty})=\rho^{v}(u_2^{\infty})$.
Since $v(r_i)$ tends toward $v_{\infty}$ on $\beta$,
$\rho^{v}(u_2^{\infty})$ includes some points on $\{0\}\times\R$.
Thus for large $i$, $\rho^w(u_1^{i})$ contains points
sufficiently close to $\{0\}\times\R$. By
lemma~\ref{lem:degeneration} the holomorphic curve
$u_1^i$  should be a boundary
degeneration for $i$ sufficiently large. This implies that $k=1$
and again, we are within the cases considered earlier, and there
is no new contribution to the $C_y$-component of the
restriction of the differential to $C_x$ from this case.\\

Finally, if $\ovl{u}_1^{\infty}$ is a broken
flow line i.e. it is of the form $\ovl{u}_1^{\infty}=a\star b$
and $\mu(a)=\mu(b)=1$, then  $\ovl{u}_2^{\infty}$ degenerates,
correspondingly, as  $\ovl{u}_2^{\infty}=a'\star b'$. The
Maslov index of $\phi_2$ is odd, thus one of $a'$ and $b'$
has odd Maslov index. Let us assume that $\mu(a')$ is odd. Then
$(a')^{-1}(v^{\infty})$ contains some points on $\{0\}\times\R$.
 If $a$ is the holomorphic representative
of a homology class $\phi_1'$ of a Whitney disk, then for $r$ sufficiently
large $\sM(\phi'_1)$ includes holomorphic representatives $a^{r}$ such
that $\rho^{w}(a^{r})$ contains points of distance less than $1/r$ to
$\{0\}\times\R$. Since  $\mu(\phi_1')=1$,
$\phi_1'$ has finitely many
holomorphic representative up to translation. Thus for
any holomorphic representative $u$ of $\phi_1'$,
$\rho^{w}(u)$ does not include points arbitrary
close to $\{0\}\times\R$, since $w$ is not on $\betas$.\\

Gathering the above considerations, we observe that if
either of the two assumptions in the second part of theorem~\ref{thm:gluing}
is satisfied the $C_y$ component of the restriction of the differential
to $C_x$ is given by the map
\begin{displaymath}
\begin{split}
\partial_{xy}&:C_x\lra C_y\\
\partial_{xy}\left(\x\times\{x\}\right)
&=\partial^1_{xy}\left(\x\times\{x\}\right)+\partial^2_{xy}\left(\x\times\{x\}\right)\\
&=\la_{\kappa+1}(\x\times\{y\})-\left(\prod_{\substack{\gamma_{\el}\neq\gamma_i\in\partial R_{l}^+}}
\la_{i}\right).\left(\x\times\{y\}\right)\\
&=\left(\la_{\kappa+1}-\la\right)(\x\times\{y\}).
\end{split}
\end{displaymath}

The proof in the case where all the components of
$\Rr(\tau)$ have genus zero and $\ell_2=g(\Sig)$ is
exactly the same as the last part of the proof of
proposition 6.5 in \cite{OS-linkinvariant}. This completes
the proof of proposition~\ref{prop:stabilization}.
\end{proof}

\newpage
\section{A triangle associated with surgery}\label{sec:exact-triangle}
 \subsection{The triangle associated with the surgery Heegaard quadruple}
Let us assume that
$$H=\Big(\Sig,\alphas=\{\alpha_1,...,\alpha_\ell\},\betas=\{\beta_1,...,
\beta_\ell\},\z_0=\{w_1,w_2,z_3,...,z_\el\}\Big)$$
is a Heegaard diagram for the sutured manifold $(X,\tau)$ {\emph{compatible}} with the
surgery $(X,\tau)\rightsquigarrow (X,\tau_\lambda)$, where the two first marked points
$w_1$ and $w_2$ correspond to the sutures $\gamma_1$ and $\gamma_2$ which are cut by
the surgery simple closed curve $\lambda$.
More precisely, assume that the
marked points $w_1$ and $w_2$
are both placed very close to a  point  on $\beta_\ell$, such that
$\beta_\ell$ separates them from each other.
We will assume that
$$\Sig-\alphas=\coprod_{i=1}^k A_i,\ \ \&\ \ \Sig-\betas=\coprod_{j=1}^l B_j$$
are the connected components of  $\Sig-\alphas$ and
$\Sig-\betas$, respectively.
We may assume that $A_i$ corresponds to $R_i^-\subset \Rr^-(\tau)$ and
that $B_j$ corresponds to $R_j^+\subset \Rr^+(\tau)$ for
$i=1,...,k$ and $j=1,...,l$.
We will also assume that $w_1,w_2\in A_1\cap B_1$, that $A_1$ and $B_1$ are both
surfaces of genus zero, and that the only marked points in $B_1$ are $w_1$ and $w_2$.
All these assumptions may be achieved if $\gamma_1$ and $\gamma_2$ are on the
boundary of $R_1^-$ and $R_1^+$, where both these components have trivial genus and
$R_1^+$ is a cylinder (with $\gamma_1$ and $\gamma_2$ its boundary components).
We will  assume that $z_i$ corresponds to $\gamma_i$ for
$3\leq i\leq \el$.
The surgery $(X,\tau)\rightsquigarrow (X,\tau_\lambda)$ is determined by
 a simple closed curve $\lambda$   on $\partial X$ which
cuts $\gamma_1$ and $\gamma_2$ is a pair of transverse intersection points
and remains disjoint from the curves in $\{\gamma_3,...,\gamma_\el\}$.
This curve determines a simple closed curve on $\Sig$, still denoted by $\lambda$,
which contains $w_1$ and $w_2$ as close-by points. We may assume that  the only intersection point of
this curve with the curves in $\betas$ is at the point $p\in\beta_\ell$,
located in the middle of the shorter arc on $\lambda$ with end points $w_1$ and $w_2$.
Moreover, we may assume that
$\lambda$ is completely included in the closure $\ovl A_1$ of the sub-surface
$A_1$ of $\Sig$.
Such a Heegaard diagram is called compatible with the
surgery $(X,\tau)\rightsquigarrow (X,\tau_\lambda)$.
\\

Suppose that $\lambda$ is as above.
A neighborhood $N(\Lambda)$ of the union of curves
$$\Lambda=\beta_\ell\cup \lambda\subset \Sig$$
may be identified with the complement of a very
 ball, or in fact a single point,
in the standard torus $T=S^1\times S^1$, which will be denoted by $T^\circ$.
The covering of $T$ by $\R^2$ gives a covering of $T^\circ $ by $\R^2\setminus \Z^2$.
Any line with rational slope and not passing through the lattice points in $\Z^2$
gives a simple closed curve in $T^\circ$, and thus in the neighborhood $N(\Lambda)$
of $\Lambda$. If the slope of the line is $p/q\in\Q$, we may denote this
simple closed curve by $\mu(p/q)$. Thus, $\mu(p/q)$ is homologous to
$p\beta_\ell+q\lambda$. Any such simple closed curve is called a surgery
curve. If $p/q$ is an integer, $\mu(p/q)$ determines a surgery on $(X,\tau)$
of the type discussed in section~\ref{sec:surgery}. More generally,
for any rational number $p/q\in\Q$, the simple closed  curve
$\mu(p/q)$ determines a sutured manifold,
which plays the role of Morse surgeries on knots inside closed three-manifolds.
Such curves will be called surgery curves in short.
\\

Let us assume that $\mu_0,\mu_1$ and $\mu_2$ are three surgery curves for the
Heegaard diagram $H$. Let us assume
$$m_0=\#\big(\mu_1\cap\mu_2\big),\ \ m_1=\#\big(\mu_2\cap\mu_0\big)
,\ \ \&\ \ m_2=\#\big(\mu_0\cap\mu_1\big)$$
are the algebraic intersection numbers of these curves. We will assume that
the three curves (and the corresponding rational surgery coefficients) are
chosen so that $m_0,m_1$ and $m_2$ are all positive integers.
Let us assume
\begin{displaymath}
\begin{split}
\p_0:=\mu_1\cap\mu_2&=\big\{p_0=p_0^1,...,p_0^{m_0}\big\},\ \
\p_1:=\mu_2\cap\mu_0=\big\{p_1=p_1^1,...,p_1^{m_1}\big\},\\
&\ \ \&\ \ \p_2:=\mu_0\cap\mu_1=\big\{p_2=p_2^1,...,p_2^{m_2}\big\}.
\end{split}
\end{displaymath}
We assume that the order of appearance of the points on the first
curve in any of the above intersections is the same as the
cyclic order determined by the indices. Furthermore, suppose  that
the three marked points $p_0,p_1$ and $p_2$
are the vertices of a very
small triangle $\Delta$, which is one of the connected components in
$$\Sig-\alphas-\big\{\beta_1,...,\beta_{\ell-1}\big\}
-\big\{\mu_0,\mu_1,\mu_2\big\}.$$
We may assume that the intersection point $p$ is included in $\Delta$ as an interior marked point.\\

We may choose the marked points $z_0,z_1$ and $z_2$ outside $\Delta$ and very close to its edges,
so that $z_0$ is close to the edge $e_0$ connecting $p_1$ to $p_2$, $z_1$ is close to the edge
$e_1$ connecting $p_2$ to $p_0$, and $z_2$ is close to the edge $e_2$ connecting $p_0$ to $p_1$.
The notation is illustrated in figure~\ref{fig:HD-012}.
We will denote  by $\z$ %and $\uu$
the following set of marked points
$$\z=\big\{z_0,z_1,...,z_\el,p\big\}.$$

\begin{figure}[ht]
\def\svgwidth{10cm}
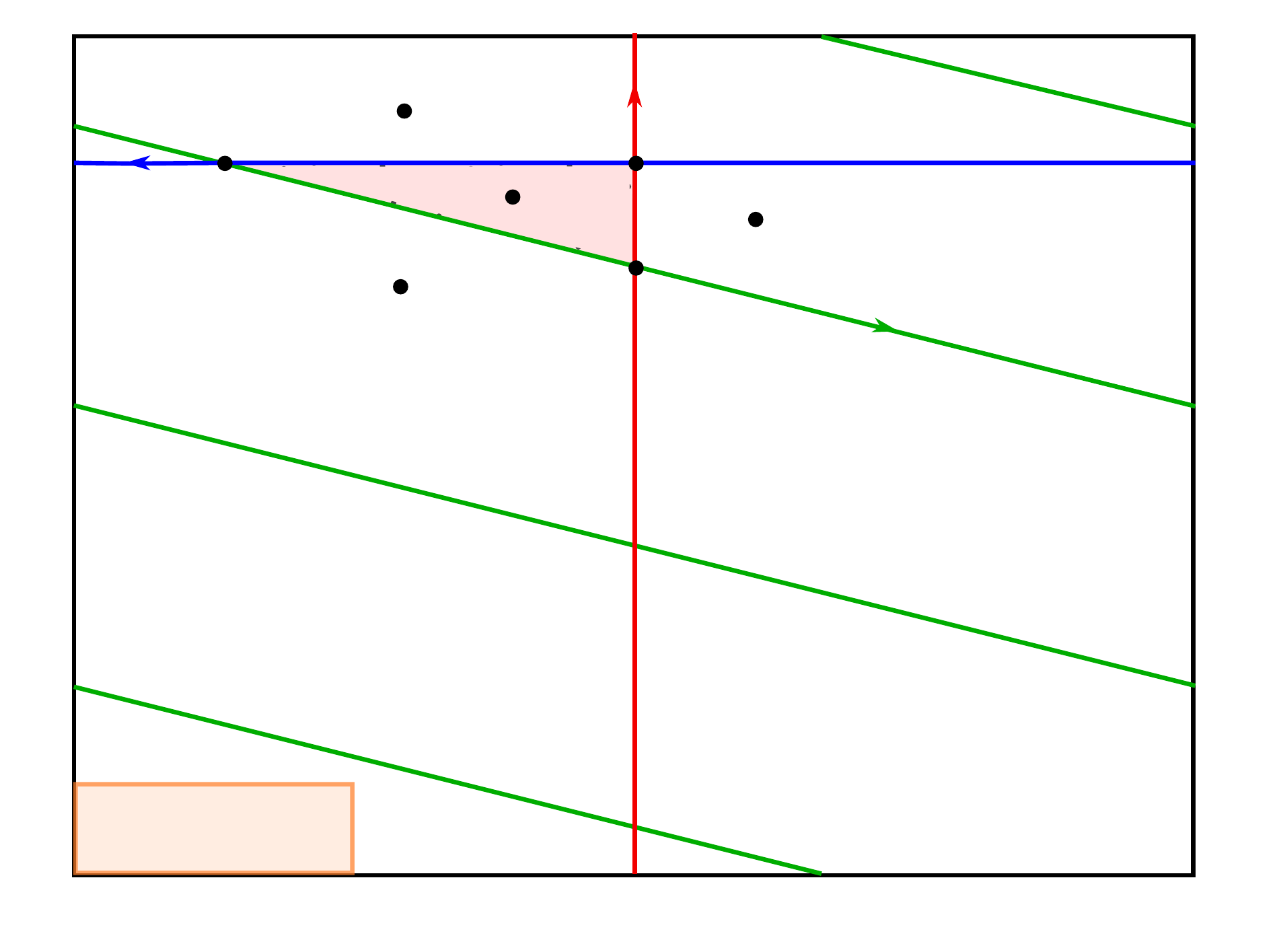
\caption{A neighborhood of the curves $\mu_0,\mu_1$ and $\mu_2$ is illustrated.
One should take the connected sum of the torus obtained by identifying the opposite
edges of the rectangle, with another Riemann surface to obtain the Heegaard surface
$\Sig$. The attaching circle of the connected sum tube lands in the shaded
area in the lower left corner of the figure. The $\alpha$ curves live
close to the boundary of the rectangle, or on the Riemann surface
which is attached to this torus.
The marked points $\{p,z_0,z_1,z_2\}$ and the intersection points $p_0,p_1$ and
$p_2$ are illustrated. For this picture $m_0=m_2=1$ while $m_1=3$.
}\label{fig:HD-012}
\end{figure}

Consider the Heegaard diagrams
\begin{displaymath}
\begin{split}
H_{i}=\Big(\Sig,\alphas,\betas^{i}&=\{\beta_1^i,...,\beta_{\ell-1}^i,\mu_i\},\z\Big),
\ \ i\in\{0,1,2\},
\end{split}
\end{displaymath}
where we assume that $\beta_j^i$  are small Hamiltonian isotopes of the curve
$\beta_j$, for $i=0,1,2$, so that any pair of curves in $\{\beta_j^0,\beta_j^1,\beta_j^2\}$
intersect each-other
in a pair of transverse canceling intersection points for $j=1,...,\ell-1$.
We would like to study the Heegaard quadruple (with $\el+2$ marked points)
$$R=(\Sig,\alphas,\betas^0,\betas^1,\betas^2,\z)$$
in this section and construct a triangle of chain complexes associated with it,
which generalizes the exact triangles associated with surgery on knots in the
context of Heegaard Floer theory of closed three-manifolds and knots inside them.\\

The Heegaard diagram $(\Sig,\alphas,\betas^i,\z)$, for $i=0,1,2$ determines a sutured manifold
which will be denoted by $(Y,\vsi^i)$.
In fact, instead of gluing a disk to $\mu_i$, one may fill out the suture $\gamma_p$
corresponding to the marked point $p$ and obtain the same three manifold $Y$.
The identification is illustrated in figure~\ref{fig:filling-suture-vs-curve}.
Thus the three-manifold $Y$ does not depend on $i$,
while the sutured manifold structure $\vsi^i$ is determined by $i$.
The above identification of the three manifold $Y$
%, and the choice of the intersection points $p_0,p_1$ and $p_2$ in the Heegaard diagram,
gives an identification of the
spaces of relative $\SpinC$ structures as well:
\begin{equation}\label{eq:SpinC(Y)}
\RelSpinC(Y,\vsi):=\RelSpinC(Y,\vsi^0)=\RelSpinC(Y,\vsi^1)=\RelSpinC(Y,\vsi^2).
\end{equation}
Note that $\RelSpinC(Y,\vsi)$ is just a notation we use for this common
space of relative $\SpinC$ structures, and that the identification of the
$\SpinC$ spaces is not natural.\\

\begin{figure}[ht]
\def\svgwidth{8.67cm}
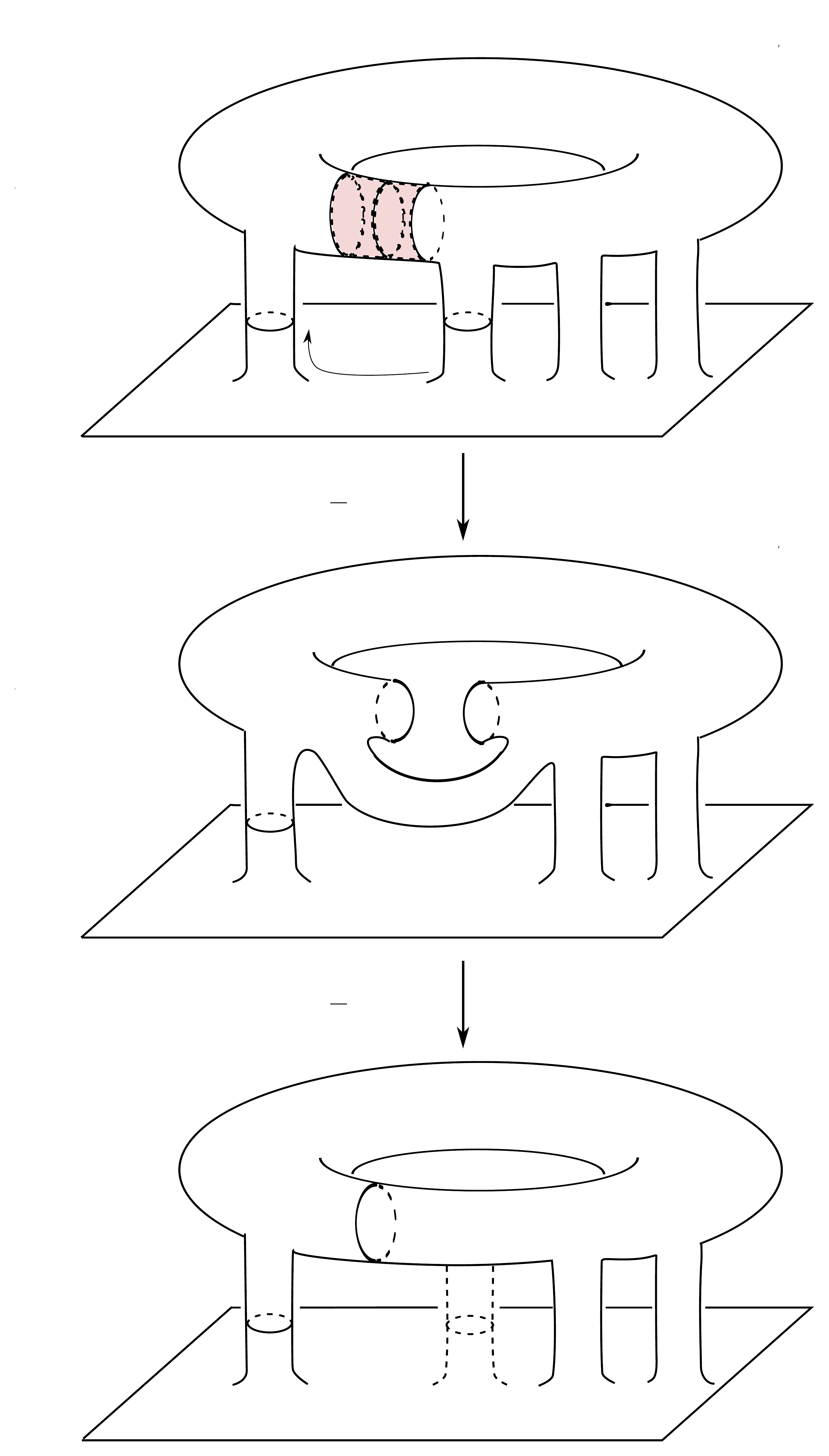
\caption{ Instead of attaching a $2$-handle along the curve $\mu_i$,
one may fill in the suture which corresponds to the marked point $p$.
In part A of the figure, a $2$=handle is attached to $\mu_i$ (think of
$X$ as the three manifold outside the torus and above the plane illustrated in this picture).
Then we may slide the $1$-handle corresponding to $p$ over the $1$-handle
corresponding to $z_i$, as illustrated in part B. The result, after smoothing
the appropriate corners, is the picture illustrated in part C, in which
the suture corresponding to $p$ is filled out and instead, no $2$-handle is attached
to $\mu_i$.}\label{fig:filling-suture-vs-curve}
\end{figure}

For $i$ a cyclic index
in $\frac{\Z}{3\Z}=\{0,1,2\}$, let $(X,\tau^i)$ be the sutured manifold obtained from
$(Y,\vsi^i)$ by filling out the sutures corresponding to $z_{i+1}$ and $z_{i+2}$ (note that
we are taking indices modulo $3$).
The intersection points $p_i\in\mu_{i+1}\cap\mu_{i+2}$, for $i
\in \frac{\Z}{3\Z}=\{0,1,2\}$
determine an identification of three subsets
$$\ovl\Ss_i\subset \SpinC(\ovl{Y}^{\vsi^i})=\SpinC(\ovl{X}^{\tau^i}),\ \ i\in\{0,1,2\}.$$
Let us denote these identified subsets by $\SpinC(\ovl X)$. Again in a sense, we are
abusing the language with this notation, at least when $\la$ is not null-homologous.
We will fix a class $\spinc\in\SpinC(\ovl X)$ for the rest of this section.
We will assume that the Heegaard diagrams $H_i,\ i=0,1,2$ are
$\spinc$-admissible. When $\la$ is null-homologous this is guaranteed if
the Heegaard diagram $H$ is $\spinc$-admissible.
In fact, we will drop the admissibility issues, as well as orientation issues,
from our discussion in the remainder of this section. Taking care of these issues is
completely straight forward, and follows the lines of the arguments given in the earlier
sections.
\\

The algebra $\Ring_{\vsi^i}$ is independent of $i$, and will  be denoted by
$\ov\Ring$.
Let us denote the generator
corresponding to the marked point $z_j$ by $\la_j$, for $j=0,1,...,\el$.
The generator associated with the marked point $p$ will be denoted by $\la_p$.
We would like to consider the following quotient of $\ov\Ring$:
$$\Ring=\frac{\ov\Ring}{\big\langle\la_p-(\la_0^{m_0-1}\la_1^{m_1-1}\la_2^{m_2-1})
\big\rangle_{\ov\Ring}}.$$
Furthermore, let us denote the generator associated with the marked point $z_j$
in $\Ring_{\tau^i}$ by $\zeta_j$ for $j=3,...,\el$, and denote the
generator associated with $w_1$ and $w_2$ by $\zeta_1$ and $\zeta_2$ respectively.
Note that
$\Ring_{i}=\Ring_{\tau^i}$ does not depend on $i\in\{0,1,2\}$, either.
Each Heegaard diagram $H_i$ determines an embedding of $\Ring_{i}$ in $\Ring$.
More precisely, we may define
\begin{displaymath}
\imath^i:\Ring_{i}\ra \Ring,\ \ \
\imath^i(\zeta_j)
=\begin{cases}
\la_i\ \ \ &\text{if }j=1\\
\frac{\la_p\la_0\la_1\la_2}{\la_i}\ \ \ &\text{if }j=2\\
\la_j\ \ \ &\text{if }3\leq j\leq \el\\
\end{cases}.
\end{displaymath}
 As mentioned earlier, $\Ring_0,\Ring_1$ and
$\Ring_2$ are isomorphic. However, the index is used to distinguish them as sub-rings of
$\Ring$, using the embedding $\imath^i:\Ring_i\ra \Ring$.\\

Let $\betas_0$ denote the set of curves $\{\beta_1,...,\beta_{\ell-1}\}$,
$\D_\alpha$ denote a set of $\ell$ copies of $D^2\times [-\epsilon,\epsilon]$
(for some small positive real number $\epsilon$) corresponding to the curves in $\alphas$
and $\D_{\beta_0}$ denote a set of $\ell-1$ copies of $D^2\times [-\epsilon,\epsilon]$
 corresponding to curves in $\betas_0$.
 Denote small tubular neighborhoods of the curves in $\alphas$ and the curves in $\betas_0$ by
 $\mathrm{nd}(\alphas)$ and $\mathrm{nd}(\betas_0)$, respectively. These neighborhoods
 may be identified with subsets of $\partial \D_\alpha$
 and $\partial \D_{\beta_0}$ respectively.
Under the identification of $Y$ with the three-manifold
$$\Big([0,1]\times\Sig\setminus (\z-\{p\}) \Big)
\bigcup_{\mathrm{nd}(\alpha)\times\{0\}}\D_\alpha\bigcup_{\mathrm{nd}(\beta_0)\times\{1\}}\D_{\beta_0},$$
each marked point $z_j$ determines an oriented simple closed curve on the boundary of $Y$. The Poincar\'e
dual of this curve determines an element $\chi_j\in\Ht^2(Y,\partial Y;\Z)$
for $3\leq j\leq \ell$. For $i\in\{0,1,2\}$ we will denote the element of
$\Ht^2(Y,\partial Y;\Z)$ corresponding to the marked point $z_i$ by $\eta_i$.
The assumptions on the Heegaard diagram imply that
$$\eta_0+\eta_1+\eta_2=0.$$
The Poincar\'e duals of the curves corresponding to the marked point $z_j$ and $p$
in  $(Y,\vsi^i)$ will be denoted by $\chi(i,j)$ and $\chi(i,p)$ respectively,
for $i=0,1,2$ and $0\leq j\leq \ell$. Furthermore, let $\chi_i$ denote
the Poincar\'e dual $\PD[\mu_i]$ of the simple closed curve $\mu_i\subset \partial Y$ for $i=0,1,2$.
One may check that
\begin{displaymath}
\begin{split}
&\chi(i,j)=\begin{cases}
\chi_j\ \ \ &\text{if }j\neq i\ \&\ j\in\{3,...,\el\}\\
\eta_j\ \ \ &\text{if }j\neq i\ \&\ j\in\{0,1,2\}\\
\chi_i+\eta_i\ \ \ &\text{if } j=i
\end{cases}\\
&\chi(i,p)=-\chi_i.
\end{split}
\end{displaymath}
%We will assume that a common multiple of $\PD[\mu_i]$ for $i=0,1,2$ exists
%in $\Ht^2(Y,\partial Y;\Z)$. This common multiple will be denoted by $\ov\chi$,
%and we will assume that
%$$\ov\chi=n_0\PD[\mu_0]=n_2\PD[\mu_1]=n_2\PD[\mu_2].$$
%This assumption is satisfied, with non-trivial coefficients $n_i$,
%if the surgery simple closed curve $\lambda$ is null homologous in
%$\Ht_1(X,\Z)$, or equivalently in $\Ht_1(Y,\Z)$.
%\\

Associated with any of the Heegaard diagrams $H_i,\ i=0,1,2$ (and independent of $i$)
we define a filtration map
\begin{displaymath}
\begin{split}
&\chi:G(\Ring)\lra \Ht^2(Y,\partial Y;\Z),\\
&\chi(\la_j):=\begin{cases}
\chi_j\ \ \ &\text{if }j\in\big\{0,1,...,\el\big\}\\
-(\chi_0+\chi_1+\chi_2)\ \ \ &\text{if } j=p
\end{cases}.
\end{split}
\end{displaymath}
Note that with this assignment we may compute
$$\chi\circ \imath^i\Big(\prod_{j=1}^\el\zeta_j^{i_j}\Big)
=(i_1-i_2)\chi_i+\sum_{j=3}^\el i_j\chi_j,\ \ \forall\ i\in\big\{0,1,2\big\}.$$
This implies that the filtration $\chi:G(\Ring)\ra \Ht^2(Y,\partial Y;\Z)$
is compatible with the filtration of $G(\Ring_i)$ by $\Ht^2(X,\partial X;\Z)$. \\

Let us consider the following quotient of $\RelSpinC(Y,\vsi)$:
$$\Ss=\RelSpinC(X,\tau):=
\frac{\RelSpinC(Y,\vsi)}{\big\langle\eta_0,\eta_1,\eta_2\big\rangle_\Z}
={\SpinC(X,\tau)}.$$
The last equality follows since $\RelSpinC(X,\tau^i)$ is obtained from
$\RelSpinC(Y,\vsi^i)$ by setting trivial the sutures corresponding to
$z_j$ with $j\in\{0,1,2\}-\{i\}$. Thus, it is equal to the quotient of
$\RelSpinC(Y,\vsi^i)$ by the action of $\eta_j$, with $j\in\{0,1,2\}-\{i\}$.
Since we have $\eta_0+\eta_1+\eta_2=0$, this means that  $\eta_0,\eta_1,$ and $\eta_2$
act trivially on $\RelSpinC(X,\tau^i)$.\\

Correspondingly, consider the following $\Z$ module associated with
the three-manifold $Y$:
$$\Hbb:=\frac{\Ht^2(Y,\partial Y;\Z)}{\big\langle\eta_0,\eta_1,\eta_2\big\rangle_\Z}
={\Ht^2(X,\partial X;\Z)}.$$
We continue to denote the image of $\chi_i\in\Ht^2(Y,\partial Y;\Z)$ in $\Hbb$ by $\chi_i$.
Clearly, $\Hbb$ acts on $\Ss$.
 From the definition, we also have a natural quotient map
$$\RelSpinC(Y,\vsi)\lra\RelSpinC(X,\tau)=\Ss$$
which will  be denoted by $(.)$ (thus, this
map sends a relative $\SpinC$ class $\relspinc$ to its class
$(\relspinc)\in\Ss$.\\

The filtration map $\chi:G(\Ring)\ra \Ht^2(Y,\partial Y;\Z)$ may be composed with
the quotient map from $\Ht^2(Y,\partial Y;\Z)$ to $\Hbb$ to define a new filtration map,
yet denoted by $\chi$.
\\

For $i=0,1,2$, consider the filtered $\tab$ chain complex
\begin{displaymath}
\begin{split}
\E_i(\spinc):&=%\frac{\Z}{m_i\Z}\otimes_\F
\CFT(Y,\vsi^i)\otimes_{\ov \Ring}\Ring=%\frac{\Z}{m_i\Z}\otimes_\F
\CFT(X,\tau^i,\spinc)\otimes_{\Ring_{\tau^i}}\Ring\\
&=\Big\langle  \x\ \big|\ \x\in\Ta\cap\mathbb{T}_{\beta^i},\ \
 \&\ \relspinc_\z(\x)\in\spinc\Big\rangle_\Ring.
\end{split}
\end{displaymath}
The set of marked points $\z=\{z_0,...,z_\el,p\}$
defines a map
\begin{displaymath}
\begin{split}
&\la_\z:\coprod_{i=0}^3\ \ \coprod_{\x,\y\in\Ta\cap\mathbb{T}_{\beta^i}}\pi_2(\x,\y)\lra G(\Ring)\\
&\la_\z(\phi):=\la_p^{n_p(\phi)}.\prod_{i=0}^\el \la_i^{n_i(\phi)}
=\la_p^{n_p(\phi)}\la_0^{n_{z_0}(\phi)}\la_1^{n_{z_1}(\phi)}...\la_\el^{n_{z_\el}(\phi)}.\\
\end{split}
\end{displaymath}
The differential $\partial_i$ of the complex $\E_i(\spinc)$,
as an $\Ring$-module homomorphism, is  defined by
\begin{displaymath}
\begin{split}
\partial_{i}\big(\x\big):=
\sum_{\y\in\Ta\cap\mathbb{T}_{\beta^i}}
\sum_{\substack{\phi\in\pi_2^1(\x,\y)}}
\big(\m(\phi)\la_\z(\phi)\big)\y.
\end{split}
\end{displaymath}
Here $n_i(\phi)$ denotes the intersection number $n_{z_i}(\phi)$, and $\pi_2^1(\x,\y)\subset \pi_2(\x,\y)$
consists of all homotopy types $\phi$ of Whitney disks connecting $\x$ to $\y$ such that $\mu(\phi)=1$.
%For $\x\in\Ta\cap\mathbb{T}_{\beta^i}$, and $\la=\la_p^j\la_0^{i_0}...\la_\el^{i_\el}\in\Ring$,
%we will sometimes denote the generator
%$\la. \x$ by $[\x,i_0,i_1,...,i_\el;j]$.\\
We  define a map from the set of generators of $\E_i(\spinc)$ to
$\Ss$ by setting
\begin{displaymath}
\begin{split}
&\relspinc^i: G(\Ring)\times (\Ta\cap\mathbb{T}_{\beta^i})
\lra \Ss\\
&\relspinc^i(\la \x):=(\relspinc(\x))+\chi(\la).
\end{split}
\end{displaymath}
Abusing the notation, we will sometimes denote $\relspinc^i(\la\x)$ by
$\relspinc(\la\x)=\relspinc(\x)+\chi(\la)$, dropping the index $i$ and
the quotient map $(.):\RelSpinC(Y,\vsi)\ra \Ss$ from the notation.\\

\begin{lem}
 If a generator
$\la\y$, with $\la\in G(\Ring)$, appears with non-zero coefficient in $\partial_{i}\big(\x\big)$,
we will have $\relspinc^i(\x)=\relspinc^{i}(\la\y)$ in $\Ss$.
\end{lem}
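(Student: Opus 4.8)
The plan is to reduce the statement to the corollary on relative $\SpinC$ structures and disks already proved in the excerpt (Corollary~\ref{cor:s(x)-s(y)}), together with the compatibility of the filtration map $\chi$ with the Poincar\'e duals of the marked points. First I would unwind the definitions: if $\la\y$ appears with nonzero coefficient in $\partial_i(\x)$, then there is a homotopy class $\phi\in\pi_2^1(\x,\y)$ with $\m(\phi)\neq 0$ and $\la_\z(\phi)=\la$, so in particular $\Dcal(\phi)\ge 0$ and $\phi$ admits a holomorphic representative. The key point is then to track the local multiplicities of $\phi$ at all the marked points $\{z_0,z_1,\dots,z_\el,p\}$, since these determine both $\la=\la_\z(\phi)$ and the shift in relative $\SpinC$ structure.

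Next I would invoke the analogue of Corollary~\ref{cor:s(x)-s(y)} for the sutured manifold $(Y,\vsi^i)$: for $\phi\in\pi_2(\x,\y)$ one has
\begin{displaymath}
\relspinc(\x)-\relspinc(\y)=n_p(\phi)\,\chi(i,p)+\sum_{j=0}^\el n_{z_j}(\phi)\,\chi(i,j)
\end{displaymath}
in $\RelSpinC(Y,\vsi^i)$, where $\chi(i,p)$ and $\chi(i,j)$ are the Poincar\'e duals of the sutures of $(Y,\vsi^i)$ computed explicitly in the excerpt. Passing to the quotient $\Ss=\RelSpinC(Y,\vsi)/\langle\eta_0,\eta_1,\eta_2\rangle$ kills the classes $\eta_0,\eta_1,\eta_2$, and in this quotient the formulas for $\chi(i,j)$ collapse: $\chi(i,j)$ becomes $\chi_j$ for $j\in\{3,\dots,\el\}$, becomes $0$ for $j\in\{0,1,2\}$ with $j\ne i$, becomes $\chi_i$ for $j=i$ (since $\eta_i\equiv 0$), and $\chi(i,p)=-\chi_i$. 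Therefore in $\Ss$,
\begin{displaymath}
(\relspinc(\x))-(\relspinc(\y))=-n_p(\phi)\,\chi_i+n_{z_i}(\phi)\,\chi_i+\sum_{j=3}^\el n_{z_j}(\phi)\,\chi_j.
\end{displaymath}

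Then I would compare the right-hand side with $\chi(\la)=\chi(\la_\z(\phi))$. By definition $\chi(\la_j)=\chi_j$ for $j\in\{0,\dots,\el\}$ and $\chi(\la_p)=-(\chi_0+\chi_1+\chi_2)$, which in $\Hbb=\Ht^2(Y,\partial Y;\Z)/\langle\eta_0,\eta_1,\eta_2\rangle$ equals $-(\chi_0+\chi_1+\chi_2)$; but note that under the relation $\la_p=\la_0^{m_0-1}\la_1^{m_1-1}\la_2^{m_2-1}$ defining $\Ring$, consistency of $\chi$ forces $\chi(\la_p)=(m_0-1)\chi_0+(m_1-1)\chi_1+(m_2-1)\chi_2$, and using $\eta_0+\eta_1+\eta_2=0$ together with the explicit relation $\chi_0+\chi_1+\chi_2=0$ in $\Hbb$ (which holds because $\eta_i\equiv 0$ there, so $\chi(i,j)$ summed cyclically vanishes) these agree. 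Since $\la_\z(\phi)=\la_p^{n_p(\phi)}\prod_{j=0}^\el\la_j^{n_{z_j}(\phi)}$, additivity of $\chi$ gives $\chi(\la)=n_p(\phi)\chi(\la_p)+\sum_{j=0}^\el n_{z_j}(\phi)\chi_j$, and after substituting the value of $\chi(\la_p)$ in $\Hbb$ and using $\chi_0+\chi_1+\chi_2=0$ this is exactly the right-hand side above. Hence $(\relspinc(\x))=(\relspinc(\y))+\chi(\la)=\relspinc^i(\la\y)$ in $\Ss$, and since $\relspinc^i(\x)=(\relspinc(\x))$ by definition, we conclude $\relspinc^i(\x)=\relspinc^i(\la\y)$.

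The main obstacle I anticipate is purely bookkeeping: one must be careful that the two a priori different expressions for $\chi(\la_p)$ — the one dictated by the filtration formula $\chi(\la_p)=-(\chi_0+\chi_1+\chi_2)$ and the one dictated by the ring relation $\la_p=\la_0^{m_0-1}\la_1^{m_1-1}\la_2^{m_2-1}$ — are genuinely compatible in $\Hbb$, which requires checking $m_0\chi_0+m_1\chi_1+m_2\chi_2=0$ in $\Hbb$; this in turn is the content of the relation $\chi_0+\chi_1+\chi_2=0$ combined with the identity $m_0\chi_0+m_1\chi_1+m_2\chi_2=0$ that follows from the surgery triangle (it is the Poincar\'e-dual incarnation of the fact that the boundary of the small triangle $\Delta$ is nullhomologous, i.e.\ $\partial\Delta$ winds $m_i$ times around the relevant sutures). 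Everything else is a direct substitution once the Poincar\'e duals $\chi(i,j)$, $\chi(i,p)$ recorded earlier in this section are taken at face value, so the proof is short.
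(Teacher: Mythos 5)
Your first step coincides with the paper's: unwind the differential to a class $\phi\in\pi_2^1(\x,\y)$ with $\la_\z(\phi)=\la$, apply the analogue of Corollary~\ref{cor:s(x)-s(y)} to $(Y,\vsi^i)$ with the Poincar\'e duals $\chi(i,j)$, $\chi(i,p)$, and pass to the quotient $\Ss$ where the $\eta_j$ die; this correctly gives $(\relspinc(\x))-(\relspinc(\y))=(n_{z_i}(\phi)-n_p(\phi))\chi_i+\sum_{j\geq 3}n_{z_j}(\phi)\chi_j$ in $\Ss$. The gap is in the comparison with $\chi(\la)$. Expanding, $\chi(\la)=-n_p(\phi)(\chi_0+\chi_1+\chi_2)+\sum_{j=0}^{\el}n_{z_j}(\phi)\chi_j$, so the two expressions agree only if $(n_{z_{i+1}}(\phi)-n_p(\phi))\chi_{i+1}+(n_{z_{i+2}}(\phi)-n_p(\phi))\chi_{i+2}=0$ in $\Hbb$. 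You never address this term. The paper's proof closes it with a geometric fact you omit: in the diagram $H_i$ the curves $\mu_{i+1}$ and $\mu_{i+2}$ are absent, and $z_{i+1}$, $z_{i+2}$ sit just across the edges of the small triangle $\Delta$ lying on $\mu_{i+1}$, $\mu_{i+2}$, so $p$, $z_{i+1}$, $z_{i+2}$ lie in one connected component of $\Sig-\alphas-\betas^i$ and hence $n_{z_{i+1}}(\phi)=n_{z_{i+2}}(\phi)=n_p(\phi)$ for every Whitney disk of $H_i$ (the paper states this as $n_1(\phi)=n_2(\phi)=n_p(\phi)$ for $i=0$). With that equality the offending term vanishes identically and no relation among the $\chi_j$ is needed.

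The substitute you invoke, $\chi_0+\chi_1+\chi_2=0$ in $\Hbb$, is both unjustified and false in general: summing the formulas for $\chi(i,j)$ over $j\in\{0,1,2\}$ gives $\chi_i+\eta_0+\eta_1+\eta_2=\chi_i$, not $\chi_0+\chi_1+\chi_2$, so your parenthetical argument is a non sequitur; and for, say, the $\infty,n,n+m$ triple with $m>1$ one only has $m\chi_0+\chi_1+\chi_2=0$. The relation that does hold (and is what makes $\chi$ compatible with the ring relation $\la_p=\la_0^{m_0-1}\la_1^{m_1-1}\la_2^{m_2-1}$) is $m_0\chi_0+m_1\chi_1+m_2\chi_2=0$, coming from the identity $(\mu_1\cdot\mu_2)[\mu_0]+(\mu_2\cdot\mu_0)[\mu_1]+(\mu_0\cdot\mu_1)[\mu_2]=0$ on the torus; but even granting your stronger identity, your final substitution still leaves the term $(n_{z_{i+1}}(\phi)-n_p(\phi))\chi_{i+1}+(n_{z_{i+2}}(\phi)-n_p(\phi))\chi_{i+2}$, so the proof does not close without the equality of multiplicities at $p$, $z_{i+1}$, $z_{i+2}$.
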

\begin{proof}
Without loosing on generality, let us assume that $i=0$.
Suppose that $\relspinc(\x),\relspinc(\y)\in\spinc$, and that there is a Whitney disk
$\phi\in\pi_2(\x,\y)$ contributing to $\partial_{0}(\x)$ with $\la=\la_\z(\phi)$.
Then we will have $n_1(\phi)=n_2(\phi)=n_p(\phi)$.
The existence of this disk implies that
\begin{displaymath}
\begin{split}
\relspinc(\x)&=\relspinc(\y)+\big(n_0(\phi)-n_p(\phi)\big)\chi_0
+\sum_{j=3}^\el n_j(\phi).\chi_j\\
\Rightarrow\ \ \relspinc^0(\x)&=\relspinc^0(\y)+\chi\Big(\la_p^{n_p(\phi)}
\prod_{j=0}^\el\la_j^{n_j(\phi)}\Big)=\relspinc^0(\la\y).
\end{split}
\end{displaymath}
For the equality in the second line, we use the equality $\chi(\la_p)=-\left(\chi_0+\chi_1+\chi_2\right)$.
This completes the proof of the lemma.
\end{proof}

The above assignment of relative $\SpinC$ structures is thus
respected by the differential $\partial_{i}$ of $\E_i(\spinc)$, and
$\E_i(\spinc)$ is thus decomposed as
\begin{displaymath}
\E_i(\spinc)=\bigoplus_{\relspinc\in \spinc\subset
\Ss}\E_i(\relspinc).
\end{displaymath}

Associated with the $\SpinC$ class $\spinc$, we will describe a
triangle of chain maps
\begin{equation}\label{eq:hol-triangle}
\begin{diagram}
\E_0(\spinc)&&\rTo{\f^{\spinc}_2}&&
\E_1(\spinc)\\
&\luTo{\f_1^{\spinc}}&&\ldTo{\f_0^{\spinc}}&\\
&&\E_{2}(\spinc)&&
\end{diagram}
\end{equation}
such that the compositions $\f^{\spinc}_{i+1}\circ \f^{\spinc}_i$, $i\in\frac{\Z}{3\Z}=\{0,1,2\}$
 are chain homotopic to zero. \\

To define the chain map $\f_{i-1}^{\spinc}$, note that the special Heegaard diagram
$(\Sig,\betas^i,\betas^{i+1},\z)$  is admissible for
all the corresponding $\SpinC$ classes (c.f. the arguments of subsection~\ref{subsec:special-HD}).
We may thus compute
\begin{displaymath}
\begin{split}
\CFT\left(L_{i-1},\nu_{i-1};\Ring\right)&
=\CFT(\Sig,\betas^i,\betas^{i+1},\z)\otimes \Ring\\
\end{split}
\end{displaymath}
where $(L_{i-1},\nu_{i-1})$ is the sutured manifold corresponding to the special Heegaard diagram  $(\Sig,\betas^i,\betas^{i+1},\z)$.
The intersection point $p_{i-1}\in\mu_i\cap\mu_{i+1}$ determines a unique
$\SpinC$ class
$$\spinc_{i-1}\in\SpinC\left(\ovl{L_{i-1}}\right),\ \ \
c_1(\spinc_{i-1})=0,$$
as well as a top generator $\Theta_{i-1}$ corresponding to $\spinc_{i-1}$ (which is a closed element)
in the above Heegaard Floer complex. The generator $\Theta_{i-1}$ is obtained as the
union of $p_{i-1}$ and the positive intersection points of $\beta^i_j$ and $\beta^{i+1}_j$ for
$j=1,...,\ell-1$.
The generator $\Theta_{i-1}$ corresponds to a relative $\SpinC$ class which will be denoted by
$\relspinc_{i-1}\in\spinc_{i-1}$.
Consider the holomorphic triangle map
\begin{displaymath}
\begin{split}
f^\spinc_{i-1}:\E_{i}(\spinc)\otimes_\Ring{\mathrm{CF}}
(\Sig,\betas^i,\betas^{i+1},\z;\spinc_{i-1};\Ring)\lra
\E_{i+1}(\spinc).
\end{split}
\end{displaymath}
On a generator  $\x \otimes \q$ of the left hand side,
with $\x\in\Ta\cap\mathbb{T}_{\beta^i}$ and
$\q$ a generator corresponding to the
$\SpinC$ class $\spinc_{i-1}$,  $f^{\spinc}_{i-1}(\x\otimes\q)$ is defined by
\begin{equation}\label{eq:Phi-f}
\begin{split}
f^{\spinc}_{i-1}\big(\x\otimes \q\big):=
\sum_{\y\in \Ta\cap \Tb}\sum_{\substack{\Delta\in \pi_2^0(\x,\q,\y)\\ }}
\Big(\m(\Delta)\la_{\z}(\Delta)\Big)\y,
\end{split}
\end{equation}
where  $\pi_2^0(\x,\q,\y)$ denotes
the subset of $\pi_2(\x,\q,\y)$ consisting of the triangle classes
$\Delta$ such that $\mu(\Delta)=0$. The map
$f^\spinc_{i-1}$ is then extended, as an $\Ring$-module homomorphism, to all of
$\E_{i}(\spinc)\otimes_\Ring{\mathrm{CF}}
(\Sig,\betas^i,\betas^{i+1},\z;\spinc_{i-1};\Ring)$.\\

One should also fix the $\SpinC$ class of the triangles
contributing to the sum in equation~\ref{eq:Phi-f}.
Let us assume that the intersection points $\x_i\in\Ta\cap\Tbi$ for $i=0,1,2$ are
fixed so that $\relspinc^i(\x)\in\spinc\subset\Ss$. Furthermore, assume that,
after possible re-labeling of the curves in $\alphas$, we have
$$\x_i=\big\{x_i^1,...,x_i^\ell\big\},\ \ x_i^j\in
\begin{cases}\alpha_j\cap\beta^i_j\ \ &\text{if }1\leq j<\ell\\
\alpha_\ell\cap\mu_i\ \ &\text{if }j=\ell\end{cases}.$$
Also, for $j=1,...,\ell-1$, we will assume that $x_0^j,x_1^j$ and $x_2^j$ are
very close to each other, and correspond to one another by the Hamiltonian isotopies
considered above.
We may always change the $\alpha$ curves in the Heegaard diagram by
isotopy so that the above condition is satisfied. In order to specify the
class of triangles used in equation~\ref{eq:Phi-f}, we need to specify
triangle classes $\Delta_{i}\in\pi_2(\x_{i+1},\Theta_{i},\x_{i-1})$ for
any $i\in\frac{\Z}{3\Z}=\{0,1,2\}$.  The domain $\Dcal(\Delta_i)$ consists of a union of $\ell$
triangles. The first $\ell-1$ triangles are small triangles determined by the small Hamiltonian
isotopy changing the simple closed curves in $\betas^{i+1}-\{\mu_{i+1}\}$ to those in
$\betas^{i-1}-\{\mu_{i-1}\}$. Two of the vertices of the $j$-th triangle are the
intersection points $x_{i+1}^j$ and $x_{i-1}^j$, while the last vertex
belongs to the top generator $\Theta_i$. The $\ell$-th triangle connects
three intersection points between $\mu_{i},\mu_{i+1}$ and  $\alpha_\ell\in\alphas$.
With this notation fixed, let
$$\Dcal=\Dcal(\Delta_0)+\Dcal(\Delta_1)+\Dcal(\Delta_2).$$
 We assume that no $\alpha$ curve appears in $\partial \Dcal$.
Furthermore, we may assume that $n_p(\Dcal)=-1$ while $n_j(\Dcal)=0$
for $j=0,1,...,\el$.
$\Dcal$ is then the domain of a triangle class $\ov\Delta\in\pi_2(\Theta_0,\Theta_1,\Theta_2)$
with small area.
Note that achieving all these properties
may be done through a correct choice of the last triangle among the
$\ell$ triangles chosen above.

The choice of this last triangle class (with the above
properties) determines how
the map $f^\spinc_{i-1}$ changes the relative $\SpinC$ classes.
We will specify this last choice after the following lemma. \\

\begin{lem}\label{lem:f-respects-spinc}
There exists a cohomology class $h_i\in\Hbb$ for $i=0,1,2$ with the following property.
If for a generator $\x$ of $\E_{i}(\spinc)$ we have
$$\relspinc(\x)=\relspinc\in\spinc\subset\Ss,$$
and for the intersection point $\q\in\mathbb{T}_{\beta^i}\cap\mathbb{T}_{\beta^{i+1}}$
we have $\relspinc(\q)=\relspinc_{i-1}$, then
$$f_{i-1}^{\spinc}\big(\x\otimes \q\big)\in \E_{i+1}(\relspinc+h_{i-1}).$$
Furthermore, the cohomology classes satisfy
$$h_0+h_1+h_2=-\left(\chi_0+\chi_1+\chi_2\right).$$
\end{lem}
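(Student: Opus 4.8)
The plan is to argue, exactly as in the closed case, that the holomorphic triangle map $f_{i-1}^{\spinc}$ shifts relative $\SpinC$ structures by a fixed amount, and that this amount is pinned down by the choice of the triangle class $\ov\Delta \in \pi_2(\Theta_0,\Theta_1,\Theta_2)$ of small area constructed just above. First I would note that the whole content of the lemma is additivity: if $\x,\x'$ are generators of $\E_i(\spinc)$ and $\q,\q'$ are generators representing $\relspinc_{i-1}$, and if $\y$ (resp.\ $\y'$) appears with non-zero coefficient in $f_{i-1}^\spinc(\x\otimes\q)$ (resp.\ $f_{i-1}^\spinc(\x'\otimes\q')$) via a triangle class $\Delta\in\pi_2^0(\x,\q,\y)$ (resp.\ $\Delta'\in\pi_2^0(\x',\q',\y')$), then one must show
$$
\big(\relspinc(\y)\big)-\big(\relspinc(\y')\big)
=\big(\relspinc(\x)\big)-\big(\relspinc(\x')\big)+\chi\big(\la_\z(\Delta)\big)-\chi\big(\la_\z(\Delta')\big)
$$
in $\Ss$. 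Given this, the quantity $h_{i-1}:=\big(\relspinc(\y)\big)+\chi(\la_\z(\Delta))-\big(\relspinc(\x)\big)-\big(\relspinc(\q)\big)$ is well-defined (independent of the generators and of $\Delta$, once we have normalized so that $\relspinc(\q)$ is added in), which is precisely the assertion $f_{i-1}^\spinc(\x\otimes\q)\in\E_{i+1}(\relspinc+h_{i-1})$.

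\textbf{Key steps.} The additivity is a Mayer--Vietoris / domain-counting argument on the Heegaard quadruple $R=(\Sig,\alphas,\betas^0,\betas^1,\betas^2,\z)$. Concretely: (1) Pick disk classes $\phi_\alpha\in\pi_2(\x,\x')$, $\phi_\beta\in\pi_2(\q,\q')$ (inside $\mathbb{T}_{\beta^i}\cap\mathbb{T}_{\beta^{i+1}}$), and $\phi_\gamma\in\pi_2(\y,\y')$; then $\Delta-\Delta'-\phi_\alpha\star(\cdot)-\ldots$ can be closed up so that $\Delta'\star\phi_\gamma$ and $\phi_\alpha\star\phi_\beta\star\Delta$ differ by a triangle class in $\pi_2(\Theta_{i-1},\Theta_{i-1},\cdot)$ plus periodic domains; comparing multiplicities at each $z_j$ and at $p$ and using Corollary~\ref{cor:s(x)-s(y)} (in the form $\phi\in\pi_2(\x,\y)\Rightarrow \relspinc(\x)-\relspinc(\y)=\Hh(\phi)$) applied on each of the three sutured manifolds $(Y,\vsi^i)$, $(Y,\vsi^{i+1})$, together with the identifications $\RelSpinC(Y,\vsi^i)=\RelSpinC(Y,\vsi)$ of equation~\eqref{eq:SpinC(Y)}, gives the displayed additivity. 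This is the same computation as in the proof that the closed triangle map respects $\SpinC$ structures; the extra marked point $p$ is handled by the definition $\chi(\la_p)=-(\chi_0+\chi_1+\chi_2)$, which is exactly what makes the character $\chi$ descend consistently. (2) Having $h_0,h_1,h_2$ defined, I would compute $h_0+h_1+h_2$ by composing: the generator $\Theta_i$ is taken by (a suitable triangle class involving) $\ov\Delta$ and the three $\Delta_i$'s, and the domain $\Dcal=\Dcal(\Delta_0)+\Dcal(\Delta_1)+\Dcal(\Delta_2)$ of $\ov\Delta$ was arranged to satisfy $n_p(\Dcal)=-1$ and $n_j(\Dcal)=0$ for $0\le j\le\el$. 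Hence $\la_\z(\ov\Delta)=\la_p^{-1}$ up to the stated normalization, and $\chi(\la_\z(\ov\Delta))=\chi(\la_p)=-(\chi_0+\chi_1+\chi_2)$. Tracking $\relspinc(\Theta_{i-1})=\relspinc_{i-1}$ around the cycle $0\to1\to2\to0$ and adding the three shift relations, all the intermediate $\relspinc$-terms cancel and one is left with $h_0+h_1+h_2=\chi(\la_\z(\ov\Delta))=-(\chi_0+\chi_1+\chi_2)$, as claimed.

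\textbf{Expected obstacle.} The routine part is the first additivity statement; the place that needs genuine care is step~(2), i.e.\ verifying that the \emph{specific} small triangle $\ov\Delta$ with $n_p(\ov\Delta)=-1$, $n_j(\ov\Delta)=0$ really does realize the juxtaposition that relates $h_0,h_1,h_2$, rather than only up to an element of $\langle\eta_0,\eta_1,\eta_2\rangle_\Z$ or up to the choice of the last of the $\ell$ small triangles. This is where one must use that $\ov\Delta$ was constructed with \emph{all} $\alpha$-multiplicities of $\partial\Dcal$ vanishing and with prescribed multiplicities at the $z_j$'s and at $p$; once these normalizations are in force, the identity $h_0+h_1+h_2=-(\chi_0+\chi_1+\chi_2)$ in $\Hbb$ follows, because the ambiguity $\langle\eta_0,\eta_1,\eta_2\rangle_\Z$ is precisely what we have already quotiented out of $\Hbb$ and $\Ss$. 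I would finish by remarking that everything is compatible with the embeddings $\imath^i:\Ring_i\to\Ring$ and the compatibility of the character $\chi$ established just before the lemma, so the shifts $h_i$ are genuinely elements of $\Hbb=\Ht^2(X,\partial X;\Z)$.
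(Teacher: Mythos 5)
Your proposal is correct and follows essentially the same route as the paper: one shows that the $\relspinc$-shift of any triangle contributing to $f^{\spinc}_{i-1}$ is measured by $\chi(\la_\z(\cdot))$ up to a fixed constant $h_{i-1}$ coming from the chosen (non-natural) identification of the relative $\SpinC$ spaces (the paper does this by the same direct multiplicity computation using $n_{z_{i-1}}=n_p$ and Corollary~\ref{cor:s(x)-s(y)}), and then obtains the sum formula by composing the three normalized triangle classes $\Delta_0,\Delta_1,\Delta_2$ whose domains add to $\Dcal(\ov\Delta)$ with $n_p=-1$ and $n_j=0$. The only blemish is a pair of self-cancelling sign slips in your step (2): the cycle actually gives $h_0+h_1+h_2=-\chi(\la_\z(\ov\Delta))$, and since $\la_\z(\ov\Delta)$ corresponds to $\la_p^{-1}$ one has $\chi(\la_\z(\ov\Delta))=-\chi(\la_p)=\chi_0+\chi_1+\chi_2$, so the final identity $h_0+h_1+h_2=-(\chi_0+\chi_1+\chi_2)$ comes out as claimed.
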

\begin{proof}
Once again, it suffices to prove the lemma for $i=0$. The cyclic symmetry of all definitions
then implies the lemma in general.
Let $\q\in\mathbb{T}_{\beta^0}\cap\mathbb{T}_{\beta^{1}}$
be an intersection point corresponding to the relative $\SpinC$ class $\relspinc_{2}$.
Suppose that $\y\in\Ta\cap\mathbb{T}_{\beta^1}$ is a generator such that
$\pi_2(\x,\q,\y)$ is non-empty, and that $\Delta$ is a
triangle class in this set. Then, using the fact that $n_2(\phi)=n_p(\phi)$ we will have
\begin{displaymath}
\begin{split}
\relspinc^0(\x)&=\big(\relspinc(\y)+h_{2}\big)
+\big(n_0(\phi)-n_p(\phi)\big)\chi_0+\big(n_1(\phi)-n_p(\phi)\big)\chi_1
+\sum_{j=3}^\el n_j(\phi)\chi_j\\
&=\big(\relspinc^1(\y)+h_{2}\big)-n_p(\phi)(\chi_0+\chi_1+\chi_2)+\sum_{j=0}^\el n_j(\phi)\chi_j\\
&=\relspinc^1(\la_\z(\phi).\y)+h_2.
\end{split}
\end{displaymath}
Here $h_2$ is a cohomology class in $\Hbb$ which depends on our identification of
spaces of relative $\SpinC$ classes associated with the diagram, and is chosen once
for all. Consider triangle classes
$$\Delta_{i-1}\in \pi_2(\x_i,\Theta_{i-1},\x_{i+1}),\ \ i\in\{0,1,3\}=\frac{\Z}{3\Z}$$
corresponding to the Heegaard diagrams
$(\Sig,\alphas,\betas^i,\betas^{i+1},\z)$ defined earlier.
We have assumed that the triangle classes are chosen so that
$$\Dcal=\Dcal(\Delta_0)+\Dcal(\Delta_1)+\Dcal(\Delta_2)=\Dcal(\ov\Delta)$$
is the  domain of the triangle class $\ov\Delta\in\pi_2(\Theta_0,\Theta_1,\Theta_2)$
so that $n_p(\Dcal)=-1$ and $n_j(\Dcal)=0$ for $j=0,...,\el$.
The above computation then implies that
\begin{displaymath}
\begin{split}
\relspinc^0(\x_0)&=\relspinc^1\left(\la_\z(\Delta_2).\x_1\right)+h_2\\
&=\relspinc^2\left(\la_\z(\Delta_0)\la_\z(\Delta_2).\x_2\right)+h_0+h_2\\
&=\relspinc^0\left(\la_\z(\Delta_0)\la_\z(\Delta_1)\la_\z(\Delta_2).\x_0\right)+h_0+h_1+h_2.\\
&=\relspinc^0\left(\la_\z(\ov\Delta).\x_0\right)+h_0+h_1+h_2.\\
\Rightarrow\ \ \ 0&=h_0+h_1+h_2-\chi\left(\la_p\right)=\sum_{i=0}^3 (h_i+\chi_i)
\end{split}
\end{displaymath}
This completes the proof of the lemma.
\end{proof}
In fact, we may choose the  identification of equation~\ref{eq:SpinC(Y)}
for $\SpinC$ classes   so that with the
notation of the above lemma we have
$$f_{i}^{\spinc}\big(\x\otimes \q\big)\in \E_{i-1}(\relspinc+(m_{i}-1)\chi_{i}),\ \
\forall\ i\in\frac{\Z}{3\Z},$$
or equivalently, $h_i=(1-m_i)\chi_i$. This last condition determines the triangle
classes in a unique way.
The closed top generator
$$\Theta_{i-1}\in\mathrm{CF}\left(L_{i-1},\nu_{i-1};\spinc_{i-1}\right)
\otimes\Ring$$
may then be used to define the map $\f^\spinc_{i-1}$ by
\begin{displaymath}
\begin{split}
\f^\spinc_{i-1}&:\E_{i}(\spinc)\lra \E_{i+1}(\spinc),\ \ \
\f^\spinc_{i-1}(\x):=f^\spinc_{i-1}(\x\otimes \Theta_{i-1}).
\end{split}
\end{displaymath}
For a relative $\SpinC$ class $\relspinc\in\spinc\subset \Ss$, the
restriction of $\f^\spinc_{i}$ to $\E_{i+1}(\relspinc)\subset \E_{i+1}(\spinc)$
will be denoted by $\f^\relspinc_{i}$. Lemma~\ref{lem:f-respects-spinc}
implies that the image of $\f^\relspinc_{i}$ is in $\E_{i+2}(\relspinc+(m_{i}-1)\chi_{i})$.\\

Straight forward arguments in Heegaard Floer homology (c.f. section 7 of \cite{OS-3m1})
may be used to show the following proposition, using the closed-ness of the
generators $\Theta_0,\Theta_1$ and $\Theta_2$:
\begin{prop}\label{prop:Phi-gh}
The maps $f^\relspinc_{i}$, for $\relspinc\in\spinc\subset \Ss$,
as defined above are all chain maps, which are induced by $\ta$ chain maps
$$\f^\spinc_i:\E_{i+1}(\spinc)\lra \E_{i+2}(\spinc),
\ \ i\in\frac{\Z}{3\Z}=\big\{0,1,2\big\}.$$
\end{prop}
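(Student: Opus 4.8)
The plan is to follow the well-trodden path used by Ozsv\'ath and Szab\'o in section~7 of \cite{OS-3m1} for establishing that a map defined by counting pseudo-holomorphic triangles against a closed top generator is a chain map, carefully keeping track of the coefficient ring $\Ring$ and the relative $\SpinC$ grading. First I would fix the cyclic index and, by symmetry, argue only for $f^\spinc_{i-1}\colon \E_i(\spinc)\to\E_{i+1}(\spinc)$ defined by $\f^\spinc_{i-1}(\x)=f^\spinc_{i-1}(\x\otimes\Theta_{i-1})$. The starting point is the associativity identity of Theorem~\ref{thm:general-associativity} applied to the Heegaard quadruple $(\Sig,\alphas,\betas^i,\betas^{i+1},\betas^{i+1},\z)$ in the degenerate case $m=4$ where two of the curve systems coincide up to a small Hamiltonian isotopy; equivalently, one may directly invoke the $m=3$ case of that theorem for the triple $(\Sig,\alphas,\betas^i,\betas^{i+1},\z)$, which gives precisely
\begin{displaymath}
f_{i-1}^\spinc\big(\partial_i(\x)\otimes\q\big)+f_{i-1}^\spinc\big(\x\otimes\partial(\q)\big)=\partial_{i+1}\big(f_{i-1}^\spinc(\x\otimes\q)\big)
\end{displaymath}
as an identity of $\Ring$-module homomorphisms (the admissibility of the relevant diagrams, which is taken as given in this section, guarantees that all the sums are finite and the maps well-defined).

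The second step is to specialize this identity to $\q=\Theta_{i-1}$. Here the key input is that $\Theta_{i-1}$ is a \emph{closed} element of $\CFT(\Sig,\betas^i,\betas^{i+1},\z;\spinc_{i-1})\otimes_{\Ring}\Ring$, established in subsection~\ref{subsec:special-HD} by the explicit analysis of the special Heegaard diagram associated to the handle-slide/isotopy relating $\betas^i$ and $\betas^{i+1}$ (the contributions of the small bigons $D_i^+,D_i^-$ and of the triple-domain around the slid curve cancel, exactly as in \cite{OS-3m1}, lemma~9.4, modified for the present coefficient ring). Substituting $\partial\Theta_{i-1}=0$ kills the middle term, leaving
\begin{displaymath}
\f^\spinc_{i-1}\circ\partial_i=\partial_{i+1}\circ\f^\spinc_{i-1},
\end{displaymath}
which is the chain-map property. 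The third step is to check compatibility with the relative $\SpinC$ splitting $\E_j(\spinc)=\bigoplus_{\relspinc\in\spinc}\E_j(\relspinc)$: this is exactly the content of Lemma~\ref{lem:f-respects-spinc}, which shows $\f^\spinc_{i-1}$ carries $\E_i(\relspinc)$ into $\E_{i+1}(\relspinc+h_{i-1})$, so that the induced maps $f^\relspinc_{i-1}$ on the summands are genuine chain maps; one notes that with the normalization $h_i=(1-m_i)\chi_i$ fixed just before the proposition, these cohomology shifts are the asserted ones.

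I do not expect a genuine obstacle here, since essentially every ingredient has been prepared upstream: the associativity law (Theorem~\ref{thm:general-associativity}), the closedness of $\Theta_{i-1}$ (subsection~\ref{subsec:special-HD}), and the $\SpinC$-bookkeeping (Lemma~\ref{lem:f-respects-spinc}). The one point requiring a little care — and the closest thing to a ``hard part'' — is making sure that when the associativity theorem is invoked, the polygon $\SpinC$-classes and the coherent orientations are chosen consistently across the three Heegaard diagrams $(\Sig,\alphas,\betas^i,\z)$, $(\Sig,\betas^i,\betas^{i+1},\z)$, $(\Sig,\alphas,\betas^{i+1},\z)$ so that the triangle class used to define $f^\spinc_{i-1}$ is the one compatible with the fixed system $\Tt$ and the $\spinc$-admissibility hypotheses stated at the start of the section; but this is purely a matter of unwinding the definitions and invoking Lemma~\ref{lem:Choice-of-Orientation}, not a new argument. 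Finally, the statement for all $i\in\Z/3\Z$ follows from the cyclic symmetry of the entire construction, so it suffices to present the case $i=0$ in detail.
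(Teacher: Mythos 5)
Your proposal is correct and follows essentially the same route the paper intends: the paper disposes of Proposition~\ref{prop:Phi-gh} by citing the standard argument (section~7 of \cite{OS-3m1}, i.e.\ the $m=3$ case of Theorem~\ref{thm:general-associativity}) together with the closedness of $\Theta_0,\Theta_1,\Theta_2$, and Lemma~\ref{lem:f-respects-spinc} handles the relative $\SpinC$ splitting exactly as you describe. Your write-up simply makes explicit the steps the paper leaves implicit.
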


\subsection{Compositions in the triangle are null-homotopic}
The maps defined in the previous subsection give a triangle of  $\ta$ chain maps
between filtered $\tab$ chain complexes:
\begin{equation}\label{eq:hol-triangle-2}
\begin{diagram}
\E_0(\spinc)=\bigoplus_{\relspinc\in\spinc}\E_0(\relspinc)&&\rTo{\f^{\spinc}_2}&&
\E_1(\spinc)=\bigoplus_{\relspinc\in\spinc}\E_1(\relspinc)\\
&\luTo{\f_1^{\spinc}}&&\ldTo{\f_0^{\spinc}}&\\
&&\E_{2}(\spinc)=\bigoplus_{\relspinc\in\spinc}\E_{2}(\relspinc)&&
\end{diagram}.
\end{equation}
The maps in this triangle change the associated relative $\SpinC$ class
in a controlled way, as described in lemma~\ref{lem:f-respects-spinc}.\\

Our first observation is the following theorem.
\begin{thm}\label{thm:exact-triangle}
With the notation of the previous sub-section,  the compositions
$\f^\spinc_{i+1}\circ \f^\spinc_i$, $i\in\frac{\Z}{3\Z}=\{0,1,2\}$
from the triangle in equation~\ref{eq:hol-triangle-2}
are  $\ta$ chain homotopic to zero for each $\SpinC$ class
$\spinc\in\SpinC(\ovl X)$.
More precisely, there are $\ta$ homotopy maps
\begin{displaymath}
\begin{split}
&H_i^\spinc:\E_{i-1}(\spinc)\ra \E_{i+1}(\spinc), \ \ \
i\in\frac{\Z}{3\Z}=\big\{0,1,2\big\},\ \ s.t.\\
&H_i^\spinc\circ \partial_{i-1}+\partial_{i+1}\circ H_i^\spinc
=\f^{\spinc}_{i-1}\circ\f^\spinc_{i+1} \ \ \forall\ i\in\frac{\Z}{3\Z}.
\end{split}
\end{displaymath}
\end{thm}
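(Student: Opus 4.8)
The plan is to imitate the proof of the analogous statement for closed three-manifolds (Ozsv\'ath--Szab\'o, Section~7 of \cite{OS-3m1}, and also the link-surgery version), using the holomorphic square map associated with the Heegaard quadruple $R=(\Sig,\alphas,\betas^0,\betas^1,\betas^2,\z)$. First I would fix a coherent system $\Tt$ of $\SpinC$ structures of polygons and a compatible coherent system of orientations for $R$, containing the square class $\Box$ and the four triangle classes $\Delta_0,\Delta_1,\Delta_2,\ov\Delta$; these were essentially already set up in the previous subsection, where $\ov\Delta\in\pi_2(\Theta_0,\Theta_1,\Theta_2)$ is the small-area triangle connecting the three top generators. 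I would invoke the $\Ss$-admissibility of $R$ (which follows from the arguments of Section~\ref{sec:admissibility}, exactly as in the handle-slide lemma) so that the polygon maps are all well-defined.

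The key computation is the generalized associativity, Theorem~\ref{thm:general-associativity}, applied to the index set $[4]=\{1,2,3,4\}$ corresponding to the four collections $\alphas,\betas^{i+1},\betas^{i+2},\betas^i$ (with cyclic indices in $\Z/3\Z$). This gives a square map $h=h_{\alpha,\beta^{i+1},\beta^{i+2},\beta^i}$ such that
\begin{displaymath}
\begin{split}
f_{\alpha,\beta^{i+2},\beta^i}\Big(f_{\alpha,\beta^{i+1},\beta^{i+2}}(\x\otimes\p)\otimes\q\Big)
&-f_{\alpha,\beta^{i+1},\beta^i}\Big(\x\otimes f_{\beta^{i+1},\beta^{i+2},\beta^i}(\p\otimes\q)\Big)\\
&=\partial\big(h(\x\otimes\p\otimes\q)\big)+h\big(\partial(\x\otimes\p\otimes\q)\big),
\end{split}
\end{displaymath}
which is just Lemma~\ref{lem:associativity} transcribed to the present setup. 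I would then substitute $\p=\Theta_{i+1}$ and $\q=\Theta_{i+2}$ (the closed top generators), define $H_i^\spinc(\x)=h(\x\otimes\Theta_{i+1}\otimes\Theta_{i+2})$, and use that $\Theta_{i+1}$ and $\Theta_{i+2}$ are closed so the terms of $h\circ\partial$ involving $\partial\Theta_{i+1}$, $\partial\Theta_{i+2}$ drop out. The left-hand side becomes $\f^\spinc_{i-1}\circ\f^\spinc_{i+1}(\x)$ on one side; the other term $f_{\alpha,\beta^{i+1},\beta^i}(\x\otimes f_{\beta^{i+1},\beta^{i+2},\beta^i}(\Theta_{i+1}\otimes\Theta_{i+2}))$ must be shown to vanish after passing to the coefficient ring $\Ring$.

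The main obstacle — the one genuinely new point compared to the closed case — is computing $f_{\beta^{i+1},\beta^{i+2},\beta^i}(\Theta_{i+1}\otimes\Theta_{i+2})$ and checking it contributes zero to $\f^\spinc_{i-1}\circ\f^\spinc_{i+1}$ after tensoring with $\Ring$. This is a small-triangle count in the special Heegaard diagram $(\Sig,\betas^{i+1},\betas^{i+2},\betas^i,\z)$ of the type studied in subsection~\ref{subsec:special-HD}; as in \cite{OS-3m1} (and its link analogue in \cite{OS-linkinvariant}) the count on the first $\ell-1$ small Hamiltonian triangles is $\pm 1$ and the contribution is controlled by the behaviour at the torus piece $T^\circ$ built from $\beta_\ell$ and the surgery curves $\mu_{i+1},\mu_{i+2},\mu_i$. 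There one must verify that the resulting cycle in $\CFT(\Sig,\beta^{i+1},\beta^i,\z;\spinc_0)\otimes\Ring$ is a boundary — equivalently, that its weight, a monomial in the $\la_j$'s determined by the local multiplicities $n_{z_0},n_{z_1},n_{z_2},n_p$ of the relevant triangle domains, becomes trivial or cancelling in $\Ring=\ov\Ring/\langle\la_p-\la_0^{m_0-1}\la_1^{m_1-1}\la_2^{m_2-1}\rangle$. I would carry this out by the standard ``degenerating the torus'' argument: pair the small triangles in $T^\circ$ in two ways (analogous to the two degenerations of $\ov\Delta$), observe that the two contributions carry opposite signs but the same $\la_\z$-weight once the relation $\la_p=\la_0^{m_0-1}\la_1^{m_1-1}\la_2^{m_2-1}$ is imposed, hence cancel. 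The bookkeeping of marked points and of the identification $\eta_0+\eta_1+\eta_2=0$ (so that everything respects the relative $\SpinC$ decomposition, giving maps $H_i^\relspinc$ as well) is routine and follows lemma~\ref{lem:f-respects-spinc}; I would handle it briefly at the end, noting that $H_i^\spinc$ shifts the $\SpinC$-grading by the predicted amount $(m_{i+1}-1)\chi_{i+1}+(m_{i+2}-1)\chi_{i+2}$, consistent with the compositions $\f^\spinc_{i-1}\circ\f^\spinc_{i+1}$.
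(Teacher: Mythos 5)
Your proposal follows essentially the same route as the paper: the homotopy $H_i^\spinc$ is exactly the holomorphic square map evaluated on $\x\otimes\Theta_{i+1}\otimes\Theta_{i-1}$ (restricted to the square class obtained by juxtaposing $\ov\Delta$ with $\Delta_i$), the homotopy relation comes from the ends of the index-zero square moduli spaces (the content of Lemma~\ref{lem:associativity}), and the genuinely new step you single out — that the triangle count $f_{\beta^{i-1}\beta^{i}\beta^{i+1}}(\Theta_{i+1}\otimes\Theta_{i-1})$ vanishes because the small triangles in the standard diagram come in pairs with opposite signs and equal $\la_\z$-weights once the relation $\la_p=\la_0^{m_0-1}\la_1^{m_1-1}\la_2^{m_2-1}$ is imposed — is precisely the paper's argument. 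Apart from minor index relabeling, there is no substantive difference.
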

\begin{proof}
Throughout this proof, we will assume, for the sake of simplicity, that
the top generators $\Theta_i,\ i\in\frac{\Z}{3\Z}$
are represented by a single intersection point of the corresponding tori.
Define the homotopy map $H_i^\spinc$
from the Heegaard quadruple
$(\Sig,\alphas,\betas^{i-1},\betas^{i},\betas^{i+1},\z)$ by
\begin{equation}\label{eq:Phi-Homotopy}
\begin{split}
H_i^\spinc&:\E_{i-1}(\spinc)\lra\E_{i+1}(\spinc)\\
H_i^\spinc\big(\x\big)&:=
\sum_{\substack{\y\in\Ta\cap\mathbb{T}_{\beta^{i+1}}\\
\square\in\pi_2^{-1}(\x,\Theta_{i+1},\Theta_{i-1},\y)}}
\left(\m(\square)\la_\z(\square)\right).\y.
\end{split}
\end{equation}
Here $\pi_2^{j}(\x,\Theta_{i+1},\Theta_{i-1},\y)$ denotes the subset of $\pi_2(\x,\Theta_{i+1},\Theta_{i-1},\y)$
consisting of the squares $\square$ with $\mu(\square)=j$, and $\m(\square)$ denotes the number of
points in the moduli space $\Mod(\square)$, counted with sign. Furthermore $\la_\z(\square)$
is defined by
$$\la_\z(\square)=\la_p^{n_p(\square)}\prod_{j=0}^\el \la_j^{n_j(\square)}\in\Ring.$$
In equation~\ref{eq:Phi-Homotopy}, we only count square classes which may be represented
as the juxtaposition of the small triangle class $\ov\Delta$ in $\pi_2(\Theta_{i+1},\Theta_{i-1},\Theta_i)$ with
the triangle class $\Delta_i$ in $\pi_2(\x,\Theta_{i},\y)$.
We will drop this condition from the notation for
the sake of simplicity.
\begin{lem}\label{lem:H-respects-spinc}
With the above notation fixed, for any relative $\SpinC$ class
$\relspinc\in\spinc\subset \Ss$
the image of
$$H_i^\relspinc=H_i^\spinc|_{\E_{i-1}(\relspinc)}:\E_{i-1}(\relspinc)\lra \E_{i+1}(\spinc)$$
is in the sub-complex $$\E_{i+1}\left(\relspinc-(m_i\chi_i+\chi_{i-1}+\chi_{i+1})\right)\subset \E_{i+1}(\spinc).$$
\end{lem}
\begin{proof}
Without loosing on generality, we may assume that $i=0$.
Let $\square\in\pi_2^{-1}(\x,\Theta_{1},\Theta_{2},\y)$ be a square connecting $\x$  to $\y$.
We can thus find an element $h\in\Hbb$ such that for all such generators and
square classes we have
\begin{displaymath}
\begin{split}
\relspinc^2(\x)&=\relspinc^1(\y)+h+\sum_{i=0}^2(n_i(\square)-n_p(\square))\chi_i
%+(n_1(\square)-n_p(\square))\chi_1
%+(n_2(\square)-n_p(\square))\chi_2
+\sum_{j=3}^\el n_j(\square)\chi_j\\
&=\relspinc^1(\y)+h+\chi\left(\la_p^{n_p(\square)}\prod_{j=0}^\el \la_j^{n_j(\square)} \right)
=\relspinc^1\left(\la_\z(\square).\y\right)+h.
\end{split}
\end{displaymath}
Considering the square classes which are obtained as the juxtaposition of triangles
corresponding to $f^\spinc_{i-1}$ and $f^\spinc_{i+1}$,
and using the coherence of the system of $\SpinC$ classes, we may compute $h$:
\begin{displaymath}
\begin{split}
h=h_2+h_1
&=(m_2-1)\chi_2+(m_1-1)\chi_1\\
&=-(m_0\chi_0+\chi_1+\chi_2).
\end{split}
\end{displaymath}
This completes the proof of the lemma.
\end{proof}

If $\y$ is an intersection point in $\Ta\cap\mathbb{T}_{\beta^{i+1}}$  and
if $\square\in \pi_2(\x,\Theta_{i+1},\Theta_{i-1},\y)$ is a square with $\mu(\square)=0$,
we may consider the
moduli space $\Mod(\square)$, which is a smooth, oriented  $1$-dimensional manifold with boundary.
The boundary points of this moduli space correspond to different types of degenerations of $\square$.
Four types of these degenerations, are degenerations of $\square$ to a bi-gon and a square.
Since $\Theta_{i-1}$ and
$\Theta_{i+1}$ are closed elements in their corresponding chain complexes,
counting such degenerations
contribute to the coefficient of $\la_\z(\square).\y$
in the expression
$$\left( H_i^\spinc\circ \partial_{i-1}+\partial_{i+1}\circ H_i^\spinc\right)(\x).$$
Then we have the possibility of a degeneration of $\square$ as $\Delta\star \Delta'$
with $\Delta\in\pi_2(\x,\q,\y)$ and
$\Delta'\in \pi_2(\Theta_{i+1},\Theta_{i-1},\q)$ for some
$\q\in \mathbb{T}_{\beta^{i-1}}\cap\mathbb{T}_{\beta^{i+1}}$ satisfying
$\mu(\Delta)=\mu(\Delta')=0$.
Such degenerations correspond to the appearance of $\y$ in the expression
$$\Psi_i\big(\x\otimes \Phi_i(\Theta_{i+1}\otimes\Theta_{i-1} )\big),$$
where the holomorphic triangle maps $\Psi_{i}$ and $\Phi_i$ are defined by
\begin{displaymath}
\begin{split}
&\Psi_{i}:\E_{i-1}(\spinc)\otimes{\mathrm{CF}}
(\Sig,\betas^{i-1},\betas^{i+1},\z;\spinc_i;\Ring)\lra \E_{i+1}(\spinc)\\
&\Psi_i\big(\x\otimes \p\big)
:=\sum_{\substack{\w\in\Ta\cap\mathbb{T}_{\beta^{i+1}}
\\ \Delta\in\pi_2^0(\x,\p,\w)}}
\left(\m(\Delta)\la_\z(\Delta)\right)\w\\
&\Phi_i(\Theta_{i+1}\otimes\Theta_{i-1} ):=
\sum_{\substack{\p\in\mathbb{T}_{\beta^{i-1}}\cap\mathbb{T}_{\beta^{i+1}}
\\ \Delta\in\pi_2^0(\Theta_{i+1},\Theta_{i-1},\p)}}
\left(\m(\Delta)\la_\z(\Delta)\right)\p.
\end{split}
\end{displaymath}
Since the Heegaard diagram $(\Sig,\betas^{i-1},\betas^i,\betas^{i+1},\z)$ is a
 standard diagram, one may easily observe that $\Phi_i(\Theta_{i+1}\otimes\Theta_{i-1})=0$.
 The reason for this vanishing is that
holomorphic triangles which contribute to the above sum come in pairs.
This is in fact the same phenomena as what happens in the surgery exact sequence of
Ozsv\'ath and Szab\'o \cite{OS-Zsurgery}. The relation
$$\la_p=\la_0^{m_0-1}\la_1^{m_1-1}\la_2^{m_2-1}$$
then guarantees that the element of $\Ring$ associated with both triangles
in any pair is the same (note that the corresponding signs associated by
the orientation convention are different).
Thus, the triangles in each pair will cancel each other
to give
$$\Phi_i(\Theta_{i+1}\otimes\Theta_{i-1})=0\ \ \Rightarrow\ \
\Psi_i\big(\x\otimes \Phi_i(\Theta_{i+1}\otimes\Theta_{i-1} )\big)=0.$$

Finally, the last type of degeneration for the domain $\square$ is a degeneration of
$\square$ as $\square=\Delta\star\Delta'$, where $\Delta\in\pi_2^0(\x,\Theta_{i+1},\w)$ and
$\Delta'\in\pi_2^0(\w,\Theta_{i-1},\y)$ for some $\w\in \Ta\cap\mathbb{T}_{\beta^i}$.
Counting the end points of $\Mod(\square)$ corresponding to such
degenerations gives the coefficient of $\la_\z(\square).\y$
in $\big(\f^\spinc_{i-1}\circ\f^\spinc_{i+1}\big)(\x)$.\\

Gathering all this data we conclude that the following relation is satisfied.
\begin{displaymath}%\label{eq:H-homotopy}
\begin{split}
H_i^\spinc\circ \partial_{i+1}
+\partial_{i-1}\circ H_i^\spinc
=\f^\spinc_{i-1}\circ \f^\spinc_{i+1},\ \ \forall \ i\in\frac{\Z}{3\Z}=\big\{0,1,2\big\}.
\end{split}
\end{displaymath}
implying that $\f^\spinc_{i-1}\circ \f^\spinc_{i+1}$ is $\ta$-chain homotopic
to zero, and that the decomposition into relative $\SpinC$ classes in $\Ss$
is almost respected by the maps in the sense described in lemmas~\ref{lem:H-respects-spinc}
and~\ref{lem:f-respects-spinc}.\\
\end{proof}

\subsection{Exactness and computation of chain homotopy type}
We would like to apply lemma~\ref{lem:quasi-isomorphism} to the triangle of equation~\ref{eq:hol-triangle-2}.
For this purpose, we have to refine the coefficient ring as follows. Let
$$\Ringg:=\frac{\Ring\big[\xi_p\big]}{\big\langle 1-\la_p\xi_p \big\rangle}$$
be the algebra constructed from $\Ring$ by adding an inverse for $\la_p$.
There is a natural homomorphism
$$\rho:\Ring\lra \Ringg$$
which may be used to take the tensor product of any $\Ring$ module with $\Ringg$ and construct
a $\Ringg$ module from it.
We will sometimes denote $\xi_p$ by $\la_p^{-1}$. In the algebra $\Ringg$, the element
$\la_i^{m_i-1}$ is invertible, and it thus makes sense to talk about
$\la_i^{1-m_i}$. In fact, if $m_i>1$  the element $\la_i$ itself will be invertible.
The most interesting case is, however, when some or all of $m_i$ are equal to $1$.
In particular, if $m_0=m_1=m_2=1$, $\la_p=1\in \Ring$ and consequently we will have
$\Ringg=\Ring$.\\

We may define, for $i\in\frac{\Z}{3\Z}=\{0,1,2\}$,
\begin{displaymath}
\begin{split}
&\g^\spinc_i:\D_{i+1}(\spinc):=\E_{i+1}(\spinc)\otimes_\Ring\Ringg
\lra \D_{i-1}(\spinc):=\E_{i-1}(\spinc)\otimes_\Ring\Ringg\\
&\g^\spinc_i(a)=\la_i^{1-m_i}.\f^\spinc_i(a),\ \ \ \forall\ a\in \D_{i+1}(\spinc).
\end{split}
\end{displaymath}
From lemma~\ref{lem:f-respects-spinc} we know that $\g^\spinc_i$ is a filtered
$\tabb$ map between filtered $\tabb$ chain complexes $\D_{i+1}(\spinc)$
and $\D_{i-1}(\spinc)$ which decomposes as a sum of maps
\begin{displaymath}
\begin{split}
&\g^\relspinc_i:\D_{i+1}(\relspinc)
\lra \D_{i-1}(\relspinc),\ \ \ \forall\ \relspinc\in\spinc\subset \Ss.
\end{split}
\end{displaymath}
We may also modify the maps $H^\spinc_i$ so that they respect the relative
$\SpinC$ decompositions. According to lemma~\ref{lem:H-respects-spinc} the
following definition gives the appropriate chain homotopy maps
\begin{displaymath}
\begin{split}
&G^\spinc_i=\bigoplus_{\relspinc\in\spinc}G^\relspinc_i:\D_{i-1}(\spinc)=\bigoplus_{\relspinc\in\spinc}
\D_{i-1}(\relspinc)
\lra \D_{i+1}(\spinc)=\bigoplus_{\relspinc\in\spinc}
\D_{i+1}(\relspinc)\\
&G^\spinc_i(a)=\big(\xi_p\la_i^{m_i-1}\big).H^\spinc_i(a),\ \ \ \forall\ a\in \D_{i-1}(\spinc).
\end{split}
\end{displaymath}

\begin{thm}\label{thm:quasi-isomorphism}
With our previous notation fixed and for any $i\in\frac{\Z}{3\Z}=\{0,1,2\}$,
the map from $\D_{i}(\spinc)$ to the mapping cone
of $\g^\spinc_i$ defined by
\begin{displaymath}
\begin{split}
&\Inv^\spinc_i:\D_i(\spinc)
\lra \D_{i+1}(\spinc)\oplus\D_{i-1}(\spinc)\\
&\Inv^\spinc_i(\z):=\big(\g^\spinc_{i-1}(\z),G^\spinc_{i+1}(\z)\big)
\end{split}
\end{displaymath}
is a filtered chain homotopy equivalence of filtered $\tabb$ chain complexes.
In particular, the decompositions of the two sides into relative $\SpinC$
classes $\relspinc\in\spinc\subset\Ss$ is respected
by this chain homotopy equivalence.
\end{thm}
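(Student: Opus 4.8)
The plan is to invoke the abstract mapping-cone criterion of Lemma~\ref{lem:quasi-isomorphism}, or rather its filtered refinement Lemma~\ref{lem:quasi-iso-filtered}, applied to the $3$-periodic sequence of complexes and maps obtained from the triangle in equation~\ref{eq:hol-triangle-2} after tensoring with $\Ringg$. More precisely, I would set $A_i = \D_i(\spinc)$ for $i \in \Z$, where the indices are read modulo $3$, and take the chain maps to be the rescaled triangle maps $\g^\spinc_i$; the periodicity of the whole configuration (Heegaard diagram, $\SpinC$ classes, orientations, coefficient rings) under cyclic permutation of $\{\betas^0,\betas^1,\betas^2\}$ makes this into a genuinely periodic input for the lemma. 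By Lemma~\ref{lem:quasi-iso-filtered}, once I verify hypotheses (1) and (2) of Lemma~\ref{lem:quasi-isomorphism} in the filtered $\tabb$ category, I get that $\M(\g^\spinc_i)$ is filtered $\tabb$ chain homotopy equivalent to $A_{i+2} = \D_{i+2}(\spinc) = \D_{i-1}(\spinc)$ for the appropriate index shift, which up to re-indexing is exactly the statement that $\D_i(\spinc)$ is filtered chain homotopy equivalent to $\M(\g^\spinc_i)$, with the equivalence given by the maps $\alpha_i, \beta_i$ constructed in the proof of that lemma --- and these are precisely $\g^\spinc_{i-1}$ and $G^\spinc_{i+1}$, so $\Inv^\spinc_i$ is the desired equivalence.

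\textbf{Verification of hypothesis (1).} The first condition is that $\g^\spinc_{i+1} \circ \g^\spinc_i$ is null-homotopic via $\ta$ (here $\Ringg$) chain homotopy maps. This is immediate from Theorem~\ref{thm:exact-triangle}: there we produced $\ta$ homotopy maps $H^\spinc_i$ with $H_i^\spinc \circ \partial_{i+1} + \partial_{i-1}\circ H_i^\spinc = \f^\spinc_{i-1}\circ\f^\spinc_{i+1}$. After rescaling by the invertible elements $\la_i^{1-m_i} \in \Ringg$ (which makes sense because we have passed to $\Ringg$), one checks that $G^\spinc_i = \xi_p\la_i^{m_i-1}\cdot H^\spinc_i$ serves as the chain homotopy between $\g^\spinc_{i-1}\circ \g^\spinc_{i+1}$ and zero; this is a bookkeeping check using $h_i = (1-m_i)\chi_i$ from Lemma~\ref{lem:f-respects-spinc} and $h_0+h_1+h_2 = -(\chi_0+\chi_1+\chi_2) = \chi(\la_p)$, together with the fact that $\la_p$ became invertible. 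Lemmas~\ref{lem:f-respects-spinc} and~\ref{lem:H-respects-spinc} then guarantee that all these maps are filtered $\tabb$ maps and respect the decomposition into $\relspinc \in \spinc \subset \Ss$, so the whole argument refines over relative $\SpinC$ classes.

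\textbf{Verification of hypothesis (2) --- the main obstacle.} The hard part is showing that the composite $\g^\spinc_{i+2}\circ G^\spinc_i - G^\spinc_{i+1}\circ \g^\spinc_i : \D_i(\spinc) \to \D_{i+3}(\spinc) = \D_i(\spinc)$ is a filtered $\tabb$ chain homotopy equivalence (in fact one expects it to be chain homotopic to the identity, after the rescalings are unwound). Geometrically this is the statement that the ``length-three'' composite of triangle and square maps, counted over the surgery Heegaard multi-diagram, reduces in an appropriate energy filtration to an isomorphism of the generating sets; the standard argument of Ozsv\'ath--Szab\'o (from the surgery exact sequence, cf.\ \cite{OS-Zsurgery} and section~9 of \cite{OS-3m1}) is to degenerate a pentagon counting problem, isolate the ``small'' contributions coming from the nearest-point intersection generators, and show the leading term is the identity while the lower-energy terms can be absorbed by a filtered homotopy. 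I would carry this out by introducing the relevant energy (area) filtration on $\D_i(\spinc)$, writing $\g^\spinc_{i+2}\circ G^\spinc_i - G^\spinc_{i+1}\circ \g^\spinc_i = \Id + (\text{lower order})$ via a pentagon count with the small triangle/square classes $\ov\Delta$, $\Delta_j$ fixed throughout, and invoking the algebraic fact that a filtered endomorphism equal to the identity plus a filtration-decreasing term is a filtered homotopy equivalence. The one genuinely new ingredient over the closed case is keeping track of the coefficient-ring weights $\la_\z(\cdot) \in \Ringg$: here the crucial point is precisely the relation $\la_p = \la_0^{m_0-1}\la_1^{m_1-1}\la_2^{m_2-1}$, which was built into $\Ring$ exactly so that the holomorphic pentagons that must cancel carry the same monomial weight (with opposite orientation signs), and the invertibility of $\la_p$ in $\Ringg$ is what allows the normalizing factors $\la_i^{1-m_i}$ and $\xi_p$ to make the leading term literally the identity rather than multiplication by a non-unit. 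Once hypothesis (2) is established, Lemma~\ref{lem:quasi-iso-filtered} finishes the proof, and unravelling the identifications $\alpha_i \leftrightarrow \g^\spinc_{i-1}$, $\beta_i \leftrightarrow G^\spinc_{i+1}$ from the proof of Lemma~\ref{lem:quasi-isomorphism} shows that the equivalence it produces is $\Inv^\spinc_i$, which automatically respects the relative $\SpinC$ decomposition by the preceding lemmas.
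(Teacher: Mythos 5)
Your proposal follows essentially the same route as the paper: the paper also feeds the $3$-periodic sequence $\D_0(\spinc),\D_1(\spinc),\D_2(\spinc)$ with the maps $\g^\spinc_i$ and homotopies $G^\spinc_i$ into Lemma~\ref{lem:quasi-isomorphism} (in its filtered form), and verifies hypothesis (2) exactly as you outline — a pentagon count giving $\phi_j \simeq \xi_p\, I_j(\cdot\otimes K_j(\Theta_j\otimes\Theta_{j+1}\otimes\Theta_{j+2}))$ with $K_j = \la_p\Theta_{j,j+3}$, where the relation $\la_p=\la_0^{m_0-1}\la_1^{m_1-1}\la_2^{m_2-1}$ forces the non-preferred square classes to cancel in pairs and the closest-point map plus lower-energy terms is inverted by the standard energy-filtration argument. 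Your plan is correct and matches the paper's proof in both structure and the key cancellation mechanism.
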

\begin{proof}
For any integer $j\in\Z$ let us define
\begin{displaymath}
A_j:=\begin{cases}
\D_{0}(\spinc)\ \ \ \ &\text{if }j=0\ (\mathrm{mod}\ 3)\\
\D_{1}(\spinc)\ \ \ \ &\text{if }j=1\ (\mathrm{mod}\ 3)\\
\D_{2}(\spinc)\ \ \ \ &\text{if }j=2\ (\mathrm{mod}\ 3)\\
\end{cases}
\end{displaymath}
Denote the differential of $A_j$ by $d_j$.
Furthermore, define $f_j:A_j\ra A_{j+1}$ to be $\g^\spinc_2$,
$\g^\spinc_0$ or $\g^\spinc_1$ for $j=0,1$ or $2$ modulo $3$, respectively.
Let $H_j:A_j\ra A_{j+2}$, depending on whether $j=0,1$ or $2$ modulo $3$
be the maps $G^\spinc_1, G^\spinc_2$ and $G^\spinc_0$, respectively.
By lemma~\ref{lem:quasi-isomorphism},
 in order to show that the map $\Inv^\spinc_i$
is a chain homotopy equivalence of filtered $\tabb$ chain complexes we have to show that
the differences $\phi_i=f_{i+2}\circ H_i-H_{i+1}\circ f_i:A_i\ra A_{i+3}$ are chain homotopy equivalences.
Checking that all the constructions respect the decomposition into relative $\SpinC$ classes
in $\Ss$ is straight-forward from the lemmas~\ref{lem:f-respects-spinc} and~\ref{lem:H-respects-spinc}.
\\

As in \cite{OS-branched-double-cover} and \cite{OS-Zsurgery}, checking the above claim is
done by considering holomorphic pentagons associated with Heegaard
diagrams of the form
$$(\Sig,\alphas,\betas^j,\betas^{j+1},\betas^{j+2},\betas^{j+3},\z),$$
where $\betas^j$ denotes a set of $\ell$ simple closed curves which are
Hamiltonian isotopes of the curves in $\betas^i$
where  $i\in\frac{\Z}{3\Z}$ is equal to $0,1$ or $2$ and $j$ is congruent to
$i$ modulo $3$.
Let us denote the top generator of the Heegaard Floer homology group associated with
$(\Sig,\betas^j,\betas^{j+1},\z)$ by $\Theta_j$, by little abuse of notation.
More generally, the top generator associated with
$(\Sig,\betas^i,\betas^j,\z)$ will be denoted
by $\Theta_{ij}$.
For any three indices $i<j<k$, there is a triangle,
with small area (assuming that the Hamiltonian isotopies
changing the curve collection to each other are small)
which connects $\Theta_{ij},\Theta_{jk}$ and $\Theta_{ik}$.
Denote this triangle class by $\Delta_{ijk}$.
\\

Without loosing on generality, we may assume that $j=0$ modulo $3$.
Choose a generator $\x\in\Ta\cap\Tbjj$ so that
$\relspinc(\x)=\relspinc\in\spinc$. The curves
in $\betas^{j+3}$ are Hamiltonian isotopes of those in
$\betas^j$. Thus there is a natural {\emph{closest point}}
map $$I:\Ta\cap\Tbjj\ra \Ta\cap\mathbb{T}_{\beta^{j+3}}.$$
There is a natural triangle class connecting
$\Theta_{j,j+3},\x$ and $I(\x)$ which will be denoted
by $\Delta_\x$.\\

Let us denote the complex associated with $(\Sig,\betas_j,\betas_{j+1},\z)$
and the coefficient ring $\Ringg$ with $B_j$, and the
complex associated with $(\Sig,\betas_j,\betas_{j+2},\z)$ (again with coefficient ring $\Ringg$)
by $C_j$, and finally
the complex associated with $(\Sig,\betas_j,\betas_{j+3},\z)$ by $D_j$.
We omit the straight forward details of the definitions.
\\

Define a map $\Pcal_j:A_j\ra A_{j+3}\cong A_j$ by
\begin{displaymath}
\begin{split}
\Pcal_j(\x)=\sum_{\substack{\y\in\Ta\cap \mathbb{T}_{\beta_{j+3}}\\
\pentagon\in\pi_2^{-2}(\x,\Theta_{j+1},\Theta_{j+2},\Theta_{j+3},\y)%\\\Hut\in\Pp
}}\left(\m(\pentagon)\xi_p\la_\z(\pentagon)\right)\y.
\end{split}
\end{displaymath}
The class of the pentagons counted in the above sum is determined by juxtaposing a
triangle class $\Delta_\x\in\pi_2(\x,\Theta_{j,j+3},I(\x))$ with an standard  square class
$\ov\square\in\pi_2(\Theta_j,\Theta_{j+1},\Theta_{j+2},\Theta_{j,j+3})$ with small
area. As usual, we will drop this class from the notation.\\

Let us assume that $\pentagon\in\pi_2^{-1}(\x,\Theta_{j+1},\Theta_{j+2},\Theta_{j+3},\y)$
is a pentagon class %which is also in$\Pp$,
which has Maslov index $-1$. Consider the ends of the
smooth orientable one dimensional moduli space $\Mod(\pentagon)$, which correspond to
the degenerations discussed in theorem~\ref{thm:general-associativity}.\\

Considering the possible degenerations
at the boundary of $\Mod(\pentagon)$, theorem~\ref{thm:general-associativity} implies
\begin{equation}\label{eq:quasi-iso}
\begin{split}
\phi_j(a_j)=&(\Pcal_j\circ d_j-d_j\circ \Pcal_j)(a_j)\\
&\ +\xi_p.I_j\Big(a_j\otimes K_j(\Theta_j\otimes\Theta_{j+1}\otimes
\Theta_{j+2})\Big),\ \ \forall\ a_j\in A_j,
\end{split}
\end{equation}
where the maps $I_j:A_j\otimes D_j\ra A_{j+3}$ and $K_j(\Theta_j\otimes\Theta_{j+1}\otimes
\Theta_{j+2})$
are defined as follows.
\begin{equation}\label{eq:H-square}
\begin{split}
&I_j\big(\x\otimes\q\big):=\sum_{\y\in\Ta\cap\mathbb{T}_{\beta^{j+3}}}\sum_{\substack{\Delta\in \pi_2^0(\x,\q,\y)\\
}}\left(\m(\Delta)\la_\w(\Delta)\right)\y\\
&K_j(\Theta_j\otimes\Theta_{j+1}\otimes\Theta_{j+2}):=
\sum_{}
\sum_{\substack{\q\in\Tbjj\cap\mathbb{T}_{\beta^{j+3}}\\
\square\in \pi_2^{-1}(\Theta_j,\Theta_{j+1},\Theta_{j+2},\q)\\
}}\left(\m(\square)\la_\z(\square)\right)\q.
\end{split}
\end{equation}
Two of the terms appearing in theorem~\ref{thm:general-associativity} vanish and are not present in the
equation~\ref{eq:quasi-iso}. These are the terms that correspond to degenerations containing
a triangle in $\pi_2(\Theta_j,\Theta_{j+1},\q)$ for some $\q\in\Tbjj\cap\mathbb{T}_{\beta^{j+2}}$,
or a triangle in $\pi_2(\Theta_{j+1},\Theta_{j+2},\q)$ for some $\q\in\mathbb{T}_{\beta^{j+1}}\cap\mathbb{T}_{\beta^{j+3}}$.
The total contribution of such triangles vanishes, since they come in canceling pairs. Thus the terms containing such
degenerations would vanish as well.\\

Note that the map
$\x\mapsto I_j(\x\otimes \Theta_{j,j+3})$ is a perturbation of the isomorphism
$I$ with a map $\epsilon:A_j\ra A_{j+3}$ which
takes a generator $\x$ to generators with smaller energy than $I(\x)$, when we equip
$A_{j+3}$ with an appropriate energy filtration.
This follows since the contributions from triangle classes other than
$\Delta_\x$ will contribute more than the small energy associated with
$\Delta_\x$.
Standard arguments in Heegaard Floer
theory (c.f. Ozsv\'ath and Szab\'o's original paper \cite{OS-3m1})
may then be applied to construct an explicit
inverse for this map up to filtered $\tabb$ chain homotopy.
In order to complete the proof of the theorem, it is thus enough to show that
$$K_j(\Theta_j\otimes\Theta_{j+1}\otimes\Theta_{j+2})=\la_p\Theta_{j,j+3}.$$
This can be proved directly, since the Heegaard quadruple
$$(\Sig,\betas^j,\betas^{j+1},\betas^{j+2},\betas^{j+3},\z)$$
is a special Heegaard diagram, which may be analyzed without too much difficulty.
The only difference with earlier considerations of Ozsv\'ath and Szab\'o
(e.g. in \cite{OS-branched-double-cover}, subsection 4.2)
is the following. There is a preferred square class which contributes to the
second sum of equation~\ref{eq:H-square}. This square class has small total area, and multiplicity $1$
at $p$. The contribution of this class would give $\la_p\Theta_{j,j+3}$.
The rest of contributing square classes come in pairs and the elements of $\Ringg$ associated
with both elements in each pair are the same (with opposite sign), since the relation
$$\la_p=\la_0^{m_0-1}\la_1^{m_1-1}\la_2^{m_2-1}$$ is satisfied in $\Ring$ and hence in $\Ringg$.
Thus the two square classes in each pair cancel each other.
\end{proof}

\subsection{Special cases}
Let us now consider a few special cases, which correspond to the existing exact sequences
in Heegaard Floer homology. We use the observation of this subsection as an indication of how
the new language developed in this paper for understanding Heegaard Floer theory of Ozsv\'ath
and Szab\'o may be used to understand the existing objects in a uniform way, and extend
them to the more general setup.
\\

The first case we would like to consider, is the case where
$m_0=m_1=m_2=1$. This would be the case if $\mu_0$ and $\mu_1$ cut each other in a single
transverse point (so that $\#(\mu_0.\mu_1)=1$) and $\mu_2=-(\mu_0+\mu_1)$.
In this case, we will have
\begin{displaymath}
\begin{split}
\Ring&=\frac{\ov\Ring}{\big\langle \la_p=\la_0^{m_0-1}\la_1^{m_1-1}\la_2^{m_2-1}\big\rangle}
=\frac{\ov\Ring}{\big\langle\la_p=1\big\rangle},\ \ \&\\
\Ringg&=\frac{\Ring[\xi_p]}{\big\langle \xi_p\la_p=1\big\rangle}=\Ring
=\frac{\ov\Ring}{\big\langle\la_p=1\big\rangle}.
\end{split}
\end{displaymath}
Thus the algebra $\Ringg$ is the algebra associated with any of the diagrams
$$(\Sig,\alphas,\betas^i,\z-\{p\}),\ \ \ i=0,1,2.$$
In particular, if the simple closed curve $\la$  determines the surgery, and
$\mu_0$, $\mu_1$ and $\mu_2$ correspond to $\infty$, $0$ and $1$ surgeries
respectively, the above conditions are satisfied.
For an arbitrary ring $R$ such that there is a ring homomorphism
$\rho_R:\Ringg\ra R$, one may define the $R$ chain complexes
$$\D_i(\spinc; R)=\D_i(\spinc)\otimes_\Ringg R,\ \ i=0,1,2.$$
The triangle of theorem~\ref{thm:quasi-isomorphism} gives a triangle of
$R$ chain maps between $\D_i(\spinc;R)$, $i=0,1,2$, and the conclusion of
the theorem remains true (however, we may need to drop the filtration
from the conclusions if $\rho_R$ does not respect the
filtration). In particular, the homology groups
$$\Hbb_i(\spinc;R):=H_*(\D_i(\spinc;R),\partial_{\D_i}),\ \ i=0,1,2$$
fit into an exact triangle. The exact triangle of
\cite{Ef-splicing} and \cite{Ef-c-splicing}
is a special case of such exact triangles.
\\

Consider the algebra $\ovl \Ring$ and the quotient map $r:\Ring=\Ringg \ra \ovl \Ring$ where
$$\ovl\Ring=\frac{\Ring}{\big\langle\la_0=\la_1=1\big\rangle}
=\frac{\Ringg}{\big\langle\la_0=\la_1=1\big\rangle}.$$
Correspondingly, let $\ovl \Hbb$ be the quotient of $\Hbb$ by the action of
$\chi_0,\chi_1$ and $\chi_2$. From the filtered $\tabb$ chain complexes
$\D_i(\spinc)$ we may construct the filtered $\tabl$ chain complexes
$$\C_i(\spinc):=\D_i(\spinc)\otimes_\Ringg\ovl\Ring,\ \ \ i\in\frac{\Z}{3\Z}=\big\{0,1,2\big\}.$$
The complex $\C_i(\spinc)$ may be identified as $\CFT(X_{\tau^i}(1),\tau^i(1),\spinc)$ when $i=0,1$ and
as $\CFT(X_{\tau^2}(2),\tau^2(2),\spinc)$ when $i=2$. Here $X_{\tau^i}(j)$ denotes the
three-manifold obtained from $X$ by filling out the $j$-th suture in $\tau^i$ and $\tau^i(j)$ denotes
the induced set of sutures on the boundary of $X_{\tau^i}(j)$.
The triangle of theorem~\ref{thm:quasi-isomorphism} thus generalizes the exact sequence in homology,
which appears as theorem 1.7 in \cite{OS-3m2}. Also, theorem 4.7 from \cite{OS-branched-double-cover}
is a special case of theorem~\ref{thm:quasi-isomorphism}.
Furthermore, theorem 8.2 from \cite{OS-knot}
is also a corollary in this situation.\\

Let us now assume that $m_1=m_2=1$, while $m_0=m$ is an arbitrary integer. In particular,
if $\mu_0$, $\mu_1$ and $\mu_2$ correspond to the surgery coefficients $\infty, n$ and $n+m$,
for some integer $n\in\Z$ this would be the case. We will thus have
\begin{displaymath}
\begin{split}
\Ring=\frac{\ov\Ring}{\big\langle \la_p=\la_0^{m-1}\big\rangle}.
\end{split}
\end{displaymath}
We may thus define the following quotient ring of $\Ringg$:
\begin{displaymath}
\begin{split}
\Ring_m&=\frac{\ov\Ring}{\big\langle \la_p-\la_0^{m-1}, \la_0^m-1\big\rangle}
=\frac{\Ringg}{\big\langle\xi_p=\la_0\big\rangle}.
\end{split}
\end{displaymath}
The filtration module $\Hbb=\Ht^2(X,\partial X;\Z)$ and the space $\Ss=\SpinC(X,\tau)$ corresponding
to the coefficient ring $\Ringg$ should be changed to
\begin{displaymath}
\begin{split}
\Hbb_m&:=\frac{\Hbb}{\big\langle m\chi_0\big\rangle_\Z}
=\frac{\Ht^2(X,\partial X;\Z)}{\big\langle m\chi_0\big\rangle_\Z},\ \ \&\\
\Ss_m&:=\frac{\Ss}{\big\langle m\chi_0\big\rangle_\Z}
=\frac{\SpinC(X,\tau)}{\big\langle m\chi_0\big\rangle_\Z}.
\end{split}
\end{displaymath}
Theorem~\ref{thm:quasi-isomorphism} then gives the main result of \cite{Ef-surgery}
as a special case. The surgery exact sequence of theorem 3.1 in \cite{OS-Zsurgery}
is in turn a consequence of this last result. In a similar way, theorem 6.2 from
\cite{OS-Qsurgery} follows from this last consideration.
\\
\newpage
% ----------------------------------------------------------------

\end{document}